\documentclass[10pt]{article}
\usepackage[mathscr]{eucal}

\usepackage{graphicx} 
\usepackage{url}
\usepackage{stmaryrd}

\addtolength{\oddsidemargin}{-.10\textwidth}
\addtolength{\textwidth}{.20\textwidth}

\newcommand\DMO[2]{\DeclareMathOperator{#1}{#2}}

\usepackage{amsmath,amsthm,amsfonts,amscd,amssymb}
\usepackage[usenames,dvipsnames]{color}

\usepackage[all]{xy} 

\usepackage{makeidx}





\newcounter{enumitemp}
\newenvironment{enumeratecontinue}{
  \setcounter{enumitemp}{\value{enumi}}
  \begin{enumerate}
  \setcounter{enumi}{\value{enumitemp}}
}
{
  \end{enumerate}
}

\numberwithin{equation}{section}

\newcommand\pref[1]{(\ref{#1})}

\newcommand\marginparLee[1]{\marginpar{\tiny #1 --- Lee}}

\newcommand\ds\displaystyle

\theoremstyle{plain}

\newtheorem*{KuroshTheorem}{Kurosh Subgroup Theorem}

\newtheorem*{theorem*}{Theorem}
\newtheorem*{proposition*}{Proposition}

\newtheorem{theorem}{Theorem}[section]
\newtheorem{proposition}[theorem]{Proposition}
\newtheorem{lemma}[theorem]{Lemma}

\newtheorem{corollary}[theorem]{Corollary}

\theoremstyle{definition}
\newtheorem{definition}[theorem]{Definition}

\DeclareMathOperator{\Out}{Out}
\DeclareMathOperator{\Aut}{Aut}

\DeclareMathOperator{\rank}{rank}

\DeclareMathOperator{\Stab}{Stab}
\DeclareMathOperator\diam{diam}

\DeclareMathOperator\closure{cl}

\DeclareMathOperator{\MCG}{\mathsf{MCG}}

\DeclareMathOperator\corank{corank}
\DeclareMathOperator\DFF{D_{\text{FF}}}
\DeclareMathOperator\DFS{D_{\text{FS}}}

\DeclareMathOperator\Ax{Ax}
\DeclareMathOperator\core{core}

\newcommand\reals{{\mathbf R}}

\newcommand\Z{{\mathbf Z}}

\newcommand\inject{\hookrightarrow}

\newcommand{\bdy}{\partial}
\newcommand{\from}{\colon}
\newcommand\composed{\circ}
\newcommand\suchthat{\bigm|}
\newcommand\inv{{-1}}
\newcommand\union{\cup}

\newcommand\abs[1]{\left| #1 \right|}

\newcommand\intersect{\cap}

\newcommand\meet{\wedge}

\newcommand\restrict{\bigm|}
\newcommand\subgroup{<}
\newcommand\cross{\times}

\renewcommand\H{{\mathcal H}}

\newcommand\K{{\mathcal K}}
\renewcommand\L{\mathcal L}

\newcommand\V{\mathcal V}


\newcommand\A{\mathscr A}
\newcommand\B{\mathscr B}
\newcommand\C{\mathscr C}
\newcommand\F{\mathscr F}

\newcommand\Fell{\F_{\!ell}}

\newcommand\CFFS{\mathcal{F\!F}}

\newcommand\FFC{{\mathcal{F}}}

\newcommand\<\langle
\renewcommand\>\rangle
\newcommand\wh\widehat
\newcommand\disjunion\sqcup

\DeclareMathOperator\interior{int}
\DeclareMathOperator\frontier{Fr}
\DeclareMathOperator\Fr{Fr}

\newcommand\act\curvearrowright

\newcommand\X{\mathcal{X}}
\newcommand\CV\X

\newcommand\BookOneTag{BFH:TitsOne}
\newcommand\BookOne{\cite{\BookOneTag}}

\newcommand\FSOneTag{HandelMosher:FreeSplittingHyperbolic}
\newcommand\FSOne{\cite{\FSOneTag}}

%
%
%
%
%
%

\newcommand\STLOneTag{HandelMosher:RelComplexHypII}

\newcommand\STLTwoTag{HandelMosher:RelComplexHypIII}

\DMO\Core{Core}
\DMO\ACore{{\hat{\mathcal{C}}}}
\DMO\truss{truss}
\newcommand\wt\widetilde

\newcommand\FS{\mathcal{F\!S}}

\newcommand\collapsesto\succ
\newcommand\collapse\collapsesto
\newcommand\collapses\collapsesto
\newcommand\expandsto\prec
\newcommand\expand\expandsto
\newcommand\expands\expandsto

\newcommand\relA{\emph{rel}~$\A$}

\title{Relative free splitting and free factor complexes I:\\ Hyperbolicity}

\author{Michael Handel and Lee Mosher \thanks{The first author was supported by National Science Foundation grants and by PSC-CUNY grants. The second author was supported by National Science Foundation grants.}}

\makeindex

\begin{document}

\maketitle

\begin{abstract}
In this work, the first of a three part study of free splitting and free factor complexes of a group~$\Gamma$ relative to a free factor system~$\A$, we prove that these complexes are hyperbolic. The proof yields information about coarsely transitive families of quasigeodesics in each of these complexes, expressed in terms of fold paths of free splittings.
\end{abstract}

\section{Introduction to Part I}
\label{SectionIntro}
Masur and Minsky, in their papers \cite{MasurMinsky:complex1} and \cite{MasurMinsky:complex2}, introduced a hierarchy of connected simplicial complexes associated to a finite type surface $S$: at the top of the hierarchy is the curve complex of $S$; and at lower levels are the curve complexes of essential, connected subsurfaces of~$S$. They proved hyperbolicity of the curve complexes of all finite type surfaces, which applies immediately to all levels of the hierarchy of~$S$. This hierarchy of hyperbolic complexes has proved immensely useful in many applications to the large scale geometry of the mapping class group $\MCG(S)$ \,\cite{BestvinaFujiwara:bounded,BehrstockMinsky:rank,Mangahas:UniformUniform,BKMM,BBF:MCGquasitrees}.

For purposes of application to the large scale geometry of the outer automorphism group $\Out(F_n)$ of a rank~$n$ free group $F_n$, several complexes emerged as analogues, in different ways, of the curve complex of a finite type surface: Bestvina and Feighn proved hyperbolicity of the \emph{free factor complex} $\FFC(F_n)$ \cite{BestvinaFeighn:FFCHyp}; we proved hyperbolicity of the \emph{free splitting complex} $\FS(F_n)$ \FSOne, originally introduced as Hatcher's sphere complex \cite{Hatcher:HomStability}; and Mann proved hyperbolicity of the \emph{cyclic splitting complex} \cite{Mann:CyclicSplittingComplex}.

In this paper we study the large scale geometry of ``relative'' free factor and free splitting complexes of $F_n$ and of more general groups, proving their hyperbolicity. Our focus on these relative complexes has two motivations: as potential analogues of subsurface curve complexes for studying the large scale geometry of $\Out(F_n)$; and as analogues of the curve complex itself for studying the large scale geometry of outer automorphism groups of more general groups relative to a choice of free factor system (see below).

In the work of Masur and Minsky, closed surfaces and surfaces with boundary were treated on equal footing in \cite{MasurMinsky:complex1}, and so in \cite{MasurMinsky:complex2} the subsurface curve complexes could be immediately applied to study any compact surface, with or without boundary. Unlike that situation, it seems more appropriate to think of hyperbolicity of relative free factor and free splitting complexes of~$F_n$ as a \emph{generalization} of the absolute cases: stated in Theorems~\ref{TheoremRelFSHyp} and~\ref{TheoremRelFFHyp} below for $F_n$; and in Theorems~\ref{TheoremRelFSGammaHyp} and~\ref{TheoremRelFFGammaHyp} for more general groups. And while the \emph{need} for relativizing the results of \FSOne\ and \cite{BestvinaFeighn:FFCHyp} is perhaps motivated by considering subsurface curve complexes, the \emph{meaning} of relativization only became clear to us after considering deformation spaces of group actions on trees \cite{McCulloughMiller:symmetric,Forester:Deformation,GuirardelLevitt:outer,GuirardelLevitt:DefSpaces} that generalize the outer space of $F_n$ \cite{CullerVogtmann:moduli}. We use a definition of relative free splitting complexes related to deformation spaces of free splittings, as $\FS(F_n)$ is related to the outer space of $F_n$. This motivates Theorems~\ref{TheoremRelFSHyp} and~\ref{TheoremRelFFHyp} for $F_n$ relative to a free factor system, and Theorems~\ref{TheoremRelFSGammaHyp} and~\ref{TheoremRelFFGammaHyp} for any group $\Gamma$ relative to a free factor system. See also the work of Horbez \cite[Appendix A.1]{Horbez:HyperbolicGraphs} for an independent discussion of Theorem~\ref{TheoremRelFSGammaHyp}.

Because of the central interest in $F_n$, we first describe our results in that context.

\medskip\noindent
\textbf{The complex of relative free factor systems of $F_n$.} Free factor systems for $F_n$ were introduced in \BookOne\ to aid analysis of the dynamics of elements of $\Out(F_n)$. Formally a \emph{free factor system} of $F_n$ is a finite set of the form $\A = \{[A_1],\ldots,[A_I]\}$ such that there exists an internal free factorization $F_n = A_1 * \cdots * A_I * B$, $(I \ge 0)$, where each $A_k$ is nontrivial, and $[\cdot]$ denotes conjugacy class of a subgroup of~$F_n$. Elements of the set~$\A$ are referred to as its \emph{components}. The free factor $B$ is called a \emph{cofactor} of $\A$. And while a cofactor is far from unique, not even up to conjugacy, nonetheless its rank and thus its isomorphism type are well-defined (see Lemma~\ref{LemmaPartialFFCofactor}). Note that a cofactor may be trivial. Inclusion of free factors up to conjugacy induces a partial ordering on free factor systems which is denoted $\A \sqsubset \A'$.

Fixing one free factor system~$\A$ of $F_n$, the \emph{complex of free factor systems of $F_n$ rel~$\A$}, denoted $\CFFS(F_n;\A)$ and sometimes also called the \emph{complex of relative free factor systems}, is defined in terms of the partial ordering $\sqsubset$ on the set of free factor systems $\B$ such that $\A \sqsubset \B$. After removing from this poset its unique minimum, namely~$\A$, and its unique maximum, namely~$\{[F_n]\}$, the geometric realization of the resulting poset is, by definition, $\CFFS(F_n;\A)$. For certain \emph{exceptional} free factor systems $\A$ close to the maximum $\{[F_n]\}$, the complex $\CFFS(F_n;\A)$ exhibits the exceptional behavior of being either empty or 0-dimensional (see Section~\ref{SectionDFF}). 

The ``complex of free factor systems'' $\CFFS(F_n)$ is very tightly related to the ``free factor complex'' $\FFC(F_n)$ studied in \cite{BestvinaFeighn:FFCHyp}. See Proposition~\ref{PropF_Into_FF_QI} for details, but in brief: $\CFFS(F_n)$ contains $\FFC(F_n)$ as a quasi-isometrically embedded subcomplex. This same tight relation also holds more generally between $\CFFS(F_n;\A)$ and $\FFC(F_n;\A)$, and still more generally between $\CFFS(\Gamma;\A)$ and $\FFC(\Gamma;\A)$.  

\medskip\noindent
\textbf{Relative free splitting complexes of $F_n$.}
A \emph{free splitting} of $F_n$ is a minimal action of $F_n$ on a nontrivial simplicial tree $T$ with trivial edge stabilizers and with finitely many edge orbits. The set of conjugacy classes of nontrivial vertex stabilizers forms a free factor system of $F_n$ called the \emph{elliptic} free factor system of $T$, and denoted $\Fell T$ (see Section~\ref{SectionFSSucc}). Two free splittings which differ by an equivariant homeomorphism are equivalent. Collapsing invariant subgraphs of free splittings defines a partial ordering on equivalence classes which is denoted $S \collapsesto T$. Fixing a free factor system~$\A$, and restricting the collapse partial order to equivalence classes of those free splittings $T$ such that $\A \sqsubset \Fell T$ (here allowing $\A=\Fell T$), the geometric realization of the resulting partially ordered set is, by definition, the \emph{free splitting complex of~$F_n$ rel~$\A$}. Again, the familiar case $\FS(F_n)=\FS(F_n;\emptyset)$ is the free splitting complex of~$F_n$ as studied in~\FSOne. 

\medskip

\begin{theorem} 
\label{TheoremRelFSHyp}
For any nonfull free factor system $\A$ of $F_n$, the complex $\FS(F_n;\A)$ is nonempty, connected, and hyperbolic.
\end{theorem}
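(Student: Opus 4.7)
The plan is to address the three claims---nonemptiness, connectedness, and hyperbolicity---in turn, reserving most of the effort for hyperbolicity.

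For nonemptiness, I would start from any free factorization $F_n = A_1 * \cdots * A_K * B$ realizing $\A = \{[A_1],\ldots,[A_K]\}$. Since $\A$ is proper, either $K \ge 2$ or $B$ is nontrivial, so the associated Bass--Serre tree $T$ is a nondegenerate free splitting in which each $A_i$ is elliptic, giving $\A \sqsubset \F(T)$ with equality allowed, hence a vertex of $\FS(F_n;\A)$. For connectedness, given $S, T \in \FS(F_n;\A)$, I would produce a common refinement respecting $\A$---either by a relative pullback/Guirardel core construction, or by blowing up each shared elliptic vertex equivariantly---and then edge-collapse within $\FS(F_n;\A)$ to connect both $S$ and $T$ to this refinement.

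For hyperbolicity the strategy is to relativize the fold-path method of \FSOne. For each pair $S, T \in \FS(F_n;\A)$, choose an equivariant simplicial map $S \to T$ in general position and factor it as a sequence of elementary equivariant folds; the crucial observation is that identifying two edges in a free splitting does not create any new nonelliptic elements among the conjugates of the $A_i$, so every tree along a fold path remains in $\FS(F_n;\A)$. These relative fold paths furnish candidate quasi-geodesics. I would then verify Bowditch's guessing-geodesics criterion: that such paths are coarsely reversible, coarsely unique up to reparametrization, and that the resulting triangles are uniformly thin.

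The main obstacle is implementing the projection and contraction estimates of \FSOne\ in the relative setting. In the absolute case one projects free splittings to fold paths using data attached to individual edges and tree points, but in the relative case the peripheral subtrees carrying the action of each conjugate of $A_i$ are common to every vertex of $\FS(F_n;\A)$ and contribute no useful projection information. I expect to handle this by passing to a quotient model in which each peripheral subtree is collapsed to a distinguished marked vertex, so that all distance and projection estimates become invariant under the relative structure of $\A$. Once this reduction is in place, the thin-triangles and subfactor projection arguments of \FSOne\ should apply with uniform constants, yielding hyperbolicity of $\FS(F_n;\A)$.
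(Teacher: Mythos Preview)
Your overall strategy---relativize the fold-path arguments of \FSOne---is exactly what the paper does, and your treatment of nonemptiness is fine. But the proposal has two genuine problems.

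First, a conceptual confusion. You write that ``the peripheral subtrees carrying the action of each conjugate of $A_i$'' must be collapsed to marked vertices. But in a free splitting $T$ rel~$\A$, each $A_i$ is already elliptic by definition: it fixes a single vertex, not a subtree. There is nothing to collapse. The relativization is not accomplished by any quotient construction; rather, one works directly in the class of free splittings rel~$\A$ and checks that every construction (folds, combing, collapses) stays in that class. This is cheap (Lemma~\ref{LemmaGoodStabilizers}): once the initial tree is rel~$\A$, every tree admitting a map from it is also rel~$\A$.

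Second, and more seriously, the phrase ``the thin-triangles and subfactor projection arguments of \FSOne\ should apply with uniform constants'' is precisely where all the work lies, and you have given no mechanism for it. The paper does not use Bowditch's criterion; it uses the Masur--Minsky axioms, with projection to a fold path defined via \emph{projection diagrams} (Definition~\ref{DefProjDiagram}). The key technical device is a complexity function $C(\beta)$ on invariant subgraphs (Section~\ref{SectionSubgraphComplexity}), built from four summands tailored to the relative setting, which controls how pullback subgraphs evolve along fold sequences. This feeds into \emph{free splitting units} (Section~\ref{SectionFSU}), which give the quasigeodesic parameterization. The Coarse Retract axiom requires a delicate ``hanging trees'' argument (Lemma~\ref{LemmaCoarseRetract}), and the Strong Contraction axiom is proved by the ``Big Diagram'' argument (Section~\ref{SectionBigDiagrams}), which iteratively simplifies a stack of combing rectangles. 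None of this machinery is visible in your outline, and without it there is no proof---only an expression of hope that the absolute case generalizes.

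For connectedness, the paper does not use a common-refinement or Guirardel-core argument; it uses fold paths directly (Corollary~\ref{CorollaryConnected}), which is both simpler and supplies the almost-transitive path family needed for the Masur--Minsky axioms.
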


\begin{theorem} 
\label{TheoremRelFFHyp}
For any nonexceptional free factor system $\A$ of $\Gamma$, the complex $\CFFS(F_n;\A)$ is positive dimensional, connected, and hyperbolic.
\end{theorem}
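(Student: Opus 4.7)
The plan is to deduce Theorem~\ref{TheoremRelFFHyp} from Theorem~\ref{TheoremRelFSHyp} via a coarsely Lipschitz projection, following the conceptual blueprint by which Kapovich--Rafi deduced hyperbolicity of $\F(F_n)$ from that of $\FS(F_n)$ in the absolute setting.

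First I would construct a map $\pi\from \FS(\Gamma;\A) \to \FF(\Gamma;\A)$ on vertices by sending each free splitting $T$ to (a bounded perturbation of) its free factor system $\F(T)$. One must handle the boundary cases: if $\F(T)=\A$, or $\F(T)=\{[\Gamma]\}$, or $\F(T)$ is exceptional, one replaces $\F(T)$ by a nonexceptional vertex of $\FF(\Gamma;\A)$ within uniformly bounded coarse distance, which is possible because the nonexceptional hypothesis on $\A$ ensures the target is nonempty and not degenerate near the extremes. Two properties then need checking: (i) $\pi$ is coarsely Lipschitz, since an edge $S \collapsesto T$ of $\FS(\Gamma;\A)$ forces $\F(S) \sqsubset \F(T)$, so $\pi(S)$ and $\pi(T)$ are comparable in $\FF(\Gamma;\A)$ up to bounded error; (ii) $\pi$ is coarsely surjective, since every vertex $\B \in \FF(\Gamma;\A)$ is realized as $\F(T)$ for some free splitting $T$ obtained by blowing up a graph-of-groups decomposition realizing~$\B$.

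With $\pi$ in hand, connectedness of $\FF(\Gamma;\A)$ follows from connectedness of $\FS(\Gamma;\A)$ (Theorem~\ref{TheoremRelFSHyp}) together with coarse surjectivity of $\pi$. For hyperbolicity I would appeal to a Kapovich--Rafi-style projection criterion: the image of a hyperbolic space under a coarsely Lipschitz, coarsely surjective map is hyperbolic provided there is a designated family of paths joining any two points of the source whose projections are unparameterized quasi-geodesics with uniform constants. The natural candidate family is the collection of Stallings-style folding paths $\{T_t\}$ in the relative free splitting complex.

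The main obstacle, which I expect to occupy the bulk of the paper, is the quasi-geodesic projection property: showing that along a folding path the trajectory $t \mapsto \F(T_t)$ proceeds through $\FF(\Gamma;\A)$ essentially monotonically and with linear progress in an unparameterized sense. In the absolute case this is handled using attracting laminations and Whitehead-style combinatorial invariants; in the relative setting one must carry out the analogous argument while keeping $\A$ fixed, working only with the $\A$-relative portions of the splittings and their free factor supports, and ruling out backtracking in $\FF(\Gamma;\A)$ through a relative version of the relevant no-twice-covered-edge or ``carrying'' lemmas. Finally, positive-dimensionality of $\FF(\Gamma;\A)$ is immediate from the nonexceptional hypothesis on $\A$, which guarantees the existence of a chain $\A \strictcoarser \B_1 \strictcoarser \B_2 \strictcoarser \{[\Gamma]\}$ and hence a $1$-simplex.
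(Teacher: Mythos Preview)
Your overall strategy matches the paper's: deduce hyperbolicity of $\FF(\Gamma;\A)$ from that of $\FS(\Gamma;\A)$ via a Kapovich--Rafi style Lipschitz projection $\pi$ sending $T$ to (a choice near) $\F(T)$. Positive dimensionality, surjectivity of $\pi$, the Lipschitz bound, and hence connectedness all go essentially as you sketch.

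Where your proposal diverges is in the form of the Kapovich--Rafi criterion and, consequently, in what you think the hard part is. The paper uses the version stated as Theorem~\ref{TheoremKR} (Proposition~2.5 of \cite{KapovichRafi:HypImpliesHyp}): one must show that whenever $d_{\FF}(\pi(S),\pi(T))\le 1$, the $\pi$-image of a \emph{geodesic in $\FS(\Gamma;\A)$} from $S$ to $T$ has uniformly bounded diameter. No ``linear progress'' or unparameterized quasi\-geodesic statement about projections of fold paths into $\FF$ is required as a \emph{hypothesis}; such a statement emerges only as a \emph{conclusion} (item~\pref{ItemHausdorffClose} of Theorem~\ref{TheoremKR}).

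With that criterion the verification is far more elementary than your proposal suggests. After reducing (via collapses and Lemma~\ref{LemmaFoldableExists}) to the case where there is a fold sequence $S=S_0\to S_1\to\cdots\to S_L=T$ with $\F(S)\sqsubset\F(T)$ and $\F(S)\ne\A$, the key observation is simply that
\[
\F(S_0)\sqsubset\F(S_1)\sqsubset\cdots\sqsubset\F(S_L)
\]
is a nested chain (Lemma~\ref{LemmaRealCollapse}\pref{ItemRCFStoFF}), hence has diameter $\le 1$ in $\FF(\Gamma;\A)$. Since $\A\ne\F(S_0)$, each $\F(S_i)$ equals $\pi(S_i)$ for the natural choice of $\pi$. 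One then transfers this bound from the fold path to a genuine $\FS$-geodesic using that fold paths are uniform quasigeodesics (Theorem~\ref{TheoremRelFSUParams}) together with hyperbolicity of $\FS(\Gamma;\A)$. None of the lamination, Whitehead-graph, or ``carrying'' machinery you anticipate is used; the monotonicity you mention is essentially the whole story, and the ``linear progress'' you worry about is not needed.

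A minor point: the case $\F(T)=\{[\Gamma]\}$ in your list of boundary cases never occurs, since a free splitting is by definition nontrivial and hence $\F(T)$ is always proper.
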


\medskip\noindent
\textbf{Generalizing beyond $F_n$.} The construction by Culler and Vogtmann of the outer space of~$F_n$ \cite{CullerVogtmann:moduli} was extended by McCullough and Miller to construct an outer space of any free product $\Gamma = A_1 * \cdots * A_K$ in which the free factors $A_1,\ldots,A_K$ are freely indecomposable and not infinite cyclic \cite{McCulloughMiller:symmetric}. The further extension to the outer space of a group $\Gamma$ relative to a free factor system $\A$ occurs as a special case of very general constructions of deformation spaces of minimal actions of groups on trees \cite{Forester:Deformation,GuirardelLevitt:outer,GuirardelLevitt:DefSpaces}. 

The close relation between the outer space, free splitting complex, and free factor complex of $F_n$ extends to the context of relative versions of these spaces and complexes, for any group $\Gamma$ relative to any free factor system~$\A$. Our main results, Theorems~\ref{TheoremRelFSGammaHyp} and~\ref{TheoremRelFFGammaHyp} stated below, are generalizations of Theorems~\ref{TheoremRelFSHyp} and~\ref{TheoremRelFFHyp} to that extended context. The proofs of hyperbolicity of these relative complexes do not use any special assumption about the ambient group $\Gamma$, and except for some preliminaries regarding generalizations of very basic facts in the special case~$\Gamma = F_n$ found in Sections~\ref{SectionBasicFFDefs} and~\ref{SectionFFFAndRel}, the proofs are not much different in the general case (see the heading below entitled ``Theorem~\ref{TheoremRelFSGammaHyp}: Comparison of methods''). Theorems~\ref{TheoremRelFSGammaHyp} and~\ref{TheoremRelFFGammaHyp} are therefore intended as a contribution to a growing mathematical study of outer automorphism groups of freely decomposable groups --- both absolute, and relative to a choice of free factor system --- with a goal of developing analogies between theorems about these groups and theorems about $\Out(F_n)$. For other works in this genre see 
\cite{Horbez:BoundaryOfOuterSpace}, 
\cite{Martino:IndexTheorem}, \cite{McCulloughMiller:symmetric}, \cite{CollinsTurner:efficient}. For example we expect that Theorem~\ref{TheoremRelFSGammaHyp} and together with the results of Parts II and III of this work  \cite{\STLOneTag,\STLTwoTag} could be used to advance the study of bounded cohomology of subgroups of $\Out(\Gamma;\A)$, as was done for $\Out(F_n)$ in \cite{HandelMosher:BddCohomology} using \cite{HandelMosher:FreeSplittingHyperbolic,HandelMosher:FreeSplittingLox}.

\medskip\noindent
\textbf{Grushko's Theorem, the Kurosh Subgroup Theorem, and free factor systems.}
Before formulating Theorems~\ref{TheoremRelFSGammaHyp} and~\ref{TheoremRelFFGammaHyp} we first discuss free factor systems and their partial order $\sqsubset$ in the context of a general group. These topics have historical roots in Grushko's Theorem and the Kurosh Subgroup Theorem; see Sections~\ref{SectionFFSystems} and~\ref{SectionKurosh} for a full exposition. Grushko's Theorem can be formulated as follows: any finitely generated group $\Gamma$ has a free product decomposition of the form 
$$(*) \qquad\Gamma = A_1 * \ldots * A_I * B \qquad (I \ge 0)
$$ 
such that each free factor $A_i$ is nontrivial, freely indecomposable, and not infinite cyclic and such that the free factor $B$ is a finite rank free group (possibly trivial); under these conditions $(*)$ is called a \emph{Grushko decomposition} of $\Gamma$. While a Grusko decomposition is not unique, it has certain uniqueness properties as a corollary of the Kurosh Subgroup Theorem (the full statement of which can be found in Section~\ref{SectionFFSystems}): for any other Grushko decomposition $\Gamma = A'_1 * \cdots * A'_{I'} * B'$ \, ($I' \ge 0$), we have $I=I'$ and $\rank(B)=\rank(B')$, and there is a unique index permutation $\sigma$ such that the subgroups $A^{\vphantom{\prime}}_i,A'_{\sigma(i)}$ are conjugate in $\Gamma$ (for $i=1,\ldots,I$). 

One can obtain a different formulation of uniqueness of a Grushko decomposition expressed in terms of ``free factor systems''. Define a \emph{free factor system} of $\Gamma$ to be a set of the form $\A = \{[A_1],\ldots,[A_I]\}$ such that $\Gamma$ has a free product decomposition of the form $(*)$, where $[A_i]$ denotes the conjugacy class in $\Gamma$ of~$A_i$, and the free factors $A_i$ are required only to be nontrivial; again $B$ is a possibly trivial finite rank free group. See \cite{\BookOneTag} for the origin of this definition in the case $\Gamma=F_n$. If each $A_i$ is freely indecomposable and not infinite cyclic then we say that $\A$ is a \emph{Grushko free factor system} of~$\Gamma$. With these concepts in hand the uniqueness property says that the Grushko free factor system $\A$ of a finitely generated group $\Gamma$ is well-defined, as are its two numerical invariants: the cardinality $\abs{\A}=I$; and the quantity~$\rank(B)$ which we call the \emph{corank} of~$\A$ in $\Gamma$, denoted as $\corank(\Gamma;\A)$ or just $\corank(\A)$. 

The Kurosh Subgroup Theorem has another corollary: for any Grushko decomposition~$(*)$, and for any free factor $A' \subgroup \Gamma$, if $A'$ is not a free group then there exists $i \in \{1,\ldots,I\}$ such that $A_i$ is conjugate in $\Gamma$ to a free factor of $A'$. This leads to the following alternate expression of the uniqueness for Grushko decompositions. There is a natural partial order on free factor systems denoted $\A \sqsubset \A'$ meaning that each free factor representing an element of $\A$ is conjugate to a subgroup of some free factor representing an element of~$\A'$. The alternate uniqueness statement says that the Grushko free factor system $\A = \{[A_1],\ldots,[A_K]\}$ of a finitely generated group is the unique minimum of the partial ordering $\sqsubset$ on the set of free factor systems of~$\Gamma$. A converse is also true: if $\A$ is a free factor system of $\Gamma$ that is a minimum of $\sqsubset$, then $\A$ is the Grushko free factor system associated to a Grushko decomposition of $\Gamma$ (this also follows from the Kurosh Subgroup Theorem; see Proposition~\ref{PropGrushkoMinimum}).

We do not actually \emph{apply} Grushko's Theorem in this work, unless it is to conclude (using the notation defined just below) that $\Out(\Gamma) = \Out(\Gamma;\A)$ when $\Gamma$ is finitely generated and $\A$ is the Grushko free factor system of~$\Gamma$. On the other hand, we will be heavily applying the Kurosh Subgroup Theorem throughout this work, via its various corollaries that are developed in Section~\ref{SectionBasicFFDefs}.

\medskip\noindent
\textbf{Relative free splitting and free factor complexes in general.} Fix now an arbitrary group $\Gamma$ and a free factor system~$\A$ of $\Gamma$, not required to be a Grushko free factor system. The \emph{relative outer automorphism group} $\Out(\Gamma;\A)$ is defined to be the subgroup of $\Out(\Gamma)$ which fixes the subset~$\A$ under the action of $\Out(\Gamma)$ on the set of free factor systems of~$\Gamma$. This is the group whose virtual cohomological dimension is studied by Guirardel and Levitt \cite[Theorem~5.2]{GuirardelLevitt:outer} as an application of their construction of the outer space of $\Gamma$ rel~$\A$. Here in Part~I the group $\Out(\Gamma;\A)$ is mostly lurking behind the scenes, but see Section~\ref{SectionRelOut} and Section~\ref{SectionFFCHyp} for a record of basic facts. In Parts II and III  on the other hand \cite{HandelMosher:RelComplexHypII,HandelMosher:RelComplexHypIII},
the study of individual elements of $\Out(\Gamma;\A)$ rises to the~fore.

The \emph{complex of relative free factor systems of $\Gamma$ rel~$\A$}, denoted $\CFFS(\Gamma;\A)$, is defined to be the geometric realization of the partial ordering $\sqsubset$ restricted to the set of  free factor systems $\B$ of $\Gamma$ such that $\A \sqsubset \B$ and such that $\B$ is neither the minimal nor maximal element subject to that restriction, that is, $\B \ne \A$ and $\B \ne \{[\Gamma]\}$. Just as for $\Gamma=F_n$ (see above), there are \emph{exceptional} free factor systems, those closest to the maximum $\{[\Gamma]\}$, for which $\CFFS(\Gamma;\A)$ exhibits exceptional behavior (see Section~\ref{SectionDFF} and Proposition~\ref{PropExceptionalFFS}). 

A \emph{free splitting} of $\Gamma$ is a minimal action on a nontrivial simplicial tree $T$ with trivial edge stabilizers and with finitely many edge orbits. The conjugacy classes of nontrivial vertex stabilizers form a free factor system of $\Gamma$ denoted $\Fell T$. Two free splittings which differ by an equivariant homeomorphism are equivalent. Collapsing equivariant subgraphs of free splittings defines a partial ordering on equivalence classes which is denoted $S \collapsesto T$. To say that $T$ is a free splitting \emph{relative to $\A$} (almost always abbreviated to ``rel~$A$'') means that $\A \sqsubset \Fell T$; here we allow equality $\A=\Fell T$. The \emph{free splitting complex rel~$\A$}, denoted $\FS(\Gamma;\A)$, is the geometric realization of this partial order restricted to the equivalence classes of free splittings $T$ rel~$\A$. When $\A$ is a Grushko free factor system then one may think of $\FS(\Gamma;\A)$ as the \emph{absolute free splitting complex of~$\Gamma$}. Just as happens for $\FS(F_n)$ (c.f.~\cite{Hatcher:HomStability}), in general  $\FS(\Gamma;\A)$ is a kind of ``simplicial completion'' of the Guirardel-Levitt outer space of $\Gamma$ rel~$\A$ considered in \cite{GuirardelLevitt:outer}: that relative outer space is naturally the complement of the subcomplex of $\FS(\Gamma;\A)$ obtained from all $T$ such that the nesting relation $\A \sqsubset \Fell T$ is proper.


\begin{theorem} 
\label{TheoremRelFSGammaHyp}
For any group $\Gamma$ and any nonfull free factor system $\A$ of $\Gamma$, the free splitting complex $\FS(\Gamma;\A)$ is nonempty, connected, and hyperbolic.
\end{theorem}
\noindent
See also \cite{Horbez:HyperbolicGraphs} for a discussion of Theorem~\ref{TheoremRelFSGammaHyp} following \cite{BestvinaFeighn:subfactor}.

\begin{theorem} 
\label{TheoremRelFFGammaHyp}
For any group $\Gamma$ and any nonexceptional free factor system $\A$ of $\Gamma$, the complex of relative free factor systems $\CFFS(\Gamma;\A)$ is nonempty, connected, and hyperbolic.
\end{theorem}

\noindent
Both of these theorems have special outcomes in the case of an exceptional free factor $\A$: for Theorem~\ref{TheoremRelFSGammaHyp} see Section~\ref{SectionFSLow}; and for Theorem~\ref{TheoremRelFFGammaHyp} see Section~\ref{SectionFFHyperbolicProof}.

Theorem~\ref{TheoremRelFSGammaHyp} is proved here in Section~\ref{SectionFFRelAHyp}, and Theorem~\ref{TheoremRelFFGammaHyp} in Section~\ref{SectionFFCHyp}. Outlines of the proofs of those two theorems can be found in the \emph{Overview} \cite{HandelMosher:RelHypComplexIntro}. Also, see below for a comparison of our methods of proof of Theorem~\ref{TheoremRelFSGammaHyp} with methods of earlier works. 

But first we briefly mention two further results arising from the methods of proof of Theorem~\ref{TheoremRelFSGammaHyp}, each of which will be applied in Part III~\cite{\STLTwoTag} where we study the translation lengths of elements of $\Out(\Gamma;\A)$ acting on $\FS(\Gamma;\A)$ and on $\CFFS(\Gamma;\A)$.

\smallskip
\textbf{Theorem~\ref{TheoremRelFSUParams} and Corollary~\ref{CorollaryCompFSU}: Quasigeodesic fold paths.} Theorem~\ref{TheoremRelFSUParams} describes a very explicit coarsely transitive family of uniform quasigeodesics in $\FS(\Gamma;\A)$, namely the family of Stallings fold paths, reparameterized using a measurement of combinatorial change along a fold path which we call \emph{free splitting units}; see Section~\ref{SectionFSU} for details of these units. Also, Corollary~\ref{CorollaryCompFSU} describes a further reparameterization using \emph{component free splitting units}, with less efficient constants but much more easily applicable.

\smallskip
\textbf{Uniformity of constants.} Our main results --- Theorem~\ref{TheoremRelFSGammaHyp}, Theorem~\ref{TheoremRelFFGammaHyp}, Theorem~\ref{TheoremRelFSUParams}, and Corollary~\ref{CorollaryCompFSU} --- each express the existence of various constants: hyperbolicity constants, quasigeodesic constants, coarse Lipschitz constants, etc. These constants each depend only on two numerical invariants of the group $\Gamma$ and its free factor system~$\A$, namely $\corank(\Gamma;\A)$ and the cardinality~$\abs{\A}$. For the foundational special case $\Gamma=F_n$ and $\A=\emptyset$ covered in Theorems~\ref{TheoremRelFSHyp} and~\ref{TheoremRelFFHyp}, these constants depend only on $\corank(F_n;\emptyset) = n$. In the general case an interesting feature arises: the constants are completely independent of the isomorphism classes of subgroups representing the elements of the free factor system~$\A$. In a few cases the constants are independent even of $\Gamma$ and $\A$; see for example Proposition~\ref{PropConnectedLipschitz} describing a $4$-Lipschitz projection map $\FS(\Gamma;\A) \to \CFFS(\Gamma;\A)$.

\medskip\noindent
\textbf{Theorem~\ref{TheoremRelFSGammaHyp}: Comparison of methods.} Here we briefly compare our current methods of proof for Theorem~\ref{TheoremRelFSGammaHyp} for $\Out(\Gamma;\A)$ to methods of the earlier works \cite{\FSOneTag}, \cite{BestvinaFeighn:subfactor} for $\Out(F_n)$, and \cite{Horbez:HyperbolicGraphs} for $\Out(\Gamma;\A)$. While the broad outlines are similar, various refinements and improvements are important for later application in Parts II and III \cite{\STLOneTag,\STLTwoTag}, in particular improved versions of ``free splitting units'' (see Proposition~\ref{PropMMTranslation} and Corollary~\ref{CorollaryCompFSU}). Also important for application in Part~III \cite{\STLTwoTag} is a previously unremarked consequence of the Masur--Minsky axioms, namely the \emph{quasi-closest point property}; see Section~\ref{SectionMMReview}. For these reasons, we mostly give full details of proof of Theorem~\ref{TheoremRelFSGammaHyp}. For occasional steps of proof where precise outlines are practically the same, we will refer to reader to earlier works with just a comment or a sketch.

Theorem~\ref{TheoremRelFSGammaHyp} is proved by applying a hyperbolicity theorem of Masur and Minsky \cite[Theorem 2.3]{MasurMinsky:complex1}. For that application one must supply a \emph{coarsely transitive} family of paths in $\FS(\Gamma;\A)$, and for each path in that family one must supply a \emph{coarse projection function} from $\FS(\Gamma;\A)$ to the given path. One then verifies that these given objects satisfy the three \emph{Masur--Minsky axioms}, and one may then conclude that the given complex is Gromov hyperbolic with respect to the simplicial metric on~$\FS(\Gamma;\A)$.

The path families used here, and in \FSOne, \cite{MasurMinsky:complex1} and \cite{Horbez:HyperbolicGraphs}, are based on Stallings fold paths. In \FSOne\ we used a subset of fold paths in $\FS(F_n)$, namely those satisfying a special ``gate 3 condition'' (see the \emph{Remark on the gate 3 condition} in Section~\ref{SectionFoldSequences}). In~\cite{BestvinaFeighn:subfactor}, Bestvina and Feighn needed to work with optimal paths in outer space, and so they dropped the gate 3 condition and reconfigured the methods of \FSOne\ to work using arbitrary fold paths. Here we take up those reconfigurations, using the family of all fold paths to prove Theorem~\ref{TheoremRelFSGammaHyp}. This requires numerous small changes from the methods of \FSOne; these are commented on throughout the paper, particularly in the narrative of Sections~\ref{SectionFoldPathUnits} and~\ref{SectionFFRelAHyp}. This also results in some improvements and efficiencies, for example free splitting units are more easily defined here than in \FSOne, and hence more easily applicable. In \cite{Horbez:HyperbolicGraphs}, in the general context of $\FS(\Gamma;\A)$, the family of all fold paths is also used.

The coarse projection functions used in this work are described in Definition~\ref{DefProjDiagram}, expressed in terms of a class of commutative diagrams called \emph{projection diagrams}; see the Overview \cite{HandelMosher:RelHypComplexIntro} for an exposition.  Given a fold path in $\FS(\Gamma;\A)$ having the form $S_I \mapsto \cdots \mapsto S_K$, and given a free splitting $T$, a projection diagram from $T$ to the given fold path is a certain commutative diagram which incorporates both the given fold path that starts at $S_I$ and ends at $S_K$, as well as another fold path that starts somewhere near $S_I$ and ends at $T$, and in which there is a certain position $S_J$ (with $J \in \{I,\ldots,K\}$) where the two fold paths appear to diverge. As one varies over all such diagrams, the projection $\pi(T) \in \{I,\ldots,K\}$ is defined to be the maximum value of~$J$. Projection functions used in \FSOne\ and \cite{BestvinaFeighn:subfactor} for the case of $\FS(F_n)$ are described in pretty much the same fashion. For the context of $\FS(\Gamma;\A)$ discussed in \cite{Horbez:HyperbolicGraphs}, after first generalizing to $\FS(\Gamma;\A)$ certain distance bounds that were used in \cite{BestvinaFeighn:subfactor} for $\FS(F_n)$, readers are then directed to follow \cite{BestvinaFeighn:subfactor} in order to generalize projection functions from $\FS(F_n)$ to $\FS(\Gamma;\A)$ and to verify the Masur--Minsky axioms.

The Masur--Minsky axioms are verified in Section~\ref{SectionAxiomReduction}, reducing them to Proposition~\ref{PropMMTranslation} regarding relations between the combinatorial and geometric behavior of projection diagrams. Proposition~\ref{PropMMTranslation}, the technical heart of this work, is needed also in the proofs of Theorem~\ref{TheoremRelFSUParams} and Corollary \ref{CorollaryCompFSU} in Section~\ref{SectionFSUParameterization}. The proof of Proposition~\ref{PropMMTranslation} in Section~\ref{SectionBigDiagrams} is expressed using ``big commutative diagrams'' to compare projections from distinct free splittings to the same fold sequence. This~``big diagram'' argument has some technical differences in comparison with the analogous arguments in the two earlier works \FSOne\ and \cite{BestvinaFeighn:subfactor}; these differences are due to dropping the gate 3 condition and to explicit discussion of free splitting units.

\vfill\break

\setcounter{tocdepth}{2}
\tableofcontents

\vfill\break

\section{Free factor systems} 
\label{SectionBasicFFDefs}

Throughout this paper $\Gamma$ represents an arbitrary freely decomposable group, meaning that $\Gamma$ can be expressed as a nontrivial free product of nontrivial groups (if $\Gamma$ were freely indecomposable then the main objects of study of this paper---relative free factor and free splitting complexes---would be empty). For example this convention rules out the possibility that $\Gamma$ is infinite cyclic; see remarks after the definition of free factor systems in Section~\ref{SectionFFSystems} and after the definition of free splittings in Section~\ref{SectionFSSucc}. 

This section contains basic material regarding the set of free factor systems of $\Gamma$, its partial order $\sqsubset$ known as ``nesting'' or as ``extension'', and its binary operation $\meet$ known as ``meet''. Our focus is on basic applications of the Kurosh Subgroup Theorem. One such application is the Extension Lemma~\ref{LemmaExtension}, regarding the structure of a nested pair of free factor systems $\A \sqsubset \B$. The Extension Lemma and its consequences will be used throughout the rest of the paper. For those interested in the case $\Gamma = F_n$, the contents of this section are mostly well known and/or evident, and need only be skimmed.

An important application of the Extension Lemma is Lemma~\ref{LemmaFFSNorm} which describes a formula for the \emph{depth} of a free factor system with respect to the partial ordering $\sqsubset$, together with various properties of depth (the depth of an element of a partially ordered set is the length of the longest ascending chain starting with the given element). Depth of free factor systems will be applied in several ways, including in a dimension formula for relative free factor complexes (see Proposition~\ref{PropTempDimFF}) and in the construction of a Lipschitz projection $\FS(\Gamma;\A) \mapsto \CFFS(\Gamma;\A)$ (see Section~\ref{SectionFFConnected}). Of more central importance, in Section~\ref{SectionSubgraphComplexity} bounds on depth will be used to derive topological and metric properties of free splittings and their fold paths, and in Section~\ref{SectionFSU} these bounds are translated into properties of free splitting units along fold paths.

\subsection{Free factorizations and free factor systems}
\label{SectionFFSystems}

\newcommand\Id{\text{Id}}

\paragraph{Free factorizations.} Consider any group $\Gamma$ and any indexed set of nontrivial subgroups \hbox{$\H = \{H_l\}_{l \in \L}$}. Recall that for $\H$ to be a \emph{free factorization} of $\Gamma$ means that the following universality property holds: for any group $K$ and any given set of homomorphisms $\{f_l \from H_l \to K \suchthat l \in \L\}$, there exists a unique homomorphism $\Gamma \to K$ extending $f_l$ for each $l \in \L$. Equivalently, every nonidentity element $\gamma \in \Gamma$ can be written as the product of a unique \emph{reduced word over $\H$}, meaning that there exists $M \ge 1$ and a sequence $\gamma_m \in H_{l_m} - \{\Id\}$ indexed by $1 \le m \le M$ such that $l_m \ne l_{m+1}$ for $1 \le m \le M-1$, and $\gamma = \gamma_1 \cdot \ldots \cdot \gamma_M$ (note that $H_l \intersect H_{l'} = \{\Id\}$ if $l \ne l'$). When a free factorization is finite --- which is always true when $\Gamma$ is finitely generated --- we will generally pick a bijection $\L \leftrightarrow \{1,\ldots,L\}$ and write $\Gamma = H_1 * \cdots * H_L$. Meanwhile, as we ponder infinite free factorizations in these early sections of the paper, we shall write $\Gamma = * (H_l)_{l \in \L}$ or just $\Gamma = *\H$. A \emph{free factor} $H \subgroup \Gamma$ is any element of a free factorization; equivalently, $H$ is an element of a two-term free factorization $\Gamma = H * H'$, where $H'$ is obtained by conglomerating the other terms of any free factorization having $H$ as a~term.

For any free factorization $\H$ of $\Gamma$, each conjugacy class in $\Gamma$ is represented by a cyclically reduced word (meaning a reduced word that also satisfies $l_M \ne l_1$) and this representative is unique up to cyclic permutation. This immediately proves the following lemma, which incorporates the well known result that every free factor is malnormal:

\begin{lemma}
Every free factorization $\Gamma = *\H$ is \emph{mutually malnormal}, meaning that for each $H,H' \in \H$ and $\gamma \in \Gamma$, if $\gamma H \gamma^\inv \intersect H'$ is nontrivial then $\gamma \in H = H'$. \qed
\end{lemma}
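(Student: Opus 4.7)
The plan is to leverage the uniqueness of reduced and cyclically reduced word representatives recorded in the paragraph just above the lemma. I would first establish the equality $H = H'$: pick a nontrivial $h' = \gamma h \gamma^\inv \in \gamma H \gamma^\inv \intersect H'$ with $h \in H$. Both $h$ and $h'$ are length-one words that are automatically cyclically reduced, and they represent the same conjugacy class in $\Gamma$. By the uniqueness of cyclically reduced representatives up to cyclic permutation they must be identical as words; in particular they lie in a common factor $H_l$, which by the indexing convention $H_l = H_m \iff l = m$ forces $H = H' = H_l$ and $h = h'$.

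To show $\gamma \in H$, I would write $\gamma$ in reduced form $\gamma = \gamma_1 \cdots \gamma_I$ with $\gamma_i \in H_{l_i} \setminus \{1\}$ and $l_i \neq l_{i+1}$, then examine the reduced form of the concatenation
\begin{equation*}
\gamma h \gamma^\inv = \gamma_1 \cdots \gamma_I \cdot h \cdot \gamma_I^\inv \cdots \gamma_1^\inv.
\end{equation*}
The key observation is that all potential cancellations are confined to the central cluster $\gamma_I\, h\, \gamma_I^\inv$: no further cancellation can occur, because the reduced condition forces $l_{I-1} \neq l_I$, so both $\gamma_{I-1}$ and $\gamma_{I-1}^\inv$ lie outside $H_{l_I}$. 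If $\gamma_I \notin H = H_l$, then the displayed word is already reduced of length $2I+1$; if $\gamma_I \in H$, the central cluster collapses to a single nontrivial letter of $H$ (nontrivial because $h \neq 1$), giving reduced length $2I-1$. Comparing with the length~$1$ of the reduced word $h'$ pins down $I \in \{0,1\}$, and in either case $\gamma \in H$.

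I do not anticipate any substantive obstacle; this lemma is essentially a direct book-keeping consequence of reduced-word uniqueness. The only point requiring mild attention is confirming that the cancellations really are confined to the central cluster, which is immediate from the reduced-word condition $l_{i-1} \neq l_i$ applied symmetrically on the two sides of~$h$.
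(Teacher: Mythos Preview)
Your proof is correct and is precisely the argument the paper has in mind: the paper states the lemma with a \qed, noting only that it follows immediately from the uniqueness of reduced words and of cyclically reduced representatives of conjugacy classes recorded in the preceding paragraph. You have simply written out that immediate deduction in full, using cyclically reduced uniqueness to get $H=H'$ and $h=h'$, then reduced-word uniqueness applied to $\gamma h\gamma^{-1}$ to force $\gamma\in H$.
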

\noindent
From malnormality of a free factor $H \subgroup \Gamma$ it follows that two subgroups of $H$ are conjugate in $\Gamma$ if and only if they are conjugate in $H$. We shall make tacit use of this equivalence in what follows. 

\newcommand\NC[1]{\<\!\<#1\>\!\>}

A \emph{partial free factorization} of $\Gamma$ is a subset of a free factorization. Every partial free factorization $\H$ has a \emph{cofactor} which is a subgroup $B \subgroup \Gamma$ such that either $\H$ is a free factorization and $B$ is trivial, or $\H \union \{B\}$ is a free factorization; for example, one may take $B$ to be the free product of the complement of $\H$ in any free factorization containing $\H$. Without any assumptions on the cofactor, we have the following result. Let $\NC{\H} \subgroup \Gamma$ be the subgroup normally generated by the union of the subgroups in~$\H$.

\begin{lemma}
\label{LemmaPartialFFCofactor}
For any partial free factorization $\H$ of $\Gamma$ and any cofactor $B$ of $\H$ there exists a short exact sequence $1 \to \NC{\H} \to \Gamma \to B \to 1$ such that the homomorphism $\Gamma \mapsto B$ is a retract, and therefore $B$ is isomorphic to the quotient $\Gamma / \NC{\H}$.
\end{lemma}

\begin{proof} Consider any reduced word $\gamma = \gamma_1 \cdot \ldots \cdot \gamma_l$ over $\H \union \{B\}$. The retraction of $\gamma$ to $B$ is defined by erasing each letter $\gamma_i$ that lies in some element of~$\H$, and multiplying out the surviving letters of $B$ in order. Letting $K$ be the kernel of this retraction, evidently $\NC{\H} \subgroup K$. Conversely, the given word $\gamma$ can be rewritten by moving any letters in $B$ to the front of the word, preserving their order, at the expense of replacing every other letter by a conjugate; so if $w \in K$ then after rewriting one sees that $w \in \NC{\H}$.
\end{proof}

\begin{lemma}
\label{LemmaPartialWeakFFS}
Consider a group $\Gamma$ and two partial free factorizations $\H = \{H_l\}_{l \in \L}$ and $\H' = \{H'_l\}_{l \in \L}$ of $\Gamma$ with the same index set $\L$ and with respective cofactors $B,B'$. If $H_l$ is conjugate to $H'_l$ for all $l \in \L$ then the cofactors $B,B'$ are isomorphic. Furthermore there is an isomorphism $\Gamma \to \Gamma$ which for each $l$ restricts to a conjugation from $H_l$ to $H'_l$ and which restricts to an isomorphism from $B$ to $B'$.
\end{lemma}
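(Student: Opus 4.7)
The plan is to first establish the abstract isomorphism $H \cong H'$ using the retraction description from Lemma~\ref{LemmaPartialFFCofactor}, and then invoke the universal property of the free factorization $\Gamma = (*\H)*H$ to assemble the desired automorphism of $\Gamma$.

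For the first claim, Lemma~\ref{LemmaPartialFFCofactor} produces retractions $\rho \from \Gamma \to H$ and $\rho' \from \Gamma \to H'$ with kernels $N(\H)$ and $N(\H')$ respectively. Since each $H_l$ is conjugate to $H'_l$, the normal subgroups $N(\H)$ and $N(\H')$ coincide, and therefore $H \cong \Gamma/N(\H) = \Gamma/N(\H') \cong H'$. For use in the second claim, I want a specific such isomorphism: I will show that $\phi := \rho'\restrict H \from H \to H'$ is an isomorphism. Its kernel is $N(\H') \intersect H = N(\H) \intersect H$, which equals $\ker(\rho) \intersect H = \{1\}$ since $\rho\restrict H = \Id_H$. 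For surjectivity, given $h' \in H'$ write $h' = \rho(h') \cdot n$ for some $n \in N(\H) = N(\H')$; applying $\rho'$ yields $h' = \rho'(h') = \rho'(\rho(h'))$, so $\phi(\rho(h')) = h'$.

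For the second claim, for each $l \in \L$ choose $\gamma_l \in \Gamma$ with $\gamma_l H_l \gamma_l^\inv = H'_l$, and define $f_l \from H_l \to \Gamma$ by $f_l(h) = \gamma_l h \gamma_l^\inv$ and $g \from H \to \Gamma$ by $g = \iota \composed \phi$, where $\iota \from H' \inject \Gamma$ is the inclusion. By the universal property of the free factorization $\Gamma = (*\H)*H$, these restrictions combine into a unique homomorphism $\psi \from \Gamma \to \Gamma$, and by construction $\psi$ restricts to conjugation by $\gamma_l$ on each $H_l$ and to the isomorphism $\phi$ on~$H$.

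It remains to verify that $\psi$ is an automorphism. Surjectivity is immediate because $\image(\psi)$ contains $H'_l$ for every $l$ together with $H'$, and these generate $\Gamma$ via the factorization $\Gamma = (*\H') * H'$. For injectivity, express any nonidentity $w \in \Gamma$ as a reduced word $w = w_1 \cdots w_I$ with respect to $\Gamma = (*\H)*H$; each letter lies in some factor $H_{l_i}$ or in $H$, consecutive letters come from distinct factors, and $\psi$ is injective on each factor ($f_l$ is a conjugation and $g$ is an isomorphism onto~$H'$). Thus $\psi(w) = \psi(w_1)\cdots\psi(w_I)$ is a product of nontrivial elements whose consecutive entries belong to distinct factors of $\Gamma = (*\H')*H'$, so $\psi(w)$ is reduced and in particular nontrivial. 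The main subtlety to check carefully is this reduced-word argument for injectivity, but the alternation of factors in the target factorization is preserved precisely because the indexing set $\L$ is shared between $\H$ and $\H'$ and because $g$ lands in the separate cofactor~$H'$.
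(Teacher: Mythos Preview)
Your proof is correct and follows essentially the same approach as the paper: both use Lemma~\ref{LemmaPartialFFCofactor} to get $N(\H)=N(\H')$ and hence $H\cong H'$, then invoke the universal property of the free factorization $\Gamma=(*\H)*H$ to assemble the automorphism. The paper's proof is terser, simply asserting that ``the lemma follows by applying the universality property,'' whereas you supply the explicit construction of $\phi=\rho'\restrict H$ and the verification that the resulting $\psi$ is bijective via the reduced-word argument; this extra detail is sound and fills in what the paper leaves to the reader.
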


\begin{proof} Noting that $\NC{\H} = \NC{\H'}$, apply Lemma~\ref{LemmaPartialFFCofactor} to conclude that each of $B,B'$ is isomorphic to $\Gamma / \NC{\H}$. After choosing conjugations $H_l \mapsto H'_l$ and an isomorphism $B \mapsto B'$, the lemma follows by applying the universality property for free factorizations. 
\end{proof}

\noindent\textbf{Remark.} For an example which determines the extent to which cofactors can fail to be well-defined up to conjugacy, see the discussion of $\Gamma = A * Z$ following Proposition~\ref{PropExceptionalFFS} in which the non-well-definedness of an infinite cyclic cofactor $Z$ is discussed in detail.

\begin{definition}[Free factor systems.] 
\label{DefFFSystems}
A \emph{weak free factor system} of $\Gamma$ is a set of the form $\A = \{[A_l]\}_{l \in \L}$ such that $\{A_l\}_{l \in \L}$ is a partial free factor system of $\Gamma$ having a free cofactor~$B$; in this level of generality we make no assumption on the cardinality of the set $\A$ nor on the rank of the free group $B$, although by Lemma~\ref{LemmaPartialWeakFFS} the rank of $B$ is a well-defined cardinal number. A~\emph{free factor system} of $\Gamma$ is a weak free factor system $\A$ which is finite and has a finite rank cofactor; this allows $\A = \emptyset$ as a possible free factor system, but only if $\Gamma$ is free of finite rank. Also, as usual, the cofactor $B$ may be trivial.  A \emph{realization} of $\A$ is any free factorization of $\Gamma$ having the form $\Gamma = A_1 * \cdots * A_I * B$, where $\A = \{[A_1],\ldots,[A_I]\}$ and $B$ is a cofactor. The individual elements $[A_1],\ldots,[A_I]$ of $\A$ are called its \emph{components}. If $\A = \{[\Gamma]\}$ then we say that $\A$ is full, and otherwise $\A$ is \emph{nonfull}. 
\end{definition}

\noindent
\textbf{Remark.} Recalling our blanket assumption that $\Gamma$ is not infinite cyclic, nevertheless an infinite cyclic group does have a unique nonfull free factor system, namely~$\emptyset$.

\smallskip
\noindent
\textbf{Remark.} In the cases that $\Gamma = F_n$ or $\Gamma$ is finitely generated, Grushko's Theorem combined with the Kurosh Subgroup Theorem implies that every weak free factor system is a free factor system. The reader interested solely in $F_n$ or other finitely generated groups $\Gamma$ may therefore safely ignore the adjective ``weak'', which should cut down on the technical overload of the remaining subsections of Section~\ref{SectionBasicFFDefs}. Also, the Extension Lemma~\ref{LemmaExtension} will provide a relative setting in which we can also ignore ``weak'', which we shall do forever afterwards, once the Extension Lemma is proved.

\subsection{The Kurosh Subgroup Theorem. Extension $\sqsubset$ and meet $\meet$. }
\label{SectionKurosh}

The results obtained in this section by applying the Kurosh Subgroup Theorem are standard in the case $\Gamma = F_n$; see \BookOne. 

The following foundational theorem can be proved using Bass-Serre theory; see for example \cite{ScottWall} and~\cite{Cohen:CombinatorialGroupTheory}. The usual expression of this theorem is in the language of double cosets. We provide a translation into the language of conjugacy of subgroups, as well as a slightly more detailed conclusion, particularly in the case of a free factor~$A \subgroup \Gamma$.

\begin{KuroshTheorem}
For any group $\Gamma$, any free factorization $\Gamma = *(H_l)_{l \in \L}$, and any subgroup $A \subgroup \Gamma$, there exists for each $l \in \L$ a subset $U_l \subset \Gamma$ consisting of representatives $u$ of distinct double cosets $AuH_l$, and there exists a free subgroup $C \subgroup A$, such that the following hold:
\begin{enumerate}
\item \label{ItemKuroshFF}
$\displaystyle A = *\{A \intersect u H_l u^\inv \suchthat l \in \L, \, u \in U_l\} \, * \, C$
\item \label{ItemKuroshTriviality}
For each $(l,v) \in \L \cross \Gamma$:
\begin{enumerate}
\item  \label{ItemKuroshTrivial} The subgroup $C \intersect v H_l v^\inv$ is trivial.
\item \label{ItemKuroshNontrivial}
The subgroup $A \intersect v H_l v^\inv$ is nontrivial $\iff$ there exists $u \in U_l$ such that the subgroups $A \intersect v H_l v^\inv$ and $A \intersect u H_l u^\inv$ are conjugate in~$A$ $\iff$ there exists $u \in U_l$ such that $AvH_l=AuH_l$.
\end{enumerate}
\end{enumerate}
If furthermore $A$ is itself a free factor then:
\begin{enumeratecontinue}
\item \label{ItemKuroshMalnormal}
For each $(l,u),(m,v) \in \L \cross \Gamma$ such that $u \in U_l$ and $v \in U_m$, if the subgroups $A \intersect u H_l u^\inv$ and $A \intersect v H_{m} v^\inv$ are conjugate in $\Gamma$ then $l=m$ and $u=v$ (and so in particular those subgroups are equal).
\end{enumeratecontinue}
\end{KuroshTheorem}

\subparagraph{Remarks.} The statement of the Kurosh Subgroup Theorem found for example in \cite{Cohen:CombinatorialGroupTheory} incorporates only item~\pref{ItemKuroshFF}, but the others are easily proved. Item~\pref{ItemKuroshTrivial} is easily derived from the Bass-Serre theory proof found in \cite{Cohen:CombinatorialGroupTheory}, as is the first equivalence of item~\pref{ItemKuroshNontrivial}. The second equivalence of~\pref{ItemKuroshNontrivial} is a calculation: \, if $AvH_l=AuH_l$ then $u=avh$ for some $a \in A$, $h \in H_l$ and so $a(A \intersect v H_l v^\inv)a^\inv = A \intersect u H_l u^\inv$; \, conversely if $a(A \intersect v H_l v^\inv)a^\inv = A \intersect u H_l u^\inv$ for $a \in A$ then $A \intersect (av) H_l (av)^\inv = A \intersect u H_l u^\inv$ and so, by malnormality of $H_l$, we have $u^\inv av \in H_l$ implying that $AvH_l=AuH_l$. For proving item~\pref{ItemKuroshMalnormal}, the conjugating element must be in $A$ by malnormality of $A$, and $l=m$ by mutual malnormality of $*\{H_l\}_{l \in \L}$; the rest follows from~\pref{ItemKuroshNontrivial}.

\bigskip


One standard consequence of the Kurosh Subgroup Theorem is that for any partial free factorization $\{A_i\}$ of $\Gamma$ and any free factor $A' \subgroup \Gamma$, if each $A_i$ is a subgroup of $A'$ then $\{A_i\}$ is a partial free factorization of~$A'$. The following slight generalization, also an immediate consequence of the Kurosh Subgroup Theorem, is needed for the proof of the Extension Lemma~\ref{LemmaExtension}.

%

\begin{lemma}\label{LemmaKuroshConsequence}
For any group $\Gamma$, any free factor $A' \subgroup \Gamma$, and any partial free factorization $\{A_i\}_{i \in I}$ of~$\Gamma$, if $A_i$ is conjugate in $\Gamma$ to a subgroup of $A'$ then there exists an identically indexed set of subgroups $\{A'_i\}_{i \in I}$ of $A'$, such that $A'_i$ is conjugate in $\Gamma$ to $A_i$ (for each $ i \in I$) and such that $\{A'_i\}_{i \in I}$ is a partial free factorization of $A'$. \qed
\end{lemma}


\paragraph{The extension partial order $\sqsubset$ on weak free factor systems.} Given two subgroups $A,A' \subset \Gamma$ with conjugacy classes $[A],[A']$, let $[A] \sqsubset [A']$ denote the well-defined relation that $A$ is conjugate to a subgroup of $A'$. Define a partial ordering $\A \sqsubset \A'$ on weak free factor systems systems by requiring that for each $[A] \in \A$ there exists $[A'] \in \A'$ such that $[A] \sqsubset [A']$. The fact that this is a partial order follows from item~\pref{ItemKuroshMalnormal} of the Kurosh Subgroup Theorem, which tells us that for any sequence of free factors $A'' \subgroup A \subgroup A'$, if $A''$ and $A'$ are conjugate then $A'' = A = A'$. We express the relation \emph{$\text{(this)} \sqsubset \text{(that)}$} in various ways: \emph{(this) is contained in (that)}; or \emph{(that) is an extension of (this)}; or \emph{$\text{(this)} \sqsubset \text{(that)}$ is an extension}; etc. An extension $\A \sqsubset \A'$ such that $\A \ne \A'$ is called a \emph{proper extension}. 

If $\A,\B$ are free factor systems then we also express the relation $\A \sqsubset \B$ by saying that \emph{$\A$ is nested in $\B$} and that \emph{$\B$ is a free factor system rel~$\A$}.

\paragraph{Meet of free factor systems.} The \emph{meet} $\meet$ is a binary operation on weak free factor systems defined by 
$$\A \meet \B = \{[A \intersect B] \, \,  \suchthat \, \,   [A] \in \A,  \,  \, [B] \in \B,  \,  \, A \intersect B \ne \{\Id\}\}
$$
\noindent
The following, generalizing \cite[Lemma 2.6.2]{\BookOneTag}, will be proved using the Kurosh Subgroup Theorem:

\begin{lemma}[Weak Meet Lemma]
\label{LemmaWeakMeet} In any group $\Gamma$, the meet of any two weak free factor systems is a weak free factor system. 
\end{lemma}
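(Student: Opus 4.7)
The plan is to apply the Kurosh Subgroup Theorem to each free factor appearing in a realization of $\A$, decomposing it into pieces that capture its intersections with conjugates of components of $\B$, and then to assemble these into a single free factorization of $\Gamma$ whose components realize $\A \meet \B$. Fix realizations $\Gamma = (*(A_l)_{l \in \L}) * C_\A$ and $\Gamma = (*(B_m)_{m \in \M}) * C_\B$, so that $\A = \{[A_l]\}_{l \in \L}$ and $\B = \{[B_m]\}_{m \in \M}$, with $C_\A$ and $C_\B$ free by the definition of weak free factor system.

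For each $l \in \L$, since $A_l$ is a free factor of $\Gamma$, apply the Kurosh Subgroup Theorem with respect to the free factorization $\Gamma = (*(B_m)_{m \in \M}) * C_\B$. This yields, for each $m \in \M$, a set $U_{l,m} \subset \Gamma$ of double coset representatives, another set $V_l \subset \Gamma$ of double coset representatives associated to $C_\B$, and a free subgroup $C_l \subset A_l$, together producing a free factorization
\[
A_l \;=\; \Bigl(*\bigl\{A_l \cap u B_m u^\inv \bigm| m \in \M,\, u \in U_{l,m}\bigr\}\Bigr) \;*\; \Bigl(*\bigl\{A_l \cap v C_\B v^\inv \bigm| v \in V_l\bigr\}\Bigr) \;*\; C_l,
\]
in which every displayed intersection is nontrivial. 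Apply the same procedure to the free factor $C_\A$ to obtain an analogous free factorization of $C_\A$. Substituting all of these into $\Gamma = (*(A_l)_{l \in \L}) * C_\A$ produces one large free factorization of $\Gamma$ whose factors split into two classes: those of the form $A_l \cap u B_m u^\inv$, which will represent the components of $\A \meet \B$; and all remaining factors, each of which is either a subgroup of a free group (namely $C_\A$, $C_\B$, or a conjugate thereof) or explicitly free by Kurosh. The free product of the second class is thus a free product of free groups and so is itself free, and will serve as the cofactor.

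It remains to match the first class bijectively with $\A \meet \B$. Surjectivity is immediate from Kurosh item~\pref{ItemKuroshNontrivial}: every nontrivial $A_l \cap w B_m w^\inv$ is conjugate in $A_l$, hence in $\Gamma$, to some $A_l \cap u B_m u^\inv$ with $u \in U_{l,m}$. The main technical obstacle is injectivity across different indices $(l,m)$. Suppose $\gamma(A_l \cap u B_m u^\inv)\gamma^\inv = A_{l'} \cap u' B_{m'} u'^\inv$ with both sides nontrivial. Then the intersection $\gamma A_l \gamma^\inv \cap A_{l'}$ contains a nontrivial subgroup, and mutual malnormality of the free factorization $(*(A_l)_{l \in \L}) * C_\A$ forces $l = l'$ and $\gamma \in A_l$. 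Kurosh item~\pref{ItemKuroshMalnormal}, applied inside the free factor $A_l$ with respect to $\Gamma = (*(B_m)_{m \in \M}) * C_\B$, then forces $m = m'$ and $u = u'$. This bookkeeping argument, together with the free factorization constructed above, completes the verification that $\A \meet \B$ is a weak free factor system.
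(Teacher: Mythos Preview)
Your proof is correct and follows essentially the same approach as the paper: apply Kurosh to each $A_l$ with respect to the realization of $\B$, then substitute into the realization of $\A$ to obtain a single free factorization of $\Gamma$ whose non-cofactor terms represent $\A \meet \B$ and whose cofactor is a free product of free groups. Your version is slightly more detailed in two respects: you separately track the Kurosh factors coming from $C_\B$ (the paper absorbs these into the free part $A'_i$ immediately), and you spell out the injectivity/surjectivity bookkeeping that identifies the first-class factors with $\A \meet \B$ (the paper asserts this as a direct consequence of Kurosh). Applying Kurosh to $C_\A$ is unnecessary since $C_\A$ is already free and contributes nothing to $\A \meet \B$, but it does no harm.
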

\noindent
We will need to strengthen the conclusion of this lemma by removing the word ``weak'' in various situations. One such situation, for finitely generated groups, is described in Corollary~\ref{CorollaryMeet}. Another ``relativized'' version is given in Proposition~\ref{PropMeet}. 

Before giving the proof of Lemma~\ref{LemmaWeakMeet}, here are two immediate corollaries.

\begin{corollary} 
\label{CorollaryWeakMeetProps}
For any weak free factor systems $\A,\B$ in any group $\Gamma$, their meet $\A \meet \B$ can be characterized as the unique weak free factor system having the following properties: 
\begin{itemize}
\item [(i)] $\A \meet \B \sqsubset \A$; 
\item[(ii)] $\A \meet \B \sqsubset \B$; 
\item[(iii)] For every weak free factor system $\C$, if $\C \sqsubset \A$ and $\C \sqsubset \B$ then $\C \sqsubset \A \meet \B$.  \qed
\end{itemize}
\end{corollary}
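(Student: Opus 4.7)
The plan is to verify properties (i)--(iii) for $\A \meet \B$ directly from the definition of the meet, and then deduce uniqueness from antisymmetry of $\sqsubset$ on weak free factor systems, which itself reduces to the Mutual Malnormality Lemma already in hand.

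Properties (i) and (ii) are immediate from the definition of $\meet$: each component of $\A \meet \B$ has the form $[A \intersect uBu^\inv]$ with $[A] \in \A$ and $[B] \in \B$, and the subgroup $A \intersect uBu^\inv$ is contained in both $A$ and $uBu^\inv$, so $[A \intersect uBu^\inv] \sqsubset [A]$ and $[A \intersect uBu^\inv] \sqsubset [uBu^\inv] = [B]$. For (iii), given $\C$ with $\C \sqsubset \A$ and $\C \sqsubset \B$, fix a component $[C] \in \C$. Using $\C \sqsubset \A$, after replacing $C$ by a conjugate within its class one may assume $C \subset A$ for some $[A] \in \A$; then using $[C] \sqsubset \B$, there exist $[B] \in \B$ and $u \in \Gamma$ with $C \subset uBu^\inv$. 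Hence $C$ is a nontrivial subgroup of $A \intersect uBu^\inv$, and by the Weak Meet Lemma the conjugacy class $[A \intersect uBu^\inv]$ is a component of $\A \meet \B$ containing $[C]$, so $\C \sqsubset \A \meet \B$.

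For uniqueness, let $\D$ be any weak free factor system satisfying (i)--(iii). Applying (iii) of $\A \meet \B$ with $\C = \D$ (using (i), (ii) of $\D$) gives $\D \sqsubset \A \meet \B$, and applying (iii) of $\D$ with $\C = \A \meet \B$ (using the (i), (ii) just verified for $\A \meet \B$) gives $\A \meet \B \sqsubset \D$. It remains to establish antisymmetry of $\sqsubset$ on weak free factor systems, which is the only genuine obstacle. Tracing an arbitrary component $[D] \in \D$ through the two containments produces a chain $[D] \sqsubset [M] \sqsubset [D']$ with $[M] \in \A \meet \B$ and $[D'] \in \D$; the Mutual Malnormality Lemma applied within the single free factorization underlying $\D$ rules out strict $\sqsubset$-comparisons between distinct components, forcing $[D] = [D']$, and hence $[M] \sqsubset [D]$. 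Choosing representatives with $D \subset M$ and $\gamma M \gamma^\inv \subset D$ for some $\gamma$, the inclusion $\gamma M \gamma^\inv \subset M$ gives a nontrivial intersection, so malnormality of the free factor $M$ forces $\gamma \in M$, whence $M = \gamma M \gamma^\inv \subset D$ and $[D] = [M]$. Thus $\D \subset \A \meet \B$ as sets of conjugacy classes; the reverse inclusion is symmetric, completing the proof.
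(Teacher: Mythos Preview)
Your proof is correct. The paper itself gives no argument for this corollary, marking it with a bare \qed and calling it an ``immediate corollary'' of the Weak Meet Lemma; your proof supplies the details the paper omits.

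The one place where you do more than the paper is the antisymmetry of $\sqsubset$ on weak free factor systems. The paper simply declares $\sqsubset$ to be a partial ordering without justification, whereas you actually prove antisymmetry via mutual malnormality and malnormality of free factors. That argument is correct: from $[D] \sqsubset [M] \sqsubset [D']$ with $[D],[D'] \in \D$ you get $[D]=[D']$ by mutual malnormality of the realization of $\D$, and then $[D]=[M]$ by malnormality of the free factor $M$ (which is legitimate since the Weak Meet Lemma guarantees $M$ is a term in a free factorization). One small quibble: in your verification of (iii), the fact that $[A \cap uBu^{-1}]$ is a component of $\A \meet \B$ is just the definition of the meet, not the Weak Meet Lemma; the lemma is needed only to know that $\A \meet \B$ is a weak free factor system at all.
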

\noindent
The next result, well known in the case of free groups from \BookOne, follows immediately by combining Lemma~\ref{LemmaWeakMeet},  Grushko's Theorem, and Corollary~\ref{CorollaryWeakMeetProps}. 

\begin{corollary} 
\label{CorollaryMeet}
In any finitely generated group $\Gamma$, for any two free factor systems $\A,\B$ of $\Gamma$, their meet $\A \meet \B$ is a free factor system. Furthermore if $\A,\B$ are free factor systems relative to a third free factor system $\C$ then $\A \meet \B$ is also a free factor system relative to~$\C$.
\qed
\end{corollary}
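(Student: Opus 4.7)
The plan is to apply the Kurosh Subgroup Theorem to each free factor coming from a realization of $\A$, using a realization of $\Gamma$ that expresses $\B$, and then to assemble the pieces into a free factorization of $\Gamma$ realizing $\A \meet \B$ together with a free cofactor.

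First I would fix realizations $\Gamma = (*_{l \in \L} A_l) * A'$ and $\Gamma = (*_{m \in \M} B_m) * B'$; by hypothesis both cofactors $A'$ and $B'$ are free of possibly infinite rank (and allowed to be trivial). For each $l \in \L$, since $A_l$ is itself a free factor of $\Gamma$, applying the Kurosh Subgroup Theorem to $A_l$ with respect to the full free factorization of $\Gamma$ made up of $\{B_m\}_{m \in \M}$ together with $B'$ (when nontrivial) produces a double-coset transversal $U_{l,m}$ for each $m$, a transversal for the $B'$-part, a free subgroup $C_l \subset A_l$, and a free factorization
$$A_l = *\{A_l \intersect u B_m u^\inv \suchthat m \in \M,\, u \in U_{l,m}\} \,*\, F_l,$$
where $F_l$ combines $C_l$ with the intersections of $A_l$ against $\Gamma$-conjugates of $B'$. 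Each such intersection is a subgroup of a free group, hence free, so $F_l$ is itself free.

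Substituting these factorizations of the $A_l$ into $\Gamma = (*_l A_l) * A'$ and invoking the universal property yields a single free factorization
$$\Gamma = *\{A_l \intersect u B_m u^\inv \suchthat l \in \L,\, m \in \M,\, u \in U_{l,m}\} \,*\, E,$$
whose cofactor $E = (*_l F_l) * A'$ is a free product of free groups and is therefore free. Discarding the trivial intersection factors leaves a partial free factorization of $\Gamma$ with free cofactor $E$, and the conjugacy classes of its factors form the candidate realization of $\A \meet \B$ as a weak free factor system.

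To finish, I would match the $\Gamma$-conjugacy classes of the nontrivial intersection factors bijectively with $\A \meet \B$. Surjectivity uses item~(\ref{ItemKuroshNontrivial}) of Kurosh: any nontrivial $A_l \intersect v B_m v^\inv$ with $v \in \Gamma$ is $A_l$-conjugate, hence $\Gamma$-conjugate (by malnormality of the free factor $A_l$), to some $A_l \intersect u B_m u^\inv$ with $u \in U_{l,m}$; a general element of $\A \meet \B$ is reduced to this form after conjugating chosen representatives of $[A] \in \A$ and $[B] \in \B$ back to $A_l$ and $B_m$. Injectivity within a single index $l$ is exactly item~(\ref{ItemKuroshMalnormal}) of Kurosh, which applies because $A_l$ is a free factor of $\Gamma$; injectivity across distinct indices $l \ne l'$ follows from mutual malnormality of $\Gamma = (*_l A_l) * A'$, which forbids any nontrivial subgroup of $A_l$ from being $\Gamma$-conjugate to a subgroup of $A_{l'}$. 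I expect this last bijection check to be the one subtle step, particularly the avoidance of double-counting when an intersection $A \intersect u B u^\inv$ is first rewritten with $A = A_l$ and $B = B_m$; the structural content of the lemma is otherwise carried entirely by the Kurosh Subgroup Theorem.
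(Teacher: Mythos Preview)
Your argument is essentially a direct proof of the Weak Meet Lemma (Lemma~\ref{LemmaWeakMeet}), which the paper has already established separately using exactly the same Kurosh-then-substitute idea. The paper's proof of Corollary~\ref{CorollaryMeet} is then a one-line deduction: apply Lemma~\ref{LemmaWeakMeet} to get that $\A \meet \B$ is a \emph{weak} free factor system, invoke Grushko's Theorem (via the remark after Definition~\ref{DefFFSystems} that in a finitely generated group every weak free factor system is a free factor system), and finally use Corollary~\ref{CorollaryWeakMeetProps}(iii) for the relative clause.

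There are two genuine gaps in your write-up relative to what the corollary actually asserts. First, you never use the finite generation hypothesis: you show the cofactor $E$ is free, but not that it has finite rank, nor that the index set of nontrivial intersections is finite. (Your parenthetical ``possibly infinite rank'' for $A'$ and $B'$ is also off --- by hypothesis $\A,\B$ are free factor systems, so their cofactors already have finite rank --- but even with that corrected you still need Grushko to bound the rank of $E$ and the cardinality of the intersection family.) Second, you do not address the ``Furthermore'' clause at all: the statement that $\C \sqsubset \A$ and $\C \sqsubset \B$ implies $\C \sqsubset \A \meet \B$ is a separate (easy) observation, handled in the paper by Corollary~\ref{CorollaryWeakMeetProps}(iii). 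Your Kurosh/bijection work is fine as a proof of the Weak Meet Lemma, but the corollary-specific content is precisely the two missing pieces.
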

\noindent
The second sentence of Corollary~\ref{CorollaryMeet} is true in a general group; see Proposition~\ref{PropMeet}.

\begin{proof}[Proof of the Weak Meet Lemma \ref{LemmaWeakMeet}]
Consider $\A = \{[A_i]\}_{i \in I}$ and $\B=\{[B_j]\}_{j \in J}$ with respective realizations
\begin{align*} 
(\#) \qquad \Gamma &= *(A_i)_{i \in I} * A' \quad\text{and}\quad \Gamma = *(B_j)_{j \in J} * B' \\
\intertext{Applying the Kurosh Subgroup theorem to $A_i$ using the given realization of $\B$, we obtain a free factorization 
}
(\#\#) \qquad A_i &= *(A_{ik})_{k \in K_i} * A'_i
\end{align*}
where $A'_i$ is a free group and the subgroups $A_{ik}$ are representatives of the $\Gamma$-conjugacy classes of all nontrivial intersections of $A_i$ with conjugates of the $B_j$'s. It follows that  
$$\A \meet \B = \{[A_{ik}] \suchthat i \in I, k \in K_i\}
$$
Substituting $(\#\#)$ into $(\#)$ we obtain a free factorization
\begin{align*}
\Gamma &=  *\biggl( *\{A_{ik}\}_{k \in K_i} * A'_i \biggr)_{i \in I} * A' 
\\
&= \biggl( *\{A_{ik}\}_{i \in I, k \in K_i} \biggr) * \biggl[\bigl( * (A'_i)_{i \in I} \bigl) * A' \biggr]
\end{align*}
which, the factor in brackets $[\cdot]$ clearly being free, shows that $\A \meet \B$ is a weak free factor system.
\end{proof}

\subsection{Corank and the structure of extensions of free factor systems}
\label{SectionSqsubsetSuccProps}
In this section we prove the Extension Lemma \ref{LemmaExtension} detailing the structure of any extension $\A \sqsubset \B$ of free factor systems of a group~$\Gamma$. This will be applied in studying the depth of $\sqsubset$ in Section~\ref{SectionDFF}, and when studying free splitting units in Sections~\ref{SectionSubgraphComplexity} and~\ref{SectionFSU}. 

The Extension Lemma~\ref{LemmaExtension} will guarantee that any weak free factor system that is an extension of a free factor system is itself a free factor system. In general, for any extension $\A \sqsubset \B$ of free factor systems we shall say that $\B$ is a \emph{free factor system rel~$\A$}.  In Proposition~\ref{PropMeet}, we will fix a base free factor system~$\A$ and prove that the meet of any two free factor systems rel~$\A$ is also a factor system rel~$\A$, which is how we generalize Corollary~\ref{CorollaryMeet} to non finitely generated groups. These results allow us henceforth to ignore the adjective ``weak'', as long as a base free factor system~$\A$ is specified.

\paragraph{Corank.} Define the \emph{corank} of a free factor system $\A$ of a group $\Gamma$ to be the integer 
$$\corank(\A) = \rank(\Gamma / N(\A)) = \rank(A') \ge 0
$$
where $A'$ is the cofactor of any realization of $\A$. When we wish to emphasize the ambient group~$\Gamma$ we also write $\corank(\Gamma;\A)$. From Bass-Serre theory it follows that $\corank(\A)$ is equal to the topological rank of the underlying graph for any finite graph of groups representation of $\Gamma$ with trivial edge groups and with nontrivial vertex groups $A_1,\ldots,A_I$ so that $\A = \{[A_1],\ldots,[A_I]\}$.

When $\Gamma = F_n$ and $\A = \{[A_1],\ldots,[A_I]\}$, the free factors $A_1,\ldots,A_I$ are all free of finite rank, and we have the following \emph{rank sum formula} for the corank of $\A$:
$$\corank(\A) = n - \sum_{i=1}^I \rank(A_i)
$$
This formula may be useful to the reader for deriving quick proofs of results to follow in the special case $\Gamma=F_n$.

\paragraph{Notation for constants.} Given a group $\Gamma$ and a free factor system~$\A$ of $\Gamma$, the two most important numerical invariants are $\corank(\Gamma;\A)$ and the cardinality $\abs{\A}$. In this work we will encounter several other numerical invariants of $\Gamma$ and $\A$ which depend solely on $\corank(\Gamma;\A)$ and~$\abs{\A}$, and for such a constant we will use notation like $C = C(\Gamma;\A)$. For example, in \cite{CollinsTurner:efficient} the sum $\text{KR}(\Gamma;\A) = \corank(\Gamma;\A) + \abs{\A}$ is called the \emph{Kurosh rank}. 

\medskip

The following lemma defines what we shall call the \emph{containment function} from one free factor system to any of its extensions.

\begin{lemma} Given an extension $\A \sqsubset \B$ of weak free factor systems of a group $\Gamma$, the relation $\sqsubset$ between components of $\A$ and components of $\B$ defines a function $A \mapsto \B$, called the \emph{containment function}.
\end{lemma}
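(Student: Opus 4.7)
The plan is to show that for each component $[A] \in \A$ there is a \emph{unique} component $[B] \in \B$ satisfying $[A] \sqsubset [B]$. Existence is immediate from the very definition of the extension relation $\A \sqsubset \B$, so all the work is in uniqueness.

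To prove uniqueness, I would fix a realization $\Gamma = *(B_j)_{j \in J} * B'$ of $\B$, so that $\B = \{[B_j]\}_{j \in J}$, and suppose some component $[A] \in \A$ satisfies both $[A] \sqsubset [B_j]$ and $[A] \sqsubset [B_{j'}]$ for indices $j, j' \in J$. After choosing a representative, I may assume $A \subseteq B_j$, and by hypothesis there exists $g \in \Gamma$ with $gAg^{-1} \subseteq B_{j'}$, equivalently $A \subseteq g^{-1} B_{j'} g$. Then $A \subseteq B_j \intersect g^{-1} B_{j'} g$.

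Since components of a (weak) free factor system are nontrivial subgroups by definition, $A$ is nontrivial, so $B_j \intersect g^{-1} B_{j'} g$ is a nontrivial subgroup of $\Gamma$. Applying the mutual malnormality property of the free factorization $\Gamma = *(B_j)_{j \in J} * B'$ (the lemma immediately following the definition of free factorizations) to the conjugating element $g^{-1}$ and the subgroups $B_{j'}, B_j$, we conclude that $g^{-1} \in B_j = B_{j'}$, and in particular $j = j'$, which is exactly the desired uniqueness.

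The only potential obstacle is making sure mutual malnormality is invoked in the correct direction, but the previously established lemma supplies precisely this: whenever a conjugate of one component of a free factorization meets another component nontrivially, the two components coincide and the conjugator lies in the common component. Since nontriviality of $A$ is guaranteed by Definition~\ref{DefFFSystems}, no additional hypothesis is needed and the argument goes through for weak free factor systems without modification.
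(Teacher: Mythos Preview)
Your proof is correct and follows essentially the same approach as the paper: existence comes directly from the definition of $\A \sqsubset \B$, and uniqueness is obtained by invoking mutual malnormality of a realization of~$\B$. The paper's proof is a one-line appeal to mutual malnormality, and your argument simply unpacks that appeal in detail.
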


\begin{proof} By definition, for any component $[A] \in \A$ there exists a component $[B] \in \B$ such that $[A] \sqsubset [B]$. By mutual malnormality of any realization of~$\B$, this $[B]$ depends uniquely on $[A]$.
\end{proof}

The following result, in the special case $\Gamma = F_n$, is an evident consequence of the rank sum formula for corank.

\begin{proposition}\label{PropCorankIneq} 
For any nested pair of free factor systems $\A \sqsubset \A'$ of $\Gamma$ we have $\corank(\A) \ge \corank(\A')$. Equality holds if and only if the containment function $\A \mapsto \A'$ is surjective and
for each $[A'_j] \in \A'$ there exists a free factorization with trivial cofactor $A'_j = A_{j1} * \cdots * A_{jk_j}$ so that the preimage of $[A'_j]$ under the containment function is \hbox{$\{[A_{j1}],\ldots,[A_{jk_j}]\} \subset \A$}.
\end{proposition}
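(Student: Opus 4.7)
The plan is to apply the Kurosh Subgroup Theorem to each component of $\A'$ using a fixed realization of $\A$, then substitute the resulting decompositions back into a realization of $\A'$ to produce a second realization of $\A$ whose cofactor has an explicit form; equating cofactor ranks will yield both the inequality and the equality criterion.

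First I fix realizations $\Gamma = (*_i A_i) * B$ and $\Gamma = (*_j A'_j) * B'$ of $\A$ and $\A'$, and let $f \from \A \to \A'$ denote the containment function, so that $[A_i] \sqsubset [A'_{f(i)}]$. Applying the Kurosh Subgroup Theorem to the free factor $A'_j \subset \Gamma$ relative to the realization of $\A$, the crucial bookkeeping is the following: item~(2)(b) shows that the Kurosh intersection $A'_j \intersect u A_i u^\inv$ is nontrivial only when $f(i) = j$; for such $i$ some conjugate of $A_i$ already lies in $A'_j$, so that full conjugate equals the intersection; and item~(4), applicable because $A'_j$ is itself a free factor, shows that exactly one Kurosh factor conjugate in $\Gamma$ to $A_i$ is contributed, for each such $i$. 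Gathering the remaining Kurosh pieces --- the nontrivial intersections of $A'_j$ with conjugates of $B$, together with the ``free'' Kurosh supplement --- into a free group $D_j$, I obtain a free factorization
$$A'_j = \biggl( *_{i\,:\,f(i)=j} \tilde A_i \biggr) * D_j,$$
with each $\tilde A_i \subset A'_j$ conjugate in $\Gamma$ to $A_i$.

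Substituting these factorizations back into $\Gamma = (*_j A'_j) * B'$ produces a realization of $\A$ with cofactor $(*_j D_j) * B'$. By Lemma~\ref{LemmaPartialWeakFFS} (uniqueness of cofactor rank) the rank of this cofactor equals $\rank(B) = \corank(\A)$, so
$$\corank(\A) \;=\; \sum_j \rank(D_j) + \corank(\A').$$
The inequality $\corank(\A) \ge \corank(\A')$ is immediate. Equality forces $\rank(D_j) = 0$, i.e. $D_j$ trivial, for every~$j$; since $A'_j$ is nontrivial, this forces the preimage $f^\inv([A'_j])$ to be nonempty, so $f$ is surjective, and the Kurosh factorization becomes the claimed free factorization of $A'_j$ with trivial cofactor whose components exhaust $f^\inv([A'_j])$. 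Conversely, given such trivial-cofactor decompositions of each $A'_j$, a direct reverse substitution into the realization of $\A'$ exhibits $\A$ with cofactor $B'$, giving $\corank(\A) = \corank(\A')$.

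The only nonroutine step is the Kurosh bookkeeping in the first paragraph: verifying that each $[A_i] \in f^\inv([A'_j])$ contributes exactly one factor to the Kurosh decomposition of $A'_j$, neither zero nor several. This is where the full strength of the Kurosh Subgroup Theorem (in particular items~(2)(b) and~(4)) is used; once it is in place, the remainder is straightforward rank arithmetic on free factorizations.
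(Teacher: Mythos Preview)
Your proof is correct and follows essentially the same line as the paper's: apply Kurosh to each component $A'_j$ with respect to a realization of $\A$, substitute the resulting decompositions back into a realization of $\A'$ to obtain a second realization of $\A$ whose cofactor is $(\ast_j D_j) * B'$, and read off the rank identity $\corank(\A) = \sum_j \rank(D_j) + \corank(\A')$. The paper packages exactly this computation into the Extension Lemma~\ref{LemmaExtension} (your $D_j$ for $j$ in the image of the containment function are the $B_j$ of that lemma, and for $j$ outside the image your $D_j$ is the free group $A'_j$ itself), and then quotes that lemma in a two-line proof of the proposition; you have simply inlined the argument.

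Two small points. First, the claim that a nontrivial Kurosh intersection $A'_j \cap uA_iu^{-1}$ forces $f(i)=j$ is really a consequence of mutual malnormality of the realization of $\A'$ together with $\A \sqsubset \A'$, not of item~\pref{ItemKuroshNontrivial} alone; and the same malnormality argument shows that any such nontrivial intersection must be the \emph{full} conjugate $uA_iu^{-1}$, after which item~\pref{ItemKuroshMalnormal} gives uniqueness. Second, the paper's Kurosh statement has only three numbered items, so your ``item~(4)'' should be item~\pref{ItemKuroshMalnormal}. Neither affects the validity of the argument.
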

\noindent
The proof of Proposition~\ref{PropCorankIneq} in the general case---where rank sum does not make sense---will be given after the statement and proof of the following Extension Lemma. 

For understanding the conclusions of the Extension Lemma we refer the reader to Figure~\ref{FigContainmentFunction} which depicts those conclusions in tabular format. The proof of the Extension Lemma is similar to the proof of the Weak Meet Lemma~\ref{LemmaWeakMeet} but with more care taken regarding cardinalities.


\begin{lemma}[Extension Lemma]
\label{LemmaExtension} Consider a group $\Gamma$ and a free factor system $\A$. If $\A'$ is a weak free factor system such that $\A \sqsubset \A'$, then $\A'$ is a free factor system. Moreover, consider any realization $\Gamma = A'_1 * \cdots * A'_K* B'$ of $\A' = \{[A'_1],\ldots,[A'_K]\}$, with indexing chosen so that the image of the containment function $\A \mapsto \A'$ equals $\{[A'_1],\ldots,[A'_J]\}$, where $0 \le J \le K$.
For $1 \le j \le J$ let $\A_j \subset \A$ be the pre-image of $[A'_j]$ under the containment function, and let $k_j = \abs{\A_j}$. Then there exists a realization of $\A$ of the form
$$\Gamma = A_{11} * \cdots * A_{1k_1} * \cdots\cdots * A_{J1} * \cdots * A_{Jk_J} * \underbrace{(B_1 * \cdots * B_J * A'_{J+1} * \cdots * A'_K * B')}_{B \, = \, \text{cofactor of $\A$}}
$$
such that
$$\A_j = \{[A_{j1}],\ldots,[A_{jk_j}]\} \quad\text{and}\quad A'_j = A_{j1} * \cdots * A_{jk_j} * B_j \qquad \text{($1 \le j \le J$)}
$$
The subgroups $B_1,\ldots,B_J,A'_{J+1},\ldots,A'_K,B'$ are all free of finite rank. By abuse of notation (identifying conjugacy classes in $\Gamma$ with conjugacy classes in $A'_j$) we may regard $\A_j$ as a free factor system of the group $A'_j$ realized with cofactor $B_j$. 
\end{lemma}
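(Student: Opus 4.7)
The plan is to fix realizations $\Gamma = A'_1 * \cdots * A'_K * B'$ of $\A'$ (as given) and $\Gamma = *(A_i)_{i \in I} * B$ of $\A$, and then to apply the Kurosh Subgroup Theorem to each free factor $A'_j \subset \Gamma$ using the free factorization realizing~$\A$. The strategy is to match the Kurosh factors of each $A'_j$ with the components of~$\A_j$ plus a free remainder, then reassemble these decompositions into a realization of~$\A$, and finally read off finiteness of $K$ and of $\rank(B')$ from the fact that the cofactor of~$\A$ is already known to be free of finite rank.

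For each $i \in I$, mutual malnormality of the $A'$-realization produces a unique $[A'_{j(i)}] \in \A'$ with $[A_i] \sqsubset [A'_{j(i)}]$, defining the containment function; choose $v_i \in \Gamma$ with $v_i A_i v_i^\inv \subset A'_{j(i)}$. Applying Kurosh to $A'_j$, relative to the $\A$-realization, expresses $A'_j$ as a free product of nontrivial intersections of the form $A'_j \intersect u A_i u^\inv$ and $A'_j \intersect u B u^\inv$, together with a free group $C_j \subset A'_j$. The key identification is that any nontrivial intersection $A'_j \intersect u A_i u^\inv$ forces $j = j(i)$: since $u A_i u^\inv \subset (uv_i^\inv) A'_{j(i)} (uv_i^\inv)^\inv$, the intersection $A'_j \intersect (uv_i^\inv) A'_{j(i)} (uv_i^\inv)^\inv$ is nontrivial, whence mutual malnormality of the $A'$-realization yields $j = j(i)$ together with $uv_i^\inv \in A'_j$, so $uA_iu^\inv$ is $A'_j$-conjugate to $v_i A_i v_i^\inv$. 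Combined with item~\pref{ItemKuroshMalnormal} of Kurosh, this shows that the nontrivial $A$-intersection factors in the Kurosh decomposition of $A'_j$ are in bijection with $\A_j$; I would then choose double-coset representatives so that the corresponding factors are subgroups $A_{j1},\ldots,A_{jk_j} \subset A'_j$ whose $\Gamma$-conjugacy classes constitute~$\A_j$. The remaining Kurosh factors $A'_j \intersect u B u^\inv$ are subgroups of conjugates of the free group~$B$, hence themselves free, and grouping them with $C_j$ produces a single free group $B_j$ with $A'_j = A_{j1} * \cdots * A_{jk_j} * B_j$.

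Substituting these decompositions into $\Gamma = A'_1 * \cdots * A'_K * B'$ and reassociating gives the advertised free factorization
$\Gamma = A_{11} * \cdots * A_{Jk_J} * (B_1 * \cdots * B_J * A'_{J+1} * \cdots * A'_K * B')$,
whose atomic factors realize exactly the conjugacy classes of~$\A$. By Lemma~\ref{LemmaPartialWeakFFS} the parenthesized cofactor is isomorphic to the cofactor of any realization of~$\A$, hence free of finite rank. Because a free product is itself free precisely when every factor is free, and rank is additive, each $B_j$, each $A'_j$ with $j > J$, and $B'$ is forced to be free of finite rank; nontriviality of each component of $\A'$ then forces $K - J$ to be finite, simultaneously establishing the structural decomposition and the fact that $\A'$ is a genuine free factor system. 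The main obstacle is the double-coset bookkeeping inside the Kurosh step: one must verify that each $[A_i] \in \A_j$ contributes \emph{exactly} one Kurosh factor to $A'_j$, with ``at least one'' coming from item~\pref{ItemKuroshNontrivial} applied to $v_i A_i v_i^\inv$ and ``at most one'' from item~\pref{ItemKuroshMalnormal} combined with the mutual-malnormality argument above.
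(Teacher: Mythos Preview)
Your proof is correct and follows essentially the same route as the paper's. The only difference is that the paper packages the Kurosh step via Lemma~\ref{LemmaKuroshConsequence} (applied to the subgroups $A_{j1},\ldots,A_{jk_j} \subset A'_j$) to obtain $A'_j = A_{j1} * \cdots * A_{jk_j} * B_j$ in one stroke, whereas you apply Kurosh directly to $A'_j$ relative to the $\A$-realization and carry out the double-coset bookkeeping by hand; the substitution-and-finiteness argument is then identical.
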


\begin{proof} Since $\A$ is finite and $\A \sqsubset \A'$, any realization of the weak free factor system $\A'$ can be listed as
$$(*) \qquad \Gamma = A'_1 * \cdots * A'_J * (*\{A'_k\}_{k \in \K}) * B', \quad J \ge 0
$$
so that $\A' = \{[A'_j] \suchthat 1 \le j \le J\} \union \{\abs{A'_k} \suchthat k \in \K\}$, and $B'$ is a cofactor of $\A'$, and the subset $\{[A'_1],\ldots,[A'_J]\} \subset \A'$ is the image of the containment map $\A \mapsto \A'$ (we assume all free factors of $(*)$ are nontrivial, except perhaps $B'$). For $1 \le j \le J$, let $\A_j \subset \A$ be the preimage of $[A'_j]$ under the containment map $\A \mapsto \A'$, and let $k_j = \abs{\A_j} \ge 1$. By Lemma~\ref{LemmaKuroshConsequence}  we may choose subgroups $A_{j1},\ldots,A_{jk_j} \subgroup A'_j$ so that $\A_j = \{[A_{j1}],\ldots,[A_{jk_j}]\}$ and so that we have a free factorization  
$$(**)_j \qquad A'_j = A_{j1} * \cdots * A_{jk_j} * B_j
$$
Substituting each $(**)_j$ into $(*)$ and rearranging terms we obtain the following free factorization of $\Gamma$, which is clearly a realization of $\A$:
$$\Gamma = A_{11} * \cdots * A_{1k_1} * \cdots\cdots * A_{J1} * \cdots * A_{Jk_J} * \underbrace{(B_1 * \cdots * B_J * (*\{A'_k\}_{k \in \K}) * B')}_{B \, = \, \text{cofactor of $\A$}}
$$
Since $B$ is a finite rank free group, it follows that $\K$ is finite, and that the subgroups $A'_k$ for $k \in \K$ and $B_1,\ldots,B_J,B'$ are all finite rank and free. It follows that $\A'$ is a free factor system of $\Gamma$ with cofactor $B'$, and that $\A_j$ may be regarded as a free factor system of $A'_j$ with cofactor~$B_j$.
\end{proof}

\newcommand\tablecolor[1]{\textcolor{red}{#1}}

\begin{figure}
\begin{center}
\renewcommand\arraystretch{1.2}
\begin{tabular}{| c || c c c | c | c |} \hline
$j$ & \multicolumn{4}{c|}{free factorization of $A'_j$} & cofactor of $\A'$ \\ \hline\hline
$1$ & \tablecolor{$A_{11}$} & $\cdots$ & \tablecolor{$A_{1 k_1}$} & $B_1$ & \\ 
\vdots & \multicolumn{3}{c|}{\vdots} & \vdots & \\ 
$J$ & \tablecolor{$A_{1J}$} & $\cdots$ & \tablecolor{$A_{1 k_J}$} & $B_J$ & $B'$  \\ \cline{1-5} 
$J+1$ & \multicolumn{4}{c|}{$A'_{J+1}$} &  \\ 
\vdots & \multicolumn{4}{c|}{\vdots} &  \\ 
$K$ & \multicolumn{4}{c|}{$A'_{K}$} &  \\ \hline
\end{tabular} 
\end{center}
\caption{The Extension Lemma~\ref{LemmaExtension} shows that for each extension $\tablecolor{\A} \sqsubset \A'$ of free factor systems of~$\Gamma$, and for any realization of $\A'$, there exists a free factorization with terms as depicted which simultaneously incorporates the following: 
the given realization of $\A' = \{[A'_1],\ldots,[A'_K]\}$ with cofactor $B'$; 
for each $j \le J$ a realization of a free factor system of $A'_j$, namely $\A'_j = \{[\tablecolor{A_{j1}}],\ldots,[\tablecolor{A_{jk_j}}]\}$, with cofactor~$B_j$;
and a realization of $\tablecolor{\A} = \{[\tablecolor{A_{11}}],\ldots,[\tablecolor{A_{1k_1}}],\ldots\ldots,[\tablecolor{A_{1J}}],\ldots,[\tablecolor{A_{1k_J}}]\}$ with cofactor $B = B_1 * \cdots * B_J * A'_{J+1} * \cdots * A'_K * B'$. In fact these conclusions can be reformulated even when $\A'$ is only a \emph{weak} free factor system, but one then deduces that $\A'$ is actually a (strong) free factor system.
}
\label{FigContainmentFunction}
\end{figure}

\begin{proof}[Proof of Proposition~\ref{PropCorankIneq}]
This is a quick application of Lemma~\ref{LemmaExtension}. Following the notation of that lemma we have $\corank(\A) = \rank(B) \ge \rank(B') = \corank(\A')$, with equality if and only if and only if none of $B_1,\ldots, B_J, A'_{J+1},\ldots, A'_K$ exist: nonexistence of $A'_{J+1},\ldots,A'_K$ is equivalent to $J=K$ which is equivalent to surjectivity of $\A \mapsto \A'$; and nonexistence of the cofactor $B_j$ is equivalent to existence of the desired free factorization \break $A'_j = A_{j1} * \cdots * A_{jk_j}$ without cofactor.
\end{proof}

Here is the promised relativization of Corollary~\ref{CorollaryMeet}.

\begin{proposition}
\label{PropMeet}
For any group $\Gamma$, any free factor system $\A$, and any two free factor systems $\B,\C$ of $\Gamma$ rel~$\A$, their meet $\B \meet \C$ is a free factor system rel~$\A$. 
\end{proposition}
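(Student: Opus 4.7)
The plan is to combine the Weak Meet Lemma~\ref{LemmaWeakMeet}, the universal characterization in Corollary~\ref{CorollaryWeakMeetProps}, and the Extension Lemma~\ref{LemmaExtension}. All the heavy lifting has already been done, so the proof should be very short.

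First, I would invoke the Weak Meet Lemma~\ref{LemmaWeakMeet} to conclude that $\B \meet \C$ is at least a weak free factor system of $\Gamma$. This guarantees the meet has the right algebraic shape but does not yet tell us it is finite with finite rank cofactor.

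Next, I would establish the extension $\A \sqsubset \B \meet \C$. Since $\A$ is a free factor system, it is in particular a weak free factor system, and by hypothesis $\A \sqsubset \B$ and $\A \sqsubset \C$. Applying clause~(iii) of Corollary~\ref{CorollaryWeakMeetProps} with $\C$ replaced by $\A$, we obtain $\A \sqsubset \B \meet \C$.

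Finally, I would apply the first sentence of the Extension Lemma~\ref{LemmaExtension}: since $\A$ is a free factor system, since $\B \meet \C$ is a weak free factor system, and since $\A \sqsubset \B \meet \C$, it follows that $\B \meet \C$ is actually a free factor system. Combined with the extension $\A \sqsubset \B \meet \C$ established in the previous step, this gives exactly the conclusion that $\B \meet \C$ is a free factor system relative to $\A$.

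There is no real obstacle here; the proposition is essentially an immediate synthesis of earlier results, and the only subtlety is remembering to upgrade ``weak free factor system'' to ``free factor system'' via the Extension Lemma at the end rather than trying to control finiteness of $\B \meet \C$ directly.
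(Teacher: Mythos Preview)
Your proposal is correct and follows essentially the same approach as the paper's own proof: invoke the Weak Meet Lemma to get a weak free factor system, use Corollary~\ref{CorollaryWeakMeetProps}(iii) to get $\A \sqsubset \B \meet \C$, and then apply the Extension Lemma to upgrade to a genuine free factor system. If anything, your citation is slightly more precise, since the paper attributes the ``weak free factor system'' conclusion to Corollary~\ref{CorollaryWeakMeetProps} rather than to Lemma~\ref{LemmaWeakMeet} directly.
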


\begin{proof} Applying Corollary~\ref{CorollaryWeakMeetProps}, $\B \meet \C$ is a weak free factor system, and by item (iii) of that corollary we have $\A \sqsubset \B \meet \C$. By Lemma~\ref{LemmaExtension} it follows that $\B \meet \C$ is a free factor system.
\end{proof}

\subsection{Grushko free factor systems}

Recall Grushko's theorem, which says every finitely generated group has a Grushko decomposition (see Section~\ref{SectionIntro}). Grushko decompositions can also exist naturally outside of the realm of finitely generated groups: any free product of a finite rank free group and finitely many freely indecomposable, non-cyclic groups yields a Grushko decomposition. 

The following proposition describes uniqueness properties of Grushko decompositions, expressed in terms of the $\sqsubset$ relation, thus allowing us to introduce the concept of a Grushko free factor system. Starting in Part~II \cite[Section 2.2.5]{\STLOneTag}, motivated by this proposition we will be extending the terminology to a concept of ``relative'' Grushko free factor systems.

\begin{proposition} 
\label{PropGrushkoMinimum}
For any group $\Gamma$ and any free factor system $\A$ of $\Gamma$, the following are equivalent: 
\begin{enumerate}
\item\label{PropGrushkoRealize}
Some realization $\Gamma = A_1 * \cdots * A_K * A'$ of $\A$ is a Grushko decomposition.
\item\label{PropGrushkoRealizeAny}
Any realization $\Gamma = A_1 * \cdots * A_K * A'$ of $\A$ is a Grushko decomposition.
\item\label{PropGrushkoMin}
$\A$ is a minimum weak free factor system with respect to $\sqsubset$. 
\item\label{PropGrushkoUnique}
For any weak free factor system $\B$ of $\Gamma$ we have $\A \sqsubset \B$. In particular $\A$ is the unique minimum weak free factor system with respect to $\sqsubset$.
\end{enumerate}
\end{proposition}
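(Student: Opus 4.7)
The plan is to prove the cycle $(1) \Rightarrow (4) \Rightarrow (3) \Rightarrow (1)$ and separately $(1) \Leftrightarrow (2)$. The equivalence $(1) \Leftrightarrow (2)$ is essentially automatic: by Lemma~\ref{LemmaPartialWeakFFS} any two realizations of the same free factor system have conjugate components, and by the definition of a free factor system the cofactor is always free of finite rank; hence the conditions defining a Grushko decomposition (components nontrivial, freely indecomposable, not infinite cyclic; cofactor free of finite rank) pass between different realizations. The implication $(4) \Rightarrow (3)$ is immediate, with uniqueness following from antisymmetry of~$\sqsubset$.

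For $(1) \Rightarrow (4)$, I would fix a Grushko realization $\Gamma = A_1 * \cdots * A_K * A'$ of~$\A$ and an arbitrary weak free factor system $\B = \{[B_l]\}_{l \in \L}$ with realization $\Gamma = (*(B_l)_{l \in \L}) * B'$. To show $\A \sqsubset \B$ I need each $A_k$ to sit inside some conjugate of some $B_l$. Apply the Kurosh Subgroup Theorem to the subgroup $A_k \subgroup \Gamma$ with respect to the free factorization realizing $\B$; this expresses $A_k$ as a free product of a free group $C$ together with the nontrivial intersections $A_k \intersect u B_l u^\inv$. Since $A_k$ is nontrivial and freely indecomposable, exactly one factor is nontrivial. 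If that factor were $C$, then $A_k$ would be a nontrivial freely indecomposable free group and hence infinite cyclic, contradicting the Grushko hypothesis on~$A_k$. Therefore the unique nontrivial factor is some $A_k \intersect u B_l u^\inv$, which must equal $A_k$, so $A_k \subgroup u B_l u^\inv$ as required.

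For $(3) \Rightarrow (1)$ I would argue the contrapositive. Suppose a realization $\Gamma = A_1 * \cdots * A_K * A'$ of $\A$ is not Grushko. The cofactor $A'$ is free of finite rank by definition of a free factor system, so the failure must be that some $A_k$ is freely decomposable or infinite cyclic. In the first case, writing $A_k = C * D$ with $C,D$ nontrivial and substituting into the free factorization yields a weak free factor system $\B = \{[A_j] : j \ne k\} \cup \{[C],[D]\}$ with $\B \sqsubset \A$. In the second case, absorbing $A_k$ into the cofactor yields the weak free factor system $\B = \{[A_j] : j \ne k\}$ with cofactor $A_k * A'$ (still free of finite rank). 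In both cases mutual malnormality of the realization of $\A$ (from Section~\ref{SectionFFSystems}) together with the fact that $C, D$ are proper free factors of $A_k$ shows that $[A_k]$ is not conjugate into any component of $\B$; hence $\A \not\sqsubset \B$, and $\B$ is strictly below~$\A$ in~$\sqsubset$, contradicting $(3)$.

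The main technical content lies in $(1) \Rightarrow (4)$: the Kurosh decomposition of $A_k$ relative to the factorization realizing $\B$ is what forces the components of a Grushko $\A$ to be ``indivisible atoms'' with respect to every other weak free factor system, and the Grushko conditions (freely indecomposable, not infinite cyclic) are precisely what is needed to collapse Kurosh's free product expression down to a single nontrivial factor of the desired form. The remaining implications are organizational manipulations of the partial order $\sqsubset$ and the definitions.
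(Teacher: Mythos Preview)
Your proof is correct and takes a somewhat different route from the paper's for the key implication $(1)\Rightarrow(4)$. The paper proves $(1)\Rightarrow(4)$ by showing $\A = \A \meet \B$: it uses the Weak Meet Lemma to form $\C = \A \meet \B \sqsubset \A$, then uses Kurosh and free indecomposability to show each component of $\C$ equals a component of $\A$, and finally invokes the Extension Lemma~\ref{LemmaExtension} to rule out any component of $\A$ missing from $\C$ (such a component would be a free factor of a free cofactor, hence free, contradicting the Grushko conditions). You instead apply Kurosh directly to each $A_k$ relative to a realization of $\B$ and use free indecomposability plus ``not infinite cyclic'' to force $A_k$ into a single conjugate of some $B_l$. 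Your argument is more elementary in that it avoids both the meet operation and the Extension Lemma; the paper's argument has the virtue of exercising the machinery it just built. One small point worth making explicit in your $(1)\Rightarrow(4)$: the Kurosh decomposition of $A_k$ relative to $\Gamma = (*B_l)*B'$ also produces intersections with conjugates of the cofactor $B'$, but these are subgroups of a free group and hence may be absorbed into the free part $C$ --- so your sentence ``expresses $A_k$ as a free product of a free group $C$ together with the nontrivial intersections $A_k\cap uB_lu^{-1}$'' is correct once this absorption is understood. For $(3)\Rightarrow(1)$ you give an explicit contrapositive construction of a strictly smaller $\B$; the paper simply declares $(3)\Rightarrow(2)$ to be clear, which amounts to the same argument left to the reader.
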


\noindent
If these properties hold then we say that $\A$ is the \emph{Grushko free factor system} of~$\Gamma$.

\begin{proof} Clearly \pref{PropGrushkoUnique}$\implies$\pref{PropGrushkoMin}$\implies$\pref{PropGrushkoRealizeAny}$\implies$\pref{PropGrushkoRealize}. Assuming \pref{PropGrushkoRealize}, in order to prove~\pref{PropGrushkoUnique} it suffices by Corollary~\ref{CorollaryWeakMeetProps} to prove that $\A = \A \meet \B$.  Let $\C = \A \meet \B \sqsubset \A$. For each $[C] \in \C$, consider the unique component $[A_k] \in \A$ such that $[C] \sqsubset [A_k]$. Applying the Kurosh Subgroup Theorem, after conjugation it follows that $C$ is a nontrivial free factor of $A_k$, but $A_k$ is freely indecomposable, and so $C=A_k$. This proves that $\C$ is a subset of $\A$. If $\C \ne \A$ then there exists $[A_k] \in \A$ such that $[A_k] \not\in \C$, and by the Extension Lemma~\ref{LemmaExtension} it follows that $[A_k]$ is a free factor of some cofactor of $\C$. But cofactors are free and $A_k$ is not free, a contradiction.
\end{proof}

\subsection{Free factor system depth.}
\label{SectionDFF}
In general the \emph{depth} of an element $x$ of a partially ordered set is the cardinality $L$ of the longest ascending chain $x=x_0 \sqsubset \cdots \sqsubset x_L$ of order relations starting with the given element. Given a group $\Gamma$ we compute depth for the set of free factor systems of $\Gamma$ with respect to the partial ordering~$\sqsubset$, and we derive some properties of this depth. These could be immediately applied to define and compute depths of complexes of free factor systems relative to a free factor system, but we shall delay that until Section~\ref{SectionFFCHyp}.

Given a free factor system $\A = \{[A_1],\ldots,[A_I]\}$ of $\Gamma$ define the \emph{free factor system depth} of $\A$ to be
$$\DFF(\A) = 2 \, \corank(\A) + \abs{\A} - 1 = 2 \, \rank(B) + I - 1
$$
where $\abs{\cdot}$ denotes the cardinality, and $B$ is any cofactor of any realization of~$\A$. 

Assuming $\Gamma = F_n$, for any free factor system $\A = \{[A_1],\ldots,[A_I]\}$ we have
$$\DFF(\A) = 2 \bigl(n - \sum_{i=1}^I \rank(A_i)\bigr) + I - 1 = (2n-1) -  \sum_{i=1}^I \bigl( 2 \rank(A_i) - 1 \bigr)
$$
Part of the content of Lemma~\ref{LemmaFFSNorm} below is that $\DFF(\A)$ is indeed the depth of $\A$ with respect to the partial ordering~$\sqsubset$. This is easily checked when $\Gamma=F_n$.

Here are some examples. The \emph{exceptional} free factor systems~$\A$, defined to be those for which $\DFF(\A) \le 2$, can be enumerated as follows:
\begin{itemize}
\item $\DFF(\A)=0$ if and only if $\A$ is the full free factor system $\A=\{[\Gamma]\}$.
\item $\DFF(\A)=1$ if and only if $\corank(\A)=0$ and $\abs{\A}=2$, in which case $\A = \{[A_1],[A_2]\}$ with $\Gamma = A_1 * A_2$. The possibility that $\corank(\A)=1$ and $\abs{\A}=0$ is equivalent to $\Gamma$ being infinite cyclic, which was ruled out.
\item $\DFF(\A) = 2$ if and only if one of the following happens: either $\abs{\A}=1$ and $\corank(\A)=1$, in which case $\A = \{[A]\}$ with realization $\Gamma = A * Z$ where the cofactor $Z$ is infinite cyclic; or $\abs{\A}=3$ and $\corank(\A)=0$ in which case $\A = \{[A_1],[A_2],[A_3]\}$ with realization $\Gamma = A_1 * A_2 * A_3$.
\end{itemize}
As we shall see in Proposition~\ref{PropExceptionalFFS}, the exceptional free factor systems $\A$ are characterized as those for which the complex of free factor systems rel~$\A$ is exceptionally simple, either empty or $0$-dimensional. 


We say that a proper extension $\A \sqsubset \A'$ is \emph{elementary} if one of the following holds:
\begin{enumerate}
\item $\A' = \A \union \{[Z]\}$ where $Z \subgroup \Gamma$ is infinite cyclic; or
\item there exists a realization $\Gamma = A_1 * \cdots * A_I * B$ of $\A$ and two indices $i \ne j \in \{1,\ldots,I\}$ such that
$$\A' = \bigl(\A - \{[A_i],[A_j]\} \bigr) \union \{[A_i * A_j]\}
$$
\end{enumerate}
Another part of Lemma~\ref{LemmaFFSNorm} is that the statement ``$\A \sqsubset \A'$ is elementary'' is equivalent to $\DFF(\A) = \DFF(\A')+1$ which is equivalent to saying that no other free factor system is properly contained between $\A$ and $\A'$. Again this is easily checked when $\Gamma=F_n$.


\begin{lemma}\label{LemmaFFSNorm}
The function $\DFF$ on free factor systems of $\Gamma$ has the following properties:
\begin{enumerate}
\item\label{ItemFFSxIneq}
If $\A \sqsubset \A'$ then $\DFF(\A) \ge \DFF(\A')$ with equality if and only if $\A=\A'$. As a special case, $\DFF(\A) \ge 0$ with equality if and only if $\A=\{[\Gamma]\}$.
\item\label{ItemFFSxElem}
If $\A \sqsubset \A'$ is a proper extension then $\DFF(\A) \ge \DFF(\A') + 1$ with equality if and only if $\A \sqsubset \A'$ is an elementary extension.
\item\label{ItemFFSxIncr}
For any proper extension $\A \sqsubset \A'$ there exists a free factor system $\C$ such that $\A \sqsubset \C \sqsubset \A'$ and such that $\A \sqsubset \C$ is elementary.
\item\label{ItemFFSxChain}
For every chain of proper extensions of the form $\A = \A_0 \sqsubset \cdots \sqsubset \A_K = \{[\Gamma]\}$, its length $K$ satisfies $K \le \DFF(\A)$. Equality holds if only if the chain is maximal, if and only if every extension $\A_{k-1} \sqsubset \A_k$ is an elementary extension.
\end{enumerate}
\end{lemma}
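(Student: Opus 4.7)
The plan is to use the Extension Lemma~\ref{LemmaExtension} to derive an explicit formula for $\DFF(\A) - \DFF(\A')$ in terms of the structural data of any extension $\A \sqsubset \A'$, and then read off items (1)--(4) from that formula. Adopting the Extension Lemma's notation, write $\A' = \{[A'_1],\ldots,[A'_K]\}$ with cofactor $B'$; let $\{[A'_1],\ldots,[A'_J]\}$ be the image of the containment function; for $1 \le j \le J$ let the pre-image of $[A'_j]$ be $\A_j = \{[A_{j1}],\ldots,[A_{jk_j}]\}$ with $A'_j = A_{j1} * \cdots * A_{jk_j} * B_j$; and let $B = B_1 * \cdots * B_J * A'_{J+1} * \cdots * A'_K * B'$ be the corresponding cofactor of~$\A$. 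Since $|\A| = \sum_{j=1}^J k_j$, $|\A'| = K$, $\corank(\A') = \rank(B')$, and $\corank(\A) = \sum_{j=1}^J \rank(B_j) + \sum_{j=J+1}^K \rank(A'_j) + \rank(B')$, direct substitution yields
$$(\dagger) \qquad \DFF(\A) - \DFF(\A') = \sum_{j=1}^J \bigl[2\rank(B_j) + k_j - 1\bigr] + \sum_{j=J+1}^K \bigl[2\rank(A'_j) - 1\bigr].$$
Each summand is nonnegative: the $j \le J$ term vanishes exactly when $(k_j, \rank(B_j)) = (1,0)$, and the $j > J$ term is always $\ge 1$ since the Extension Lemma makes each such $A'_j$ a nontrivial finite rank free group. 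Moreover, every positive summand is $\ge 1$.

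For items~(1) and~(2), inequality in~(1) is nonnegativity of~$(\dagger)$, with equality forcing $K = J$ and $A_{j1} = A'_j$ for all $j$, hence $\A = \A'$; the unconditional lower bound $\DFF(\A) \ge 0$ is then the special case $\A' = \{[\Gamma]\}$, noting $\DFF(\{[\Gamma]\}) = 0$. For~(2), properness of $\A \sqsubset \A'$ makes at least one summand positive, hence $\ge 1$, giving $\DFF(\A) \ge \DFF(\A') + 1$. Equality means exactly one summand equals $1$ and the rest vanish, which falls into two mutually exclusive cases: either $K = J + 1$ with $\rank(A'_{J+1}) = 1$ and all $j \le J$ contributing $0$, whence $\A' = \A \cup \{[Z]\}$ for the infinite cyclic $Z = A'_{J+1}$; or $K = J$ with exactly one $j$ satisfying $(k_j, B_j) = (2, \{1\})$ and all others contributing $0$, whence $\A' = (\A - \{[A_{j1}], [A_{j2}]\}) \cup \{[A_{j1} * A_{j2}]\}$. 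These are exactly the two types of elementary extension, and the converse is direct from~$(\dagger)$.

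For item~(3), given a proper $\A \sqsubset \A'$, if $K > J$ choose $j > J$ and an infinite cyclic free factor $Z \subgroup A'_j$ (possible since $A'_j$ is a nontrivial finite rank free group); substituting a splitting $A'_j = Z * Y$ into the Extension Lemma realization of $\A$ exhibits $\C := \A \cup \{[Z]\}$ as a free factor system, with $\A \sqsubset \C$ of type~(1) elementary and $\C \sqsubset \A'$ since $[Z] \sqsubset [A'_j]$. If instead $K = J$, properness forces some $j \le J$ with either $k_j \ge 2$ or $B_j$ nontrivial; in the first case $\C := (\A - \{[A_{j1}], [A_{j2}]\}) \cup \{[A_{j1} * A_{j2}]\}$ is type~(2) elementary with $[A_{j1} * A_{j2}] \sqsubset [A'_j]$, and in the second case an infinite cyclic $Z \subgroup B_j$ gives $\C := \A \cup \{[Z]\}$ of type~(1), with $\C \sqsubset \A'$ since $Z \subgroup B_j \subgroup A'_j$.

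For item~(4), telescoping~(2) along the chain gives $K \le \DFF(\A) - \DFF(\{[\Gamma]\}) = \DFF(\A)$, with equality iff every step is elementary. For the equivalence with maximality, use~(3): a non-elementary step $\A_{k-1} \sqsubset \A_k$ admits a strict intermediate, namely the $\C$ produced by~(3), which has $\A_{k-1} \sqsubset \C$ elementary and hence differs from the non-elementary $\A_k$; conversely an elementary step admits no strict insertion, by~(2). The main obstacle is simply the careful case analysis in~(3), for which the indexing provided by the Extension Lemma is essential; items~(1), (2), and~(4) are then immediate consequences of the sign and equality analysis of~$(\dagger)$.
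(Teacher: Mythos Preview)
Your proof is correct and follows essentially the same approach as the paper: both apply the Extension Lemma to derive the explicit difference formula $(\dagger)$ (the paper separates your $j\le J$ summand into $(a)_j = 2\rank(B_j)$ and $(b)_j = k_j - 1$, but this is cosmetic), then read off items (1)--(3) from the sign and equality analysis, with item~(4) as a formal consequence. Your case split in item~(3) by $K>J$ versus $K=J$ is a minor reorganization of the paper's three cases indexed by which of $(a)_j$, $(b)_j$, $(c)_j$ is positive, and you give slightly more detail on the maximality equivalence in~(4) than the paper does.
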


\begin{proof}  Noting that item~\pref{ItemFFSxChain} is a consequence of the earlier items, it remains to prove \pref{ItemFFSxIneq}, \pref{ItemFFSxElem} and~\pref{ItemFFSxIncr}. Assuming $\A \sqsubset \A'$, in items~\pref{ItemFFSxIneq} and~\pref{ItemFFSxElem} we are interested in the difference
$$\DFF(\A)-\DFF(\A') = 2(\corank(\A) - \corank(\A')) + \abs{\A} - \abs{\A'}
$$
Applying Lemma~\ref{LemmaExtension} and adopting its notation, we have
\begin{align*}
\corank(\A) &= \sum_{j=1}^J \rank(B_j) + \sum_{j=J+1}^K \rank(A'_j) + \overbrace{\rank(B')}^{\corank(A')} \\
\corank(A) - \corank(A') &= \sum_{j=1}^J \rank(B_j) + \sum_{j=J+1}^K \rank(A'_j) \\
\abs{\A} - \abs{\A'} &= \sum_{j=1}^J \abs{\A_j} \, - \, K \\
                               &= \sum_{j=1}^J (\abs{\A_j}-1) \, - \, (K-J) \\
\DFF(\A)-\DFF(\A') &= 
          \sum_{j=1}^J \underbrace{2 \rank(B_j)}_{(a)_j}  \,\, + \,\,  \sum_{j=1}^J (\underbrace{\abs{\A_j}-1}_{(b)_j})   \,\, +    \sum_{j=J+1}^K (\underbrace{2\rank(A'_j) - 1}_{(c)_j})                     
\end{align*}
From this it follows that $\DFF(\A) \ge \DFF(\A')$ because each of the quantities $(a)_j$, $(b)_j$, $(c)_j$ is non-negative: for $1 \le j \le J$ the quantity $(a)_j$ is a non-negative even integer, and the quantity $(b)_j$ is a non-negative integer because $\A_j \ne \emptyset$; and for $J+1 \le j \le K$ the quantity $(c)_j$ is an odd positive integer because $A'_j$ is free of rank~$\ge 1$. 
Furthermore: 
\begin{itemize}
\item $(a)_j=0$ if and only if the free factorization $A'_j = A_{j1} * \cdots * A_{j k_j} * B_j$ has trivial cofactor $B_j$ (for $1 \le j \le J$).
\item $(b)_j=0$ if and only if $\abs{\A_j} = k_j = 1$ if and only if $\A_j$ has exactly one component (for $1 \le j \le J$).
\item $(c)_j > 0$ (for $J+1 \le j \le K$). 
\end{itemize}
Thus $\DFF(\A)=\DFF(\A')$ if and only if no $(c)_j$'s exist, i.e.\ $J=K$, and $\A_j = \{[A_{j1}]\}$ for each $1 \le j \le J$, which happens if and only if $\A=\A'$. This completes the proof of~\pref{ItemFFSxIneq}.

We next prove the ``if'' direction of item~\pref{ItemFFSxElem}. Suppose that $\A \sqsubset \A'$ is an elementary extension. In one case we have $\A' = \A \union \{[Z]\}$ where $Z$ is infinite cyclic, and it follows that $K=J+1$, that $(a)_j = (b)_j = 0$ for $1 \le j \le J$, and that $(c)_{J+1}=1$. In the other case, there exists $j_0 \in \{1,\ldots,J\}$ and two components $[A],[A'] \in \A$ such that up to conjugacy we have $A'_{j_0} = A * A'$, and $\A' = (\A - \{[A],[A']\}) \union \{[A'_{j_0}]\}$. It follows that each $(a)_j = 0$, that $(b)_j = 1$ if $j=j_0$ and $(b_j)=0$ otherwise, and that there are no $(c)_j$'s. In either case we have $\DFF(\A)=\DFF(\A') + 1$.

Suppose now that $\A \sqsubset \A'$ is a proper expansion, equivalently $\DFF(\A) - \DFF(\A') > 0$, equivalently at least one of the quantities $(a)_j$, $(b)_j$, $(c)_j$ is positive. In each case we exhibit a free splitting $\C$ such that $\A \sqsubset \C \sqsubset \A'$, and $\A \sqsubset \C$ is elementary. Item~\pref{ItemFFSxIncr} and the remaining contentions of item~\pref{ItemFFSxElem} follow immediately.

\smallskip

\textbf{Case 1:} Some $(a)_j > 0$  ($1 \le j \le J$)  which means  the free factorization  $A'_j = A_{j1} * \cdots * A_{j k_j} * B_j$ has nontrivial cofactor $B_j$. Let $Z$ be rank~$1$ free factor of $B$ and let $\C = \A \union \{[Z]\}$.

\smallskip

\textbf{Case 2:} Some $(b)_j > 0$ ($1 \le j \le J$) which means $\A_j = \{[A_{j1}],[A_{j2}],\ldots,[A_{jk_j}]\}$ has $k_j \ge 2$ components. Let $\C = (\A - \{[A_{j1}],[A_{j2}]\}) \union \{[A_{j1}*A_{j2}]\}$.

\smallskip

\textbf{Case 3:} Some $(c)_j > 0$ exists, which means $J < K$. For $J+1 \le j \le K$ each of the groups $A'_j$ is free of positive rank. Let $Z \subgroup A'_{J+1}$ be a rank~$1$ free factor and let $\C = \A \union \{[Z]\}$. 
\end{proof}

We showed in Corollary~\ref{CorollaryWeakMeetProps} that ``meet'' is a binary operation on the set of free factor systems of a group relative to a given free factor system. As~a corollary to Lemma~\ref{LemmaFFSNorm} we can extend this to a multivariate operation (c.f.\ \cite[Section 2.6]{\BookOneTag} for the case $\Gamma=F_n$, $\A=\emptyset$):

\newcommand\bF{\textbf{F}}

\begin{corollary}
\label{CorollaryFFSupport}
For any group $\Gamma$, any free factor system $\A$ of $\Gamma$, and any set $\bF$ of free factor systems of $\Gamma$ rel~$\A$, there exists a unique free factor system rel~$\A$, denoted $\meet \bF$ such that the following properties hold:
\begin{enumerate}
\item\label{ItemMeetInAll}
For all $\F \in \bF$ we have $\meet\bF \sqsubset \F$.
\item\label{ItemMeetUniversal}
For any free factor system~$\B$ of $\Gamma$ rel~$\A$, if $\B \sqsubset \F$ for all $\F \in \bF$ then $\B \sqsubset \meet \bF$.
\end{enumerate}
Furthermore, choosing an indexing $\bF = \{\F_i \suchthat i \in I\}$, we have:
\begin{enumeratecontinue}
\item $\meet\bF$ is the set of conjugacy classes $[F]$ of all nontrivial subgroups $F \subgroup \Gamma$ that can be expressed in the following form, for some indexed family of subgroups $\{F_i\}_{i \in I}$ such that $[F_i] \in \F_i$:
$$F = \bigcap_{i \in I} F^{\vphantom\inv}_i
$$
\end{enumeratecontinue}
\end{corollary}

\begin{proof} Uniqueness follows from the fact that if $\meet \bF$ and $\meet' \bF$ both satisfy properties~\pref{ItemMeetInAll} and~\pref{ItemMeetUniversal} then $\meet \bF \sqsubset \meet' \bF \sqsubset \meet \bF$, and so $\meet\bF = \meet'\bF$ because the relation $\sqsubset$ is a partial order. 

In the special case when $\bF \subset \{[\Gamma]\}$ --- that is, when $\bF = \emptyset$ or $\{[\Gamma]\}$ --- clearly $\meet\bF = \{[\Gamma]\}$ satisfies ~\pref{ItemMeetInAll} and~\pref{ItemMeetUniversal}.

If $\bF \not\subset \{[\Gamma]\}$, consider a finite sequence $\B_1,\ldots,\B_K \in \bF$ (with $K \ge 1$), and the following inductively defined sequence of free factor systems of $\Gamma$ rel~$\A$:
\begin{align*}
\F_0 &= \{[\Gamma]\} \\
\F_{k} &= \B_k \meet \F_{k-1} \quad (1 \le k \le K)
\end{align*}
and so we have a length~$K$ sequence of extensions
$$\F_K \sqsubset \cdots \sqsubset \F_1 \sqsubset \F_0
$$
We say that $\B_1,\ldots,\B_K$ is a \emph{proper sequence} if the extension $\F_{k} \sqsubset \F_{k-1}$ is proper for each $0 \le k-1 < k \le K$. By choosing any $\B_1 \in \bF - \{[\Gamma]\}$ we obtain a proper sequence of length $K=1$. By Lemma~\ref{LemmaFFSNorm}~\pref{ItemFFSxChain} any proper sequence has \hbox{length~$\le \DFF(\A)$,} and therefore there exists a proper sequence as denoted above whose length~$K$ is maximal. Choosing any such maximal sequence, it remains to check that $\F_K$ satisfies properties~\pref{ItemMeetInAll} and~\pref{ItemMeetUniversal}. For any $\B$ as in property~\pref{ItemMeetUniversal} we have $\B \sqsubset \B_k$ for all $k=1,\ldots,K$, from which it follows by induction that $\B \sqsubset \F_K$. Property~\pref{ItemMeetInAll} must also hold for if not then, choosing $\B_{K+1} \in \bF$ such that $\F_K \not\sqsubset \B_{K+1}$, we obtain a longer proper sequence with $\F_{K+1} = \B_{K+1} \meet \F_K$, violating maximality of~$K$.

For proving the \emph{furthermore} sentence, choose again a maximal, finite, proper sequence of length $K$ with notations as above. By combining the definition of the original binary meet operation with Corollaries~\ref{CorollaryWeakMeetProps} and~\ref{CorollaryMeet} and an induction argument, it follows that $\meet \bF$ is the set of conjugacy classes $[F]$ of all nontrivial subgroups $F \subgroup \Gamma$ which can be written in the form
$$F = B_1 \, \intersect \, \cdots \, \intersect  B_I
$$ 
for some sequence of subgroups $B_1,\ldots,B_K \subgroup \Gamma$ such that $[B_k] \in \B_k$. For any $\B' \in \bF$ and any free factor $B'$ such that $[B'] \in \B'$, the intersection $F \cap B'$ must be either trivial or equal to $F$, for otherwise $\F_{K+1} = \B' \meet \F_K$ again produces a longer proper sequence.
\end{proof}

\section{Relative free splitting complexes}
\label{SectionFFFAndRel}

In Sections~\ref{SectionBasicFSDefs}--\ref{SectionFSRelComplex}, given an arbitrary freely decomposable group $\Gamma$ we define free splittings of $\Gamma$ and their partial ordering $\succ$ called the ``collapse relation''. Also, using these concepts we define free splitting complexes of $\Gamma$, both the ``absolute'' free splitting complex $\FS(\Gamma)$ and the free splitting complex $\FS(\Gamma;\A)$ ``relative to'' a choice of free factor system~$\A$. We also study a function which associates to each free splitting a free factor system called its ``vertex stabilizer system'', and in Section~\ref{SectionSqsubsetSuccRelations} we study how this function relates the partial orderings $\sqsubset$ and $\succ$. In Section~\ref{SectionFSDepthAndDimension} we study the depth of the inverted partial ordering $\prec$. We apply that study to obtain a formula for the dimension of $\FS(\Gamma;\A)$, and to obtain a finer understanding of the partial ordering as it relates to inclusion of simplices. Of particular importance is Proposition~\ref{PropMaximizingSimplices} that explains exactly which free splittings are maximal and minimal with respect to the collapse relation~$\succ$, and which chains of the relation $\succ$ correspond to maximal simplices of $\FS(\Gamma;\A)$.

The proofs in this section are primarily applications of Bass-Serre theory along with basic topological manipulations of graphs and trees, and a few further applications of the Kurosh Subgroup Theorem.

For the case of $\Gamma = F_n$, many of the results of this section, regarding basic concepts of free splittings and the collapse partial ordering may be familiar to a reader of \FSOne. Nonetheless we examine these concepts from new points of view, in order to study relative free splitting complexes. Throughout this section we try to  view these points first from the vantage of the special case $\Gamma = F_n$, before moving on the general formulation. This is done so as to enable the reader interested mostly in $\Gamma = F_n$ to get through this section more quickly.

\subsection{Basic terminology and notation regarding graphs.} 
\label{SectionBasicFSDefs}

A \emph{graph} $G$ is a 1-dimensional $\Delta$-complex, a \emph{tree} is a contractible graph, and a \emph{subgraph} of a graph $G$ is subset that is a subcomplex of some simplicial decomposition of~$G$. Given a subgraph $H \subset G$, its \emph{complementary subgraph}, denoted $G \setminus H$, is the closure of the set theoretic complement $G-H$; equivalently, with respect to a simplicial decomposition in which $H$ is a subcomplex, $G \setminus H$ is the union of those edges of $G$ not contained in $H$. Unless otherwise specified we work in the PL category: every map between graphs is assumed to be PL, meaning that it is simplicial with respect to a choice of subdivision of the domain and range. Occasionally we specify that a map is \emph{simplicial}, meaning simplicial with respect to the \emph{given} graph structures on the domain and range.

Given $p \in G$, let $D_p G$ denote the set of \emph{directions} at $p$, meaning initial germs of locally injective paths with initial point $p$. If $p$ is a vertex then each element of $D_p G$ is uniquely represented by an oriented edge of $G$ with initial vertex~$p$.

\subsection{Free splittings and the partial order $\succ$.}
\label{SectionFSSucc}

A \emph{free splitting} of $\Gamma$ is a simplicial action $\Gamma \act T$ of the group $\Gamma$ on a tree $T$ such that the action is \emph{minimal} meaning that there is no proper $\Gamma$-invariant subtree, $T$ is not a point, the stabilizer of each edge of $T$ is trivial, and there are finitely many edge orbits. It follows that there are finitely many vertex orbits, and so $T/\Gamma$ is a finite graph of groups. It also follows, using minimality, that $T$ has no valence~$1$ vertices. There is an induced action $\Gamma \act \bigcup_p D_p T$ where $p$ ranges over the vertex set of~$T$, and this action is free: if not then there would exist $\gamma \ne \Id \in \Gamma$ and an oriented edge $E \subset T$ such that $\gamma \cdot E = E$, contradicting that $T$ is a free splitting.

Two free splittings $S,T$ of $\Gamma$ are \emph{equivalent}, denoted $S \approx T$, if there exists a $\Gamma$-equivariant homeomorphism $f \from S \mapsto T$. While this homeomorphism need not be simplicial, like any map it is assumed to be PL.

\smallskip
Consider a free splitting $\Gamma \act T$ and let $P \subset T$ be the set of points with nontrivial stabilizer. On $T$ there exists a \emph{natural} graph structure, namely a graph structure satisfying the following: its $0$-skeleton consists of the \emph{natural vertices} of $T$ consisting of the union of $P$ with the set of vertices of valence~$\ge 3$; the action $\Gamma \act T$ is simplicial with respect to the natural graph structure; and the given graph structure on $T$ is a $\Gamma$-equivariant subdivision of the natural graph structure. Edges of a natural graph structure are called \emph{natural edges}, and they are uniquely determined; in fact the natural graph structure is unique up to a $\Gamma$-equivariant PL self-homeomorphism of $T$ that restricts to the identity on the set of natural vertices, thus taking each natural edge to itself. Edges of the given graph structure on $T$ are sometimes called \emph{edgelets} in order to contrast them with the natural edges of $T$; each natural edge is a union of edgelets.

\smallskip
\textbf{Remark.}  In proving the existence of the natural graph structure for a free splitting $\Gamma \act T$ one encounters the following quirk. For a vertex $p \in T$ of valence~$2$, its stabilizer group $\Stab(p)$ is nontrivial if and only if $\Stab(p)$ is cyclic of order~$2$, in which case $\Stab(p)$ acts transitively on the 2-point set $D_p T$. Using this one can show that in the (very special) case where the tree $T$ has an isolated end, the group $\Gamma$ is infinite dihedral and the action $\Gamma \act T$ is equivalent to a standard infinite dihedral action on~$\reals$ generated by reflections across the points of $\Z \subset \reals$; the vertices of $T$ with nontrivial stabilizer therefore form the natural vertex set (recall here our convention, from the opening of Section~\ref{SectionBasicFFDefs}, that $\Gamma$ cannot be infinite cyclic). In the (generic) case that $T$ has no isolated ends, it is evident that the vertices of valence~$\ge 3$ already form the vertex set of a $\Gamma$-equivariant graph structure on $T$, and by further subdividing at the valence~$2$ vertices with nontrivial stabilizer one arrives at the natural graph structure. 

\medskip
\noindent
\textbf{Vertex stabilizer systems.} Associated to each free splitting $\Gamma \act T$ is a nonfull free factor system of $\Gamma$ denoted $\Fell T$, consisting of the conjugacy classes of nontrivial vertex stabilizers and called either the \emph{vertex stabilizer system} of $T$ or the \emph{elliptic subgroup system}. The fact that $\Fell T$ is indeed a free factor system follows from Bass-Serre theory, by using any isomorphism between $\Gamma$ and the fundamental group of the quotient graph of groups $T/\Gamma$. In the converse direction we have the following fact, which will often be invoked silently:

\begin{lemma}\label{LemmaFSToFFOnto} For every nonfull free factor system $\A$ of $\Gamma$ there exists a free splitting $\Gamma \act T$ such that $\A = \Fell T$.
\end{lemma}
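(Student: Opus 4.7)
The plan is to build $T$ explicitly as the Bass-Serre tree of a finite graph of groups $\G$ tailored to $\A$. Starting from a realization $\Gamma = A_1 * \cdots * A_K * B$ of $\A = \{[A_1],\ldots,[A_K]\}$ with $B$ free of finite rank $r$, I would take $\G$ to be the graph of groups with one central vertex $v_0$ carrying the trivial group, $K$ leaf vertices $v_1,\ldots,v_K$ carrying vertex groups $A_1,\ldots,A_K$ each joined to $v_0$ by a single edge, and $r$ loop edges based at $v_0$; all edge groups are trivial. Standard Bass-Serre theory identifies $\pi_1(\G)$ with $A_1 * \cdots * A_K * F_r = \Gamma$ and presents the universal cover $T$ as a simplicial $\Gamma$-tree with trivial edge stabilizers, finitely many edge orbits (one per edge of $\G$), and nontrivial vertex stabilizers precisely the $\Gamma$-conjugates of $A_1,\ldots,A_K$. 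Hence $\F(T)=\A$ is immediate from the construction.

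It then remains to verify that this action is a free splitting in the sense of Section~\ref{SectionFSSucc}, namely that $T$ is not a point and the action is minimal. Nontriviality amounts to $\G$ having at least one edge. For a graph of groups with trivial edge groups, minimality of the Bass-Serre tree is equivalent to the condition that $\G$ has no valence-one vertex with trivial vertex group: such a vertex could be pruned to yield a proper $\Gamma$-invariant subtree, while conversely the absence of such leaves is easily seen to suffice (interior valence-two trivial vertices are harmless). In our $\G$ the leaves $v_1,\ldots,v_K$ all carry nontrivial groups, so the only configurations where either condition could fail are $K=0$ with $r\le 1$, or $K=1$ with $r=0$.

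The standing hypotheses rule out exactly these configurations: $K=0,r=0$ forces $\Gamma$ trivial and $K=0,r=1$ forces $\Gamma$ infinite cyclic, both excluded by the blanket convention that $\Gamma$ is freely decomposable, while $K=1,r=0$ gives $\A=\{[\Gamma]\}$, excluded by properness of $\A$. In every remaining case the construction delivers a free splitting $\Gamma \act T$ with $\F(T)=\A$. The only real obstacle, then, is this enumeration of degenerate cases; once they are checked off, the Bass-Serre machinery produces the required free splitting with no further work.
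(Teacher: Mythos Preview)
Your proof is correct and follows essentially the same construction as the paper: both build the Bass-Serre tree of a graph of groups consisting of a rose of rank $\rank(B)$ at a base vertex with trivial group, together with $K$ pendant edges whose far vertices carry $A_1,\ldots,A_K$. The paper is terser and does not spell out the minimality/nontriviality check or the enumeration of degenerate cases; your additional care in ruling out $K=0,r\le 1$ and $K=1,r=0$ is a welcome explicit verification of details the paper leaves implicit.
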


\begin{proof} Choose a realization $\Gamma = A_1 * \cdots * A_I * B$ of $\A = \{[A_1],\ldots,[A_I]\}$. Construct a graph of groups with base point $p$, attaching to $B$ a rose with rank equal to $\corank(\A)=\rank(B)$, and attaching $K$ additional edges to $p$ with opposite vertices of valence~$1$ having respective vertex groups $A_1,\ldots,A_I$. The fundamental group of this graph of groups has an isomorphism to $\Gamma = A_1 * \cdots * A_I * B$. Letting $T$ be the Bass-Serre tree of this graph of groups with associated $\Gamma$ action, we obtain a free splitting of $\Gamma$ satisfying $\Fell T=\A$. 
\end{proof}

\smallskip
\noindent
\textbf{Nondegenerate subgraphs, collapse maps, and the partial ordering $\succ$.} 
An \emph{invariant subgraph} of a free splitting $T$ is a $\Gamma$-invariant subgraph $\tau \subset T$ with respect to some subdivision of~$T$. A component of $\tau$ is \emph{degenerate} if it consists of a single point. A \emph{nondegenerate subgraph} of $T$ is a proper invariant subgraph with no degenerate component. 

A \emph{collapse map} $f \from T \to S$ is a map which is simplicial with respect to some subdivisions of $T$ and $S$, such that for each $x \in S$ the inverse image $f^\inv(x)$ is connected. The union of those inverse images $f^\inv(x)$ which are not single points is a nondegenerate subgraph $\sigma \subset T$ called the \emph{collapse forest}. Letting $T \mapsto T/\sigma$ denote the equivariant quotient map under which each component of $\sigma$ is collapsed to a single point, it follows that $T/\sigma$ and $S$ are equivalent free splittings. We sometimes incorporate $\sigma$ into the notation by writing $T \xrightarrow{[\sigma]} S$. 

A collapse $T \xrightarrow{[\sigma]} S$ is \emph{natural} if $\sigma$ is a natural subcomplex of $T$, equivalently $\sigma$ is a union of natural edges of~$T$. Note that for any collapse map $T \xrightarrow{[\sigma]} S$ there exists a natural collapse map $T \xrightarrow{[\sigma']} S$ where $\sigma'$ is the union of natural edges of $T$ contains in $\sigma$.

We define a relation denoted $T \succ S$ to mean that there exists a collapse map $T \mapsto S$. This relation is a well-defined on equivalence classes, and it is a partial ordering (note that a composition of collapse maps is a collapse map). We also express the relation $T \succ S$ as $S \prec T$, and by using various terminologies such as that $T$ \emph{collapses to}~$S$, or that $S$ \emph{expands to}~$T$. If~furthermore $T \not\approx S$ then the collapse or expansion is \emph{proper}, which holds if and only if for some (any) collapse map $T \xrightarrow{[\sigma]} S$ the subgraph $\sigma$ contains a natural edge. 

Note that for any map of free splittings $f \from S \to T$, each element of $\Gamma$ that is elliptic in $S$ is also elliptic in $T$, and therefore $\Fell S \sqsubset \Fell T$ (see Lemma~\ref{LemmaRealCollapse}~\pref{ItemRCFFtoFS}). It follows that if $S \succ T$ then $\Fell S \sqsubset \Fell T$.

\smallskip
\textbf{Remark on abuses of notation.} While a free splitting is formally denoted $\Gamma \act T$, and we often use this notation to emphasize the action, also we often suppress the action from the notation and simply write~$T$. The action is always suppressed from the notation for the equivalence class $[T]$, and sometimes we write just~$T$ for the equivalence class.

\subsection{Absolute and relative free splitting complex.}
\label{SectionFSRelComplex}
We define the (absolute) \emph{free splitting complex} of $\Gamma$, denoted $\FS(\Gamma)$, to be the simplicial complex which is the geometric realization of the  set of equivalence classes of free splittings of $\Gamma$ partially ordered by $\prec$. Thus $\FS(\Gamma)$ has a $0$-simplex for each equivalence class of free splittings $\Gamma \act T$, denoted $[T]$. In general $\FS(\Gamma)$ has a $K$-simplex for each $K+1$-tuple of distinct $0$-simplices $[T_0],[T_1],\ldots,[T_K]$ such that $T_0 \prec T_1 \prec \cdots \prec T_K$; this simplex is denoted $[T_0] \prec [T_1] \prec \cdots \prec [T_K]$. By our convention that $\Gamma$ be freely indecomposable, $\FS(\Gamma)$ is always nonempty.

Consider now a nonfull free factor system $\A$ of $\Gamma$. A \emph{free splitting of $\Gamma$ rel~$\A$} is a free splitting $\Gamma \act T$ with the property that $\A \sqsubset \Fell T$, equivalently  \emph{$\A$ is elliptic with respect to $T$} meaning that each subgroup of $\Gamma$ representing an element of $\A$ fixes some point of~$T$. The \emph{free splitting complex of $\Gamma$ rel~$\A$}, denoted $\FS(\Gamma;\A)$, is the flag subcomplex of $\FS(\Gamma)$ consisting of all simplices $[T_0] \prec\cdots\prec [T_K]$ such that $\A$ is elliptic in each of the free splittings $\Gamma \act T_0,\ldots,T_K$; this is equivalent to requiring simply that $\A$ is elliptic in $T_K$, because $\Fell T_K \sqsubset \cdots \sqsubset \Fell T_0$. The requirement that $\A$ not be full implies that free splittings rel~$\A$ exist (by Lemma~\ref{LemmaFSToFFOnto}) and so $\FS(\Gamma;\A)$ is nonempty. In Corollary~\ref{CorollaryConnected} below we will see that $\FS(F_n;\A)$ is connected.

Note that if $\Gamma$ has a proper Grushko decomposition, equivalently if there exists a free factor system~$\A$ which is minimal with respect to $\sqsubset$ (see Proposition~\ref{PropGrushkoMinimum}), then $\FS(\Gamma)=\FS(\Gamma;\A)$; this holds for example whenever $\Gamma$ is finitely generated.

\smallskip
\textbf{Remarks on terminology and notation.} The notation $[T]$ is used both for the equivalence class of a free splitting $\Gamma \act T$ and for the corresponding $0$-simplex of $\FS(\Gamma)$. Sometimes we abuse notation by writing things like ``$T \in \FS(\Gamma;\A)$'' which can be read formally either as ``$T$ is a free splitting of $\Gamma$ rel~$\A$'' or as ``$[T]$ is a $0$-simplex of $\FS(\Gamma;\A)$''.

In \FSOne\ we used the notation $\FS(F_n)$ a little differently, namely the complex with one $k$-simplex for each equivalence class of free splittings $T$ having $k+1$-orbits of natural edges, where the face inclusion is defined by the relation $S \prec T$. Also, we used the notation $\FS'(F_n)$ for the first barycentric subdivision of $\FS(F_n)$ which is equivalent to the free splitting complex as defined in this section. But even in \FSOne\ we worked primarily with this first barycentric subdivision, and since relative free splitting complexes live naturally as subcomplexes of this first barycentric subdivision, in this current work we switch the notation and we hope that this does not cause confusion.

\subsection{Relations between the partial orders $\sqsubset$ and $\succ$.} 
\label{SectionSqsubsetSuccRelations}
In the following lemma we collect properties relating the partial order $\sqsubset$ on free factor systems to the partial order $\succ$ on (equivalence classes of) free splittings. These properties are all true as well when they are specialized by choosing a free factor system $\A$ and putting in the qualifier ``rel~$\A$''.


\begin{lemma} \label{LemmaRealCollapse} For any $\Gamma$ the following hold:
\begin{enumerate}
\item\label{ItemRCFStoFF}
For any map of free splittings $f \from T \to S$ we have an extension $\Fell T \sqsubset \Fell S$ of free factor systems. In particular if $S \prec T$ then $\Fell T \sqsubset \Fell S$.
\item\label{ItemRCFMeet}
For any free factor system $\A$ of $\Gamma$ and any two free splittings $\Gamma \act S,T$ rel~$\A$ there exists a free splitting $\Gamma \act U$ rel~$\A$ and a natural collapse map $f \from U \to T$ such that $\Fell U = \Fell S \meet \Fell T$ and such that for each $x \in T$, if the subgroup $\Stab_T(x)$ is nontrivial then its action on $f^\inv(x) \subset U$ is equivalent to its action on its minimal subtree in $S$.
\item\label{ItemRCFFtoFS}
For any free splitting $\Gamma \act T$ and any free factor system $\B \sqsubset \Fell T$ there exists a free splitting $U$ and a collapse map $U \mapsto T$ such that $\Fell U=\B$.
\item\label{ItemRCFSequence}
More generally, for each sequence of extensions $\A_0 \sqsubset \A_1 \sqsubset \cdots \sqsubset \A_K$ of free factor systems rel~$\A$, and each free splitting $S_K$ such that $\Fell S_K=\A_K$ there exists a sequence of free splittings and collapses $S_0 \collapses S_1 \collapses \cdots \collapses S_K$ such that $\Fell S_k=A_k$ for each $k=0,\ldots,K$.
\end{enumerate}
\end{lemma}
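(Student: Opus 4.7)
The plan is to handle item~\pref{ItemRCFStoFF} by direct appeal to equivariance, to prove item~\pref{ItemRCFMeet} by constructing $U$ as an equivariant blow-up of $T$, and to derive items~\pref{ItemRCFFtoFS} and~\pref{ItemRCFSequence} as short consequences of~\pref{ItemRCFMeet}. For~\pref{ItemRCFStoFF}, given a map $f \from T \to S$, equivariance forces $\Stab_T(x) \subseteq \Stab_S(f(x))$ for every $x \in T$, so each nontrivial vertex stabilizer of $T$ sits inside a vertex stabilizer of $S$, yielding $\F(T) \sqsubset \F(S)$.

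For item~\pref{ItemRCFMeet}, I would construct $U$ by equivariantly blowing up vertices of $T$ of nontrivial stabilizer using the $S$-action. Choose $\Gamma$-orbit representatives $v_1,\ldots,v_m$ of such vertices of $T$, set $G_i = \Stab_T(v_i)$, and let $S_i \subset S$ be the minimal $G_i$-invariant subtree (a point if $G_i$ has a fixed point in $S$, a nondegenerate $G_i$-tree otherwise). Form $U$ by equivariantly excising each vertex $\gamma v_i$ from $T$ and inserting a translated copy $\gamma S_i$ in its place, reattaching the incident edges at equivariantly chosen basepoints. The resulting collapse $f \from U \to T$ crushing each $\gamma S_i$ back to $\gamma v_i$ is by construction relatively natural, and the $\Stab_T(\gamma v_i)$-action on $f^\inv(\gamma v_i) = \gamma S_i$ is equivalent to its action on its minimal subtree in $S$, as required. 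The nontrivial vertex stabilizers of $U$ are exactly the intersections $\gamma G_i \gamma^\inv \cap \Stab_S(y)$ with $y \in \gamma S_i$; running through $\Gamma$-orbits these match the nontrivial intersections defining $\F(T) \meet \F(S)$, which is a free factor system by the Weak Meet Lemma~\ref{LemmaWeakMeet}. Since $\A \sqsubset \F(T)$ and $\A \sqsubset \F(S)$, Corollary~\ref{CorollaryWeakMeetProps}(iii) delivers $\A \sqsubset \F(U)$, so $U$ is rel~$\A$.

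For item~\pref{ItemRCFFtoFS}, invoke Lemma~\ref{LemmaFSToFFOnto} to produce a free splitting $S$ with $\F(S) = \B$, then apply~\pref{ItemRCFMeet} to obtain $U \succ T$ with $\F(U) = \B \meet \F(T) = \B$, the last equality following from $\B \sqsubset \F(T)$ via the universal property (iii) of meet in Corollary~\ref{CorollaryWeakMeetProps}. For item~\pref{ItemRCFSequence}, induct downward: given $S_k$ with $\F(S_k) = \A_k$, apply~\pref{ItemRCFFtoFS} with $T = S_k$ and $\B = \A_{k-1}$ to produce $S_{k-1} \succ S_k$ with $\F(S_{k-1}) = \A_{k-1}$, iterating from $k = K$ down to $k = 1$.

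The main obstacle lies in the identification $\F(U) = \F(T) \meet \F(S)$ in the blow-up. Verifying this requires a Bass--Serre-theoretic accounting: one must show that the nontrivial point stabilizers arising inside the minimal subtrees $S_i$, taken over orbit representatives, exhaust modulo $\Gamma$-conjugacy every nontrivial intersection $[A \cap uBu^\inv]$ with $[A] \in \F(T)$ and $[B] \in \F(S)$. Working with the minimal subtree rather than all of $S$ avoids introducing spurious edges of trivial stabilizer, and minimality does not omit any intersection: each conjugate of an $\F(S)$-component meeting $G_i$ nontrivially is realized as a point stabilizer inside $S_i$, exactly mirroring the double-coset enumeration of the Kurosh Subgroup Theorem underlying the Weak Meet Lemma.
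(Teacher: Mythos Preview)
Your proposal is correct and follows essentially the same approach as the paper: item~\pref{ItemRCFStoFF} via $\Stab(x)\subset\Stab(f(x))$, item~\pref{ItemRCFMeet} via the equivariant blow-up of $T$ along minimal $\Stab_T(v_i)$-subtrees of $S$, and the deductions \pref{ItemRCFMeet}$\Rightarrow$\pref{ItemRCFFtoFS}$\Rightarrow$\pref{ItemRCFSequence} exactly as you describe. The paper packages the fact that $\F(S)\meet\F(T)$ is a free factor system rel~$\A$ by citing Proposition~\ref{PropMeet} directly rather than Weak Meet plus Corollary~\ref{CorollaryWeakMeetProps}, but the content is the same, and your final paragraph correctly identifies the Kurosh-style bookkeeping as the substance behind the equality $\F(U)=\F(S)\meet\F(T)$.
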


\begin{proof} Item~\pref{ItemRCFStoFF} is evident since $\Stab(x) \subgroup \Stab(f(x))$. Clearly \pref{ItemRCFMeet}$\implies$\pref{ItemRCFFtoFS} by taking $S$ to be any free splitting such that $\Fell S=\B$ and using that $\B \sqsubset \C$ implies $\B \meet \C = \B$. Also clearly \pref{ItemRCFFtoFS}$\implies$\pref{ItemRCFSequence}. 

Item~\pref{ItemRCFMeet} says intuitively that one can always ``blow up'' $T$ to get some $U$ so that for each $x \in T$ the actions of $\Stab(x)$ on its blowup in $U$ is a copy of its action on its minimal subtree in~$S$. The proof of item~\pref{ItemRCFMeet} is an elaboration of the Bass-Serre theory proof of the Kurosh Subgroup Theorem (see e.g.\ \cite{Cohen:CombinatorialGroupTheory}); here are a few details. First apply Proposition~\ref{PropMeet} to conclude that $\B = \Fell S \meet \Fell T$ is a free factor system rel~$\A$. Consider $x \in T$ such that $\Stab_T(x)$ is nontrivial, let $S^x \subset S$ be the minimal subtree for the action $\Stab_T(x) \act S$, and suppose that $S^x$ is not a point. Blow up the vertex $x$ using $S^x$: detach each of the directions of $D_x T$ from $x$, then remove $x$, then reattach the directions of $D_x T$ to a copy of the tree $S^x$ in a $\Stab_T(x)$-equivariant manner. Now extend this ``detachment--attachment'' operation over the whole orbit of~$x$, reattaching the directions in a $\Gamma$-equivariant manner. Doing this for each orbit of such points $x$ results in the desired free splitting $\Gamma\act U$. 
\end{proof}

\subsection{Free splitting depth of free factor systems and dimensions of relative free splitting complexes.}
\label{SectionFSDepthAndDimension}

The absolute free splitting complex of a rank~$n$ free group $\FS(F_n)$ has the following easily proved properties. Define a free splitting $F_n \act T$ to be \emph{generic} if every vertex has valence~$3$. First, $T$ has at most $3n-3$ natural edge orbits, the maximum being attained if and only if $T$ is generic. Also, the maximal number of natural vertex orbits is the number attained for generic $T$ which is $2n-2$. These are proved by simple Euler characteristic calculations taking place in the quotient graph of groups $T/F_n$. Next, given a $D$-simplex $[T_0] \prec [T_1] \prec \cdots \prec [T_D]$ with corresponding sequence of natural collapse maps $T_D \mapsto \cdots \mapsto T_1 \mapsto T_0$, the following are easily proved to be equivalent:
\begin{enumerate}
\item $D = 3n-4$.
\item $T_D$ is generic, each map $T_d \mapsto T_{d-1}$ collapses exactly one orbit of natural edges, and $T_0$ has exactly one orbit of natural edges.
\item The simplex $[T_0] \prec [T_1] \prec \cdots \prec [T_D]$ is maximal, meaning it is not a proper face of any other simplex.
\end{enumerate}
As a consequence, the dimension of $\FS(F_n)$ equals $3n-4$ and every simplex is a face of some simplex of maximal dimension $3n-4$.

We now generalize, stating and proving analogous results for relative free splitting complexes. 

\begin{definition} 
\label{DefFSDepth}
Let $\Gamma$ be a group and $\A$ any free splitting of $\Gamma$.
\begin{enumerate}
\item The \emph{free splitting depth} of~$\A$ is defined to be the number
$$\DFS(\A) = 3 \corank(\A) + 2 \abs{\A} - 4
$$
\item A free splitting $\Gamma \act T$ rel~$\A$ is \emph{generic} if $\Fell T=\A$ and for each vertex $v$ the following holds: if $\Stab(v)$ is trivial then $v$ has valence~$\le 3$; whereas if $\Stab(v)$ is nontrivial then $\Stab(v)$ acts transitively on $D_v T$. 
\end{enumerate}
\end{definition}
\noindent
Note that for $\Gamma \act T$ to be generic, it is equivalent that in the quotient graph of groups $G = T / F_n$ the following hold: the nontrivial vertex groups are of the form $A_1,\ldots,A_I$ where $\A = \{[A_1],\ldots,[A_I]\}$; and for every vertex $V$ of $G$, if $V$ has trivial vertex group then $V$ has valence~$2$ or $3$, whereas if $V$ has nontrivial vertex group then $V$ has valence~$1$. One can always choose the vertex groups to fit into a realization of $\A$ of the form $\Gamma = A_1 * \cdots * A_I * B$ in such a way that $B$ is identified with a lift to $\Gamma$ of the fundamental group of the underlying graph of $G$.





\begin{proposition}\label{PropGenericFS}
For any free splitting $\Gamma \act T$ rel~$\A$ the following hold:
\begin{enumerate}
\item\label{ItemNatEBound}
The number of natural edge orbits of $T$ satisfies 
$$E(T) \le \DFS(\A)+1 = 3 \corank(\A) + 2 \abs{\A} - 3
$$
\item\label{ItemGenericEquivalencies}
The following are equivalent:
\begin{enumerate}
\item\label{ItemNatECount}
$E(T) = \DFS(\A)+1$.
\item\label{ItemTGeneric}
$T$ is generic.
\item\label{ItemTMaximal}
$[T]$ is maximal with respect to the partial ordering $\prec$, that is, for every free splitting $\Gamma \act U$, if there exists a collapse map $U \mapsto T$ then $[U]=[T]$.
\end{enumerate}
\item\label{ItemVMax}
The number of natural vertex orbits of $T$ satisfies 
$$V(T) \le \DFS(\A) + 2 - \corank(\A) = 2 \corank(\A) + 2\A - 2
$$
with equality if and only if $T$ is generic.
\end{enumerate}
\end{proposition}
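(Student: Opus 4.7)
The plan is to work inside the finite quotient graph of groups $G = T/\Gamma$ equipped with its relatively natural cell structure. Let $V = V(T)$, $E = E(T)$, $K = |\F(T)|$, and $V_t = V - K$ (the number of trivial-stab vertices of $G$). Two counting facts drive the argument: the underlying graph of $G$ has free fundamental group of rank $\corank(\F(T))$, giving the Euler identity $V - E = 1 - \corank(\F(T))$; and in the natural structure every trivial-stab vertex has valence~$\ge 3$ while every nontrivial-stab vertex has valence~$\ge 1$, giving $2E \ge 3V_t + K$. Substituting the Euler identity and simplifying yields $E \le 3\corank(\F(T)) + 2K - 3 = \DFS(\F(T)) + 1$. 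To pass from $\F(T)$ back to $\A$, I apply the Extension Lemma~\ref{LemmaExtension} to $\A \sqsubset \F(T)$; a direct bookkeeping in its notation gives
\[
\DFS(\A) - \DFS(\F(T)) = 3\sum_{j=1}^{J}\rank(B_j) + 2\sum_{j=1}^{J}(k_j - 1) + \sum_{j=J+1}^{K}\bigl(3\rank(A'_j) - 2\bigr),
\]
which is nonnegative and vanishes iff $\F(T) = \A$. This proves~(1).

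For (a)$\Leftrightarrow$(b) in~(2), note that $E(T) = \DFS(\A)+1$ forces equality in both steps above: $2E = 3V_t + K$ becomes sharp exactly when every trivial-stab vertex of $G$ has valence~$3$ and every nontrivial-stab vertex has valence~$1$ (upstairs: $\Stab(v)$ is transitive on $D_v T$), while the Extension-Lemma estimate forces $\F(T) = \A$; together these are the definition of generic. For (b)$\Rightarrow$(c), suppose $T$ is generic and $U$ is a free splitting rel~$\A$ with $U \succ T$. Lemma~\ref{LemmaRealCollapse}\pref{ItemRCFStoFF} gives $\F(U) \sqsubset \F(T) = \A$; combined with $\A \sqsubset \F(U)$ this forces $\F(U) = \A$, so (1) yields $E(U) \le \DFS(\A)+1 = E(T)$. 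But a proper collapse $U \to T$ has some point preimage containing at least two natural vertices of $U$, collapsing a natural edge, whence $E(U) > E(T)$; contradiction. For (c)$\Rightarrow$(b), suppose $T$ is not generic. If $\F(T) \ne \A$, apply Lemma~\ref{LemmaRealCollapse}\pref{ItemRCFFtoFS} with $\B = \A \sqsubsetneq \F(T)$ to produce $U$ with $\F(U) = \A$ and $U \succ T$, hence $[U] \ne [T]$. Otherwise $\F(T) = \A$ and some vertex $v$ of $G$ violates the valence condition, so blow it up in the graph of groups: split a trivial-stab vertex of valence $\ge 4$ into two valence-$\ge 3$ vertices joined by a new trivial-stab edge; or for a nontrivial-stab vertex with group $A$ acting with $\ge 2$ orbits on $D_v T$, split off a new trivial-stab vertex adjacent to the retained vertex (still with group $A$), redistributing the incident $A$-orbit classes into two nonempty groups. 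Either blowup yields $U \succ T$ strictly with $\F(U) = \A$, contradicting maximality.

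Finally, part~(3) follows by combining $V(T) = E(T) - \corank(\F(T)) + 1$ with the bound $E(T) \le \DFS(\F(T)) + 1$ and the Extension-Lemma consequence $\corank(\F(T)) + K \le \corank(\A) + |\A|$ to deduce
\[
V(T) \le 2\corank(\A) + 2|\A| - 2 = \DFS(\A) + 2 - \corank(\A),
\]
with the equality case tracing through the valence sharpening and the Extension-Lemma identity to give generic. The main obstacle is the nontrivial-stabilizer blowup in (c)$\Rightarrow$(b): one must partition the incident orbits of directions equivariantly in such a way that the newly introduced edge retains trivial stabilizer, and this is possible precisely because $\Stab(v)$ acts with at least two orbits on $D_v T$.
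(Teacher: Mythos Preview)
Your approach --- Euler identity plus the valence inequality $2E\ge 3V_t+K$ in the quotient, then bounding $\DFS(\F(T))\le\DFS(\A)$ via the Extension Lemma --- is correct for item~(1), for the equivalences in~(2), and for the inequality in~(3), and it is a clean alternative to the paper's route. The paper instead first constructs a generic $S$ with $S\collapses T$ (via successive blowups) and then reads everything off by comparison with~$S$; you do the counting directly and only need blowups for (c)$\Rightarrow$(b).

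Two gaps remain. In (c)$\Rightarrow$(b), the obstacle you flag is not the real one: trivial edge stabilizer is automatic in any graph-of-groups blowup of a free splitting. The actual issue is that your nontrivial-stabilizer blowup can be \emph{trivial}. When $\Stab(v)$ has exactly two direction-orbits, partitioning them ``one and one'' gives the new trivial-group vertex valence~$2$ in the quotient, so it is not relatively natural, and after passing to the natural structure the resulting splitting is equivariantly homeomorphic to~$T$. The paper's construction avoids this by pushing \emph{all} $k$ direction-orbits onto the new trivial vertex (which then has quotient valence $k+1\ge 3$), leaving the $A$-vertex with valence~$1$. Separately, your equality analysis in~(3) does not close: from $\corank(\F(T))+|\F(T)|=\corank(\A)+|\A|$ you cannot deduce $\F(T)=\A$, since adjoining infinite-cyclic components preserves $\corank+|\cdot|$. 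In fact the ``iff'' in~(3) is false as stated --- for $\Gamma=A*\langle z\rangle$ with $\A=\{[A]\}$, the edge-type splitting has $V(T)=2=2\corank(\A)+2|\A|-2$ yet is not generic --- and the paper's own argument for this direction (that $V(S)=V(T)$ forces the collapse forest to be empty) breaks on the same example, since collapsing the loop of the generic sewing-needle $S$ to the edge-type $T$ preserves the vertex-orbit count. Only the inequality in~(3) is used later (Lemma~\ref{LemmaCompArc}), and your proof of that is fine.
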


\begin{proof} In this proof we assume that all vertices and all edges of free splittings are natural, equivalently no valence~$2$ vertex has nontrivial stabilizer; if any such vertices exist, just remove them from the $0$-skeleton. Thus every vertex and every edge of the quotient graph of groups is natural, meaning that no valence~$2$ vertex has trivial vertex group. Also, all collapse maps are natural and are nontrivial if and only if they are not homeomorphisms. Having done this, for any such free splitting $T$ with quotient $G = T / \Gamma$ the numbers $E=E(T)$ and $V=V(T)$ are just the counts of edge and vertex orbits of~$T$, equivalent of edges and vertices of $G$. Let $V_k=V_k(T)$ be the number of valence~$k$ vertices of $G$, equivalently the number of $\Gamma$-orbits of vertices $v \in T$ at which the set $D_v \Gamma$ has exactly $k$ orbits under the action of $\Stab(v)$.

We first prove \pref{ItemTGeneric}$\implies$\pref{ItemNatECount}. Assuming $T$ is generic we have $V = V_1 + V_3$ and $V_1 = \abs{\A}$. We also have $E = \frac{1}{2} (V_1 + 3 V_3)$ and $\corank(\A) = E - V + 1 \,\, (= \rank(G))$. Eliminating $V$, $V_1$, and $V_3$ gives $E = \DFS(\A)+1$.

We next claim that for every free splitting $\Gamma \act T$ rel~$\A$ there exists a generic free splitting $\Gamma \act S$ rel~$\A$ and a natural collapse map $S \xrightarrow{[\sigma]} T$. From this claim we obtain the following consequences. First, item~\pref{ItemNatEBound} holds because $E(T) \le E(S) = \DFS(\A)+1$. Next, the implication \pref{ItemNatECount}$\implies$\pref{ItemTMaximal} holds, because if \pref{ItemTMaximal} does not hold then there exists $U$ and a collapse $U \xrightarrow{[\sigma]} T$ such that $[U] \ne [T]$, and so $\sigma$ is nontrivial, implying by~\pref{ItemNatEBound} that $E(T) < E(U) \le \DFS(\A)+1$. Next, \pref{ItemTMaximal}$\implies$\pref{ItemTGeneric}, because if $T$ is not generic then the collapse map $S \mapsto T$ is nontrivial and so $[T]$ is not maximal. Finally, item~\pref{ItemVMax} follows because the collapse map $S \xrightarrow{[\sigma]} T$ takes the natural vertices of $S$ onto the natural vertices of $T$ and so $V(S) \ge V(T)$, with equality if and only if $\sigma=\emptyset$ if and only if $[S]=[T]$ if and only if $T$ is generic, and 
$$V(S) = 1 - \rank(S/\Gamma) + E(S) = 1 - \corank(\A) + \DFS(\A) + 1
$$

To prove the claim we do a sequence of expansions of $T$ one at a time to build up the properties of a generic free splitting rel~$\A$. First, by applying the expansion from Lemma~\ref{LemmaRealCollapse}~\pref{ItemRCFFtoFS} we may assume that $\Gamma \act T$ satisfies $\Fell T=\A$. 

Next, by expanding $T$ we may assume that if $v \in T$ is a vertex with nontrivial stabilizer, and so $[\Stab(v)] \in \A$, then the number $k_v$ of $\Stab(v)$-orbits in the set $D_v\Gamma$ satisfies $k_v=1$. Otherwise, if $k_v \ge 2$, choose orbit representatives $d_1,\ldots,d_k \in D_v\Gamma$, do a simultaneous partial fold of these directions by identifying proper initial segments into a single segment $e$, having one vertex with the same stabilizer as $v$ and opposite vertex of valence $k+1$ and with trivial stabilizer. Extending these identifications equivariantly, the resulting free splitting is an expansion of $T$ because by collapsing the orbit of $e$ we recover~$T$. 

Finally, we may assume that if $v$ is a vertex with trivial stabilizer and valence~$\ge 3$ then $v$ has valence~$3$, for otherwise we may group $D_v\Gamma$ into two sets of cardinality $\ge 2$ and expand $T$ by pulling these two sets apart, inserting a new edge, and extending this expansion equivariantly over the orbit of~$v$. This expansion decreases the lexicographically ordered sequence $(V_3(T), V_4(T),\ldots)$.
\end{proof}

\begin{definition} Let $\Gamma$ be a group.
\begin{enumerate}
\item A natural collapse map $S \xrightarrow{[\sigma]} T$ of free splittings of $\Gamma$ is \emph{elementary} if $\sigma$ consists of a single orbit of natural edges. 
\item A \emph{one edge free splitting} is a free splitting $\Gamma \act T$ with exactly one natural edge orbit.
\end{enumerate}
\end{definition}

\smallskip


\begin{proposition}
\label{PropMaximizingSimplices}
For each $D$-simplex $[T_0] \prec [T_1] \prec \cdots \prec [T_D]$ in $\FS(\Gamma;\A)$ with corresponding sequence of natural collapse maps 
$$T_D \xrightarrow{[\sigma_D]} T_{D-1} \xrightarrow{[\sigma_{D-1}]} \cdots \xrightarrow{[\sigma_2]} T_1 \xrightarrow{[\sigma_1]} T_0
$$
the following are equivalent:
\begin{enumerate}
\item\label{ItemDIsDFS}
$D = \DFS(\A)$.
\item\label{ItemSmallSteps}
Each of the following holds: \, (a) $T_D$ is generic rel~$\A$; \, (b) each collapse map $T_d \mapsto T_{d-1}$ is elementary, for $d=1,\ldots,D$; \, (c) $T_0$ is a one-edge free splitting.
\item\label{ItemMaximalSimplex}
The simplex $[T_0] \prec [T_1] \prec \cdots \prec [T_D]$ is maximal, meaning it is not a face of any other simplex.
\end{enumerate}
As a consequence, the dimension of $\FS(\Gamma;\A)$ equals $\DFS(\A)$, and every simplex is a face of a simplex of maximal dimension $\DFS(\A)$.
\end{proposition}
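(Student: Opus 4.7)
The plan hinges on the following claim, which I will prove first: if $T \xrightarrow{[\sigma]} T'$ is a proper relatively natural collapse of free splittings rel~$\A$ and $\sigma$ consists of $k$ orbits of natural edges, then $E(T') = E(T) - k$. By iterating it suffices to handle the case $k = 1$, which amounts to checking that the merged vertex produced by collapsing a single orbit of natural edges is again natural in~$T'$ (valence $\ge 3$ or nontrivial stabilizer), so that no other natural edges of $T$ get absorbed into non-natural edges of~$T'$. I would verify this by case analysis on the collapsed edge $e$ in the quotient $T/\Gamma$: if $e$ has two distinct endpoints both with trivial vertex groups, naturality of $T$ forces each to have valence $\ge 3$, so the merged vertex has valence $\ge 4$; if at least one endpoint of $e$ has nontrivial vertex group, the merged vertex inherits that group; and if $e$ is a loop at a vertex $v$, Bass--Serre theory identifies the new vertex group after the collapse as $\Stab(v) * \Z$ with the $\Z$ factor arising from the stable letter, which is nontrivial regardless of $\Stab(v)$.

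Granting the claim, the equivalence~\pref{ItemDIsDFS}$\Leftrightarrow$\pref{ItemSmallSteps} follows from a straightforward count. Given any chain $[T_0] \prec \cdots \prec [T_D]$, summing the claim over each step gives $E(T_D) - E(T_0) \ge D$, and the bounds $E(T_0) \ge 1$ and $E(T_D) \le \DFS(\A) + 1$ from Proposition~\ref{PropGenericFS} yield $D \le \DFS(\A)$. Equality here forces simultaneously $E(T_0) = 1$, $E(T_D) = \DFS(\A) + 1$, and each $E(T_d) - E(T_{d-1}) = 1$, which by the claim and by the characterization of generic splittings in Proposition~\ref{PropGenericFS} translate exactly into the three bullets of~\pref{ItemSmallSteps}. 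Conversely those three bullets force $D = \DFS(\A)$ by the same arithmetic.

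For the equivalence~\pref{ItemDIsDFS}$\Leftrightarrow$\pref{ItemMaximalSimplex}, the forward direction is immediate from $D \le \DFS(\A)$, since a chain of that length admits no extension. For the reverse direction I argue by contrapositive: if $D < \DFS(\A)$ then at least one of the bullets of~\pref{ItemSmallSteps} fails, and in each case an explicit extension of the simplex contradicts maximality. Namely, if $T_D$ is not generic, use the construction inside the proof of Proposition~\ref{PropGenericFS} to produce a generic $S$ with $T_D \prec S$ properly, and append it; if $E(T_0) \ge 2$, collapse one natural edge orbit of $T_0$ to produce $T' \prec T_0$ in $\FS(\Gamma;\A)$ (noting $\A \sqsubset \F(T_0) \sqsubset \F(T')$) and prepend it; and if some $T_d \xrightarrow{[\sigma_d]} T_{d-1}$ has $\sigma_d$ containing $\ge 2$ orbits of natural edges, pick a proper nonempty subcollection $\sigma'$ of those orbits and insert $T_d/\sigma'$, which sits strictly between $T_{d-1}$ and $T_d$ in~$\prec$ and lies in $\FS(\Gamma;\A)$ because $\A \sqsubset \F(T_d) \sqsubset \F(T_d/\sigma')$.

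The ``as a consequence'' sentence then falls out quickly. The bound $\dim \FS(\Gamma;\A) \le \DFS(\A)$ is the inequality already established, and the matching lower bound comes from exhibiting a chain satisfying~\pref{ItemSmallSteps}, which I construct by starting with any generic free splitting from Proposition~\ref{PropGenericFS} and iteratively performing elementary collapses down to a one-edge splitting. Finally, every simplex is a face of a maximal one by repeatedly applying the extension procedures of the previous paragraph until none of the bullets of~\pref{ItemSmallSteps} fails. The main obstacle is the claim itself, particularly its loop case, where a careful Bass--Serre analysis is essential to recognize that the merged vertex stabilizer is nontrivial; without this one would worry that an elementary loop collapse could create a valence-$2$ trivial-stabilizer vertex and spoil the edge count, and the arithmetic driving the rest of the proof would break down.
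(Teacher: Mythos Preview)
Your proposal is correct and follows essentially the same route as the paper. The paper also reduces everything to the edge-count identity $E(T) + E(\sigma) = E(S)$ for a relatively natural collapse $S \xrightarrow{[\sigma]} T$, then runs the same arithmetic $D+1 \le E(\sigma_D) + \cdots + E(\sigma_1) + E(T_0) = E(T_D) \le \DFS(\A)+1$ to get \pref{ItemDIsDFS}$\Leftrightarrow$\pref{ItemSmallSteps}, and handles \pref{ItemSmallSteps}$\Leftrightarrow$\pref{ItemMaximalSimplex} by the same three failure-case extensions you describe.

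The one substantive difference is that the paper simply asserts $E(T) + E(\sigma) = E(S)$ without comment, whereas you isolate it as a claim and give a case analysis (two trivial endpoints, a nontrivial endpoint, a loop) to verify that the merged vertex is again relatively natural. Your care here is well placed: the loop case genuinely requires the Bass--Serre observation that the new vertex group is $\Stab(v) * \Z$, and without it the identity is not obvious. So your argument is a bit more self-contained than the paper's, but the overall architecture is the same.
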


\begin{proof} As in the proof of Proposition \pref{PropGenericFS}, we assume that all edge and vertices are natural, and we continue to use the notation $E(T),V(T)$ as in that proof.

The scheme of the proof is \pref{ItemDIsDFS}$\iff$\pref{ItemSmallSteps}$\iff$\pref{ItemMaximalSimplex}.

Assuming item \pref{ItemSmallSteps} we shall prove \pref{ItemDIsDFS}. By applying Proposition~\ref{PropGenericFS} one concludes $E(T_D)=\DFS(\A)+1$, and then one notices that from \pref{ItemSmallSteps} it follows that the edge orbits of $T_D$ are collapsed one-at-a-time until only one remains, implying that the number $D$ of collapse maps equals $\DFS(\A)$. 

Assuming \pref{ItemDIsDFS} we shall prove \pref{ItemSmallSteps}. For any natural collapse map $S \xrightarrow{[\sigma]} T$, letting $E(\sigma)$ be the number of natural edge orbits of $S$ contained in the $\Gamma$-equivariant natural subforest $\sigma$, we have $E(T) + E(\sigma)=E(S)$; recall also that $E(\sigma) =0 \iff \sigma = \emptyset \iff [S]=[T]$. Using that each of $E(\sigma_D),\ldots,E(\sigma_1),E(T_0)$ is $\ge 1$ we have
\begin{align*}
D + 1 &\le E(\sigma_D) + \cdots + E(\sigma_1) + E(T_0) \\
          &= E(T_D) \\
          &\le \DFS(\A) + 1 \qquad\text{(by Proposition~\ref{PropGenericFS}~\pref{ItemNatEBound})} \\
          &= D+1 \qquad\text{(by assumption of \pref{ItemDIsDFS})}
\end{align*}
and so all inequalities are equations. Applying Proposition~\ref{PropGenericFS}~\pref{ItemGenericEquivalencies}, it follows $T_D$ is generic. It also follows that $E(\sigma_D)=\cdots=E(\sigma_1)=E(T_0)=1$, which proves~\pref{ItemSmallSteps}.

Assuming \pref{ItemSmallSteps} holds, we prove \pref{ItemMaximalSimplex} as follows. Since $T(D)$ is generic, there does not exist any proper collapse map of the form $S \mapsto T_D$ for that would imply $E(S) > \DFS(\A)+1$, contradicting Proposition~\ref{PropGenericFS}~\pref{ItemNatEBound}. Since $T_d \mapsto T_{d-1}$ is elementary, there exist any factorization of $T_d \mapsto T_{d-1}$ into proper collapse maps of the form $T_d \mapsto S \mapsto T_{d-1}$ because that would imply $E(T_d) \ge E(T_{d-1}) + 2$, contradicting that $E(T_d) = E(T_{d-1})+1$. Nor does there exist any proper collapse map of the form $T_0 \mapsto S$, for that would imply $E(S) \le E(T_0)-1 = 1 - 1 = 0$. It follows that the simplex $[T_0] \prec \cdots \prec [T_D]$ is maximal.

Assuming \pref{ItemSmallSteps} fails, we prove that~\pref{ItemMaximalSimplex} fails as follows. One of (a), (b), or (c) must fail. If $T_D$ is not generic then by Proposition~\ref{PropGenericFS}~\pref{ItemTMaximal} there exists a free splitting $S$ and a proper natural collapse map $S \mapsto T_D$. If $T_d \xrightarrow{[\sigma]} T_{d-1}$ is not elementary then, first collapsing a single edge orbit of $\sigma$, there a sequence of proper natural collapse maps $T_d \mapsto S \mapsto T_{d-1}$. If $T_0$ has more than one edge orbit then, collapsing just one edge orbit, there exists a proper natural collapse map $T_0 \mapsto S$. In each case we obtain a simplex of one dimension higher containing the simplex $[T_0] \prec \cdots \prec [T_D]$.
\end{proof}

\subsection{The relative outer automorphism group $\Out(\Gamma;\A)$.}
\label{SectionRelOut}
Now that the sets of free factor systems and free splittings rel~$\A$ have been defined together with various relations and operations on them, we pause here to carefully define the relative outer automorphism group $\Out(\Gamma;\A)$ and its actions on those sets. We also define the action of the group $\Out(\Gamma;\A)$ on the relative free splitting complex $\FS(\Gamma;\A)$, although the definition of its action on the complex of free factor systems rel~$\A$ will await the definition of that complex to be given in Section~\ref{SectionFFRelComplex}.

The group $\Out(\Gamma)$ has a canonical left action on the set of free factor systems $\A$, namely: given $\phi \in \Out(\Gamma)$, choosing a representative $\Phi \in \Aut(\Gamma)$, and choosing a realization $\Gamma = A_1 * \cdots * A_I * B$ of $\A$, one defines
$$\phi(\A) = \{[\Phi(A_1)],\ldots,[\Phi(A_I)]\}
$$
This action is well-defined independent of choices, the left action equations $\phi (\psi(\A)) = (\phi\psi)(\A)$ and $\Id(\A)=\A$ hold, and the action preserves the extension partial order $\sqsubset$ and the meet operation~$\meet$. 

\smallskip\textbf{Relative outer automorphism groups.} Given a free factor system $\A$ of $\Gamma$, the subgroup of $\Out(\Gamma)$ that fixes $\A$ is denoted $\Out(\Gamma;\A)$ and is called the \emph{outer automorphism group of $\Gamma$ rel~$\A$}. This is the group studied by Guirardel and Levitt in \cite{GuirardelLevitt:outer} who derive information about the virtual cohomological dimension of $\Out(\Gamma;\A)$ using information about the virtual cohomological dimensions of the groups $A_i$, $\Aut(A_i)$, and $\Out(A_i)$, $i=1,\ldots,I$.

\smallskip\textbf{Action on relative free splitting complexes.} The group $\Out(\Gamma)$ has a canonical right action on the set of equivalence classes of free splittings of $\Gamma$ as follows. Consider the equivalence class $[T]$ of a free splitting $\Gamma \act T$ with associated homomorphism $\alpha \from \Gamma \to \Aut(T)$; incorporating $\alpha$ into the notation we write $\Gamma \act_\alpha T$. Consider also $\phi \in \Out(\Gamma)$ represented by $\Phi \in \Aut(\Gamma)$. Precomposing $\alpha$ by $\Phi$ we obtain a homomorphism $\alpha \composed \Phi \from \Gamma \to \Aut(T)$ which defines a free splitting $\Gamma \act_{\alpha\composed\Phi} T$, the equivalence class of which is defined to be $[T] \cdot \phi$. This free splitting is well-defined, the right action equations $[T] \cdot (\phi \psi) = ([T] \cdot \phi) \cdot \psi$ and $[T] \cdot \Id = [T]$ hold, and the action preserves the collapse partial order $\succ$. We obtain thereby an induced right action of $\Out(\Gamma)$ on linear chains of free splittings as follows: 
$$\biggl( [T_0]  \, \succ  \, [T_1] \succ  \, \cdots  \, \succ [T_K] \biggr) \cdot \phi = [T_0] \cdot \phi  \, \succ \,  [T_1] \cdot \phi  \, \succ  \, \cdots  \, \succ \,  [T_K] \cdot \phi
$$
Finally, for any free factor system $\A$ of $\Gamma$, we obtain by restriction a right action of $\Out(\Gamma;\A)$ on linear chains of free splittings rel~$\A$. These chains define simplices of the relative free splitting complex $\FS(\Gamma;\A)$, and so we immediately obtain the right action of $\Out(\Gamma;\A)$ on $\FS(\Gamma;\A)$ by simplicial isomorphisms.

\smallskip\textbf{Action on chains of relative free factor systems.} The action of $\Out(\Gamma)$ on free factor systems preserving $\sqsubset$ induces an action on linear chains of free factor systems: 
$$\phi \bigl( \A_0 \sqsubset \A_1 \sqsubset \cdots \sqsubset \A_K \bigr) = \phi(\A_0) \sqsubset \phi(\A_1) \sqsubset \cdots \sqsubset \phi(\A_K)
$$
For any given free factor system $\A$ we obtain by restriction a left action of $\Out(\Gamma;\A)$ on linear chains of free splittings rel~$\A$. Once the formal definitions are given in Section~\ref{SectionFFRelComplex}, we will immediately obtain the left action of $\Out(\Gamma;\A)$ on the complex of relative free factor systems $\CFFS(\Gamma;\A)$ by simplicial isomorphisms.

\bigskip

We record here one fact for later use, which is a simple consequence of the definitions:

\begin{lemma}
\label{LemmaFofTEquivariance}
The function $[T] \mapsto \Fell T$ satisfies the inverted equivariance condition with respect to the actions of $\Out(\Gamma)$: given an equivalence class of free splittings $[T]$ and $\phi \in \Out(\Gamma)$ we have the following equation of free factor systems: 

\smallskip
\hfill $\Fell\bigl([T] \cdot \phi\bigr) = \phi^\inv\bigl(\Fell T\bigr)$ \hfill \qed
\end{lemma}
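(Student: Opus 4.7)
The plan is to unpack both sides of the claimed equation from first principles and compare set-theoretically, chasing only the action conventions carefully.

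First I would fix a representative $\Phi \in \Aut(\Gamma)$ of $\phi$, and realize the equivalence class $[T]$ by a specific free splitting $\Gamma \act_\alpha T$ with structural homomorphism $\alpha \from \Gamma \to \Aut(T)$. By the definition of the right action of $\Out(\Gamma)$ on equivalence classes of free splittings (given in Section~\ref{SectionRelOut}), the class $[T] \cdot \phi$ is represented by the free splitting $\Gamma \act_{\alpha \composed \Phi} T$, obtained by precomposing the original homomorphism with~$\Phi$.

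Next I would compute the vertex stabilizers in the precomposed action. For any vertex $v \in T$, the stabilizer under $\alpha \composed \Phi$ is
$$\Stab_{\alpha \composed \Phi}(v) = \{g \in \Gamma \suchthat \alpha(\Phi(g))(v) = v\} = \Phi^\inv\bigl(\Stab_\alpha(v)\bigr),$$
using that $\Phi$ is a bijection of $\Gamma$. In particular $\Stab_{\alpha \composed \Phi}(v)$ is trivial if and only if $\Stab_\alpha(v)$ is trivial, so the set of vertices with nontrivial stabilizer is the same for both actions, and the orbits under $\Gamma$ of such vertices coincide (the underlying set $T$ and orbit partition are unchanged). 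Thus if $\F[T] = \{[A_1],\ldots,[A_K]\}$ arises from $\Gamma$-orbit representatives $v_1,\ldots,v_K$ with $A_k = \Stab_\alpha(v_k)$, then
$$\F\bigl([T] \cdot \phi\bigr) = \bigl\{ [\Phi^\inv(A_1)], \ldots, [\Phi^\inv(A_K)] \bigr\}.$$
On the other side, the definition of the left action of $\Out(\Gamma)$ on free factor systems, applied to $\phi^\inv$ (represented by $\Phi^\inv$), gives exactly the same set:
$$\phi^\inv\bigl(\F[T]\bigr) = \bigl\{ [\Phi^\inv(A_1)],\ldots,[\Phi^\inv(A_K)] \bigr\}.$$
Comparing these two expressions completes the proof.

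The only thing that requires a bit of care, rather than being an obstacle per se, is verifying independence of the representative $\Phi$: a different representative is of the form $i_\gamma \composed \Phi$ for some inner automorphism $i_\gamma$, and conjugating each $A_k$ by $\gamma^\inv$ does not change the conjugacy class $[\Phi^\inv(A_k)]$, so both sides are unambiguously defined on the outer level. This well-definedness is already implicit in the definitions of the two actions recorded in Section~\ref{SectionRelOut}, so the proof reduces to the one-line computation $\Stab_{\alpha \composed \Phi}(v) = \Phi^\inv(\Stab_\alpha(v))$ together with matching conventions.
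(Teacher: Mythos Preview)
Your proof is correct and is exactly the unpacking the paper has in mind: the paper records this lemma as ``a simple consequence of the definitions'' and marks it with a \qed\ without writing anything further. Your computation $\Stab_{\alpha\composed\Phi}(v)=\Phi^{-1}(\Stab_\alpha(v))$ together with the observation that the $\Gamma$-orbits on $T$ are unchanged (since $\Phi$ is a bijection of $\Gamma$) is precisely the content of that remark.
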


\section{Fold paths and free splitting units}
\label{SectionFoldPathUnits}
In this section we fix a group $\Gamma$ and a free factor system $\A$ in $\Gamma$, and we study fold paths in the relative free splitting complex~$\FS(\Gamma;\A)$. Section~\ref{SectionFoldSequences} contains the basic definitions, generalizing fold paths following \FSOne\ but also following \cite{BestvinaFeighn:subfactor} to the extent of dropping the ``gate 3 condition'' of \FSOne. In Section~\ref{SectionFSLow} we use fold paths to give an explicit description of $\FS(\Gamma;\A)$ in the simplest cases where the free factor system~$\A$ is very close to maximal in~$\Gamma$. In Section~\ref{SectionCombing} we generalize the concepts of combing of fold paths following \FSOne. In Section \ref{SectionSubgraphComplexity} we consider a measurement of the complexity of a $\Gamma$-invariant subforest of a free splitting $\Gamma \act T$, and we study how this complexity can change along a fold path. In Section \ref{SectionFSU} we use change of complexity to define free splitting units along fold paths; in later sections these units are shown to give efficient upper and lower bounds to distance along fold paths. We note that while free splitting units as defined here are \emph{a fortiori} comparable to free splitting units as defined in \FSOne, the definition here is somewhat simpler and easier to work with.

\subsection{Fold sequences} 
\label{SectionFoldSequences}

Stallings was the first to construct fold factorizations to analyze equivariant maps between group actions on trees \cite{Stallings:folds}. Bestvina and Feighn made a careful study of fold factorizations in \cite{BestvinaFeighn:bounding}. The choices made in those constructions make clear that the resulting fold path is not unique. 

It is common in the literature to fix the failure of uniqueness by using various schemes to enforce unique choices of fold paths. The earliest example we know of this enforcement procedure is Skora's preprint \cite{Skora:deformations} which (implicitly) uses uniquely chosen fold paths to give a new proof of the Culler--Vogtman theorem \cite{CullerVogtmann:moduli} on the contractibility of the outer space of~$F_n$, and see \cite{GuirardelLevitt:DefSpaces} for an explicit version of Skora's proof.

In this work and its sequel \cite{\STLOneTag} we shall take the opposite tack, exploiting the freedom of choice inherent in nonuniqueness of fold paths for various applications. One example of this exploit is the basic distance bound for $\FS(F_n)$ that is found in \cite[Lemma 5.2 (3)]{\FSOneTag}, and that we generalize here for $\FS(\Gamma;\A)$ in Lemma~\ref{LemmaComplexityBounds}~\pref{ItemDiameterBound}. Another such exploit occurs in Step~1 of the proof of the \emph{Two Over All Theorem} in \cite[Section 5.4]{\STLOneTag}: see also \cite[Definition 5.2]{\STLOneTag} which formalizes these kinds of exploits with the concept of \emph{fold prioritization}; and see \cite[Theorem 2.17]{\STLOneTag} for an expanded version of Stallings Fold Theorem which emphasizes the freedom of choice that allows for fold prioritization. 

\paragraph{Foldable maps and fold maps.} Consider two free splittings $\Gamma \act S,T$ and a map $f \from S \to T$ which is injective on each edgelet. For each $p \in S$ there is an induced ``derivative'' $df_p \from D_p S \to D_{f(p)} T$, which maps the initial direction of each oriented edgelet $E \subset S$ with initial vertex $p$ to the initial direction of the path $f \restrict E$. The point pre-images of the map $df_p$ are called the \emph{gates} of $f$ at $p$. Two directions in $D_p S$ are said to form a \emph{foldable turn} (with respect to $f$) if those directions are contained in the same gate at $p$; otherwise those directions form an \emph{unfoldable turn}. Because $T$ is a free splitting, the two directions of a foldable turn cannot be in the same $\Gamma$-orbit of the action on the set of directions of~$S$.

We say that the map $f \from S \to T$ is \emph{foldable} if it is injective on each edgelet and has at least~$2$ gates at each vertex. A foldable map $f \from S \to T$ is a \emph{fold} if there exist oriented natural edges $E,E' \subset S$ with common initial vertex $E \intersect E' = V$, and initial segments $e \subset E$, $e' \subset E'$, and an orientation preserving homeomorphism $h \from e \to e'$, such that the equivalence relation on $S$ defined by $x \cong y \iff f(x)=f(y)$ is generated by the relation $g \cdot x \sim g \cdot h(x)$ for all $x \in e$ and $g \in \Gamma$. Also, we say more specifically that $f$ \emph{folds the turn $\{E,E'\}$}, and still more specifically that $f$ \emph{folds the segments $e,e'$}, emphasizing here the fact that $f$ does not identify any point of $E-e$ with any point of $E'-e'$. Noting that $df_v$ identifies the initial directions of $E$ and $E'$, those directions are in different $\Gamma$-orbits. 

We briefly review the Bestvina-Feighn classification of folds given in \cite[Section~2]{BestvinaFeighn:bounding}, incorporating certain simplifications that occur in our current setting of free splittings. Our statement of the classification makes use of the following fact:
\begin{description}
\item[Claim:] Continuing with the notation of a fold $f \from S \to T$ as above, and letting $W,W'$ be the terminal endpoints of $e,e'$ respectively, the path $e \, \union \, e'$ intersects each of its nontrivial translates in a subset of the set $\{W,V,W'\}$.
\end{description}
The proof is found below. 

Applying this claim in the meanwhile, folds are classified depending on the partition of the three point set $\{W,V,W'\}$ into orbit types under the action of $\Gamma$, i.e.\ its partition into point preimages under the orbit map $\pi \from S \to S/\Gamma$ to the quotient graph of groups: 
\begin{description}
\item[$f$ has type IA --- a vee fold] if $\pi$ is one-to-one on $\{W,V,W'\}$. 
\item[$f$ has type IB --- a loop fold] if $\pi(W)=\pi(V)\ne\pi(W')$  or $\pi(W) \ne \pi(V)=\pi(W')$.
\item[$f$~has type IIIA --- a bigon fold] if $\pi(W)=\pi(W')$ (which may or may not equal $\pi(V)$). 
\end{description}
This covers all classes from \cite[Section~2]{BestvinaFeighn:bounding} that can actually occur in our current setting: the remaining classes, known as types II, IIIB, and IIIC, all involve nontrivial edge stabilizers, which do not occur in a free splitting. Nicknames refer to the appearance of projected image $\pi(e \, \union \, e')$ in $S / \Gamma$, as follows. For type IA, the ``vee'' appearance of $e \union e'$ is retained under embedding to $S/\Gamma$. For type IB, if, say, $\pi(W)=\pi(V)$ then $\pi(e)$ is a loop in $S/\Gamma$, and $\pi(e')$ is an edge sticking off of that loop. For type IIIA, $\pi(e)$ and $\pi(e')$ are two paths in $S/\Gamma$ forming a bigon, i.e. a closed loop $\pi(e) \overline{\pi(e')}$: either a simple closed loop in $S/\Gamma$, when $\pi(V) \ne \pi(W)=\pi(W')$; or a loop going exactly once each across two edges of a rank~$2$ rose subgraph of $S/\Gamma$, when $\pi(V)=\pi(W)=\pi(W')$. Note also that for type IIIA, $W$ need not be a natural vertex: in the graph of groups $S/\Gamma$, the vertex $\pi(W)=\pi(W')$ could have valence~$2$ and be labelled by the trivial group.

In all cases of the classification of folds, the extension $\Fell S \sqsubset \Fell T$ can be described explicitly. For types IA or IB: if at least one of $\Stab(W)$, $\Stab(W')$ is trivial then $\Fell S=\Fell T$ (and this is the only case of equality); otherwise $[\Stab(W)]$, $[\Stab(W')]$ are two components of $\Fell S$, and $\Fell T$ is obtained from $\Fell S$ by replacing those two with the strictly larger component $[\<\Stab(W) \union \Stab(W')\>]$. For a fold of type IIIA, let $g \in \Gamma$ be such that $g(W)=W'$. If $\Stab(W)$ is nontrivial and hence $[\Stab(W)] \in \Fell S$, then $\Fell T$ is obtained from $\Fell S$ by replacing the component $[\Stab(W)]=[g^\inv \Stab(W') g]=[\Stab(W')]$ with the strictly larger component $[\<\Stab(W)\> * \<g\>\>]$. If on the other hand $\Stab(W)$ is trivial then $\Fell T$ is obtained from $\Fell S$ by adding one new component, namely $[\<g\>]$.

\begin{proof}[Proof of the Claim] Note first that the Claim is equivalent to the statement that the open arc $(e \union e') - \{W,W'\}$ is disjoint from each of its translates by nontrivial elements of~$\Gamma$. 

Arguing by contradiction, the only way this could fail is if there existed some $\gamma \ne \Id \in \Gamma$ such that $\gamma \cdot E' = E$ or $E^\inv$ and such that $\gamma \cdot e'$ has nontrivial overlap with~$e$. In the first case where $\gamma \cdot E'=E$ we obtain an equation of oriented arcs $\gamma \cdot f(E') = f(\gamma \cdot E') = f(E)$. It follows that in $T$ the oriented arcs paths $\gamma \cdot f(E')$ and $f(E)$ have the same initial direction, contradicting that $\Gamma$ acts freely on the set of directions of~$T$ (see the opening paragraph of Section~\ref{SectionFSSucc}). 

In the second case where $\gamma \cdot E' = E^\inv$, note that $\gamma$ acts loxodromically on $S$ with $E$ as a fundamental domain for the action of $\gamma$ on its axis in~$S$. We shall prove that $\gamma^2$ fixes a direction of $T$, again contradicting that $\Gamma$ acts freely on the set of directions of~$T$. The proof requires a careful examination of the axis of $\gamma$ in~$S$. We write that axis as a concatenation of oriented natural edges $\cdots E_{-2} \, E_{-1} \, E_0 \, E_1 \, E_2 \cdots$ with $E=E_0$, such that the labelling is $\<\gamma\>$-equivariant in the sense that $\gamma^i \cdot E_0 =E_i$. The natural vertices along the axis are written $\<\gamma\>$-equivariant as $V_i$, so that the initial and terminal vertices of $E_i$ are $V_i$ and $V_{i+1}$ respectively. By composition we get a homeomorphism $e \xrightarrow{h} e' \xrightarrow{\gamma} \gamma \cdot e'$ from the initial oriented segment $e \subset E_0$ to the orientation reversed terminal segment $\gamma \cdot e' \subset E_0$, and since these segments have overlapping interiors, that overlap contains a point $M_0$ fixed by the homeomorphism. We may therefore subdivided $E$ into four oriented subpaths
$$\xymatrix{
V_0 \ar[r]^-{\kappa_0} & W'_0 \ar[r]^-{\lambda_0} & M_0 \ar[r]^-{\mu_0} & W_0 \ar[r]^-{\nu_0} & V_1
}$$
such that $e = \kappa_0 \, \lambda_0 \, \mu_0$ and $\gamma \cdot e' = \bar\nu_0 \, \bar\mu_0 \, \bar \lambda_0$, with $W=W_0$ and $\gamma \cdot W' = W'_0$. We extend these subdivisions and labellings to each edge $E_i$ in a $\<\gamma\>$-equivariant manner, as shown in the following diagram: 

$$\xymatrix@C-28pt{
V_{-1}  \ar[dr]_{\kappa_{-1}}&&&&&&&& V_1 \ar[dr]_{\kappa_1}  \\
& W'_{-1} \ar[dr]_{\lambda_{-1}}&&&&&& W_1  \ar[ur]^{\nu_0}&&   W'_{-2} \ar[dr]_{\lambda_1}\\
&& M_{-1} \ar[dr]_{\mu_{-1}}&&&&  M_0 \ar[ur]^{\mu_0}&&&& M_1 \ar[dr]_{\mu_1} \\
&&& W_{-1}  \ar[dr]_{\nu_{-1}}&&  W'_0  \ar[ur]^{\lambda_0}&&&&&&  W_1  \ar[dr]_{\nu_1} \\
&&&& V_0 \ar[ur]^{\kappa_0} &&&&&&&&  V_2
}$$
We note that $e' = h(e) = \bar\nu_{-1} \, \bar\mu_{-1} \bar \lambda_{-1}$.
By assumption the map $f$ folds $e'$ and $e$, identifying them into a single oriented segment of $T$. The two oriented subsegments $\bar\lambda_{-1}$ and $\mu_0$ are therefore identified by $f$ to a single oriented segment of $T$. Also, $f$ folds the two oriented segments $\gamma \cdot e'$ and $\gamma \cdot e$ of $S$ into a single oriented segment of $T$; the two oriented segments $\mu_0$ and $\bar\lambda_1$ of $S$ are therefore identified by $f$ into a single oriented segment of $T$. In $T$ we have verified the following equations of oriented segments:
$$\gamma^\inv \cdot f(\bar\lambda_0) = f(\gamma^\inv \cdot \bar\lambda_0) = f(\bar\lambda_{-1}) = f(\mu_0) = f(\bar\lambda_1) = f(\gamma \cdot \bar\lambda_0) = \gamma \cdot f(\bar\lambda_0) 
$$
This shows that $\gamma^2$ fixes the initial direction of the path $f(\bar\lambda_0)$ in $T$, the final contradiction that completes the proof of the claim. We remark that this fold map $f$ does exist outside of the realm of free splittings, being the composition of a bigon fold $f_1 \from S \to U$ of the segments $\kappa_0\,\lambda_0$ and $\bar\nu_{-1}\bar\mu_{-1}$ which creates a vertex $f_1(M_0) \in U$ stabilized by $\<\gamma\>$, followed by another fold of type IIA --- in the classification scheme of \cite[Section~2]{BestvinaFeighn:bounding} --- that pulls $\gamma^2$ out of the vertex~$f_1(M)$.
\end{proof}

\paragraph{Fold sequences.} A \emph{foldable sequence} is an indexed sequence of maps of free splittings of the form
$$T_I \xrightarrow{f_{I+1}} T_{I+1} \xrightarrow{f_2} \cdots \xrightarrow{f_K} T_K
$$
such that each map $f^i_j  = f_j \composed \cdots f_{i+1} \from T_i \to T_j$ is foldable, $I \le i \le j \le J$. In discussing foldable sequences we often restrict our attention to a subsequence $T_i \mapsto\cdots\mapsto T_j$ parameterized by an integer subinterval $[i,j] = \{k \in \Z \suchthat i \le k \le j\} \subset [I,J]$. A \emph{fold sequence} is a foldable sequence denoted as above, in which each of the maps $f_i \from T_{i-1} \to T_i$ is a fold map ($I < i \le K$). Lemma~\ref{ThmFoldPathExists} to follow is an instance of Stallings' fold method, and implies that every foldable sequence of $K$ foldable maps (as denoted above) can be \emph{interpolated} by a fold sequence, meaning that for each $k=1,\ldots,K$ the foldable map $T_{k-1} \mapsto T_k$ may be factored as a fold sequence, and these $K$ fold sequences may then be concatenated to obtain a fold sequence from $T_I$ to $T_K$.

\smallskip\emph{Remark on the ``gate 3 condition''.} In \FSOne, in the setting of $\Gamma=F_n$, the definition of a foldable map $f \from S \to T$ had an additional requirement, the following ``gate 3 condition'': for any vertex $p \in S$ of valence~$\ge 3$ the map $f$ has at least three gates at~$p$. Here we follow Bestvina and Feighn \cite{BestvinaFeighn:subfactor} to the extent of weakening the definition of \FSOne\ by dropping the ``gate 3 condition''. In what follows we will occasionally explain how this change effects proofs. For the most part these are desirable changes, but there are occasional exceptions; see after the statement of Lemma~\ref{LemmaCoarseRetract} for a significant exception. Two desirable effects of dropping the gate 3 condition are as follows. First, it allows for a broader collection of fold sequences in the free splitting complex; this was an important motivation for dropping that condition in \cite{BestvinaFeighn:subfactor}. Second, the interpolation of the previous paragraph does not generally work when foldable maps are required to satisfy the gate~3 condition.

\bigskip

The following commonly used relativization tool is an immediate consequence of Lemma~\ref{LemmaRealCollapse}~\pref{ItemRCFFtoFS}:

\begin{lemma}\label{LemmaGoodStabilizers}
If $S \in \FS(\Gamma;\A)$ and $T \in \FS(\Gamma)$, and if there exists a map $f \from S \to T$, then $T \in \FS(\Gamma;\A)$. \qed
\end{lemma}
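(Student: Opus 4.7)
The plan is to chain two pieces of scaffolding already in place: Lemma~\ref{LemmaRealCollapse}\pref{ItemRCFStoFF}, which converts any map of free splittings into a containment of vertex stabilizer systems, together with transitivity of the extension partial order $\sqsubset$. The statement to prove is a definitional unwinding once those two tools are in hand, so this really is a one-line proof and the main work is to cite the correct item.

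Explicitly, I would proceed as follows. First, unpack the hypothesis $S \in \FS(\Gamma;\A)$ using the definition from Section~\ref{SectionFSRelComplex}: it is exactly the assertion that $\A \sqsubset \F(S)$, i.e.\ that $\A$ is elliptic in~$S$. Next, apply Lemma~\ref{LemmaRealCollapse}\pref{ItemRCFStoFF} to the given map $f \from S \to T$, which yields the extension $\F(S) \sqsubset \F(T)$. Finally, combine these two extensions by transitivity of $\sqsubset$ to obtain $\A \sqsubset \F(T)$, which is precisely the defining condition for $T$ to be a free splitting of $\Gamma$ rel~$\A$, and therefore for $[T]$ to be a vertex of $\FS(\Gamma;\A)$.

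There is essentially no obstacle here, but one minor thing warrants attention: one must apply the correct item of Lemma~\ref{LemmaRealCollapse}. The item that converts a map between free splittings into containment of their vertex stabilizer systems is~\pref{ItemRCFStoFF}, and it requires only an arbitrary map of free splittings in the sense of Section~\ref{SectionFSSucc}—equivariant and simplicial after subdivision—so no special structure on $f$ (such as being a collapse map or a fold) is needed beyond what the hypothesis already provides. Indeed the substance of the argument has been discharged inside the proof of Lemma~\ref{LemmaRealCollapse}\pref{ItemRCFStoFF}, which simply notes that $\Stab(x)$ is a subgroup of $\Stab(f(x))$ for each $x \in S$, so that any subgroup representing a component of $\A$ that fixes a point of~$S$ also fixes a point of~$T$.
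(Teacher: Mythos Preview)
Your proof is correct and is essentially the same one-line argument the paper intends: the lemma is stated with a \qed\ and justified as an immediate consequence of Lemma~\ref{LemmaRealCollapse}. In fact your citation of item~\pref{ItemRCFStoFF} is the right one; the paper's cross-reference to~\pref{ItemRCFFtoFS} appears to be a typo (the same mis-citation occurs earlier in Section~\ref{SectionFSSucc} for the identical fact that a map $S\to T$ forces $\F(S)\sqsubset\F(T)$).
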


\paragraph{Construction of foldable maps.} The next lemma describes tools for constructing foldable maps and for inductive construction of foldable sequences. Item~\pref{ItemFoldableExists} says foldable maps exist after perturbing the domain slightly; and \pref{ItemFoldableFactorization} says that in any factorization of foldable maps, the factors are also foldable.


\begin{lemma}
\label{LemmaFoldableExists} 
\begin{enumerate}
\item\label{ItemFoldableExists}
(cf.\ Lemma 2.4 of \FSOne) \, 
For any $S,T \in \FS(\Gamma;\A)$ there exist $S',S'' \in \FS(\Gamma;\A)$ and a diagram of maps $S \xleftarrow{g} S' \xrightarrow{f'} S'' \xrightarrow{f''} T$ such that $g$ and $f'$ are collapse maps, $f''$ is foldable, and $f'' \circ f'$ is tight. If $\Fell S \sqsubset \Fell T$ then one can take $S=S'$ and $g = \text{Id}$.
\item\label{ItemFoldableFactorization}
(cf.\ Item (3) on page 1600 on \FSOne) \, For any $S,T,U \in \FS(\Gamma;\A)$ and any maps $S \xrightarrow{g} U \xrightarrow{f} T$ taking vertices to vertices, if the composition $f \circ g \from S \to T$ is foldable then the maps $f$ and $g$ are foldable.
\end{enumerate}
\end{lemma}

\textbf{Remark.} The proofs are considerably simpler than the indicated analogue in \FSOne, due to the removal of the gate~3 condition.

\begin{proof} To prove~\pref{ItemFoldableExists}, choose a free splitting $\Gamma \act S'$ such that $S \expands S'$ and $\Fell S' \sqsubset \Fell T$: if $\Fell S \sqsubset \Fell T$ choose $S'=S$; otherwise, applying Lemma~\ref{LemmaRealCollapse}~\pref{ItemRCFFtoFS}, choose $S'$ so that $S \expands S'$ and $\Fell S' = \A \sqsubset \Fell T$. In either case we have $S' \in \FS(\Gamma;\A)$. 

There exists a map $S' \mapsto T$ which on each edge of $S$ is either constant or injective: for each $v \in S'$ choose $f(v) \in T$ in a $\Gamma$-equivariant manner so that $\Stab(v) \subgroup \Stab(f(v))$, and extend linearly over each edge; this is possible because $\Fell S'=\A \sqsubset \Fell T$. For each such map, let $S'$ be subdivided so that each edgelet maps either to a vertex or an edge of $T$. Amongst all such maps $S' \mapsto T$, choose $f \from S' \to T$ to minimize the number of orbits of edgelets of $S'$ on which $f$ is nonconstant. Factor $f$ as $S' \xrightarrow{f'} S'' \xrightarrow{f''} T$ where $f'$ collapses to a point each component of the union of edgelets on which $f$ is constant. The map $f''$ is injective on each edgelet. Applying Lemma~\ref{LemmaGoodStabilizers} we have $S'' \in \FS(\Gamma;\A)$.

To prove that $f''$ is foldable it remains to show that at each vertex $v \in S''$ the map $f''$ has at least two gates. Suppose to the contrary that $f''$ has only one gate at~$v$, let $e_1,\ldots,e_I$ be the edgelets incident vertex $v$, and let $w_1,\ldots,w_I$ be their opposite endpoints. Let $S'' \mapsto S'''$ be the quotient map obtained by collapsing to a point each of $e_1,\ldots,e_I$ and all edgelets in their orbits, so we get an induced action $\Gamma \act S'''$. Noting that $w_1,\ldots,w_I$ all map to the same point in $T$, there is an alternate description of $S'''$ as follows: remove from $S''$ the point $v$ and the interiors of $e_1,\ldots,e_I$, identify $w_1,\ldots,w_I$ to a single point, and extend equivariantly. From this description it follows that the map $f'' \from S'' \mapsto T$ induces a map $S''' \mapsto T$, and by construction the composition $S' \xrightarrow{f'} S'' \mapsto S''' \mapsto T$ is nonconstant on a smaller number of edgelet orbits than $f$ is nonconstant on. This contradicts minimality of the choice of~$f$, completing the proof of~\pref{ItemFoldableExists}. 

To prove~\pref{ItemFoldableFactorization}, noting that $f \circ g$ is injective on each edgelet $e$ of $S$, it follows that $g$ is also injective on $e$. Also, for each edgelet $e' \subset U$, because $g$ takes vertices to vertices and is injective on each edgelet of $S$ it follows that there exists an edgelet $e \subset S$ and a subsegment $\hat e' \subset e$ such that $g$ maps $\hat e'$ injectively to $e'$; since $f \circ g$ is also injective on $\hat e'$ it follows that $f$ is injective on $e'$. We may now subdivide so that each of the maps $f,g$ is simplicial. It remains to consider any vertex $v \in S$ with image vertex $w=g(v) \in U$, and to prove that $D_v g$ and $D_w f$ are both nonconstant, but this follows immediately from the fact that $D_v(f \circ g) = D_w(f) \circ D_{v}(g)$ is nonconstant. 
\end{proof}

\paragraph{Stallings fold theorem.} This theorem is stated for the case $\Gamma=F_n$ in Lemma~2.7 of \FSOne: any foldable map of free splittings $S \mapsto T$ of $F_n$ factors into a fold sequence of free splittings. Pretty much the exact same proof works for general $\Gamma$, albeit with our current different definition of ``foldability''; and if furthermore one assumes that $S \in \FS(\Gamma;\A)$ then by applying Lemma~\ref{LemmaGoodStabilizers} inductively starting with $S$, it follows that each term in the fold sequence is in $\FS(\Gamma;\A)$. Nonetheless, because of the central importance of this result, we outline the proof here.

\begin{theorem}[Stallings Fold Theorem (cf.\ Lemma 2.7 of \FSOne)] 
\label{ThmFoldPathExists}
For any $S,T \in \FS(\Gamma;\A)$, any foldable map $S \xrightarrow{f} T$ factors as a fold sequence in $\FS(\Gamma;\A)$.\qed
\end{theorem}

\begin{proof}[Proof outline] Starting with the map $S=T_0 \xrightarrow{f=g_1} T_1=T$ as the basis case, consider by induction a ``length $J$ partial fold factorization'' of $f$ having the form
$$f \from S=T_0 \xrightarrow{f_1} T_1 \xrightarrow{f_2} \cdots \xrightarrow{f_J} T_J \xrightarrow{g_J} T
$$
meaning a foldable sequence for which each of $f_1,\ldots,f_J$ is a fold. Denote 
$$g_j = g_J \circ f_J \circ \cdots\circ f_{j+1} \from T_j \to T
$$
We include in the induction hypothesis that each $f_j \from T_{j-1} \to T_j$ is a \emph{maximal} fold ($j=1,\ldots,J$), meaning that if $e,e' \subset T_{j-1}$ are initial segments of edges that witness $f_j$ being a fold, then $e,e'$ are the maximal initial segments of those edges with respect to the property that $g_j(e)=g_j(e')$ and that $(e \union e') - \{w,w'\}$ is disjoint from its translates by nontrivial elements of $\Gamma$. If $g_J$ is injective then it is a homeomorphism and the induction is complete. If $g_J$ is not injective then (like any non-injective map between trees) $g_J$ is not locally injective. It follows that there exists a vertex $v \in T_J$, and there exist distinct oriented natural edges with initial vertex $v$ and with initial directions represented by initial segments $e,e' \subset T_J$ with terminal endpoints $w \in e$, $w' \in e'$, such that $g_J(e)=g_J(e')$, and such that $(e \union e') - \{w,w'\}$ is disjoint from its translates by nontrivial elements of $\Gamma$.  Choose any such segments $e,e'$ which are maximal with respect to the listed properties, and let $f_{J+1} \from T_J \to T_{J+1}$ be the fold map that identifies $e$ and~$e'$. The map $g_J$ factors as $T_J \xrightarrow{f_{J+1}} T_{J+1} \xrightarrow{g_{J+1}} T$. By Lemma~\ref{LemmaFoldableExists}~\pref{ItemFoldableFactorization} we obtain a length $J+1$ partial fold factorization of $f$, completing the induction. 

This process must stop with an actual fold factorization of $f$, because if we start by subdividing $S$ and $T$ so that $f$ is a simplicial map, one can prove by induction on $J$ that $f_1,\ldots,f_J$ and $g_J$ are simplicial maps; it follows that the number of edgelet orbits of $T_J$ decreases strictly as $J$ increases. In the inductive step, knowing that $g_J$ is simplicial, the maximality requirement on $e,e'$ implies that each of $e,e'$ is a subcomplex of $T_J$, i.e.\ each is a union of edgelets of $T_J$. 
\end{proof}

\paragraph{The distance of a fold.} Lemma~\ref{LemmaFoldLengthTwo} to follow is a version of Lemma 2.5 of \FSOne\ which, in the narrower setting where the ``gate~3'' condition is imposed, described the values for $d(S,T)$ when $f \from S \mapsto T$ is a fold. Here we limit Lemma~\ref{LemmaFoldLengthTwo} to the proof of the inequality $d(S,T) \le 2$. After the proof we discuss the case analysis of~$f$ that yields exact values of $d(S,T)$, with a sketch of the proofs of those values. Without imposing the gate~3 condition, this case analysis is considerably more complicated, and we limit ourselves to a brief discussion of a single case.

\smallskip

To set up the statement of Lemma~\ref{LemmaFoldLengthTwo}, consider a fold $f \from S \to T$ between two free splittings $S,T$ of $\Gamma$ rel~$\A$, let $E,E' \subset S$ be a pair of oriented natural edges with initial endpoint $v$ having maximal initial segments $e \subset E$, $e' \subset E'$ that are folded by~$f$, and let $w,w'$ be the terminal endpoints of $e,e'$ respectively. By choosing proper initial segments $\eta \subset e$, $\eta' \subset e'$ that have the same $f$-images $f(\eta)=f(\eta') \subset T$, we obtain a foldable factorization
$$f \from S \xrightarrow{g} U \xrightarrow{h} T
$$
where $g$ is the ``partial'' fold defined by folding $\eta$ and $\eta'$, and so $g(\eta)=g(\eta')$. In $U$ we denote $\Gamma$-invariant subforests
$$\sigma = \Gamma \cdot g(\eta) = \Gamma \cdot g(\eta') \qquad\text{and}\qquad \tau =  \Gamma \cdot \bigl(g(e \setminus \eta) \union g(e' \setminus \eta')\bigr)
$$


\begin{lemma}
\label{LemmaFoldLengthTwo}
For any fold map $f \from S \to T$ as above there exist collapse maps $\bar g$, $\hat h$ as follows:
$$\xymatrix{
S \ar@/^1pc/[r]^{g} & U \ar@/^1pc/[r]^h \ar@/^1pc/[l]_{\bar g}^{\<\sigma\>} \ar@/_1pc/[r]^{\hat h}_{\langle\tau\rangle} & T
}$$
The distance inequality $d(S,T) \le 2$ follows, and if $d(S,T)=2$ then $d(S,U)=d(U,T)=1$.
\end{lemma}

\begin{proof} Once the existence of $\bar g$ and $\hat h$ has been established, the sentence ``The distance inequality\ldots'' follows because for any collapse map between free splittings, the corresponding vertices in $\FS(\Gamma;\A)$ have distance~$\le 1$.

To set up the construction of $\bar g$ and $\hat h$, we add some notations. Let $b,b'$ denote the respective terminal endpoints of $\eta,\eta'$. Denote $\zeta = g(\eta)=g(\eta') \subset U$, an oriented subpath of a natural edge of $U$, with initial endpoint $g(v)$ and terminal endpoint denoted $c=g(b)=g(b')$. Also, $g(e \setminus \eta)$ and $g(e' \setminus \eta')$ are oriented initial subpaths of distinct oriented natural edges of~$U$, each with initial endpoint $c$; their respective terminal endpoints are $g(w)$, $g(w')$. The free splitting $S$ decomposes into invariant subforests $S = V \union W$ where $V = \Gamma \cdot (e \union e')$ and $W = S \setminus V$; clearly $V \intersect W = \Gamma \cdot \{v,w,w'\}$, and so $V,W$ are non-overlapping. Note that for $x \in V$ and $y \in W$, if $x \ne y$ then $f(x) \ne f(y)$, and so we obtain a decomposition of $U$ into non-overlapping invariant subforests $U = f(V) \union f(W)$. 

Define $\bar g \from U \to S$ as follows. The restriction $f \from W \to f(W)$ is an equivariant homeomorphism; let $\bar g \restrict f(W) \to W$  be the inverse homeomorphism. Next comes the collapse: let $\bar g(\zeta)=v$, and extend equivariantly over $\Gamma \cdot \zeta$. Next, $g(e \setminus \eta)$ is an oriented subpath of a natural edge of $U$ with initial and terminal endpoints $c$ and $g(w)$, those points already mapping by $\bar g$ to the initial and terminal endpoints $v,w$ of $e$, so we can extend to a homeomorphism $\bar g \from g(e \setminus \eta) \to e$; then we extend equivariantly over $\Gamma \cdot g(e \setminus \eta)$. Similarly we extend to a homeomorphism $\bar g \from g(e' \setminus \eta') \to e'$ and then extend equivariantly. It follows that $\bar g$ is the desired collapse map
$$\bar g \from U \xrightarrow{\langle \Gamma \cdot \zeta \rangle} S
$$

Define $\hat h \from U \to T$ as follows. Because the restriction $g \restrict W \from W \to g(W)$ is a homeomorphism, we can define the restriction $\hat h \restrict g(W)$ to be $f \circ (g \restrict W)^\inv$. Next comes the collapse: let 
$$\hat h \bigl(g(e \setminus \eta) \union g(e' \setminus \eta') \bigr) = f(w)=f(w')
$$
and extend equivariantly. Next, $\hat h$ is already defined on the initial and terminal endpoints $g(v)$ and $c$ of $\zeta$, taking them to the initial and terminal endpoints $f(v)$ and $f(w)=f(w')$ of $f(e)=f(e')$, so we can extend to a homeomorphism $\hat h \from \zeta \to f(e)=f(e')$, and then we extend equivariantly over $\Gamma \cdot \zeta$. It follows that $\hat h$ is the desired collapse map
$$\hat h \from U \xrightarrow{\Gamma \cdot \bigl(g(e \setminus \eta) \union g(e' \setminus \eta') \bigr)} T
$$


%
%
%
%
\end{proof}

Given a fold $f \from S \to T$ factored with the notation preceding Lemma~\ref{LemmaFoldLengthTwo}, we next discuss exact computation of \hbox{$d(S,T) \in \{0,1,2\}$}, based on a case analysis of~$f$. The cases depend on the case analysis from \cite[Section~2]{BestvinaFeighn:bounding} discussed earlier, \emph{and} on subcases regarding how many of the two inclusions $e \subset E$ and $e' \subset E'$ are proper:
\begin{description}
\item[Partial fold:] Both of the inclusions $e \subset E$ and $e' \subset E'$ are proper.
\item[Full fold:] At least one of the inclusions $e \subset E$ and $e' \subset E'$ is improper. Up to swapping notations we assume that $E'=e'$, and then there are two subcases:
\begin{description}
\item[Proper full fold:] The inclusion $e \subset E$ is proper.
\item[Improper full fold:] The inclusion $e \subset E$ is also improper, $E=e$.
\end{description}
\end{description}

\medskip
\centerline{
\begin{tabular}{l | c || c | c | c | c | c } 			
		\multicolumn{2}{c||}{}
		 	& \multicolumn{2}{c}{vee fold (IA)}
				& \multicolumn{2}{|c|}{loop fold (IB)} 
						&\multicolumn{1}{|c}{bigon fold (IIIA)}
\\ 
\cline{3-6}
		\multicolumn{2}{c||}{\textbf{Values of $d(S,T)$}}		
			& partial 	
				& full	
					& \multicolumn{2}{|c|}{full}	
\\ 
\cline{5-6}
		\multicolumn{2}{c||}{}
		 	& 	&      & proper   	& improper 			
\\ 
\cline{2-7}
		&(Cases)
		 	& (a)	&  (b)    & (c)   	& (d) & (e)
\\
\hline\hline
$\text{valence}(v) = 3$ & (1)		
		& 0 & 0+1	& 0		& 0+1	& 0+1 
\\ 
\hline
$\text{valence}(v) \ge 4$ & (2) 	
		& 1+0 & 2	&  2		&2    & 2 
\end{tabular}
}
\medskip
\noindent
In each entry where $d(S,T)=1$ more information is given: ``\,$0+1$'' means that $d(S,U)=0$ and $d(U,T)=1$ ($\bar g$ is a trivial collapse); and ``$1+0$'' means $d(S,U)=1$ and $d(U,T)=0$ ($\hat h$ is a trivial collapse). To verify that the indicated values for $d(S,T)$ are \emph{upper} bounds for $d(S,T)$ one notes which of $\bar g$, $\hat h$ is a trivial collapse: in case (1), the map $\bar g$ is a trivial collapse in all subcases, whereas $\hat h$ is a trivial collapse only in subcases (a) and (c); and in case (2), $\bar g$ is never a trivial collapse, and $\hat h$ is a trivial collapse only in subcase (a).

Verifying that the indicated distance values in the table are also \emph{lower} bounds is more laborious, and we do not have any applications of it, so we provide just a sketch for some cases. 

For the cases where $d(S,T)=1$ we need only rule out equivalence of $S$ and $T$: in cases (1b) and (1d), $S$ has one more $\Gamma$-orbit of natural edges than $T$; in case (1e) the bigon represents an element of $\Gamma$ that is loxodromic in $S$ but elliptic in $T$; and in case (2a), $T$ has one more orbit of natural edges than $S$ has. 

For the cases where $d(S,T)=2$, to verify the lower bound $d(S,T) \ge 2$ one can use the \emph{translation spectrum} of a free splitting $U$ of $\Gamma$, namely the function that assigns to each $\gamma \in \Gamma$ its translation length function $\tau_U(\gamma)$: if $\gamma$ is elliptic in $U$ then $\tau_U(\gamma)=0$; whereas if~$\gamma$ is loxodromic in $U$ then $\tau_U(\gamma)$ is the number of $\gamma$-orbits of natural edges along the axis of~$\gamma$ in $U$. The translation spectrum is a complete invariant of free splittings, meaning that two free splittings $S$ and $T$ are equivalent if and only if $\tau_S=\tau_T$ \cite{CullerMorgan:Rtrees}. Furthermore, if $S$ collapses to $T$ then $\tau_S(\gamma) \ge \tau_T(\gamma)$ for all $\gamma \in \Gamma$. One can therefore prove the inequality $d(S,T) \ge 2$ by finding $\gamma,\delta \in \Gamma$ such that $\tau_S(\gamma) < \tau_T(\gamma)$ and $\tau_S(\delta) > \tau_T(\delta)$. 

Consider for example Case (2b), a full vee fold $f \from S \to T$ with a fold vertex $v$ of valence~$\ge 4$; we use the notation $e \subset E$ and $e' \subset E'$ as described preceding the lemma. We consider only the subcase that $f$ is a proper fold with $e \ne E$ and $e'=E'$, and so $\Fell S = \Fell T$, hence $\Gamma$ has the same set of loxodromic elements for $S$ and for $T$. Consider a loxodromic element $\gamma$ with axes $L_S(\gamma) \subset S$ and $L_T(\gamma) \subset T$. The axis $L_T(\gamma)$ is obtained from $f(L_S(\gamma))$ by operations of cancellation and subdivision, one such operation for each $\gamma$-orbit of a translate of $e$ or its inverse in $L_S(\gamma)$: associated to each $\gamma$-orbit of a translate $\bar e' e$ or its inverse there is a cancellation; and associated to each $\gamma$-orbit of $e'' e$ or its inverse such that $e''$ is not a translate of $\bar e'$, there is a subdivision. The value of $\tau_T(\gamma)$ is obtained from $\tau_S(\gamma)$ by subtracting one for each $\gamma$-orbit of cancellation and adding one for each $\gamma$-orbit of subdivision. So one must find $\gamma$ having more $\gamma$-orbits of cancellations than subdivisions in $L_S(\gamma)$ in order to guarantee that $\tau(\gamma;S) < \tau(\delta;T)$; and one must find $\delta$ having more $\gamma$-orbits of subdivisions than cancellations in $L_S(\gamma)$ in order to guarantee that $\tau(\delta;S) > \tau(\delta;T)$. The method for finding such $\gamma,\delta$ has its own subsubcases depending on the number of connected components of $(S/\Gamma) \setminus ((e \union e')/\Gamma)$. The easiest subsubcase is when there is just one connected component: one can then find $\gamma$ having exactly one cancellation and no subdivisions; and one can find $\delta$ having exactly one subdivision and no cancellations. With more connected components of $(S/\Gamma) \setminus ((e \union e')/\Gamma)$, the constructions of $\gamma$ and $\delta$ become more complicated, but here we stop, leaving details to the interested reader.

\paragraph{Fold paths.} A sequence of vertices $(T_i)_{i \in I}$ in $\FS(\Gamma;\A)$, parameterized by some subinterval $I \subset \Z$, is called a \emph{fold path} if for each $i-1,i \in I$ there exists a map $f_i \from T_{i-1} \to T_i$ such that the sequence of maps $\cdots \xrightarrow{f_{i-1}} T_{i-1} \xrightarrow{f_i} T_i \xrightarrow{f_{i+1}} \cdots$ is a fold sequence.

\begin{corollary}\label{CorollaryConnected}
$\FS(\Gamma;\A)$ is connected, and fold paths form an almost transitive sequence of paths in $\FS(\Gamma;\A)$. More precisely, for any $S,T \in \FS(\Gamma;\A)$ there is a fold path starting at distance~$\le 2$ from $S$, making jumps of distance~$\le 2$, and ending at $T$. Furthermore, any fold path can be interpolated to obtain a fold path with jumps of distance~$\le 1$.
\end{corollary}

\begin{proof} The ``more precisely'' sentence follows by combining Lemma \ref{LemmaFoldableExists}, Theorem~\ref{ThmFoldPathExists} and Lemma~\ref{LemmaFoldLengthTwo}~(cf.\ remark following Theorem~3.1 of \FSOne). For the ``furthermore'' sentence, consider any fold path $T_0 \xrightarrow{f_1} T_1 \xrightarrow{f_2} \cdots \xrightarrow{f_J} T_J$, and so $d(T_{j-1},T_j) \le 2$ for all $j$. Given any $j=1,\ldots,J$ such that $d(T_{j-1},T_j) = 2$, by applying Lemma~\ref{LemmaFoldLengthTwo} we obtain a foldable factorization $f_J \from T_{j-1} \xrightarrow{f'_J} T'_j \xrightarrow{f''_J} T_j$ such that $d(T_{j-1},T'_j) = d(T'_j,T_j)=1$. By applying Lemma~\ref{LemmaFoldableExists}~\pref{ItemFoldableFactorization} for each such $j$ one at a time, we 
can interpolate $f_J$, replacing it by $f'_j$ followed by $f''_J$, to obtain the desired fold path with jumps of distance~$\le 1$. 
\end{proof}

\subsection{$\FS(\Gamma;\A)$ in low complexity cases}
\label{SectionFSLow}
\marginparLee{I do not like leaving this conjecture buried here \ldots}
Using the results of Section~\ref{SectionFoldSequences} we now give a complete description of free splitting complexes $\FS(\Gamma;\A)$ in two low complexity cases where $\FS(\Gamma;\A)$ is a very specific finite diameter tree. In all remaining cases we conjecture that $\FS(\Gamma;\A)$ is of infinite diameter, indeed that the action of $\Out(\Gamma;\A)$ on $\FS(\Gamma;\A)$ has loxodromic elements. 

The first low complexity case is when $\DFF(\A)=0$, which occurs if and only if $\A =\{[A_1],[A_2]\}$ and $\Gamma = A_1 * A_2$. The second is when $\DFF(\A)=1$ and $\abs{\A} \le 1$, which occurs if and only if $\A=\{[A]\}$ and $\Gamma = A * Z$ where $Z$ is infinite cyclic. We consider these cases separately in Propositions~\ref{PropOneCoedgeTwoComps} and~\ref{PropOneCoedgeOneComp} to follow. 

\begin{proposition}\label{PropOneCoedgeTwoComps}
Suppose that $\DFF(\A)=0$, equivalently $\A = \{[A_1],[A_2]\}$ has a realization of the form $\Gamma = A_1 * A_2$. In this case $\FS(\Gamma;\A)$ is a single point, corresponding to the Bass-Serre tree of the free factorization $\Gamma = A_1 * A_2$.
\end{proposition}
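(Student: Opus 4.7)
The plan is to show that the Bass-Serre tree $T_*$ of the free factorization $\Gamma = A_1 * A_2$ lies in $\FS(\Gamma;\A)$ and is the unique vertex up to equivalence. First, $T_*$ is a free splitting with one natural edge orbit (trivial stabilizer) and two natural vertex orbits with stabilizers $A_1$ and $A_2$, so $\F(T_*) = \A$ and in particular $T_* \in \FS(\Gamma;\A)$. The task reduces to showing every $S \in \FS(\Gamma;\A)$ satisfies $[S] = [T_*]$.

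The first substantive step is to pin down the vertex stabilizer system by proving $\F(S) = \A$. Since $S \in \FS(\Gamma;\A)$ we have $\A \sqsubset \F(S)$, so Proposition~\ref{PropCorankIneq} gives $\corank(\F(S)) \le \corank(\A) = 0$, forcing equality. The equality clause of that proposition then says the containment function $\A \to \F(S)$ is surjective and each component of $\F(S)$ is a free product, with trivial cofactor, of the components of $\A$ mapping to it. Hence $\F(S)$ is either $\A$ itself or the singleton $\{[A_1 * A_2]\} = \{[\Gamma]\}$. The latter is impossible: a free splitting acts minimally on a nontrivial tree and therefore admits no global fixed vertex, so its vertex stabilizer system is always proper. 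Thus $\F(S) = \A$.

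With $\F(S) = \A$ identified, the combinatorial type of $S$ is now forced. From $\corank(\A) = 0$ and $|\A| = 2$ one computes $\DFS(\A) = 3 \cdot 0 + 2 \cdot 2 - 4 = 0$, so Proposition~\ref{PropGenericFS}~\pref{ItemNatEBound} yields $E(S) \le 1$; since $S$ is a nontrivial tree, $E(S) = 1$, which by Proposition~\ref{PropGenericFS}~\pref{ItemGenericEquivalencies} makes $S$ generic rel~$\A$, and by item~\pref{ItemVMax} gives $V(S) = 2$. The quotient graph of groups $S/\Gamma$ is therefore a single edge with trivial edge group, whose two endpoint vertex groups represent $[A_1]$ and $[A_2]$. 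This is the defining graph of groups of $\Gamma = A_1 * A_2$, whose Bass-Serre tree is $T_*$; hence $[S] = [T_*]$.

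There is no serious obstacle here: the argument is a direct combinatorial consequence of the corank inequality in Proposition~\ref{PropCorankIneq} together with the edge and vertex counts in Proposition~\ref{PropGenericFS}. The only mild subtlety is excluding the degenerate possibility $\F(S) = \{[\Gamma]\}$, which is dispatched by the minimality clause in the definition of a free splitting.
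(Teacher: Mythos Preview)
Your first two steps are correct and clean: pinning down $\F(S)=\A$ via Proposition~\ref{PropCorankIneq}, and then forcing $E(S)=1$, $V(S)=2$ via $\DFS(\A)=0$ together with Proposition~\ref{PropGenericFS}. This is a genuinely different route from the paper's proof, which instead produces a foldable map $T_*\to T$ using Lemmas~\ref{LemmaFoldableExists} and~\ref{LemmaFoldPathExists} and then observes that no fold can be performed because at every vertex of $T_*$ all directions lie in a single orbit of the vertex stabilizer; hence the foldable map is already an isomorphism.

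Your final sentence, however, skips a real step. Knowing that $S/\Gamma$ is a single edge with vertex groups \emph{in the conjugacy classes} $[A_1],[A_2]$ does not by itself pin down the $\Gamma$-tree: Bass--Serre theory recovers the tree from the quotient graph of groups with \emph{specific} subgroups attached, and if a fundamental edge of $S$ has vertex stabilizers $A_1$ and $gA_2g^{-1}$ then $[S]=[T_*]$ amounts to the existence of a single element simultaneously conjugating $(A_1,gA_2g^{-1})$ to $(A_1,A_2)$, i.e.\ $g\in A_1A_2$. That this is forced by $\Gamma=A_1*gA_2g^{-1}$ is true but not automatic, and it is exactly what the paper's fold argument supplies. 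The cheapest patch given what you have already computed is this: Proposition~\ref{PropMaximizingSimplices} gives $\dim\FS(\Gamma;\A)=\DFS(\A)=0$, and Corollary~\ref{CorollaryConnected} (proved just before this proposition) gives connectedness; a connected $0$-dimensional complex is a single point.
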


\textbf{Remark.} In the case that $A_1,A_2$ are free of finite rank, this proposition is contained in \BookOne\ Corollary 3.2.2. The proof here is an extension of that proof.

\begin{proof} We first note the fact that for any nonfull free factor system $\A'$ of $\Gamma$, if $\A \sqsubset \A'$ then $\A=\A'$. It follows that for any free splitting $\Gamma \act T$ rel~$\A$, we have $\A=\Fell T$. We next note the fact that since $S$ has one edge orbit, if $S \collapses S''$ then $S$ and $S''$ are equivalent.

Given a vertex $T \in \FS(\Gamma;\A)$ we must prove that $S,T$ are equivariantly homeomorphic.  Applying Lemma~\ref{LemmaFoldableExists} and the facts noted above, it follows that there exists a foldable map $f \from S \to T$. Applying Lemma~\ref{ThmFoldPathExists}, there exists a fold sequence from $S$ to $T$. However, at each vertex $v \in S$ all of the directions at $v$ are in the same orbit of the subgroup $\Stab(v)$, because the quotient graph of groups $S / \Gamma$ has two vertices each of valence~$1$. A fold map cannot fold two directions in the same orbit. Thus the fold sequence from $S$ to $T$ has length zero and $S$, $T$ are equivalent.
\end{proof}

For describing the next case, we need a few definitions.

Consider a free product $\Gamma = A * Z$ where $Z = \<z\>$ is infinite cyclic, and consider the free factor system $\A = \{[A]\}$. Define a monomorphism $A \inject \Out(\Gamma;\A)$ denoted $a \mapsto \phi_a$, where $\phi_a$ is represented by $\Phi_a \in \Aut(\Gamma)$ which is characterized by $\Phi_a \restrict A = \Id$, $\Phi_a(z) = za$. Noting that the subgroups $\<z\>$ and $\<za\>$ are conjugate in $\Gamma$ if and only if $a$ is trivial, it follows that the homomorphism $a \mapsto \phi_a$ is injective.

In any $1$-complex~$X$, a \emph{star point} is a $0$-cell $v$ such that for each component $A$ of $X - v$, the closure of $A$ in $X$ equals $A \union \{v\}$ and is an arc called a \emph{beam} of $X$ (we do not require a beam to consist of a single edge). If a star point exists then $X$ is a \emph{star graph}. 

\begin{proposition}\label{PropOneCoedgeOneComp}
Suppose that $\DFF(\A)=1$ and $\abs{\A} = 1$, equivalently $\A=\{[A]\}$ has a realization of the form $\Gamma = A*Z$ where $Z=\<z\>$ is infinite cyclic. In this case $\FS(\Gamma;\A)$ is a star graph with star point $T$ such that each beam has the form $T \expands S \collapses R$ with quotient graphs of groups as follows (assuming natural cell structures):
\begin{description}
\item[Loop type:] $T / \Gamma$ has one vertex labelled $A$ and one edge forming a loop with both endpoints at the vertex. 
\item[Sewing needle type:] $S/\Gamma$ has two vertices, one labelled $A$ and the other of valence~$3$ labelled with the trivial group, with one edge connecting the $A$ vertex to the valence~$3$ vertex, and one edge forming a loop with both ends at the valence~$3$ vertex.
\item[Edge type:] There exists a realization $\Gamma = A*Z$ of $\A$ such that $R/\Gamma$ has two vertices, one labelled $A$ and the other labelled by the infinite cyclic group $Z$, and one edge connecting the two vertices.
\end{description}
Furthermore, under the monomorphism $A \inject \Out(\Gamma;\A)$ given by $a \mapsto \phi_a$ described above, the induced action $A \act \FS(\Gamma;\A)$, is free and transitive on the set of beams, allowing beams to be enumerated as follows: 
\begin{itemize}
\item Every free factorization of the form $\Gamma = A * Z'$ satisfies $Z' = \<za\>$ for a unique $a \in A$.
\item There are bijections: $\{$beams of $\FS(\Gamma;\A)\} \leftrightarrow \{$edge-type free splittings rel~$\A\} \leftrightarrow \{$free factorizations $\Gamma = A * \<za\>$, $a \in A\} \leftrightarrow A$.
\end{itemize}
Since $A$ is nontrivial, there are at least two beams and the diameter of $\FS(\Gamma;\A)$ equals~$4$.
\end{proposition}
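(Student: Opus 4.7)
The plan is first to pin down the combinatorial possibilities for free splittings rel~$\A$, then to identify which are equivalent, and finally to assemble the resulting incidence structure and the $A$-action. Since $\corank(\A)=1$ and $\abs{\A}=1$, Proposition~\ref{PropMaximizingSimplices} gives $\DFS(\A)=1$, so $\FS(\Gamma;\A)$ is $1$-dimensional and each $1$-simplex is a collapse of a generic splitting $S$ down to a one-edge splitting. An Euler-characteristic count on the quotient graph of groups ($V(S)=2$, $E(S)=2$, rank of underlying graph~$=1$) forces the generic~$S$ to have exactly the sewing-needle shape: one valence-$1$ vertex labelled $A$ joined by a bridge edge to a valence-$3$ trivial-stabilizer vertex carrying a loop. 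Likewise the possible one-edge splittings rel~$\A$ are exactly the loop type (a single $A$-labelled vertex with one loop, an HNN presentation $\Gamma\cong A*_{\{1\}}$) and the edge type (two vertices with stabilizers $A$ and some infinite cyclic $Z'$ realizing $\Gamma=A*Z'$, joined by one edge).

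The next step is to identify equivalence classes. For both loop and edge types, the main ingredient is that any element $s\in\Gamma$ with $\Gamma=A*\langle s\rangle$ must, in the Bass--Serre tree of that decomposition, translate the $A$-vertex to an adjacent vertex; and because $A$ is a self-normalizing free factor, the vertex with stabilizer \emph{exactly} $A$ is unique, and its neighbors in any such tree are precisely the cosets $\{azA\}\cup\{az^\inv A\}$ with $a\in A$. This forces $s$ to have reduced form $a_0 z^{\pm 1} a_1$ in $A*\langle z\rangle$. For the loop type, all such $s$ lie in the single double coset $AzA\cup Az^\inv A$, so every loop-type tree has the same edge orbit $\Gamma\cdot(A,zA)$, hence equals a single $\Gamma$-tree up to equivariant homeomorphism, giving a unique equivalence class $[T]$. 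For the edge type, any equivariant homeomorphism $R(Z_1)\to R(Z_2)$ must fix the $A$-vertex and send $v_{Z_1}$ to a neighbor $a\cdot v_{Z_2}$ with $a\in A$, forcing $Z_1=aZ_2a^\inv$; so edge-type equivalence classes correspond to $A$-conjugacy classes of complementary cyclic factors. Conjugating a generator $z'=a_0 z^{\pm 1}a_1$ of any such $Z'$ by $a_0^\inv\in A$ puts it into the form $z^{\pm 1}(a_1 a_0)$, and a short reduced-word argument in $A*\langle z\rangle$ shows $\langle za\rangle=\langle za'\rangle$ forces $a=a'$, so each $A$-conjugacy class has a unique representative $\langle za\rangle$ with $a\in A$.

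Assembly then proceeds as follows. A sewing needle has two elementary collapses: collapsing the bridge edge yields the unique loop-type $T$, while collapsing the loop fuses the $\langle s\rangle$-orbit of loop-endpoint vertices into a single vertex whose stabilizer is the infinite cyclic group generated by the loop's stable letter~$s$, yielding an edge type~$R$. Conversely, given an edge type $R=R(Z')$, Lemma~\ref{LemmaRealCollapse}~\pref{ItemRCFFtoFS} produces an expansion $S\succ R$ with $\F(S)=\A$, and up to equivalence $S$ is uniquely obtained by replacing the $Z'$-vertex by its minimal tree (the translation line of $Z'\cong\Z$) and attaching the bridge edge at any of its vertices, all of which lie in a single $Z'$-orbit. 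Thus each beam has the form $T\succ S(R)\prec R$ and is determined by its tip $R$, yielding the advertised star graph. For the $A$-action, $\phi_a$ fixes $[T]$ because the loop-type class is unique; for $R=R(\langle zb\rangle)$, Lemma~\ref{LemmaFofTEquivariance} gives
\[
\F(R\cdot\phi_a) \;=\; \phi_a^\inv(\{[A],[\langle zb\rangle]\}) \;=\; \{[A],[\langle za^\inv b\rangle]\},
\]
identifying $R\cdot\phi_a$ with $R(\langle za^\inv b\rangle)$, so the right action of $A$ on beams is given by $b\mapsto a^\inv b$ and is thereby free and transitive. Nontriviality of $A$ guarantees at least two beams, and the distance $2+2=4$ between their tips measured through the center gives the claimed diameter. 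The main obstacle is the second paragraph's reduced-form characterization forcing complementary stable letters to lie in $AzA\cup Az^\inv A$; the rest follows from routine Bass--Serre manipulations and the cited general results.
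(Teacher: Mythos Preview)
Your overall architecture differs from the paper's and is in several respects more direct: you classify quotient graphs of groups via $\DFS(\A)=1$ and Euler characteristic, then compare Bass--Serre trees by hand, whereas the paper fixes the loop-type tree $T$ and studies foldable maps $f\colon T\to U$ via the Bestvina--Feighn fold classification.  Most of your argument is sound, including the edge-type classification, the beam structure, and the computation of the $A$-action via Lemma~\ref{LemmaFofTEquivariance}.  But there is a genuine gap at the point you yourself flag as ``the main obstacle.''

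The problem is your justification that every $s$ with $\Gamma=A*\langle s\rangle$ has reduced form $a_0z^{\pm1}a_1$.  You write that such an $s$ ``must, in the Bass--Serre tree of that decomposition, translate the $A$-vertex to an adjacent vertex.''  If ``that decomposition'' is the edge-type tree of $A*\langle s\rangle$, this is false: $s$ fixes the $\langle s\rangle$-vertex, so $s\cdot v_A$ lies at distance~$2$ from $v_A$.  If instead you mean the loop-type tree with stable letter $s$, then yes $s$ moves $v_A$ to a neighbor --- but the neighbors you then list, $\{azA\}\cup\{az^{-1}A\}$, are the neighbors of $v_A$ in the loop-type tree with stable letter~$z$, and identifying these two trees is precisely the loop-type uniqueness you are trying to establish.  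The argument is circular as written.  (The easy observation that $s$ maps to a generator under $\Gamma\to\Gamma/N(A)\cong\Z$ gives only that the \emph{total} $z$-exponent of $s$ is $\pm1$, not that its syllable length in $A*\langle z\rangle$ is~$1$.)

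The paper avoids this trap by reversing the logic.  It first proves loop-type uniqueness without the reduced-form claim: since $\F(T)=\A\sqsubset\F(U)$ and $T$ is one-edge, Lemma~\ref{LemmaFoldableExists} yields a foldable map $f\colon T\to U$; factoring $f$ as a fold sequence and tracking fold types (IA versus IIIA) shows that every intermediate and terminal tree in the two-to-one case is of sewing-needle or edge type, never loop type, so if $U$ is loop type then $d_{v(T)}f$ is one-to-one and $f$ is a homeomorphism.  Only \emph{then} does the paper deduce $z'\in Az^{\pm1}A$: the two realizations $A*\langle z\rangle=A*\langle z'\rangle$ give two loop-type trees, now known to be equivalent, and since both have vertex set $\Gamma/A$ with the unique $A$-fixed vertex at the identity coset, the equivariant homeomorphism is the identity on vertices and matching edge sets forces $z'\in AzA\cup Az^{-1}A$.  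Once you obtain loop-type uniqueness by this or some other non-circular route, the remainder of your argument goes through.
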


\textbf{Remark.} As was the case for Proposition~\ref{PropOneCoedgeTwoComps}, the proof of Proposition~\ref{PropOneCoedgeOneComp} is an elaboration upon the proof of Corollary~3.2.2 of \BookOne\ which is concerned with the case that $\Gamma$ is free of some finite rank $n$ and $A$ is free of rank~$n-1$.

\begin{proof} The proof uses Bass-Serre theory \cite{ScottWall} and the Bestvina--Feighn classification of folds \cite{BestvinaFeighn:bounding} that was reviewed earlier.

For any free splitting $\Gamma \act U$ representing a $0$-simplex of $\FS(\Gamma;\A)$, the free factor system $\Fell U$ satisfies either $\Fell U=\A$, or $\Fell U = \A \union \{[Z]\}$ for some free factorization $\Gamma = A * Z$ with $Z$ infinite cyclic. It follows that $U$ has a unique vertex $v(U)$ such that $\Stab(v(U)) = A$.

First we prove existence of a free splitting rel~$\A$ of loop type. From the hypotheses on $\A$ it follows that there exists a free factorization $\Gamma = A * Z$ with $Z$ infinite cyclic, the Bass-Serre tree of which is an edge type free splitting $\Gamma \act R$. Expanding $R$ by blowing up the $Z$ vertex of $R/\Gamma$ into a loop one gets a free splitting $\Gamma \act S$ of sewing needle type. Collapsing the non-loop edge of $S/\Gamma$ one gets a free splitting of loop type.

Fix now a loop type free splitting $\Gamma \act T$ rel~$\A$. Consider any free splitting $\Gamma \act U$ rel~$\A$. Since $\Fell T \sqsubset \Fell U$ and $T$ has one edge orbit it follows, as in the proof of Proposition~\ref{PropOneCoedgeTwoComps}, that there is a foldable map $f \from T \to U$. Note that $f(v(T))=v(U)$. The derivative $d_{v(T)} \from D_{v(T)} T \to D_{v(U)} U$ is either one-to-one or two-to-one. 

In the first case where $d_{v(T)}$ is one-to-one, the map $f$ is a homeomorphism and $T \equiv U$, just as in the proof of Proposition~\ref{PropOneCoedgeTwoComps}. 

In the second case where $d_{v(T)}$ is two-to-one, consider a fold sequence that factors the map $f$, given by $T = T_0 \xrightarrow{f_1} T_1 \to \cdots \xrightarrow{f_K} T_K = U$ with $K \ge 1$. Using the Bestvina-Feighn classification of fold types described earlier, the folds in this sequence are as follows. If $K=1$ then $f \from T \to U$ is either of type IA and $U$ is of sewing needle type, or $f$ is of type IIIA and $U$ is of edge type. If $K \ge 2$ then each of $f_1,\ldots,f_{K-1}$ is of type IA, and each of $T_1,\ldots,T_{K-1}$ is of sewing needle type; the final fold $f_K$ is either of type IA and $T_K=U$ is also of sewing needle type, or $f_K$ is of type IIIA and $T_K=U$ is of edge type. 

Note in particular that if $U$ is of loop type then $d_{v(T)}$ is not two-to-one, and so any foldable map $T \mapsto U$ is a homeomorphism and $T \equiv U$, so there is a unique loop-type $0$-cell in $\FS(F_n;\A)$.

We have proved that each $0$-cell in $\FS(F_n;\A)$ is represented by a free splitting of one of the three types described. Each $0$-cell of sewing needle type collapses to exactly two other $0$-cells, namely the unique one of loop type and one other of edge type. Each $0$-cell of edge type expands to exactly one other $0$-cell, that being of sewing needle type. This proves that the unique loop type $0$-cell is a star point and each beam is as described. 

To prove the ``Furthermore'' clause, by applying Proposition~\ref{PropOneCoedgeTwoComps} to any free factor system of the form $\{[A],[Z']\}$ where $Z'$ is a cofactor of $\A$, it follows that there is an $\Out(\Gamma;\A)$-equivariant bijection between the set of edge-type free splittings rel~$\A$ and the set of conjugacy classes of cofactors of realizations of $\A$. Each realization of $\A$ is conjugate in $\Gamma$ to one of the form $\Gamma = A * Z'$. It therefore suffices to show that each realization of the latter form is conjugate to a unique one of the form $A * \<za\>$, $a \in A$. Uniqueness follows from the observation that $\<za\>$ is conjugate to $\<zb\>$ if and only if $a=b$. To prove existence, pick a generator $Z' = \<z'\>$. The two free factorizations $\Gamma = A * \<z\> = A * \<z'\>$ determine two loop type free splittings rel~$\A$, namely the Bass-Serre trees of the two HNN extensions of $A$ over the trivial group, one with stable letter $z$ and the other with stable letter~$z'$. But we proved above that any two loop type free splittings rel~$\A$ are equivalent, and it follows that $z' = b z^{\pm 1} c$ for some $b,c \in A$. After possibly replacing $z'$ with its inverse we have $z' = bzc$, which is conjugate to $zcb$, and taking $a=cb$ we are done.
\end{proof}

\subsection{Combing}
\label{SectionCombing}
Consider a foldable map $f \from S \to T$ of free splittings of $\Gamma$ rel~$\A$. Given a nondegenerate subgraph $\sigma_T \subset T$, its \emph{pullback} is the nondegenerate subgraph $\sigma_S \subset S$ obtained from $f^\inv(\sigma_T)$ by removing degenerate components. In what follows, we will often assume without explicit mention that invariant subgraphs are nondegenerate, particularly in contexts where the pullback operation is used. 

Following \FSOne\ Section 4.1 (but using the current definition of foldable sequences), a \emph{combing rectangle} in $\FS(\Gamma)$ is defined to be a commutative diagram of free splittings of $\Gamma$ of the form
$$\xymatrix{
S_I \ar[r]^{f_{I+1}} \ar[d]_{[\sigma_I]}^{\pi_I} 
 & \cdots \ar[r]^{f_{i-1}} 
 & S_{i-1} \ar[d]_{[\sigma_{i-1}]}^{\pi_{i-1}} \ar[r]^{f_i} 
 & S_i \ar[d]_{[\sigma_i]}^{\pi_i} \ar[r]^{f_{i+1}}
 & \cdots \ar[r]^{f_J}  
 & S_J \ar[d]_{[\sigma_J]}^{\pi_J}  \\
T_I \ar[r]^{g_{I+1}}                                                
 & \cdots \ar[r]^{g_{i-1}} & T_{i-1}                                   \ar[r]^{g_i} 
 & T_i                                 \ar[r]^{g_{i+1}}
 & \cdots \ar[r]^{g_J} 
 & T_J
}
$$
where the top and bottom rows are foldable sequences, each vertical arrow $\pi_i \from S_i \to T_i$ is a collapse map with indicated collapse forest $\sigma_i$, and each $\sigma_i$ is the \emph{pullback} of $\sigma_J$ under the map $f^i_J$. If $\A$ is a free factor system of $\Gamma$ and $S_i,T_i \in \FS(\Gamma;\A)$ for all $i$ then we also say this is a combing rectangle \emph{in $\FS(\Gamma;\A)$}.

Denoting a combing rectangle in shorthand as $(S_i;T_i)_{I \le i \le J}$, two given combing rectangles $(S_i;T_i)_{I \le i \le J}$ and $(S'_i;T'_i)_{I' \le i \le K'}$ are said to be equivalent if $K'-K=I'-I=D$ and if there are equivariant homeomorphisms $S^{\vphantom{\prime}}_{i} \leftrightarrow S'_{i+D}$ and $T^{\vphantom{\prime}}_{i} \leftrightarrow T'_{i+D}$ making all resulting squares commute.


\begin{lemma}[Relative combing by collapse, cf.\ \FSOne\ Proposition~4.3] 
\label{LemmaCombingByCollapse}
For any combing rectangle, if its top row is in $\FS(\Gamma;\A)$ then so is its bottom row. For any foldable sequence $S_I \mapsto\cdots\mapsto S_J$ and any collapse $\pi_J \from S_J \to T_J$ in $\FS(\Gamma;\A)$, there exists a combing rectangle with the given top row and right edge, and that combing rectangle is unique up to equivalence.
\end{lemma}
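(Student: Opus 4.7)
The plan is to handle the two assertions of the lemma separately. The first falls out of the structural results already established in Section~\ref{SectionFFFAndRel}, while the second is the relative analogue of \FSOne\ Proposition~4.3; I would give a canonical construction of the combing rectangle and then invoke the absolute argument for the more delicate verification that the bottom row is foldable.

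For the first assertion, each vertical arrow $\pi_k \from S_k \to T_k$ is a map of free splittings, so Lemma~\ref{LemmaRealCollapse}\pref{ItemRCFStoFF} gives $\F(S_k) \sqsubset \F(T_k)$; combined with $\A \sqsubset \F(S_k)$ this yields $\A \sqsubset \F(T_k)$, placing $T_k \in \FS(\Gamma;\A)$.

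For the second assertion, the construction is forced. Given the top row and the collapse $\pi_K$ with forest $\sigma_K$, I would set $\sigma_k$ to be the pullback of $\sigma_K$ under $f^k_K$, let $T_k = S_k/\sigma_k$ with quotient collapse $\pi_k$, and define $g_k \from T_{k-1} \to T_k$ as the map descending from $f_k$. To see that $g_k$ is well-defined, I would verify that $f_k$ sends each component $C$ of $\sigma_{k-1}$ into a single component of $\sigma_k$: by connectedness, $f_k(C)$ lies in a unique component $C'$ of $(f^k_K)^\inv(\sigma_K)$; and since $C$ is nondegenerate (it contains an edgelet, as $\sigma_{k-1}$ has no degenerate components) and $f_k$ is edgelet-injective, $f_k(C)$ is nondegenerate, forcing $C'$ itself to be nondegenerate and hence $C' \subset \sigma_k$. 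Uniqueness up to equivalence follows automatically, since both $\sigma_k$ and $g_k$ are uniquely determined by the data of the top row and the right edge.

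The main obstacle is showing that the resulting bottom row is itself a foldable sequence, i.e.\ that each composition $g^i_j \from T_i \to T_j$ is edgelet-injective with at least two gates at every vertex. This is the content of the absolute version \FSOne\ Proposition~4.3; because the verification there is a local analysis of edgelets and their images near a vertex, it is insensitive to the presence of nontrivial vertex stabilizers and carries over unchanged to the relative setting. It is in fact simpler here since the current definition of foldable has already dropped the gate~3 condition of \FSOne, so no gate-counting refinement beyond ``at least two gates'' is required. Combined with the first assertion, this places the entire bottom row in $\FS(\Gamma;\A)$ and completes the construction of the combing rectangle.
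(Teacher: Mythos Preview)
Your proposal is correct and follows essentially the same approach as the paper's proof: the paper invokes Lemma~\ref{LemmaGoodStabilizers} (which is itself derived from Lemma~\ref{LemmaRealCollapse}) for the first assertion, and for the second assertion gives the same outline of defining $\sigma_i$ by pullback, constructing $T_i$ and the induced maps $g_i$, and then citing \FSOne\ Proposition~4.3 for the verification that the bottom row is foldable. Your write-up actually supplies a bit more detail than the paper on why $g_k$ is well-defined, and your remark that dropping the gate~3 condition simplifies matters is apt.
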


\begin{proof} The first sentence follows from Lemma~\ref{LemmaGoodStabilizers}. The existence statement in second sentence is proved in the case $\Gamma=F_n$, $\A=\emptyset$ in \FSOne\ Proposition~4.3, that proof works without change to prove existence in our present setting, \emph{and} the proof also gives uniqueness. In outline: define $\sigma_i \subset S_i$ uniquely as required by the definition; use $\sigma_i$ to uniquely define the collapse map $S_i \xrightarrow{[\sigma_i]} T_i$; check that there is a well-defined induced map $g_i \from T_{i-1} \to T_i$ which uniquely defines the bottom row; and then check that the bottom row is a foldable sequence. 
\end{proof}

\begin{lemma}[Relative combing by expansion, cf.\ \FSOne\ Proposition~4.4]
\label{LemmaCombingByExpansion}
For any foldable sequence $T_I \mapsto \cdots \mapsto T_J$ and any collapse map $\pi_J \from S_J \to T_J$ in $\FS(\Gamma;\A)$ there exists a combing rectangle in $\FS(\Gamma;\A)$ with the given bottom row and right edge, and that combing rectangle is unique up to equivalence.
\end{lemma}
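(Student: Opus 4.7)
The plan is to construct the combing rectangle by downward induction on $i$. The base case provides $S_K$, $T_K$, and the collapse $\pi_K$ with collapse forest $\sigma_K \subset S_K$. At each inductive step, given $S_i$, the collapse $\pi_i \from S_i \to T_i$ with collapse forest $\sigma_i$, and the foldable map $g_i \from T_{i-1} \to T_i$, the task is to produce $S_{i-1}$, the collapse $\pi_{i-1} \from S_{i-1} \to T_{i-1}$, and the map $f_i \from S_{i-1} \to S_i$, satisfying the commutative square relation $g_i \composed \pi_{i-1} = \pi_i \composed f_i$, with $\sigma_{i-1}$ equal to $f_i^\inv(\sigma_i)$ stripped of degenerate components.

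First I would factor each foldable $g_i$ into a fold sequence using Lemma~\ref{LemmaFoldPathExists}, reducing to the case where each $g_i$ is a single fold identifying initial segments $e, e' \subset T_{i-1}$ sharing a common initial vertex $v$, onto a common initial segment $\bar{e} \subset T_i$. Let $A = \pi_i^\inv(\bar{e}) \subset S_i$; by the pullback structure, $A$ is an arc consisting of edgelets, exactly one of which maps homeomorphically to $\bar{e}$ while the rest lie in $\sigma_i$. Then $S_{i-1}$ is constructed from $S_i$ by an equivariant ``unfolding'' along the orbit of $A$: I would pick compatible preimages of the vertices involved in the fold, take a second copy $A'$ of $A$, detach and reattach endpoints of $A$ and $A'$ in $S_i$ so as to mirror how $e$ and $e'$ sit in $T_{i-1}$, and extend this identification/duplication equivariantly across the orbit of $A$ under $\Gamma$. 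The map $f_i \from S_{i-1} \to S_i$ is defined by folding $A'$ back onto $A$; the collapse $\pi_{i-1}$ is then the unique map collapsing $\sigma_{i-1}$ (the union of edgelets lying above $\sigma_i$ under $f_i$) to the vertices of $T_{i-1}$ in a way compatible with the square.

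The main verifications are that $S_{i-1}$ is a simplicial tree with minimal $\Gamma$-action, trivial edge stabilizers, and finitely many edge orbits; that the map $f_i$ is foldable (injective on edgelets with at least two gates at each vertex, by construction); and that each composition $f_i^j = f_j \composed \cdots \composed f_{i+1}$ of the assembled top row remains foldable. Because a collapse map $\pi_{i-1} \from S_{i-1} \to T_{i-1}$ exists and $T_{i-1} \in \FS(\Gamma;\A)$, Lemma~\ref{LemmaGoodStabilizers} immediately places $S_{i-1}$ in $\FS(\Gamma;\A)$. Uniqueness follows from the observation that at each step the commutative square together with the pullback condition for $\sigma_{i-1}$ forces $S_{i-1}$ up to equivariant homeomorphism: the quotient $S_{i-1}/\sigma_{i-1}$ is prescribed to be $T_{i-1}$, and the attaching data on $\sigma_{i-1}$ is determined by requiring the square to commute over the fold.

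The main obstacle is verifying that the compositions $f_i^j$ in the new top row are foldable, especially the gate condition. Individually each $f_i$ is essentially a single fold, but when these folds are composed one must track the evolution of the gate structure to confirm the two-gate condition at every vertex along the entire top row. The absence of the gate~3 condition (following \cite{BestvinaFeighn:subfactor}) makes the compositions more flexible but puts the burden on directly verifying the weaker condition. A second point requiring care is the equivariance of the unfolding: one must pick the preimages of $v$, $e$, $e'$ in $S_i$ consistently across the $\Gamma$-orbit, using the trivial edge stabilizer hypothesis together with the structure of the collapse to ensure a well-defined equivariant construction.
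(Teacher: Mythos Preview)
Your approach is genuinely different from the paper's, and it has real gaps.

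The paper does not induct or unfold. Instead it defines $S_i$ directly as the minimal $\Gamma$-subtree of the fiber product of $T_i$ and $S_K$ over $T_K$ (with respect to the maps $T_i \to T_K$ and $\pi_K \from S_K \to T_K$). The two projection maps of the fiber product restrict to give $\pi_i \from S_i \to T_i$ and $f^i_K \from S_i \to S_K$ simultaneously, and one checks once that $\pi_i$ is a collapse with the correct collapse forest and that $f^i_K$ is foldable. Uniqueness is immediate: for any combing rectangle the pair $(\pi_i, f^i_K)$ embeds $S_i$ into the fiber product tree, and minimality forces it to be the minimal subtree. Membership in $\FS(\Gamma;\A)$ is checked directly: each subgroup $A$ with $[A] \in \A$ fixes a point of $T_i$ and a point of $S_K$, hence fixes a point of the fiber product, hence of its minimal subtree.

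Against this, your proposal has the following problems. First, your invocation of Lemma~\ref{LemmaGoodStabilizers} is backwards: that lemma says a map \emph{from} something in $\FS(\Gamma;\A)$ forces the \emph{target} into $\FS(\Gamma;\A)$, not the source. Having a collapse $S_{i-1} \to T_{i-1}$ with $T_{i-1} \in \FS(\Gamma;\A)$ does not put $S_{i-1}$ there; indeed expansions can leave $\FS(\Gamma;\A)$ (take $\A = \{[A],[Z]\}$ with $\Gamma = A * Z$: the edge-type splitting is in $\FS(\Gamma;\A)$ but its sewing-needle expansion is not). Second, your claim that $A = \pi_i^{-1}(\bar e)$ is an arc is unjustified: the preimage under a collapse map of a vertex of $\bar e$ can be an arbitrary component of $\sigma_i$, so $A$ is a tree that may well branch, and your detach-and-reattach description depends on it being an arc. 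Third, the obstacle you yourself flag --- foldability of the compositions $f^i_j$ along the top row --- is exactly what your reduction to single folds forces you to confront when you coarsen back to the original index set, and you give no argument for it. The fiber product construction produces $f^i_K$ directly and verifies foldability once, sidestepping all three issues.
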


\begin{proof} The existence proof in the case $\Gamma = F_n$, $\A=\emptyset$ is found in \FSOne, Proposition 4.4, ``Step~1'' and ``Preparation for Step 2'' (the further work in Step 2 of that proof is entirely concerned with establishing the gate~3 condition for the $S$ row, and so is not relevant to us here). Following that proof, consider the fiber product of the two free splittings $\Gamma \act T_i$, $\Gamma \act S_J$ with respect to the two $\Gamma$-equivariant maps $T_i \mapsto T_J$, $S_J \mapsto T_J$. This fiber product is the subset of the Cartesian product $T_i \times S_J$ consisting of ordered pairs $(x,y)$ such that the image of $x$ in $T_J$ equals the image of $y$ in $T_J$. It is a simplicial tree on which $\Gamma$ acts with trivial edge stabilizers, and we define $S_i$ to be the minimal subtree for that action. The two projection maps of the Cartesian product induce maps $\pi_i \from S_i \to T_i$ and $h^i_J \from S_i \to S_J$. Exactly as in ``Step 1'', the map $\pi_i$ is a collapse map which collapses a subforest $\sigma_i \subset S_i$, and $\sigma_i$ is the set of nondegenerate components of $(h^i_J)^\inv(\sigma_J)$. And exactly as in ``Preparation for Step 2'', the map $h^i_J$ is injective on edgelets and has $\ge 2$ gates at each vertex, and so $h^i_J$ is foldable according to our current definition. We thus have a combing diagram in $\FS(\Gamma)$, and we need to check that $S_i \in \FS(\Gamma;\A)$. For each subgroup $A \subgroup \Gamma$ such that $[A] \in \A$, since $A$ fixes unique points of $T_k$ and of $S_J$ it follows that $A$ fixes a unique point of the fiber product tree; since $A$ is nontrivial, that fixed point is in the minimal subtree $S_i$, and so $S_i \in \FS(F_n;\A)$.

Uniqueness follows by noticing that for any combing rectangle, the maps $\pi_i \from S_i \to T_i$ and $f^i_J \from S_i \to S_J$ embed $S_i$ in the fiber product tree of the two maps $T_i \mapsto T_J$ and $S_J \mapsto T_J$. Since the action of $\Gamma$ on $S_i$ is minimal it follows that $S_i$ is identified with the minimal subtree of the fiber product tree, and under this identification the maps $\pi_i,f^i_J$ are identified with the restrictions of the projection maps of the Cartesian product. The desired uniqueness property is an immediate consequence.
\end{proof}


\subsection{Complexity of nondegenerate subgraphs of free splittings}
\label{SectionSubgraphComplexity}

This section is concerned with an important technical underpinning of the proof of hyperbolicity. The key idea is that as one moves along a fold path, one studies the concept of a ``pullback sequence'' along that path, meaning a sequence of nondegenerate subgraphs, one in each free splitting along that fold path, each of which is the pullback of the next one with respect to the given fold map. We focus on how the topology of the subgraph varies along a pullback sequence, and we use numerical measurements of ``complexity'' to measure this change of topology. These subgraphs are just forests, of course: the only aspects of their topology that concern us are their component sets and the action of $\Gamma$ on those sets; and the only aspects of change of topology that we consider will be the $\Gamma$-equivariant maps on component sets induced by foldable maps.

The way the results of this section will be applied in what follows is to use upper and lower bounds on the change of complexity along fold paths to obtain information about upper and lower bounds on distance in $\FS(\Gamma;\A)$ along folds paths; see the discussion just below regarding the definition of complexity.

\subsubsection{Definition of complexity.} Consider a free factor system~$\A$ of $\Gamma$, a free splitting $\Gamma \act T$ rel~$\A$, and a nondegenerate subgraph $\beta \subset T$. We shall define a positive integer valued \emph{complexity} denoted $C(\beta)$ which is a sum of several terms. This complexity $C(\beta)$ will be dominated by a single term $C_1(\beta)$ called the \emph{component complexity} of $\beta$, defined to be the number of $\Gamma$-orbits of components of $\beta$. We will see in Lemma~\ref{LemmaComplexitySummandBounds} that the difference \hbox{$C(\beta)-C_1(\beta)$} is a non-negative integer bounded above by a constant depending only on~$\abs{\A}$ and~$\corank(\A)$. 

The definition of complexity is designed so that various upper and lower bounds on $C(\beta)$ can be used to obtain topological and metric conclusions. The most important of these conclusions are as follows:
\begin{itemize}
\item From upper bounds on complexity we obtain upper bounds on diameters along fold paths: see Lemma~\ref{LemmaComplexityBounds}~\pref{ItemDiameterBound} and Lemma~\ref{LemmaPreimageComplexityBound}, and applications of those lemmas in later sections. Underlying these diameter bounds is the key technical result Lemma~\ref{SublemmaNew}. The terms forming the difference $C(\beta)-C_1(\beta)$ are designed specifically to make Lemma~\ref{SublemmaNew} work.
\item From lower bounds on $C(\beta)$ we obtain lower bounds on $C_1(\beta)$, from which we deduce that some component of $\beta$ is an arc in the interior of a natural edge of~$T$: see Lemma~\ref{LemmaCompArc} and Fact~\ref{PropFSUProps}~\pref{ItemFSUBoundsUpsComp}. Ultimately this leads to lower bounds on diameter along fold paths, as expressed in Theorem~\ref{TheoremRelFSUParams}.
\end{itemize}
One may formally view the proof of hyperbolicity of $\FS(\Gamma;\A)$ as a game in which upper and lower bounds on complexity are played against each other, to obtain various upper and lower bounds on distance as needed for proving hyperbolicity.

The complexity $C(\beta)$ is defined by adding four non-negative integer summands:
$$C(\beta) = C_1(\beta) + C_2(\beta) + C_3(\beta) + C_4(\beta)
$$
These summands are each tailored to cases in the proof of Lemma~\ref{SublemmaNew}. For defining them, recall the free splitting $T/\beta$ obtained from $T$ by collapsing to a point each component of $\beta$. The free factor system $\Fell(T/\beta)$ decomposes into two subsets $\Fell(T/\beta) = \F[\beta] \, \disjunion \, \F[T-\beta]$ as follows: given $[B]\in\Fell(T/\beta)$, put $[B]$ in $\F[\beta]$ if $B$ stabilizes some component of $\beta$, and put $[B]$ in $\F[T-\beta]$ if $B$ stabilizes some point of $T-\beta$. 
\begin{itemize}
\item Define $C_1(\beta) = \abs{\beta / \Gamma}$, the number of components of the orbit space $\beta/\Gamma$, equal to the number of $\Gamma$-orbits of components of $\beta$.
\item Define $C_2(\beta) = \DFF(\Fell(T/\beta))$. 
\item Define $C_3(\beta)$ to be the number of components $[A] \in \A$ satisfying the following: the component of $\Fell(T/\beta)$ containing $[A]$ is in the set $\F[T-\beta]$; equivalently $A$ stabilizes some vertex of $T-\beta$.
\end{itemize}
For defining $C_4(\beta)$, first apply Lemma~\ref{LemmaExtension} using $\A$ and $\A'=\Fell(T/\beta)$, with the following conclusions: the components of $\Fell(T/\beta)$ can that do not contain any component of $\A$ can be listed as $[B_1],\ldots,[B_N]$ where each of $B_1,\ldots,B_N$ is free of finite rank and their free product $B_1,\ldots,B_N$ is a free factor of a cofactor of a realization of $\A$ (in the notation of Lemma~\ref{LemmaExtension} these components are $[A'_{J+1}],\ldots,[A'_K]$). Up to re-indexing there exists $M \in \{0,\ldots,N\}$ so that $[B_1],\ldots,[B_M] \in \F[T-\beta]$ and $[B_{M+1}],\ldots,[B_N] \in \F[\beta]$. Thus $[B_1],\ldots,[B_M]$ are precisely the components of $\Fell(T/\beta)$ that do not contain a component of $\A$ and whose representative subgroups $B_1,\ldots,B_M$ each fix some point of $T-\beta$. Since each $B_m$ is free of finite rank, it follows that the set $\Fell(T/\beta) - \{[B_1],\ldots,[B_M]\}$ is still a free factor system, and it is still true that $\A \sqsubset  \Fell(T/\beta) - \{[B_1],\ldots,[B_M]\}$. Define
$$C_4(\beta) = \corank\biggl(\Fell(T/\beta) - \bigl\{[B_1],\ldots,[B_M]\bigr\}\biggr) = \corank
\bigl(\Fell(T/\beta)\bigr) 
+ \sum_{m=1}^M \rank(B_m)
$$
In the following lemma the inequalities in conclusion~\pref{ItemCSBIneq} are immediate. Conclusion~\pref{ItemCSBEq} will be applied in the sequel \cite{HandelMosher:RelComplexHypII}.

\begin{lemma}\label{LemmaComplexitySummandBounds}
For any free splitting $T$ of $\Gamma$ rel~$\A$ and $\Gamma$-invariant proper subgraph $\beta$,
\begin{enumerate}
\item\label{ItemCSBIneq}
The three summands $C_2(\beta), C_3(\beta), C_4(\beta)$ have the following bounds:
\begin{align*}
0 \le C_2(\beta) &\le  2 \, \corank(\A) + \abs{\A} - 1 \quad\text{(by Lemma~\ref{LemmaFFSNorm}~\pref{ItemFFSxIneq})} \\
0 \le C_3(\beta) &\le \abs{\A} \\
0 \le C_4(\beta) &\le \corank(\A)  \quad\text{(by Corollary~\ref{PropCorankIneq})} 
\end{align*} 
and hence $C_2(\beta) + C_3(\beta) + C_4(\beta) \le 3 \corank(\A) + 2 \abs{\A} - 1$.
\item\label{ItemCSBEq}
Consider free splittings $\Gamma \act_\alpha T$ and $\Gamma \act_{\alpha'} T'$ of $\Gamma$ rel~$\A$ and $\phi \in \Out(\Gamma;\A)$ such that $[T] \cdot \phi = [T']$, as witnessed by a simplicial isomorphism $h \from T \to T'$ which is equivariant with respect to the actions $\Gamma \act_{\alpha \circ \Phi} T$ and $\Gamma \act_{\alpha'} T'$ (where $\Phi \in \Aut(\Gamma;\A)$ represents $\phi$; see Section~\ref{SectionRelOut}). For any $\Gamma$-invariant proper subgraphs $\beta \subset T$ and $\beta' \subset T'$ such that $h(\beta)=\beta'$ we have $C_i(\beta)=C_i(\beta')$ for $i=1,\ldots,4$, and hence $C(\beta)=C(\beta')$.
\end{enumerate}
\end{lemma}

\begin{proof} We need only prove~\pref{ItemCSBEq}. The map $h$ induces a bijection from $\Gamma$-orbits of components of $\beta$ to $\Gamma$-orbits of components of $\beta'$, hence $C_1(\beta)=C_1(\beta')$. 

Note that $[T/\beta] \cdot \phi = [T'/\beta']$, hence $\phi^\inv \cdot \Fell(T/\beta) = \Fell T'/\beta'$ (by Lemma~\ref{LemmaFofTEquivariance}). Since $\Out(\Gamma;\A)$ acts on free factor systems rel~$\A$ preserving the partial order $\sqsubset$, it follows that $C_2(\beta) = \DFF(\Fell(T/\beta)) = \DFF(\Fell T'/\beta') = C_2(\beta')$. 

Noting that $\Phi$ permutes those free factors $A \subgroup \Gamma$ such that $[A] \in \A$, and that $A$ stabilizes a vertex of $T \setminus \beta$ if and only if $\Phi^\inv(A)$ stabilizes a vertex of $T' \setminus \beta'$, it follows that $C_3(\beta)=C_3(\beta')$.

Finally, let $[B_1],\ldots,[B_M]$ be as in the definition of $C_4(\beta)$, namely those components of $\Fell(T/\beta)$ represented by cofactors of $\A$ that are stabilizers of vertices of $T-\beta$. Letting $B'_i = \Phi^\inv(B_i)$, and hence $[B'_i] = \phi^\inv \cdot [B_i]$, it follows that $[B'_1],\ldots,[B'_M]$ are those components of $\Fell T'/\beta'$ represented by cofactors of $\A$ that are stabilizers of vertices of $T'-\beta'$. Since $B_i$, $B'_i$ are isomorphic, they have the same rank, and hence $C_4(\beta)=C_4(\beta')$.
\end{proof}

\subsubsection{Consequence of a lower bound on complexity.} Lemma~\ref{LemmaCompArc} to follow gives a very simple topological consequence for a specific lower bound on the subgraph complexity. Further consequences of that lower bound are derived later in Proposition~\ref{PropFSUProps}~\pref{ItemFSUBoundsUpsComp}, and those consequences will play an important role in the central arguments of Section~\ref{SectionFFRelAHyp}, particularly in the statement and proof of Proposition~\ref{PropMMTranslation} where that constant is denoted~$b_1 = 5 \corank(\A) + 4 \abs{\A} - 3$. 

\begin{lemma}\label{LemmaCompArc}
For any free splitting $\Gamma \act T$ rel~$\A$, and for any $\Gamma$-invariant subgraph $\beta \subset T$, if \, $C(\beta) > 5 \corank(\A) + 4 \abs{\A} - 3$ then some component of $\beta$ is an arc contained in the interior of a natural edge of $T$.
\end{lemma}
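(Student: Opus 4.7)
The plan is a contrapositive counting argument: assume that no component of $\beta$ is an arc contained in the interior of a relatively natural edge of~$T$, and show that $C(\beta) \le 5\corank(\A) + 4\abs{\A} - 3$. Combining the three bounds provided by Lemma~\ref{LemmaComplexitySummandBounds} gives
$$C_2(\beta) + C_3(\beta) + C_4(\beta) \;\le\; \bigl(2\corank(\A) + \abs{\A} - 1\bigr) + \abs{\A} + \corank(\A) \;=\; 3\corank(\A) + 2\abs{\A} - 1,$$
so it suffices to produce the complementary bound $C_1(\beta) \le 2\corank(\A) + 2\abs{\A} - 2$.

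Under the contrapositive assumption I first claim that every component $C$ of $\beta$ contains at least one relatively natural vertex of~$T$. Indeed, if some component $C$ contained none, then $C$ would be a nondegenerate connected subset of $T$ lying in the complement of the natural $0$-skeleton; since this complement is a disjoint union of the open natural edges, each homeomorphic to an open arc, $C$ would sit inside the interior of a single natural edge. But a nondegenerate connected subcomplex of an open arc is itself an arc, contradicting the assumption. Hence every component of $\beta$ meets the set $V_{\mathrm{nat}}$ of relatively natural vertices of~$T$.

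To bound $C_1(\beta)$ I would then run an equivariant pigeonhole. Let $\bar\beta$ denote the set of components of $\beta$, and form the $\Gamma$-set $X = \{(v,C) \in V_{\mathrm{nat}} \times \bar\beta : v \in C\}$ with the diagonal action. The first projection $X \to V_{\mathrm{nat}}$ is equivariant and injective, since the component $C$ of $\beta$ containing a given point $v$ is unique (components are disjoint). The second projection $X \to \bar\beta$ is equivariant and, by the preceding claim, surjective. Passing to $\Gamma$-orbits yields $C_1(\beta) = \abs{\bar\beta/\Gamma} \le \abs{X/\Gamma} \le \abs{V_{\mathrm{nat}}/\Gamma} = V(T)$, and Proposition~\ref{PropGenericFS}~\pref{ItemVMax} gives $V(T) \le 2\corank(\A) + 2\abs{\A} - 2$. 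Adding this to the earlier bound gives $C(\beta) \le 5\corank(\A) + 4\abs{\A} - 3$, contradicting the hypothesis.

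The only real observation is that the constant in the statement is tuned exactly so that, once the three universal upper bounds from Lemma~\ref{LemmaComplexitySummandBounds} and the generic vertex-orbit bound $V(T) \le 2\corank(\A) + 2\abs{\A} - 2$ are in place, $C(\beta) > 5\corank(\A) + 4\abs{\A} - 3$ forces $C_1(\beta) > V(T)$, which by the injection/surjection pair above is impossible when every component of $\beta$ meets a natural vertex. There is no substantial technical obstacle; the argument is pure bookkeeping against Lemma~\ref{LemmaComplexitySummandBounds} and Proposition~\ref{PropGenericFS}.
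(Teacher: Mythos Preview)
Your proposal is correct and is essentially the contrapositive of the paper's own argument: the paper uses Lemma~\ref{LemmaComplexitySummandBounds} to deduce $C_1(\beta) > 2\corank(\A) + 2\abs{\A} - 2 \ge V(T)$ (the latter by Proposition~\ref{PropGenericFS}~\pref{ItemVMax}) and then pigeonholes directly to find a component orbit disjoint from the relatively natural vertices. Your equivariant pigeonhole via the incidence set $X$ is a slightly more explicit phrasing of the same step, so the approaches coincide.
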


\begin{proof} Combining the hypothesis with Lemma~\ref{LemmaComplexitySummandBounds} it follows that 
$$C_1(\beta) > 2 \corank(\A) + 2\abs{\A} - 2
$$
The right hand side is the maximal number of natural vertex orbits amongst all free splittings of $\Gamma$ rel~$\A$, according to Proposition~\ref{PropGenericFS}~\pref{ItemVMax}. So $\beta$ has more than that number of component orbits, and hence one of those orbits must be disjoint from the natural vertices of~$T$. Each component in that orbit is therefore contained in the interior of some natural edge.
\end{proof}

\subsubsection{Complexity change under a foldable map.} We now turn to a study of subgraph complexity along a foldable sequence, starting with its behavior under a single foldable map.

\begin{lemma}[Monotonicity properties of complexity]
\label{SublemmaNew}
(c.f.\ \FSOne\ Sublemma 5.3) Let $S,T$ be free splittings of~$\Gamma$ rel~$\A$, let $f \from S \to T$ be a foldable map, let $\beta_T \subset T$ be a proper $\Gamma$-invariant subgraph, and let $\beta_S \subset T$ be the pullback of $\beta_T$ under $f$. Then we have $C_i(\beta_S) \ge C_i(\beta_T)$ for $i=1,2,3,4$, and hence $C(\beta_S) \ge C(\beta_T)$. Furthermore, if $C(\beta_S)=C(\beta_T)$ then $f$ induces a bijection between the set of components of $\beta_S$ and the set of components of $\beta_T$.
\end{lemma}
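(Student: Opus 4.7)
The plan is to verify the four summand inequalities $C_i(\beta_S) \ge C_i(\beta_T)$ separately, then to use their equality cases to deduce the component bijection. Two preliminary structural facts about the pullback drive most of the argument: (a) since $f$ is injective on edgelets, no edgelet of $S-\beta_S$ can map into $\beta_T$, for its image would be an edgelet of $\beta_T$ and the source edgelet, being non-degenerate, would then lie in $\beta_S$; (b) since $T$ is minimal and $f(S)$ is a non-empty $\Gamma$-invariant subtree, $f$ is surjective, and combined with (a) this gives $f(\beta_S) = \beta_T$.

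For $C_1$, (b) supplies a $\Gamma$-equivariant surjection on components $\beta_S \to \beta_T$ descending to a surjection of $\Gamma$-orbits. For $C_2$, the induced equivariant map of free splittings $\bar f \colon S/\beta_S \to T/\beta_T$ together with Lemma~\ref{LemmaRealCollapse}\pref{ItemRCFStoFF} gives $\F(S/\beta_S) \sqsubset \F(T/\beta_T)$, and Lemma~\ref{LemmaFFSNorm}\pref{ItemFFSxIneq} then delivers $C_2(\beta_S) \ge C_2(\beta_T)$ with equality exactly when $\F(S/\beta_S) = \F(T/\beta_T)$. For $C_3$, each $[A] \in \A$ has a unique fixed vertex $v_S^A \in S$ and $v_T^A := f(v_S^A)$ is the unique fixed vertex of $A$ in $T$; fact (a) forces $v_S^A \in \beta_S \Rightarrow v_T^A \in \beta_T$, whose contrapositive is the required inclusion of counted sets.

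The main technical step is $C_4$. I would apply the Extension Lemma~\ref{LemmaExtension} to $\F(S/\beta_S) \sqsubset \F(T/\beta_T)$: each $[B^T] \in \F(T/\beta_T)$ admits a free factorization $B^T = A^S_{j1} * \cdots * A^S_{j k_j} * B_{[B^T]}$ with the $[A^S_{j\ell}]$ ranging over the preimages of $[B^T]$ under the containment function and $B_{[B^T]}$ a free cofactor (using the convention that $B_{[B^T]} = B^T$ when the preimage is empty, which happens precisely when $B^T$ is itself free), and the lemma supplies $\corank(\F(S/\beta_S)) - \corank(\F(T/\beta_T)) = \sum_{[B^T]} \rank(B_{[B^T]})$. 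For each $[B^T]$ counted in $C_4(\beta_T)$, transitivity of containment forces every preimage $[A^S_{j\ell}]$ to contain no component of $\A$; and since the free factor $A^S_{j\ell} \subset B^T$ then shares the (unique) fixed vertex of $B^T$ in $T$, which lies in $T-\beta_T$, fact (a) places the unique fixed vertex of $A^S_{j\ell}$ in $S$ inside $S-\beta_S$. Hence each such $[A^S_{j\ell}]$ is counted in $C_4(\beta_S)$ with contribution $\rank(A^S_{j\ell})$; distinctness of the preimage summands across distinct $[B^T]$, the rank identity $\rank(B^T) = \sum_\ell \rank(A^S_{j\ell}) + \rank(B_{[B^T]})$, and the corank identity, telescope to $C_4(\beta_T) \le C_4(\beta_S)$.

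For the furthermore clause, assume $C(\beta_S) = C(\beta_T)$, so each $C_i$ is equal; this forces $\F(S/\beta_S) = \F(T/\beta_T)$ (from $C_2$) and a bijection of orbits $\beta_S/\Gamma \to \beta_T/\Gamma$ (from $C_1$). For each orbit-paired components $c_S$ and $c_T = f(c_S)$, equivariance gives $\Stab(c_S) \subset \Stab(c_T)$. If $\Stab(c_S) \ne 1$, then both classes coincide in the common free factor system, so they are conjugate in $\Gamma$; malnormality of the free factor $\Stab(c_S)$ together with the inclusion forces $\Stab(c_S) = \Stab(c_T)$. If $\Stab(c_S) = 1$ while $\Stab(c_T) \ne 1$, then $[\Stab(c_T)]$ appears in $\F(S/\beta_S)$ either as the stabilizer of a distinct orbit of components of $\beta_S$ (which by the preceding analysis must then also map to the orbit of $c_T$, contradicting orbit bijection from $C_1$) or as the stabilizer of a vertex of $S-\beta_S$ (which strictly increases $C_4(\beta_S)$ over $C_4(\beta_T)$ by $\rank(\Stab(c_T))$, contradicting $C_4$ equality). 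Hence $\Stab(c_S) = \Stab(c_T)$ always, upgrading the orbit bijection to a component bijection. I expect the main obstacle to be the $C_4$ bookkeeping: balancing the free cofactors $B_{[B^T]}$ against the corank difference supplied by the Extension Lemma and tracking the distinctness of preimage summands across different $[B^T]$.
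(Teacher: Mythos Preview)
Your inequalities $C_1,C_2,C_3$ are handled exactly as in the paper, and your $C_4$ argument, while more computational than the paper's (which simply checks that removing the ``exceptional'' components on both sides preserves the extension $\sqsubset$ and then invokes corank monotonicity, Proposition~\ref{PropCorankIneq}), is correct. The ``furthermore'' clause is also organized essentially as the paper does it, via stabilizer comparison on matched orbits.

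There is, however, a genuine gap in your Case~2, second alternative. You assume $\Stab(c_S)=1$, $H:=\Stab(c_T)\ne 1$, and $[H]\in\F(S-\beta_S)$, and you claim this ``strictly increases $C_4(\beta_S)$ over $C_4(\beta_T)$ by $\rank(\Stab(c_T))$''. That claim is only valid when $[H]$ contains \emph{no} component of $\A$: it is precisely then that $[H]$ is one of the classes removed in the definition of $C_4(\beta_S)$ but not in that of $C_4(\beta_T)$, producing the surplus rank. If instead $[H]$ contains some $[A]\in\A$, then $[H]$ is removed on neither side in the definition of $C_4$, so no $C_4$ discrepancy is forced; moreover $H$ need not even be free, so ``$\rank(\Stab(c_T))$'' may be meaningless.

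The paper (and you should too) splits this alternative into two sub-cases. When $[H]$ contains no component of $\A$, your $C_4$ argument goes through. When $[H]$ contains some $[A]\in\A$, one instead contradicts $C_3$ equality: since $[H]\in\F(S-\beta_S)$, the unique fixed vertex of $A$ in $S$ lies in $S-\beta_S$, so $[A]$ is counted in $C_3(\beta_S)$; but since $[H]=[\Stab(c_T)]\in\F(\beta_T)$, the unique fixed vertex of $A$ in $T$ lies in $c_T\subset\beta_T$, so $[A]$ is \emph{not} counted in $C_3(\beta_T)$. This gives $C_3(\beta_S)>C_3(\beta_T)$, the desired contradiction.
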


\begin{proof} The $\Gamma$-equivariant surjection $f \from \beta_S \to \beta_T$ induces a $\Gamma$-equivariant surjection of component sets $f_* \from \pi_0(\beta_S) \to \pi_0(\beta_T)$ which induces in turn a surjection of component orbit sets $f_{**} \from \pi_0(\beta_S) / \Gamma \to \pi_0(\beta_T) / \Gamma$. It follows that $C_1(\beta_S) \ge C_1(\beta_T)$. By Lemma~\ref{LemmaCombingByCollapse} the foldable map $f \from S \mapsto T$ induces a foldable map $f/\beta \from S/\beta_S \to T/\beta_S$ and hence $\Fell S / \beta_S \sqsubset \F(T / \beta_T)$. Applying Lemma~\ref{LemmaFFSNorm}~\pref{ItemFFSxIneq} it follows that $C_2(\beta_S) \ge C_2(\beta_T)$. 

To prove the inequality $C_3(\beta_S) \ge C_3(\beta_T)$, we use the fact that the composition of containment functions $\A \mapsto \Fell S / \beta_S \mapsto \F(T / \beta_T)$ is the containment function $\A \mapsto \F(T / \beta_T)$. It follows that for each component $[A]$ of $\A$, if the component of $\Fell S / \beta_S$ containing $[A]$ is in $\F[\beta_S]$ then the component of $\F(T / \beta_T)$ containing $[A]$ is in $\F[\beta_T]$. The inequality $C_3(\beta_S) \ge C_3(\beta_T)$ follows. Furthermore, for the equation $C_3(\beta_S) = C_3(\beta_T)$ to hold is equivalent to saying that for each $[A] \in \A$, $[A]$ is contained in $\F[S-\beta_S]$ if and only if $[A]$ is contained in $\F[T-\beta_T]$. 

We next prove the inequality $C_4(\beta_S) \ge C_4(\beta_T)$. Consider nested components $[B] \sqsubset [B']$ of $\F[\beta_S] \sqsubset \F[\beta_T]$, respectively. Note that if $B'$ stabilizes a point of $T - \beta_T$ then $B$ stabilizes a point of $S - \beta_S$, and so if $[B'] \in \F[T - \beta_T]$ then $[B] \in \F[S - \beta_S]$. Let $[B_{1}],\ldots,[B_{M}]$ be the components of $\F[S - \beta_S]$ not containing a component of $\A$, and let $[B'_{1}],\ldots,[B'_{M'}]$ be the components of $\F[T-\beta_T]$ not containing a component of $\A$. It follows that we have an extension of free factor systems to which we apply Lemma~\ref{LemmaFFSNorm}~\pref{ItemFFSxIneq}:
\begin{align}
\Fell(S/\beta_S) - \{ [B_{1}],\ldots,[B_{M}]\} &\sqsubset \Fell(T/\beta_T) - \{ [B'_{1}],\ldots,[B'_{M'}] \} \\
\corank\biggl(\Fell(S/\beta_S) - \bigl\{ [B_{1}],\ldots,[B_{M}]\bigr\}\biggr) &\ge \corank\biggl(\Fell(T/\beta_T) - \bigl\{ [B'_{1}],\ldots,[B'_{M'}] \bigr\}\biggr)\\
C_4(\beta_S) &\ge C_4(\beta_T)
\end{align}
with equality holding in (3.2) if and only if it holds in (3.3).

\medskip

Assuming that $C(\beta_S)=C(\beta_T)$, and so $C_i(\beta_S)=C_i(\beta_T)$ for $i=1,2,3,4$, it remains to prove that $f_*$ is a bijection. From surjectivity of $f \from \beta_S \to \beta_T$ it follows that $f_*$ is also surjective, and what is left is to show that $f_*$ is injective. Consider a component $b'$ of~$\beta_T$; we must prove that there is exactly one nondegenerate component of $f^\inv(b')$. Since $C_1(\beta_S)=C_1(\beta_T)$ it follows that $f_{**}$ is a bijection, and so all of the nondegenerate components of $f^\inv(b')$ are in the same $\Gamma$-orbit. If $f^\inv(b')$ has more than one nondegenerate component then any element of $\gamma$ taking one to the other is a nontrivial element of $\Stab(b')$; therefore if $\Stab(b')$ is trivial then $f^\inv(b')$ has only one nondegenerate component and we are done. 

We have reduced to the case that $\Stab(b')$ is nontrivial; by definition we have that $[\Stab(b')] \in \F[\beta_T]$. Since 
$$\DFF(\Fell(S/\beta_S)) + 1 = C_2(\beta_S)=C_2(\beta_T) = \DFF(\Fell(T/\beta_T)) + 1
$$
and since $\Fell(S/\beta_S) \sqsubset \Fell(T/\beta_T)$, by applying Lemma~\ref{LemmaFFSNorm}~\pref{ItemFFSxIneq} we have:
$$ (*) \qquad \Fell(S/\beta_S) = \Fell(T/\beta_T)
$$
Consider the subcase that some nondegenerate component $b$ of $f^\inv(b')$ has nontrivial stabilizer. Since $\Stab(b) \subgroup \Stab(b')$, it follows from $(*)$ that $\Stab(b) = \Stab(b')$. But since all nondegenerate components of $f^\inv(b')$ are in the same orbit, $b$ must be the only such component, because otherwise any $\gamma \in \Gamma$ taking $b$ to a different nondegenerate component is an element of $\Stab(b')$ but not of $\Stab(b)$. The proof is therefore complete in this subcase.

We have further reduced to the subcase that all nondegenerate components of $f^\inv(b')$ have trivial stabilizer, and in this subcase we shall derive a contradiction. It follows from $(*)$ that there exists $x \in S - \beta_S$ such that $\Stab(x) = \Stab(b') \equiv H \subgroup \Gamma$. By definition we have $[H]=[\Stab(x)] \in \F[S-\beta_S]$ whereas $[H]=[\Stab(b')] \in \F[\beta_T]$. We now break into two cases, depending on whether $[H]$ contains some element of $\A$.

Suppose first that $[H]$ contains some $[A] \in \A$, and so up to conjugacy we have $A \subgroup H$. Since $C_3(\beta_S)=C_3(\beta_T)$, it follows that $A$ stabilizes a point of $S-\beta_S$ if and only if $A$ stabilizes a point of $T-\beta_T$ if and only if $A$ does not stabilize any component of $\beta_T$. But $A$ stabilizes the point $x$ of $S-\beta_S$ and the component $b'$ of $\beta_T$, a contradiction. 

Suppose next that $[H]$ contains no element of $\A$. In the notation of (3.1), up to conjugacy we have $H = \Stab(x) = B_{m}$ for some $m=1,\ldots,M$ and so $[H]$ is not an element of the left hand side of (3.1), although $[H] = [\Stab(b')]$ is an element of the right hand side. By Lemma~\ref{LemmaExtension} applied to the extension in (3.1), it follows that the free factor system on the left hand side of (3.1) has a realization with a cofactor $B$ that freely factors into two or more nontrivial  terms, one term up to conjugacy being $H = B_{m}$ (one of the terms denoted $A'_{J+1},\ldots,A'_{K}$ in Lemma~\ref{LemmaExtension}), and another term being a cofactor $B'$ for a realization of the free factor system on the right hand side. From this we obtain $C_4(\beta_S) = \rank(B)  \ge \rank(B') + \rank(B_m) > \rank(B') = C_4(\beta_T)$, contradicting that~$C_4(\beta_S)=C_4(\beta_T)$.
\end{proof}

\subsubsection{Complexity change along a foldable sequence.} 

Given a foldable sequence $T_I \xrightarrow{f_{I+1}} \cdots \xrightarrow{f_J} T_J$, a \emph{pullback sequence} is a sequence of nonempty, nondegenerate subgraphs $\beta_i \subset T_i$ ($I \le i \le J$) such that for each fold $f_i \from T_{i-1} \to T_i$ the subgraph $\beta_{i-1}$ is the pullback of $\beta_i$ via $f_i$; it follows for all $I \le i \le j \le K$ that the subgraph $\beta_i$ is the pullback of $\beta_j$ via $f^i_j$. For example the sequence of nondegenerate subgraphs occurring in a combing rectangle (see Section~\ref{SectionCombing}) is a pullback sequence, assuming those subgraphs are nonempty. Note that the complementary sequence $\rho_i = \closure(T_i - \beta_i) \subset T_i$ is also a pullback sequence. Furthermore, the subgraphs $\beta_i,\rho_i$ decompose the tree $T_i$ in the sense that every edgelet of $T_i$ is in either $\beta_i$ or $\rho_i$; such a sequence of decompositions $T_i = \beta_i \union \rho_i$ is called a \emph{pullback blue--red decomposition}.

The next lemma uses upper bounds on complexity to derive diameter bounds along fold sequences, as expressed in item~\pref{ItemDiameterBound}, the proof of which exploits nonuniqueness of fold paths.


\begin{lemma}[cf.\ \FSOne\ Lemma 5.2] 
\label{LemmaComplexityBounds}
Given a pullback sequence of a foldable sequence of free splittings of $\Gamma$ rel~$\A$, as denoted above, the following holds:
\begin{enumerate}
\item\label{ItemComplexityNonincreasing}
The quantities $C_1(\beta_k)$, $C_2(\beta_k)$, $C_3(\beta_k)$, $C_4(\beta_k)$, and $C(\beta_k)$ are all nonincreasing as functions of~$k$ (for $I \le k \le J$).
\item\label{ItemComplexityEquality}
Equality $C(\beta_{k-1}) = C(\beta_k)$ implies that $f_k \from T_{k-1} \to T_k$ restricts to a bijection from components of $\beta_{k-1}$ to components of $\beta_k$.
\item\label{ItemComplexityConstantImplies}
On any subinterval $a \le k \le b$ along which $C(\beta_k)$ is constant we have:
\begin{enumerate}
\item\label{ItemDiameterBound} The diameter of $\{T_a,\ldots,T_b\}$ in $\FS'(\Gamma;\A)$ is at most~$4$.
\item\label{ItemComponentBound} $C_1(\beta_a) \le C_1(\beta_b) + (3 \corank(\A) + 2 \abs{\A} - 1)$
\end{enumerate}
\end{enumerate}
\end{lemma}

\begin{proof} Items~\pref{ItemComplexityNonincreasing} and~\pref{ItemComplexityEquality} follow from Lemma~\ref{SublemmaNew}. Item \pref{ItemComponentBound} is an immediate consequence of Lemma~\ref{LemmaComplexitySummandBounds} combined with the equation $C(\beta_a) = C(\beta_b)$ and the monotonicity of $C_2(\beta_i)$, $C_3(\beta_i)$ and $C_4(\beta_i)$.

To prove~\pref{ItemDiameterBound}, first 
apply item~\pref{ItemComplexityEquality} to conclude that each $f_k$ induces a bijection from the component set of $\beta_{k-1}$ to the component set of~$\beta_k$. We now repeat the proof of \FSOne\ Lemma~5.2~(3). Given $a \le i \le j \le b$, the required distance bound $d(T_i,T_j) \le 4$ is proved using a certain refactorization of the foldable map $T_i \mapsto T_j$. We shall refactor that map as a fold sequence $T_i = U_0 \mapsto \cdots \mapsto U_P \mapsto\cdots\mapsto U_Q = T_j$ (with $0 \le P \le Q$), and we shall construct free splittings $X,Y$ and a collapse expand sequence $T_i = U_0 \collapses X \expands U_P \collapses Y \expands U_Q = T_j$. To do this, we first construct a partial fold factorization 
$$T_i = U_0 \mapsto \cdots \mapsto U_P \mapsto T_j
$$ 
by prioritizing folds of blue foldable turns, until the induced foldable map $U_P \mapsto T_j$ is injective over $\beta_j$. This is possible because the map $f^i_j \from T_i \to T_j$ induces a bijection from the components of $\beta_i$ to the components of $\beta_j$: if this map is not actually injective over $\beta_j$ then a blue turn exists in $T_i$ that is foldable with respect to $f^i_j$, and we can choose to fold that turn in the first fold factor of~$f^i_j$; proceeding inductively, as long as the map is not yet injective over $\beta_j$, there will exist another blue foldable turn which one can choose to fold in the next fold factor. This partial fold factorization must stop at some $U_P$ so that the foldable map $U_P \mapsto T_j$ is injective over $\beta_j$. The second part of the fold sequence $U_P \mapsto\cdots\mapsto U_Q \approx T_j$ is then chosen arbitrarily: all folded turns along that sequence must be red turns, lying in the inverse image of $\rho_j$. There is a single free splitting $\Gamma \act X$ obtained by collapsing all blue edgelets of $U_0$ or of $U_P$, and a single free splitting $\Gamma \act Y$ obtained by collapsing all red edgelets of $U_P$ or of $U_Q$. Applying Lemma~\ref{LemmaGoodStabilizers} and using that $T_i \in \FS(\Gamma;\A)$, it follows, in order, that $X,U_P,Y \in \FS(\Gamma;\A)$, and hence \hbox{$d(T_i,T_j) \le 4$.} 
\end{proof}

The construction in the following lemma is essentially an argument of Bestvina and Feighn which we have translated into the language of complexity. In the context of $\FS(F_n)$ this construction has the simplifying effect of enfolding two upper bounds on distance from \FSOne---the ``almost invariant edge bound'' and the ``blue--red decomposition bound''---into a single distance bound. In the current context, the upper bound in Lemma~\ref{LemmaPreimageComplexityBound} will be used in the proof of Lemma~\ref{LemmaCoarseRetract} to get certain upper bounds to distance along fold paths. 

\begin{lemma}[\protect{\cite[Lemma~4.1]{BestvinaFeighn:subfactor}}]
\label{LemmaPreimageComplexityBound}
For any foldable map $f \from S \to T$ of free splittings of $\Gamma$ rel~$\A$, and for any point $x \in T$ contained in the interior of some edgelet of ~$T$, there is a nonempty nondegenerate subgraph $\beta_T \subset T$ with pullback subgraph $\beta_S \subset S$ such that 
$$C(\beta_S) \le \abs{f^\inv(x)} + (3 \corank(\A) + 2 \abs{\A} - 1)
$$
\end{lemma}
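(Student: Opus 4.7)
\textbf{Proof plan for Lemma~\ref{LemmaPreimageComplexityBound}.} The plan is to take $\beta_T$ to be the $\Gamma$-orbit of a single edgelet containing $x$, to prove the component bound $C_1(\beta_S) \le \abs{f^\inv(x)}$ by direct combinatorics, and then to add the universal bounds on $C_2,C_3,C_4$ supplied by Lemma~\ref{LemmaComplexitySummandBounds}.

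\emph{Preliminary subdivision.} First I would ensure that $\beta_T$ can be chosen proper, which is a genuine issue when $T$ has only one orbit of edgelets. Subdivide the edgelet of $T$ containing $x$ into three consecutive edgelets $e_T^L,e_T^M,e_T^R$, with $x$ in the interior of $e_T^M$, and subdivide $S$ correspondingly at the preimages of the two new vertices; since those new vertices lie in the interiors of existing edgelets of $T$, each of their preimages sits in the interior of some edgelet of $S$, giving a legitimate subdivision. The refined map $f$ is still injective on edgelets, its gates at every original vertex of $S$ are unchanged, and at each new vertex of $S$ it has exactly two gates; so $f$ remains foldable. Because edges of $T$ have trivial stabilizer and distinct $\Gamma$-translates of an edge of $T$ meet at most at vertices, the three pieces $e_T^L,e_T^M,e_T^R$ lie in three distinct $\Gamma$-orbits; hence $\beta_T := \Gamma\cdot e_T^M$ is a proper $\Gamma$-invariant subgraph of $T$ whose components are the nondegenerate intervals $\gamma\cdot e_T^M$, $\gamma\in\Gamma$. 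Let $\beta_S\subset S$ be the pullback of $\beta_T$ under~$f$.

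\emph{The component bound.} Let $E_1,\ldots,E_N$ be the edgelets of $S$ mapping (injectively, by foldability) onto $e_T^M$. Since $x$ is in the interior of $e_T^M$, each $E_i$ contains exactly one preimage of $x$ and no other edgelet of $S$ meets $f^\inv(x)$, so $N=\abs{f^\inv(x)}$. If $\gamma E_i = E_j$ for some $\gamma\in\Gamma$, then $\gamma e_T^M = \gamma f(E_i) = f(E_j) = e_T^M$, which forces $\gamma=1$ since edge stabilizers are trivial; hence $E_1,\ldots,E_N$ lie in $N$ distinct $\Gamma$-orbits and every edgelet of $\beta_S$ is a $\Gamma$-translate of exactly one $E_i$. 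The components of $\beta_T$ are pairwise disjoint edgelets, so the image of any connected subset of $\beta_S$ is contained in a single such edgelet; in particular each component $c$ of $\beta_S$ has $f(c)\subset \gamma\cdot e_T^M$ for some $\gamma$, and the translate $\gamma^\inv c$ is a component of $\beta_S$ with image in $e_T^M$ and hence is a union of some of the $E_i$. Choose one such representative per $\Gamma$-orbit of components. Representatives of distinct orbits are themselves distinct components of $\beta_S$ and so are pairwise disjoint, so their edgelet sets partition a (nonempty-on-each) subset of $\{E_1,\ldots,E_N\}$. This yields at most $N=\abs{f^\inv(x)}$ orbits, i.e.\ $C_1(\beta_S)\le \abs{f^\inv(x)}$.

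\emph{Closing the estimate.} Combining the component bound with the universal bounds $C_2(\beta_S)\le 2\corank(\A)+\abs{\A}-1$, $C_3(\beta_S)\le\abs{\A}$, and $C_4(\beta_S)\le\corank(\A)$ from Lemma~\ref{LemmaComplexitySummandBounds}, we get
$$C(\beta_S) \;\le\; \abs{f^\inv(x)} \;+\; 3\corank(\A)+2\abs{\A}-1,$$
which is the desired inequality. The one technical nuisance in the whole argument is the preliminary subdivision required to force $\beta_T$ to be proper; the heart of the proof is the combinatorial observation that orbit-representatives of components of $\beta_S$ with image in $e_T^M$ are disjoint unions of distinct $E_i$'s, which is where triviality of edge stabilizers is used in an essential way.
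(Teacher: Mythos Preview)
Your proof is correct and follows essentially the same approach as the paper's: take $\beta_T$ to be the $\Gamma$-orbit of a single edgelet containing $x$ (after subdividing to ensure properness and disjointness of preimages), bound $C_1(\beta_S)$ by $\abs{f^{-1}(x)}$, and then invoke Lemma~\ref{LemmaComplexitySummandBounds}. Your three-piece subdivision is exactly the ``subdividing further'' that the paper alludes to in one phrase; you have simply spelled out the details the paper suppresses, including why $\beta_T$ is proper and why the orbit count is bounded by $N$. In fact your subdivision forces the $E_i$ to be pairwise disjoint (their endpoints lie in the interiors of distinct original edgelets of $S$), so one actually gets $C_1(\beta_S)=\abs{f^{-1}(x)}$ as the paper asserts, though your inequality $\le$ is all that is needed.
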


\begin{proof} Let $e \subset T$ be the edgelet whose interior contains $x$, so $f^\inv(e)$ is a union of $\abs{f^\inv(x)}$ edgelets, and by subdividing further we may assume that this is a disjoint union. Letting $\beta_T = \Gamma \cdot e$, it follows that $C_1(\beta_S) = \abs{f^\inv(x)}$, and the conclusion follows immediately from Lemma~\ref{LemmaComplexitySummandBounds}.
\end{proof}

\subsection{Free splitting units}
\label{SectionFSU}
In \FSOne\ we defined free splitting units along fold paths of $\FS(F_n)$, and we applied them in two ways: in the guts of the ``big diagram'' argument for the proof of hyperbolicity of $\FS(F_n)$; and to give uniformly quasigeodesic parameterizations of fold paths in $\FS(F_n)$ (done here in Theorem~\ref{TheoremRelFSUParams} and Corollary~\ref{CorollaryCompFSU}). We shall do the same here in the relative setting of $\FS(\Gamma;\A)$.

The intuition behind free splitting units arises from close examination of arguments that yield distance bounds along fold paths: buried in such arguments one observes more information in the form of certain combinatorial bounds. One can thus think of free splitting units as the outcome of bringing those combinatorial bounds to the surface where they can be applied --- in this paper these bounds occur as various inequalities involving complexities~$C(\beta)$, as found in Section~\ref{SectionSubgraphComplexity}.

The manner in which free splitting units are defined in this paper is a little different than in \FSOne, with influences from \cite{BestvinaFeighn:subfactor}. Originally, in \FSOne\ Section~5.2, the definition of free splitting units along a fold path in $\FS(F_n)$ involved two distance bounds: a ``blue--red'' bound; and an ``almost invariant edge'' bound. As it turns out, the two bounds can be enfolded into a single, simpler bound that is expressed in Lemma~\ref{LemmaComplexityBounds}, a fact which we overlooked in \FSOne. We found a hint of this fact in \cite[Appendix A]{BestvinaFeighn:subfactor} which introduced simplifications of certain steps of the proof of hyperbolicity of $\FS(F_n)$; see for example Lemma~\ref{LemmaPreimageComplexityBound} and the preceding discussion. Taking this hint, our new version of free splitting units in the general setting of $\FS(\Gamma;\A)$, found in Definition~\ref{DefFSU}, is based on the newer, simpler version of the complexity function $C(\beta)$ that is found in Section~\ref{SectionSubgraphComplexity}. 

See also Definition~\ref{DefCFSU} for the still simpler ``component free splitting units'', based on the still simpler complexity function $C_1(\beta)$. We expect component free splitting units to be more easily applicable and thus to have more powerful applications. For example, in \cite[Section 4.2]{HandelMosher:RelComplexHypIII} we use component free splitting units to prove that amongst all $\phi \in \Out(\Gamma;\A)$ whose action on $\FS(\Gamma;\A)$ has positive translation length, there is a positive lower bound for the translation length depending only on $\corank(\A)$ and $\abs{\A}$. 


\begin{definition}[Free Splitting Units]
\label{DefFSU}
Consider a fold sequence in $\FS(\Gamma;\A)$, denoted
$$S_I \xrightarrow{f_{I+1}} S_{I+1} \xrightarrow{f_{I+2}} \cdots \xrightarrow{f_J} S_J
$$
\begin{enumerate}
\item\label{ItemCollapseExpandDiagram}
For $I \le i \le j \le J$, define a \emph{collapse--expand diagram (rel~$\A$) over $S_i \mapsto\ldots\mapsto S_j$} to be a commutative diagram of the form
$$\xymatrix{
T_i \ar[r] \ar[d]
                 & T_{i+1} \ar[r] \ar[d]
                                & \cdots \ar[r]           & T_{j-1} \ar[r] \ar[d]
                                                                           & T_j  \ar[d]
                                                                           \\
S'_i \ar[r]                         
                 & S'_{i+1} \ar[r]                            
                                & \cdots \ar[r]           & S'_{j-1} \ar[r]   
                                                                            & S'_j & \\
S_i \ar[r]^{f_{i+1}}\ar[u]
                 & S_{i+1} \ar[r]^{f_{i+2}} \ar[u]
                                 & \cdots \ar[r]^{f_{j-1}}  & S_{j-1} \ar[r]^{f_{j}} \ar[u]
                                                                            & S_j \ar[u]
}$$
where the middle and top rows are foldable sequences and each of the two rectangles shown is a combing rectangle rel~$\A$. The diagram is \emph{trivial} if all vertical arrows are simplicial isomorphisms. 
\item\label{ItemLessThanOneFSU}
We say that $S_i,S_j$ \emph{differ by $< 1$ free splitting unit} if there exists a collapse expand diagram over $S_i \mapsto \ldots \mapsto S_j$, denoted as above, such that  on the top row $T_i \to \cdots \to T_j$ there exists a pullback sequence $\beta_k \subset T_k$ of constant complexity~$C(\beta_k)$. 
\item\label{ItemNumberOfFSU}
More generally, the \emph{number of free splitting units between $S_i$ and $S_j$} is the maximum length $\Upsilon=\Upsilon_{ij}$ of a subsequence $i \le i(0) < \cdots < i(\Upsilon) \le j$ that satisfies the following:
\begin{itemize}
\item For each integer $u$ satisfying $1 \le u \le \Upsilon$, the number of free splitting units between $S_{i(u-1)}$ and $S_{i(u)}$ is not~$<1$. 
\end{itemize}
Any such subsequence $i(0)<\cdots<i(\Upsilon)$ of $[i,j]$ is called a \emph{greedy sequence} between $S_i$ and~$S_j$ (with respect to free splitting units). While we do not require $i=i(0)$ and $i(\Upsilon)=j$, Proposition~\ref{PropFSUProps}~\pref{ItemFSUGreedy} below will guarantee that such a greedy sequence exists. 
\item\label{ItemBackAndFrontGreedy}
For $0 \le i \le j \le K$, the \emph{back greedy subsequence} between $i,j$ is the decreasing sequence $j=L_0 > L_1 > \cdots > L_U \ge i$ defined inductively as follows: if $L_u$ is defined, and if there exists $k$ with $i \le k < u$ such that $L_k, L_u$ differ by $\ge 1$ free splitting unit rel~$\A$, then $L_{u+1}$ is the largest such value of~$k$. The \emph{front greedy subsequence} is the increasing sequence in $[0,K]$ defined similarly.
\item We extend free splitting units to a symmetric function by requiring $\Upsilon_{ij}=\Upsilon_{ji}$.
\end{enumerate}
\end{definition}
\noindent
\emph{Remark on terminology.} Note that in the context of item~\pref{ItemNumberOfFSU} of this definition, to say that the number of free splitting units between $S_i$ and $S_j$ is $\Upsilon=0$ is equivalent to saying that $S_i$ and $S_j$ differ by $<1$ free splitting unit: the bulleted statement in item~\pref{ItemNumberOfFSU} is vacuously true when~$\Upsilon=0$. When these equivalent statements hold, we will sometimes favor the terminology ``$<1$ free splitting unit'' as a reminder of its special meaning given in item~\pref{ItemLessThanOneFSU}, expressed in terms of the combinatorics of the fold subsequence $S_i \mapsto\cdots\mapsto S_j$.

\smallskip

The following summarizes basic properties of free splitting units. Of particular importance is item~\pref{ItemFSUBoundsUpsComp}, which derives from Lemma~\ref{LemmaCompArc}, and which plays a central role in the ``big diagram argument'', the proof of Proposition~\ref{PropMMTranslation}.


\begin{proposition}\label{PropFSUProps} 
Consider a fold sequence $S_I \xrightarrow{f_{I+1}} \cdots \xrightarrow{f_J} S_J$ in $\FS(\Gamma;\A)$, and for $I \le i \le j \le J$ let $\Upsilon_{ij}$ be the number of free splitting units rel~$\A$ between $S_i,S_j$. We have:
\begin{enumerate}
\item\label{ItemFSUStable} For any $I \le i \le j \le J$ and $i',j' \in [i,j]$, if $S_i,S_j$ differ by $<1$ free splitting unit then $S_{i'},S_{j'}$ differ by $<1$ free splitting unit. More generally, $\Upsilon_{i'j'} \le \Upsilon_{ij}$.
\item\label{ItemFSUGreedy} (c.f.\ \FSOne\ after Definition 5.10) For any $I \le i \le j \le J$, if $\Upsilon_{ij} \ge 1$ then there exists a greedy sequence between $S_i$ and $S_j$ with first term $i$ and with last term~$j$: $i=i(0) < \cdots < i(\Upsilon_{ij}) = j$. Also, the front and back greedy sequences between $S_i$ and $S_j$ are, indeed, greedy sequences, in particular they have length equal to $\Upsilon_{ij}$. 
\item\label{ItemFSUTriangIneq} The``short triangle inequality'' (c.f.\ \FSOne\ Lemma 5.12):
For any $i,j,k \in \{I,\ldots,J\}$ we have 
$$\Upsilon_{ik} \le \Upsilon_{ij} + \Upsilon_{jk} + 1
$$
\item\label{ItemLongTriangEq} 
For any sequence $k_0,k_1,\ldots,k_L \in \{I,\ldots,J\}$ the ``long triangle inequality'' holds::
$$\Upsilon_{k_0, \, k_L} \le \, \Upsilon_{k_0, \, k_1} + \cdots + \Upsilon_{k_{L-1}, \, k_L}  + L - 1
$$
In addition, if $k_0 < k_1 < \ldots < k_L$ then the ``reverse long triangle inequality'' holds:
$$\Upsilon_{k_0, \, k_1} + \cdots + \Upsilon_{k_{L-1}, \, k_L} \le  \Upsilon_{k_0, \, k_L} 
$$
%
\item\label{ItemFSUUpperBound} 
For any collapse expand diagram as in Definition~\ref{DefFSU}, and for any pullback sequence $\beta_k \subset T_k$ defined for $i \le k \le j$, we have:
\begin{enumerate}
\item\label{ItemFSUBoundsUpsC}
$\Upsilon_{ij} \le C(\beta_i) - C(\beta_j)$
\item\label{ItemFSUBoundsUpsComp}
If $\Upsilon_{ij} \ge 5 \corank(\A) + 4 \abs{\A} - 3$ then some component of $\beta_i$ is arc in the interior of a natural edge of $T_i$.
\end{enumerate}
\item\label{ItemFSUBoundsDiam} (c.f.\ \FSOne\ Lemma 5.11)
The diameter of $\{S_I,\ldots,S_J\}$ is $\le 10 \Upsilon_{IJ} + 8$.
\end{enumerate}
\end{proposition}

\begin{proof} For proving item~\pref{ItemFSUStable}, we transpose the notation as needed to guarantee that $i' \le j'$. The first sentence then follows from Definition~\ref{DefFSU}: any collapse expand diagram over $S_i \mapsto \ldots \mapsto S_j$ which witnesses that there is $<1$ free splitting unit between $S_i$ and $S_j$ restricts to such a collapse expand diagram over $S_{i'} \mapsto\ldots\mapsto S_{j'}$. The second sentence of item~\pref{ItemFSUStable} follows by observing that any maximal length greedy sequence between $S_{i'}$ and $S_{j'}$ is a greedy sequence between $S_i$ and~$S_j$. Items~\pref{ItemFSUGreedy}, \pref{ItemFSUTriangIneq} follow exactly as in the references given above; their proofs are elementary.

The first inequality of \pref{ItemLongTriangEq} --- the long triangle inequality --- follows by induction from \pref{ItemFSUTriangIneq}. To prove the reverse inequality of \pref{ItemLongTriangEq}, for $l=1,\ldots,L$ apply \pref{ItemFSUGreedy} to obtain a subsequence of $[k_{l-1},k_l]$ which is a greedy sequence between $S_{k_{l-1}}$ and $S_{k_l}$, which starts with $k_{l-1}$, which ends with $k_l$, and which has length $\Upsilon_{k(l-1),k(l)}$. The union of these subsequences is a sequence of length equal to the sum $\Upsilon_{k_0, k_1} + \cdots + \Upsilon_{k_{L-1},k_L}$, because the subsequence of $[k_{l-1},k_l]$ ends with $k_l$ and the subsequence of $[k_l,k_{l+1}]$ begins with $k_l$. Between any two terms of this sequence the number of free splitting units is $\ge 1$, so a greedy sequence between $S_{k_0}$ and $S_{k_L}$ has length no less than the sum.


Item~\pref{ItemFSUBoundsUpsComp} follows from~\pref{ItemFSUBoundsUpsC} and Lemma~\ref{LemmaCompArc} together with $C(\beta_j) \ge 1$.  To prove~\pref{ItemFSUBoundsUpsC}, apply~\pref{ItemFSUGreedy} to obtain a greedy sequence 
$$i = k(0) < k(1) < \cdots < k(\Upsilon_{ij}) = j
$$
%
By Lemma~\ref{LemmaComplexityBounds}~\pref{ItemComplexityNonincreasing} we can uniquely decompose the interval $[i,j] = [k(0),k(\Upsilon_{ij})] $ as a concatenation of $M$ maximal subintervals on each of which $C(\beta_k)$ is constant:
$$[\underbrace{l(0)}_{=k(0)},l(1)],\,\, [l(1)\!+\!1,l(2)],\,\,\ldots,\,\,[l(M\!-\!2)\!+\!1,l(M\!-\!1)],\,\,[l(M\!-\!1)\!+\!1,\underbrace{l(M)}_{=k(\Upsilon_{ij})}]
$$
If~$\Upsilon_{ij} \ge C(\beta_i) - C(\beta_j)$ then, since $C(\beta_i) - C(\beta_j) \ge M-1$, it follows $\Upsilon_{ij} + 1 \ge M$, and so there exists $u \in [1,\Upsilon_{ij}]$ and $m \in [1,M]$ such that $k(u-1),k(u) \in [l(m-1)+1,l(m)]$. It follows further that $C(\beta_k)$ is constant for $k \in [k(u-1),k(u)]$, contradicting that there are $\ge 1$ free splitting units between $S_{k(u-1)}$ and $S_{k(u)}$.  

Item~\pref{ItemFSUBoundsDiam} is proven just as in the reference given, except that one applies Lemma~\ref{LemmaFoldLengthTwo} and Lemma~\ref{LemmaComplexityBounds}~\pref{ItemDiameterBound} in place of the analogous results of \FSOne: subdivide the interval $[i,j]$ into a concatenation of maximal subintervals on which $C(\beta_k)$ is constant; apply Definition~\ref{DefFSU}(\ref{ItemCollapseExpandDiagram},\ref{ItemLessThanOneFSU}) and Lemma~\ref{LemmaComplexityBounds}\pref{ItemDiameterBound} to obtain diameter~$\le 8$ over each subinterval; and apply Lemma~\ref{LemmaFoldLengthTwo} to obtain distance~$\le 2$ between incident endpoints of adjacent subintervals.
\end{proof}

\paragraph{Component free splitting units.} The complexity function $C(\beta)$ underlying the definition of free splitting units was specifically designed to have certain easily applicable properties, of which the most characteristic are perhaps Lemma~\ref{LemmaComplexityBounds}~\pref{ItemComplexityEquality} and~\pref{ItemComplexityConstantImplies}. Nonetheless other applications of free splitting units may be impeded by the complicated nature of the complexity function $C(\beta)=C_1(\beta)+C_2(\beta)+C_3(\beta)+C_4(\beta)$: to apply that function directly to a given pullback sequence along a foldable sequence, one must determine whether all four of the terms are constant. But since the $C_2$, $C_3$ and $C_4$ terms are uniformly bounded (see Lemma~\ref{LemmaComplexitySummandBounds}~\pref{ItemCSBIneq}), one can build a simpler version of free splitting unit based solely on the $C_1$ term: we will refer to these as \emph{component free splitting units},  and to $C_1(\beta)$ as the \emph{component complexity} of $\beta$, because $C_1(\beta)$ simply counts the number of components of the orbit space $\beta / \Gamma$. 

\begin{definition}[Component free splitting units]
\label{DefCFSU}
Given a fold path \hbox{$S_I \xrightarrow{f_{I+1}} \cdots \xrightarrow{f_J} S_J$} in $\FS(\Gamma;\A)$ and two indices $I \le i < j \le J$, to say that \emph{$S_i$, $S_j$ differ by $<1$ component free splitting unit} means that there is a collapse expand diagram over $S_i \mapsto\cdots\mapsto S_j$, denoted as in Definition~\ref{DefFSU}~\pref{ItemCollapseExpandDiagram}, such that along the top row $T_i \mapsto\cdots\mapsto T_j$ there is a pullback sequence $\beta_k \subset T_k$ of constant component complexity $C_1(\beta_k) = \abs{\beta_k / \Gamma}$. More generally, the number of \emph{component free splitting units rel~$\A$} between $S_i$ and $S_j$ is the maximum length $\Theta = \Theta_{ij}$ of a subsequence $i \le i(0) < \cdots < i(\Theta) \le j$ satisfying the property that for any integer $\theta$ such that $1 \le \theta \le \Theta$, the number of free splitting units between $S_{i(\theta-1)}$ and $S_{i(\theta)}$ is not~$<1$. As in Definition~\ref{DefFSU}~\pref{ItemNumberOfFSU}, the sequence $i(\theta)$ is called a \emph{greedy sequence} with respect to component free splitting units.
\end{definition}

We note that the analogues of conclusions~\pref{ItemFSUStable}--\pref{ItemLongTriangEq} and \pref{ItemFSUBoundsUpsC} of Proposition~\ref{PropFSUProps} hold as well for component free splitting units as they do for ordinary free splitting units, and with essentially the same proof outlines. Furthermore, as a consequence of Lemma~\ref{LemmaCFSU} to follow, analogues of conclusions~\pref{ItemFSUBoundsUpsComp} and~\pref{ItemFSUBoundsDiam} will also hold except with different constants.

\begin{lemma}
\label{LemmaCFSU}
For any fold path $S_I \mapsto \cdots \mapsto S_J$ in $\FS(\Gamma;\A)$ and any $I \le i < j \le J$, the number $\Upsilon_{ij}$ of free splitting units and the number $\Theta_{ij}$ of component free splitting units between $S_i$ and $S_j$ are related by the following linear inequalities:
$$\Theta_{ij} \,\,\le\,\, \Upsilon_{ij} \,\,\le\,\, (\mathcal C(\Gamma;\A) + 3) \cdot \Theta_{ij} \, + \, \mathcal C(\Gamma;\A)
$$
where $\mathcal C(\Gamma;\A) = 3 \corank(\A) + 2\abs{\A}-1$.
\end{lemma}

\begin{proof} As noted before the statement of the lemma, the analogue of Proposition~\ref{PropFSUProps}~\pref{ItemFSUGreedy} holds for component free splitting units, and so there is a front greedy subsequence $i = k(0) < k(1) < \cdots < k(\Theta_{ij}) \le j$ with respect to component free splitting units, meaning that for each $1 \le \theta \le \Theta_{ij}$ the following hold: $k(\theta)$ is the minimum value of \hbox{$k> k(\theta-1)$} such that between $S_{k(\theta-1)}$ and $S_{k(\theta)}$ there is not $<1$ component free splitting unit, if such a value exists; otherwise $k(\theta)=j$. It follows that along the fold subpath $S_{k(\Theta_{ij})} \mapsto\cdots\mapsto S_j$ there is $<1$ component free splitting unit.

For any \hbox{$1 \le \theta \le \Theta_{ij}$}, for any collapse expand diagram over $S_{k(\theta-1)} \mapsto S_{k(\theta)}$, and for any pullback sequence $\beta_k \subset T_k$ along the top row $T_{k(\theta-1)} \mapsto\cdots\mapsto T_{k(\theta)}$, the sequence $C_1(\beta_k)$ is not constant over the subinterval $k(\theta-1) \le k \le k(\theta)$. Applying Lemma~\ref{LemmaComplexityBounds}, the full complexity $C(\beta_k)$ is also not constant over the subinterval $i(\theta-1) \le k \le i(\theta)$. By maximality of $\Upsilon_{IJ}$ it follows that $\Theta_{IJ} \le \Upsilon_{IJ}$.

To prove the opposite inequality, consider each $1 \le \theta \le \Theta_{IJ}$. It follows by minimality of $i(\theta)$ that along the fold subpath $S_{i(\theta-1)} \mapsto\cdots\mapsto S_{i(\theta)-1}$ there is $<1$ component free splitting unit. We therefore have a decomposition of the fold subpath $S_j \mapsto\cdots\mapsto S_j$ into a concatenation of at most $2\Theta+1$ fold subpaths alternating between two types: at most $\Theta$ single folds, each of the form $S_{i(\theta)-1} \mapsto S_{i(\theta)}$; and at most $\Theta+1$ fold subpaths along each of which there is $<1$ component free splitting unit, namely each of the fold subpaths $S_{i(\theta-1)} \mapsto\cdots\mapsto S_{i(\theta)-1}$ as well as the fold subpath $S_{i(\Theta_{ij})} \mapsto\cdots\mapsto S_j$. Our desired upper bound on $\Upsilon_{ij}$ will come from the long triangle inequality for (ordinary) free splitting units (Proposition~\ref{PropFSUProps}~\pref{ItemLongTriangEq}) as applied to this decomposition, and so we must find upper bounds to the number of free splitting units along each of the terms of the decomposition.

Consider a fold subpath $S_l \mapsto\cdots\mapsto S_m$ along which there is $<1$ component free splitting unit, and let $\Upsilon_{lm}$ be the number of ordinary free splitting units. There is a collapse expand diagram over that subpath and a pullback sequence $\beta_i \subset S_i$ along which $C_1(\beta_i)$ constant, for $l \le i \le m$. Combining the numeric bounds in Lemma~\ref{LemmaComplexitySummandBounds} with the monotonicity property of Lemma~\ref{LemmaComplexityBounds}~\pref{ItemComplexityNonincreasing} it follows that the number of free splitting units from $S_l$ to $S_m$ satisfies the bound
\begin{align*}
\Upsilon_{lm} &\le C(\beta_l) - C(\beta_m) \\
&\le \bigl(C_2(\beta_l) + C_3(\beta_l) + C_4(\beta_l) \bigr) - \bigl(C_2(\beta_m) + C_3(\beta_m) + C_4(\beta_m)  \bigr) \\
  &\le \underbrace{3 \corank(\A) + 2\abs{\A}-1}_{= \, \mathcal C(\Gamma;\A)}
\end{align*}
Also, the number of free splitting units along any single fold map $S_{i-1} \mapsto S_{i}$ clearly satisfies $\Upsilon_{i-1,i} \le 1$.  The long triangle inequality therefore gives us the bound
\begin{align*}
\Upsilon_{ij} &\le (\Theta_{ij}+1) \cdot \mathcal C(\Gamma;\A) + \Theta_{ij} \cdot 1 + \bigl((2\Theta_{ij}+1) - 1\bigr) \\
& = \bigl(\mathcal C(\Gamma;\A) + 3\bigr) \cdot \Theta_{ij} + \mathcal C(\Gamma;\A)
\end{align*}
\end{proof}


\section{Hyperbolicity of relative free splitting complexes}
\label{SectionFFRelAHyp}

In this section we prove hyperbolicity of the relative free splitting complex $\FS(\Gamma;\A)$ for any group $\Gamma$ and any free factor system~$\A$. The proof uses the three Masur--Minsky axioms for hyperbolicity of a connected simplicial complex, which are reviewed in Section~\ref{SectionMMReview}, where one will also find specific details about how those axioms will be verified for $\FS(\Gamma;\A)$. Section~\ref{SubsectionProjection} contains the proof of the first of those axioms, the \emph{Coarse Retract Axiom}; this is where we pay the piper for dropping the ``gate 3 condition'' on fold paths. Section~\ref{SectionAxiomReduction} contains the statement of Proposition~\ref{PropMMTranslation}, which states that certain properties of fold maps and free splitting units together imply the two remaining Masur--Minsky axioms---the \emph{Coarse Lipschitz} and the \emph{Strong Contraction Axioms}. The proof of Theorem~\ref{TheoremRelFSGammaHyp} is thereby reduced to Proposition~\ref{PropMMTranslation}. Section~\ref{SectionFSUParameterization} also applies Proposition~\ref{PropMMTranslation} to the proof of Theorem~\ref{SectionFSUParameterization} which says that free splitting units give a quasigeodesic parameterization along a fold path. Section~\ref{SectionBigDiagrams} contains the proof of Proposition~\ref{PropMMTranslation}, what we call the ``Big Diagram'' argument, an argument concerning the large scale behavior of certain diagrams of combing rectangles in $\FS(F_n;\A)$.

The structure of this proof of Theorem~\ref{TheoremRelFSGammaHyp}, in particular the Big Diagram argument, follows very closely the structure of the proof of hyperbolicity of $\FS(F_n)$ given in \FSOne. But changing the definition of foldable maps by dropping the gate~3 condition has some major effects on this structure: the proof of the \emph{Coarse Retract Axiom} is quite a bit more complex and so has needed to be rewritten from the beginning; and subtle changes in the Big Diagram Argument make it necessary to re-present it from the beginning. In both cases, we take up these changes from the version of the proof given by Bestvina and Feighn in \cite{BestvinaFeighn:subfactor}.

\subsection{The Masur--Minsky axioms.} 
\label{SectionMMReview}

As noted in \cite[Section 3]{\FSOneTag}, the original Masur-Minsky axioms, when applied to a connected $1$-dimensional simplicial complex $X$ equipped with its simplicial metric, are easily shown to be equivalent to the following discretized version of those axioms. For integers $I \le K$ we denote the integer interval $[I,\ldots,K] = \{j \in \Z \suchthat I \le j \le K\}$, and we symmetrize this notation by letting $[K,\ldots,I]=[I,\ldots,K]$. The axioms require two structures to be given: a family of \emph{paths} $P$; and a family of projection maps $\pi_p$ one for each $p \in P$. Each  $p \in P$ is a function $p \from [I_p,\ldots,K_p] \to X^{(0)}$ with $I_p < K_p$, and this collection of ``paths'' satisfies the following \emph{almost transitivity} condition for some uniform constant~$A \ge 0$ independent of~$p$: if $I_p < \ell \le K_p$ then $d(p(\ell-1),p(\ell)) \le A$; and for each $x,y \in X^{(0)}$ there exists $p \in P$ such that $d(x,p(0)) \le A$ and $d(p(L_p),y) \le A$. 
Each projection map is a function $\pi_p \from X^{(0)} \to [I_p,\ldots,K_p]$.

The following three axioms must hold for each $p \in P$, with uniform constants $a,b,c>0$: 
\begin{description}
\item[Coarse Retract Axiom:] For each $\ell \in [I_p,\ldots,K_p]$ the diameter of the subpath $p[\ell,\ldots,\pi_p(p(\ell))]$ is at most~$c$.
\item[Coarse Lipschitz Axiom:] For all $x,y \in X^{(0)}$ such that $d(x,y) \le 1$, the diameter of the subpath $p[\pi_p(x),\ldots,\pi_p(y)]$ is at most $c$.
\item[Strong Contraction Axiom:] \qquad For all $x,y \in X^{(0)}$, if $d(x,p(\pi_p(x))) \ge a$ and if \break $d(x,y) \le b \, d(x,p(\pi_p(x)))$, then the diameter of the subpath $p[\pi_p(x),\ldots,\pi_p(y)]$ is at~most~$c$.
\end{description}
The theorem proved by Masur and Minsky \cite{MasurMinsky:complex1} says that if these axioms hold then $X$ is hyperbolic. Furthermore, the proof of that theorem given in \cite[Section 7]{MasurMinsky:complex1} gives some additional quantitative consequences, with constants $\delta > 0$, $\kappa \ge 1$, $\epsilon \ge 0$ depending only on $A$, $a$, $b$, $c$.
\begin{description}
\item[Uniform Hyperbolicity:] $X$ is $\delta$-hyperbolic.
\item[Uniform Reparameterized Quasigeodesics:] Each path $p \from [I_p,\ldots,K_p] \to X$ in $P$ is a \emph{$\kappa,\epsilon$ reparameterized quasigeodesic} meaning that there exists an interval $[x,y] \subset \mathbb R$, a decomposition $x=x_0 < \cdots < x_N=y$ with $N=K_p-I_p$, and a $\kappa,\epsilon$ continuous quasigeodesic path $\gamma \from [x,y] \to X$, such that $\gamma(x_n)=p(I_p+n)$ (for $0 \le n \le N$) and $\gamma \restrict [x_{n-1},x_n]$ is a path of length~$\le A$ in $X$ from $p(I_p+n-1)$ to $p(I_p+n)$.
\end{description}

Looking forward to how we verify the Masur--Minsky axioms and hence prove hyperbolicity of $\FS(\Gamma;\A)$, we shall take $P$ be the collection of all fold sequences of $\Gamma$ rel~$\A$, for which we have already established almost transitivity in Corollary~\ref{CorollaryConnected} with $A=2$. In Section~\ref{SubsectionProjection} we define the system of projection maps $\pi_p$ ($p \in P$), and we prove the Coarse Retract Axiom (see Lemma~\ref{LemmaCoarseRetract}). In Section~\ref{SectionAxiomReduction} we shall reduce the Coarse Lipschitz and Strong Contraction Axioms to a single statement, namely Proposition~\ref{PropMMTranslation}, regarding fold sequences and free splitting units rel~$\A$. In Section~\ref{SectionFSUParameterization} we apply Proposition~\ref{PropMMTranslation} to prove a combinatorial reformulation of the \emph{Uniform Quasigeodesics} statement above, saying that fold paths are uniformly quasigeodesic when reparameterized by free splitting units; see Theorem~\ref{TheoremRelFSUParams}. Finally in Section~\ref{SectionBigDiagrams} we prove Proposition~\ref{PropMMTranslation} using the ``big diagram argument''. In these proofs one will observe that the Masur--Minsky constants $A$, $a$, $b$, $c$ that are produced will depend only on $\corank(\Gamma;\A)$ and $\abs{\A}$, and hence $\delta$, $\kappa$, $\epsilon$ and $s$ will have the same dependence.

\subsection{Projection maps and the proof of the Coarse Retract Axiom.}
\label{SubsectionProjection}

Given a fold path in $\FS(\Gamma;\A)$ represented by a particular fold sequence, we now define the projection map to that fold path, as required in the formulation of the Masur--Minsky axioms. We then immediately turn to verification of the Coarse Retract Axiom, which takes up the bulk of this section.


\begin{definition}
\label{DefProjDiagram}
Given a fold sequence $S_I \mapsto\cdots\mapsto S_K$ and a free splitting $T$ each in $\FS(\Gamma;\A)$, a \emph{projection diagram rel~$\A$ from $T$ to $S_I \mapsto\cdots\mapsto S_K$} is defined to be a projection diagram as in \FSOne\ Section~4.1 in which all free splittings that occur are restricted to lie in $\FS(\Gamma;\A)$. This means a commutative diagram of maps amongst free splittings of $\Gamma$ rel~$\A$, having the form
$$\xymatrix{
T_I \ar[r] \ar[d] & \cdots \ar[r] & T_{J} \ar[r] \ar[d] &  T \\
S'_I \ar[r]          & \cdots \ar[r] & S'_{J} \\
S_I \ar[r] \ar[u] & \cdots \ar[r] & S_{J} \ar[r] \ar[u]  & \cdots \ar[r] & S_K \\
}$$
such that each row is a foldable sequence, and each of the two rectangles shown is a combing rectangle. The integer $J \in \{I,\ldots,K\}$ is called the \emph{depth} of the projection diagram. The \emph{projection of $T$ to $S_I \mapsto \cdots S_K$} is an integer $\pi(T) \in \{I,\ldots,K\}$ defined as follows: if there exists a projection diagram rel~$\A$ from $T$ to $S_I \mapsto \cdots S_K$ then $\pi(T)$ is the maximal depth of such diagrams; otherwise $\pi(T)=I$. Sometimes we abuse notation and refer to $S_{\pi(T)}$ itself as the projection.
\end{definition}

\begin{lemma}[The Coarse Retract Axiom]
\label{LemmaCoarseRetract}
For any fold sequence $S_I \mapsto\cdots\mapsto S_K$ in $\FS(\Gamma;\A)$ and any $I \le M \le K$, the number of free splitting units between $S_M$ and $S_{\pi(S_M)}$, and the diameter of the fold sequence between $S_M$ and $S_{\pi(S_M)}$, are both bounded above by constants depending only on $\corank(\A)$ and $\abs{\A}$.
\end{lemma}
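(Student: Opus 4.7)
The plan is to let $J = \pi(S_I)$ and prove that the free splitting unit count $\Upsilon_{I,J}$ between $S_I$ and $S_J$ is bounded by an explicit constant depending only on $\corank(\A)$ and $\abs{\A}$. The diameter bound then follows immediately from Proposition~\ref{PropFSUProps}~\pref{ItemFSUBoundsDiam}. First I would observe that $\pi(S_I) \ge I$: the \emph{trivial} projection diagram in which $T_k = S_k$ and $S'_k = S_k$ for $0 \le k \le I$, with all vertical arrows the identity, has depth $I$ (its top row $S_0 \to \cdots \to S_I \to T = S_I$ is part of the given fold sequence capped by an identity, hence foldable; both rectangles are trivially combing rectangles; and everything lies in $\FS(\Gamma;\A)$). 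So we may assume $J \ge I$, and focus on bounding the gap $J - I$ in terms of free splitting units.

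The core argument is by contradiction: I will show that if $\Upsilon_{I,J}$ exceeds $K_0 := 5\corank(\A) + 4\abs{\A} - 3$, then the projection diagram of depth $J$ extends to one of depth $J+1$, violating maximality of $J$. Fix a projection diagram of depth $J$ with top row $T_0 \to \cdots \to T_J \to T = S_I$, middle row $S'_0 \to \cdots \to S'_J$, bottom row $S_0 \to \cdots \to S_J$, and combing-rectangle collapses $T_k \to S'_k$ and $S_k \to S'_k$. The restriction of the bottom combing rectangle to indices $I \le k \le J$, together with the top row segment $T_I \to \cdots \to T_J$ and its collapses, forms a collapse--expand diagram over $S_I \to S_J$ in the sense of Definition~\ref{DefFSU}~\pref{ItemCollapseExpandDiagram}. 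Choose a nontrivial pullback subgraph sequence on this diagram by picking $\beta_J$ to be a single $\Gamma$-orbit of edgelets in $T_J$ and pulling back along the top row to obtain $\beta_k \subset T_k$ for $I \le k \le J$. Under the assumption $\Upsilon_{I,J} \ge K_0$, Proposition~\ref{PropFSUProps}~\pref{ItemFSUBoundsUpsComp} then yields a component of $\beta_I$ that is an arc lying in the interior of a natural edge of $T_I$.

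The excess arc is then used, following Bestvina and Feighn~\cite{BestvinaFeighn:subfactor}, to synthesize one additional fold step in the top row. The procedure, roughly, is to transport the arc forward to an arc in $T_J$ whose endpoints are identified by the foldable map $T_J \to T = S_I$, perform this partial fold on $T_J$ to produce an intermediate free splitting $T_{J+1}$ such that $T_J \to T_{J+1} \to S_I$ is a factorization of the original foldable map, and build $S'_{J+1}$ as a common collapse of $T_{J+1}$ and $S_{J+1}$. Applying Lemmas~\ref{LemmaCombingByCollapse} and~\ref{LemmaCombingByExpansion} to propagate the new right-edge collapses through both combing rectangles, extending the middle row by a foldable step $S'_J \to S'_{J+1}$, and invoking Lemma~\ref{LemmaGoodStabilizers} to ensure all new splittings lie in $\FS(\Gamma;\A)$, one obtains a projection diagram of depth $J+1$, contradicting $J = \pi(S_I)$.

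The chief obstacle is precisely this synthesis step: ensuring that the partial fold on $T_J$ is compatible with the existing collapse $T_J \to S'_J$ (so that it descends sensibly to $S'_J$ either as a fold or as an identity), that the commutativity and foldability of both extended combing rectangles are maintained through the insertion of the new column, and that $S'_{J+1}$ is simultaneously a common collapse of $T_{J+1}$ and $S_{J+1}$ with the correct behavior on the middle row. This delicate construction, which replaces the cleaner ``almost-invariant edge'' argument of \FSOne, is precisely the technical point where dropping the gate~3 condition forces the Bestvina--Feighn reworking. Once it is carried out we conclude $\Upsilon_{I,J} < K_0 = 5\corank(\A) + 4\abs{\A} - 3$, and Proposition~\ref{PropFSUProps}~\pref{ItemFSUBoundsDiam} gives a diameter bound of at most $10(K_0 - 1) + 8$, depending only on $\corank(\A)$ and $\abs{\A}$.
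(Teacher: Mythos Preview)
Your opening moves are sound: the trivial projection diagram shows $\pi(S_I) \ge I$, the restriction of the projection diagram to columns $[I,J]$ is a legitimate collapse--expand diagram over $S_I \to \cdots \to S_J$, and Proposition~\ref{PropFSUProps}~\pref{ItemFSUBoundsUpsComp} does produce an arc component of $\beta_I$ in the interior of a natural edge of $T_I$ once $\Upsilon_{I,J}$ is large enough. The difficulty is that your ``synthesis step'' does not work, and you essentially say so yourself when you call it ``the chief obstacle'' and then declare it done. Concretely: the arc lives in $T_I$, not $T_J$, and its image under the foldable map $T_I \to T_J$ lands in $\beta_J$, which you chose to be a single edgelet orbit; nothing forces the endpoints of anything in $T_J$ to be identified by $T_J \to S_I$. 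Even granting some partial fold $T_J \to T_{J+1}$, the real constraint for extending the depth is the existence of a common collapse $S'_{J+1}$ receiving maps from both $T_{J+1}$ and $S_{J+1}$, compatible with the fixed fold $S_J \to S_{J+1}$ of the original sequence. You assert this exists but give no mechanism, and the arc in $T_I$ provides none. You appear to be transplanting the arc-in-interior trick from the Big Diagram argument of Section~\ref{SectionBigDiagrams}, where it is used to \emph{reduce} the number of corrugations in a stack of combing rectangles, into a setting where you want to \emph{extend} a projection diagram by one column; these are different operations and the trick does not transfer.

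The paper's proof takes a completely different route. It does not argue by contradiction on the depth. Instead it observes that both $T_I$ and $S_I$ collapse to the common free splitting $S'_I$, and proves a structural Claim~$(\#)$: whenever the two ends of a fold sequence $U_0 \to \cdots \to U_L$ collapse to a common tree, the sequence can be cut into at most two pieces over each of which some point has preimage of cardinality bounded by a constant $b_\#$ depending only on $\corank(\A)$ and~$\abs{\A}$. This is applied to the fold sequence interpolating $T_I \to \cdots \to T_J \to S_I$; Lemma~\ref{LemmaPreimageComplexityBound} converts the preimage bound into a complexity bound, and Proposition~\ref{PropFSUProps}~\pref{ItemFSUUpperBound} converts that into a free splitting unit bound. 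The proof of Claim~$(\#)$ is where the real work lies: one compares natural edges of $U_0$ and $U_L$ mapping to a common natural edge of the collapse target, and when the easy cases fail one is led to the ``hanging tree'' analysis adapted from Bestvina--Feighn~\cite{BestvinaFeighn:subfactor}, bounding branches of a $Z$-equivariant tree by the edge count $2\DFS(\A)+2$.
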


By assuming the gate~3 condition, the proof of the Coarse Retract Axiom in \FSOne\ was significantly simpler than the argument to be presented here. In lieu of that assumption, we instead adapt some concepts and arguments of Bestvina and Feighn from \cite{BestvinaFeighn:subfactor}, namely the ``hanging trees'' of \cite[Proposition~A.9]{BestvinaFeighn:subfactor}; see ``Claim (\#)'' below.

\begin{proof} The proof starts as in \FSOne. Note that $\pi(S_M) \ge M$, because there exists a projection diagram rel~$\A$ from $S_M$ to $S_I \mapsto\cdots\mapsto S_K$ of depth $M$, namely the trivial diagram defined by taking $T_i=S'_i=S_i$ for $i=I,\ldots,M$. 

Choose a projection diagram rel~$\A$ of maximal depth $J=\pi(S_M) \ge M$ from $S_M$ to $S_I\mapsto\cdots\mapsto S_K$, as follows:

\centerline{\xymatrix{
T_I \ar[r] \ar[d] & \cdots \ar[r] & T_M \ar[r] \ar[d] & \cdots \ar[r] & T_{J} \ar[r] \ar[d] & S_M \\
S'_I \ar[r]          & \cdots \ar[r] & S'_M \ar[r]        & \cdots \ar[r] & S'_{J} \\
S_I \ar[r] \ar[u] & \cdots \ar[r] & S_M \ar[r] \ar[u] & \cdots \ar[r] & S_{J} \ar[r] \ar[u]  & \cdots \ar[r] & S_K \\
}}

\medskip
\noindent
Once we have bounded the number of free splitting units along the fold path on the bottom row between $S_M$ and $S_J=S_{\pi(S_M)}$, the diameter bound on the set $\{S_M,\ldots,S_{\pi(S_M)}\}$ follows from Proposition~\ref{PropFSUProps}~\pref{ItemFSUBoundsDiam}. 

The key observation is that in the foldable sequence $T_M \mapsto \cdots \mapsto T_J \mapsto S_M$, its first and last terms $T_M,S_M$ each collapse to the same free splitting, namely $S'_M$. This observation will be combined with the following:

\begin{description}
\item[Claim $(\#)$:] Consider a fold sequence $\displaystyle U_0 \xrightarrow{f_1} \cdots \xrightarrow{f_L} U_L$ in $\FS(\Gamma;\A)$. If there exists a free splitting $R \in \FS(\Gamma;\A)$ and collapse maps $U_0 \mapsto R$, $U_L \mapsto R$, then there exist integers $0=\ell_0 \le \ell_1 \le \ell_2 = L$, and for $i=1,2$ there exist $x_i \in U_{\ell_i}$, such that the inverse image $(f^{\ell_{i-1}}_{\ell_i})^\inv(x_i) \subset U_{\ell_{i-1}}$ has cardinality bounded by a constant $b_\# = b_\#(\Gamma;\A)$.
\end{description}

Before proving Claim~$(\#)$, we apply it to finish the proof of Lemma~\ref{LemmaCoarseRetract}, as follows. By replacing each individual arrow in the foldable sequence $T_M \mapsto\cdots\mapsto T_J \mapsto S_M$ by a fold sequence that factors it, we obtain a fold sequence which contains $T_M,\ldots,T_J,S_M$ as a subsequence. To that fold sequence we may then apply Claim~$(\#)$, combined with Lemma~\ref{LemmaPreimageComplexityBound} followed by Lemma~\ref{LemmaComplexityBounds}~\pref{ItemComplexityNonincreasing}, with the effect of subdividing the fold sequence between $T_M$ and $T_J$ into at most two subintervals along each of which there is pullback sequence of bounded complexity difference. Then applying Proposition~\ref{PropFSUProps}~\pref{ItemFSUUpperBound} we obtain a subdivision of the fold sequence between $S_M$ and $S_J$ into at most two subintervals along each of which the number of free splitting units rel~$\A$ is bounded. Applying Proposition~\ref{PropFSUProps}~\pref{ItemFSUTriangIneq} we obtain an upper bound to the number of free splitting units rel~$\A$ between $S_M$ and~$S_J$.

We turn to the proof of Claim~(\#). Denote $ V = U_0 \xrightarrow{f = f^0_L} U_L = W$. Choose oriented natural edges $e_V=[v_-,v_+] \subset V$,\, $e_W=[w_-,w_+] \subset W$ which map onto the same oriented natural edge $e_R = [r_-,r_+] \subset R$ under collapse maps $V,W \mapsto R$. Decompose $V \setminus e_V = V_- \union V_+$ and $W \setminus e_W = W_- \union W_+$ so that $v_\pm$ is the frontier of $V_\pm$, and~$w_\pm$ is the frontier of $W_\pm$, respectively. We have equations 
$$(*) \qquad f(V_+) = W_+ \union [w_+,f(v_+)], \qquad f(V_-) = W_- \union [w_-,f(v_-)]
$$
which are obtained by referring to \FSOne\ Lemma~5.5 and following the proof of the implication \hbox{(4)$\implies$(1)}, except that one may ignore the very last sentence which is the only place in that proof where the gate~3 condition was used. We briefly outline the proof of $(*)$ for $f(V_+)$. Decompose $R \setminus e_R = R_- \union R_+$ so that $r_\pm$ is in the frontier of~$R_\pm$. Let~$\Gamma_+$ be the set of elements of $\Gamma$ acting loxodromically on $R$ with axis contained in~$R_+$. First one shows the inclusion $W_+ \subset f(V_+)$ by proving for each $\gamma \in \Gamma_+$ that $\gamma$ acts loxodromically on $V$ and $W$ with axes contained in $V_+$ and $W_+$ respectively, and that the union of such axes over $\gamma \in \Gamma_+$ equals $V_+$, $W_+$ respectively, and finally using that for each $\gamma \in \Gamma_+$ the $f$ image of the axis of $\gamma$ in $V_+$ contains the axis of $\gamma$ in $W_+$. Next one shows, by bounded cancellation, that $f(V_+)$ is contained in a finite radius neighborhood $N_r(W_+)$ of $W_+$. Finally, using that neighborhood, one shows that if $(*)$ fails then $f(V_+)$ contains a valence~$1$ point distinct from $f(v_+)$, that point has the form $f(x)$ for some $x \in V_+ - v_+$, and $f$ has only one gate at $x$, contradicting foldability of~$f$.

Consider the oriented segment $f(e_V)=[f(v_-),f(v_+)] \subset W$. If $f(e_V)$ intersects $\interior(e_W)$ and preserves orientation, then $f$ is one-to-one over some point $x \in\interior(e_W)$, so Claim~$(\#)$ is proved with $\ell_1=L$, $x_1=x_2=x$, and $b_\#=1$. If $f(e_V)$ intersects $\interior(e_W)$ and reverses orientation, then $f$ has only one gate at $v_-$ and at $v_+$, a contradiction. 

\smallskip
\textbf{Remark.} Under the gate~3 hypothesis on the given fold sequence the proof of Claim~$(\#)$ ends here, because the segment $f(e_V)$ must intersect $\interior(e_W)$; see the last lines of the proof of \FSOne\ Lemma~5.5. Without the gate~3 hypothesis our work continues for rather a long while.

\smallskip

We may assume that $f(e_V)$ is a subset of $W_-$ or of $W_+$; up to reverse of orientation we have 
$$f(e_V) \subset W_+, \qquad f(V_+)=W_+, \qquad\text{and}\qquad e_W = [w_-,w_+] \subset \underbrace{[w_-,f(v_-)]}_{= \alpha}
$$
We orient $\alpha$ with initial endpoint $w_-$ and terminal endpoint $f(v_-)$, and we parameterize $\alpha$ by simplicial distance from $w_-$, inducing a linear order on $\alpha$ which lets us speak of maxima and minima in $\alpha$. Let $\Sigma = V_- \intersect f^\inv \alpha$ and $\xi = \Sigma \intersect f^\inv(w_-)$. Note that $\Sigma-\xi$ is connected: otherwise the closure of some component of $\Sigma-\xi$ would not contain $v_-$, its image in $\alpha$ would have a maximum value achieved at some $x \in \interior(\Sigma)$, and $x$ would have one gate, a contradiction. It follows that $\Sigma$ is connected, that each point of $\xi$ has valence~$1$ in $\Sigma$, and that $\xi \in \frontier(\Sigma)$. Furthermore $\frontier(\Sigma) = \xi \union \{v_-\}$, and the map $f \from \Sigma \to \alpha$ takes $\frontier(\Sigma)$ to $\bdy\alpha$, mapping $\xi$ to $w_-$ and $v_-$ to $f(v_-)$. 

Consider the \emph{initial edgelets} of $\Sigma$ meaning the edgelets incident to points of $\xi$, each of which maps to the initial edgelet of $e_W$. It follows that the initial edgelets of $\Sigma$ are all in different $\Gamma$-orbits, and so there are only finitely many of them, implying that $\xi$ is finite and so $\Sigma$ is a finite tree. Since each vertex of $\Sigma-\Fr(\Sigma)$ has at least two gates with respect to the map $f \restrict \Sigma$, and since $f(\Sigma - \Fr(\Sigma)) \subset \alpha$ it follows that each vertex of $\Sigma-\Fr(\Sigma)$ has exactly two gates and that $f(\Sigma-\Fr(\Sigma)) \subset \interior(\alpha)=(w_-,f(v_-))$. Assign an orientation to each edgelet of $\Sigma$ so as to point towards $v_-$. By induction on distance to $\xi$ it follows that $f$ maps each edgelet of $\Sigma$ to an edgelet of $\alpha$ in an orientation preserving manner. It follows that for each $x \in \xi$ the map $f$ takes $[x,v_-]$ one-to-one onto $\alpha$. Furthermore at each $y \in \interior(\Sigma)$ there is therefore a unique \emph{positive direction} with respect to~$f$, namely the direction pointing towards $v_-$, which is the unique direction at $y$ whose image under $f$ is the direction at $f(y) \in \alpha$ pointing towards $f(v_-)$. All other directions at $y$ form the \emph{negative gate}, each mapping to the direction at $f(y) \in \alpha$ pointing back towards $w_-$.  This gives $\Sigma$ the structure of a ``hanging tree'' in the terminology of \cite{BestvinaFeighn:subfactor}. It follows that every edgelet of $\Sigma$ that maps to the initial edgelet of $e_W$ is an initial edgelet of $\Sigma$.

We break into two cases depending on the behavior of the following subset of $\Gamma$: 
\begin{align*}
\wh Z &= \{\gamma \in \Gamma \suchthat \interior(\Sigma) \intersect \interior(\gamma \cdot \Sigma) \ne \emptyset\} \\ &= \{\gamma \in \Gamma \suchthat \Sigma \intersect \gamma \cdot \Sigma\,\,\text{contains at least one edgelet}\}
\end{align*}

\smallskip

\textbf{Case 1:} $\wh Z = \{\text{Id}\}$. Consider the graph of groups $V/\Gamma$. It follows in Case~1 that the orbit map $V \to V / \Gamma$ restricts to an injection on $\interior(\Sigma)$. The images of the initial edgelets of $\Sigma$ are therefore all contained in distinct oriented natural edges of $V/\Gamma$. Applying Proposition~\ref{PropGenericFS}~\pref{ItemNatEBound} it follows that the number of initial edgelets is bounded by the number $2 \DFS(\A) + 2 =  6 \corank(\A) + 4 \abs{\A} - 6$. Taking this number to be $b_\#$, Claim~$(\#)$ is proved with $\ell_1=L$ and with $x_1=x_2=$ an interior point of the initial edgelet of~$e_W$.

\smallskip

\textbf{Case 2:} $\wh Z \ne \{\text{Id}\}$.  The action of each $\gamma \in \wh Z$ on the tree $W$ restricts as 
$$\tau_\gamma \from (\gamma^\inv \cdot \alpha) \intersect \alpha \to \alpha \intersect (\gamma \cdot \alpha)
$$
Furthermore, this map $\tau_\gamma$ is an isometry with respect to the parameterization of $\alpha$ described earlier, so we may speak about whether $\gamma$ preserves or reverses orientation, and if $\gamma$ preserves orientation we may also speak about the translation length of $\gamma$, all by reference to what~$\tau_\gamma$ does to the parameterization of~$\alpha$.

We next show:
\begin{enumerate}
\item\label{ItemFrontierToEndpoints}
For each $\gamma \in \wh Z$ the arc $\alpha \intersect \gamma \cdot \alpha$ has endpoints in the set $\bdy \alpha \union \gamma \cdot \bdy \alpha$.
\end{enumerate}
This follows from the earlier description of how $f$ maps $\Sigma$ to $\alpha$ and $\frontier(\Sigma)$ to $\bdy\alpha$, together with the fact that $\frontier(\Sigma \intersect (\gamma\cdot \Sigma)) \subset \frontier (\Sigma) \union (\gamma\cdot \frontier(\Sigma))$. 

If $\gamma$ reverses orientation it follows from \pref{ItemFrontierToEndpoints} that $\gamma^2$ fixes the arc $\alpha \intersect \gamma \cdot \alpha$ and so, since $W$ is a free splitting, $\gamma^2$ is trivial, but that is a contradiction. Every element of $\wh Z$ therefore preserves orientation. 

We define $\gamma \in \wh Z$ to be \emph{positive} if $\gamma$ has positive translation length with respect to the parameterization of $\alpha$; for $\gamma$ to be \emph{negative} is similarly defined by requiring negative translation length. Thinking of the map $\Sigma \xrightarrow{f} \alpha$ as a ``height function'', an element of $\wh Z$ is positive if and only if it increases height in $\Sigma$, and negative if and only if it decreases height.

Letting $Z \subgroup \Gamma$ be the group generated by $\wh Z$, we shall show:
\begin{enumeratecontinue}
\item\label{ItemOverlapSign}
There exists a positive $\gamma \in \wh Z$ such that $Z = \<\gamma\>$ is infinite cyclic. 
\end{enumeratecontinue}
For any free splitting $\Gamma \act X$ in which the action of the cyclic group $Z$ is not elliptic, let $\Ax(X)$ denote the axis of that cyclic group. We show furthermore that:
\begin{enumeratecontinue}
\item\label{ItemOverlapTree}
The set $H = Z \cdot \Sigma \subset V$ is a two-ended tree on which $Z$ acts cocompactly, there is a $Z$-equivariant deformation retraction $H \mapsto \Ax(V)$, and 
$$f(H) = f(Z \cdot \Sigma) = Z \cdot f(\Sigma) = Z \cdot \alpha = \Ax(W)
$$
\item\label{ItemInfiniteHangingTree}
The map $f \from H \to \Ax(W)$ gives $H$ the structure of a ``bi-infinite hanging tree'' as follows: at each $x \in H$ the map $f \restrict H$ has a \emph{positive gate} consisting of the unique direction at $x$ whose $f$-image points towards the positive end of $\Ax(W)$, and if $x$ is not of valence~$1$ then all other directions at $x$ are in a single \emph{negative gate} whose $f$-image points towards the negative end of $\Ax(W)$.
\item\label{ItemOverlapTreeTranslates}
All translates of the tree $H$ by elements of $\Gamma - Z$ have disjoint interiors.
\end{enumeratecontinue}
For the proofs of \pref{ItemOverlapSign}--\pref{ItemOverlapTreeTranslates}, pick a positive $\gamma \in \wh Z$ whose translation distance on $\alpha$ is a minimum. Given a positive $\delta \in \wh Z$, note that $\gamma^\inv \delta$ is in $\wh Z$ and is non-negative. By induction there exists $i \ge 0$ such that $\gamma^{-i} \delta \in \wh Z$ and has translation number zero, implying that it fixes an arc of $\alpha$, and so $\delta = \gamma^i$. This proves~\pref{ItemOverlapSign}, and \pref{ItemOverlapTree} follow easily. Item~\pref{ItemInfiniteHangingTree} follows from the analogous properties of the map $f \from \Sigma \to [w_-,f(v_-)]$. For~\pref{ItemOverlapTreeTranslates}, suppose $\delta \in \Gamma$ has the property that the interiors of $H$ and $\delta \cdot H$ are not disjoint. Choose integers $i,j$ such that the interiors of $\gamma^i \cdot \Sigma$ and $\delta\gamma^j \cdot \Sigma$ are not disjoint, so the interiors of $\Sigma$ and $\gamma^{-i} \delta \gamma^j \Sigma$ are not disjoint. By \pref{ItemOverlapSign} we have $\gamma^{-i} \delta \gamma^j \in \wh Z$ and so $\delta\in Z$, proving~\pref{ItemOverlapTreeTranslates}.

From properties~\pref{ItemOverlapSign}--\pref{ItemOverlapTreeTranslates} it follows that the 1-complex $H / Z$ deformation retracts to the circle $\Ax(V) / Z$. Furthermore, the induced map $H / Z \mapsto V / \Gamma$ is an embedding on the complement of the valence~1 vertices. Define an \emph{initial edgelet} of $H$ to be an oriented edgelet whose initial vertex has valence~$1$ in $H$, and so we have a bijection between initial edgelets and valence~$1$ vertices. Define an initial edgelet  of $H/Z$ in a similar fashion. We have a bijection between $Z$-orbits of initial edgelets of $H$ and initial edgelets of $H/Z$. Under the map $H/Z \mapsto V/\Gamma$, the initial edgelets of $H/Z$ all map into distinct oriented natural edges of $V / \Gamma$. The number of $Z$-orbits of initial edgelets of $H$ is therefore bounded above by $2 \DFS(\A) + 2 =  6  \corank(\A) + 4 \abs{\A} - 6$ (see Proposition~\ref{PropGenericFS}~\pref{ItemNatEBound}), and so the number of $Z$-orbits of valence~$1$ vertices of $H$ has the same bound. A \emph{branch} of $H$ is an oriented arc with initial endpoint at a valence~1 vertex, terminal endpoint on $\Ax(V)$, and interior disjoint from $\Ax(V)$. Letting $\beta_v \subset H$ denote the branch with initial vertex~$v$, we have a $Z$-equivariant bijection $v \leftrightarrow \beta_v$ between valence~1 vertices and branches, and so: 
\begin{enumeratecontinue}
\item\label{ItemBranchOrbitBound}
The number of $Z$-orbits of branches of $H$ is bounded by 
$$2 \DFS(\A) + 2 = 6  \corank(\A) + 4 \abs{\A} - 6
$$
\end{enumeratecontinue}
We also have, as a consequence of property~\pref{ItemInfiniteHangingTree}, the following: 
\begin{enumeratecontinue}
\item\label{ItemBranchInject}
The map $f \from V \to W$ is injective on each branch $\beta_v$, mapping it homeomorphically to an arc of $\Ax(W)$. In particular, $\beta_v$ is legal with respect to $f$. 
\end{enumeratecontinue}

Consider now the whole fold sequence $V=U_0 \mapsto\cdots\mapsto U_L = W$. Choose $x_2 \in e_W$ to be in the interior of some edgelet. If $f^0_L$ is one-to-one over $x_2$ then we are done with $x_1=x_2$, $\ell_1 = L$, and $b_\#=1$. Otherwise, let $\ell_1 \in \{0,\ldots,L\}$ be the largest integer such that the map $f^{\ell_1}_L \from U_{\ell_1} \to U_L$ is not 1-to-1 over~$x_2$. Since $f^{\ell_1+1}_L$ is 1-to-1 over~$x_2$, and since the fold map $f_{\ell_1+1}$ is at worst 2-to-1 over the interior of each edgelet, it follows that $f^{\ell_1}_L$ is exactly 2-to-1 over $x_2$. Let $y \in U_{\ell_1+1}$ be the unique point of $(f^{\ell_1+1}_L)^\inv(x_2)$. Note that $y \in \Ax(U_{\ell_1+1})$, because 
\begin{align*}
x_2 \in \alpha & \subset \Ax(W) \qquad\qquad \qquad \text{(see item \pref{ItemOverlapTree})}\\
        &= \Ax(U_L) \subset f^{\ell_1+1}_L(\Ax(U_{\ell_1+1}))
\end{align*}
Under the fold map $f_{\ell_1+1} \from U_{\ell_1} \to U_{\ell_1+1}$ the point $y$ has exactly 2 pre-images, exactly one of which denoted $x_1$ is \emph{disjoint from} $\Ax(U_{\ell_1})$. Let $P = (f^0_{\ell_1})^\inv(x_1) \subset U_0$, which is \emph{disjoint from} $\Ax(U_0)$. It remains to show
\begin{description}
\item[Claim $(*)$] The cardinality of $P$ is $\le$ the number of $Z$-orbits of branches of~$H$. 
\end{description}
Applying this claim, it follows by \pref{ItemBranchOrbitBound} that the cardinality of $P$ is bounded above by $2 \DFS(\A) + 2 = 6  \corank(\A) + 4 \abs{\A} - 4$. Claim~$(\#)$ is then proved by taking $b_\# = \max\{2, 2 \DFS(\A) + 2\} = 2\DFS(\A)+2$. 

For proving Claim~$(*)$, by applying item~\pref{ItemOverlapTreeTranslates} we conclude that $P \subset \interior(H)$, so $P$ is the inverse image of $x_1$ under the restriction of $f^0_{\ell_1}$ to $H$. Consider $H_{\ell_1} = f^0_{\ell_1}(H) \subset U_{\ell_1}$. From items~\pref{ItemOverlapTree}, \pref{ItemInfiniteHangingTree}, which describe the infinite hanging tree structure on $H$ with respect to the map $f=f^0_L \from H \to \Ax(W)$, it follows that $H_{\ell_1}$ also has an infinite hanging tree structure with respect to the map $f^{\ell_1}_L \from H_{\ell_1} \to \Ax(W)$. For each branch $\beta_v \subset H$, by~\pref{ItemBranchInject} the map $f$ takes $\beta_v$ homeomorphically onto a subsegment of $\Ax(W)$, from which it follows that $f^0_{\ell_1}$ maps $\beta_v$ homeomorphically onto its image $f^0_{\ell_1}(\beta_v)$, a path which therefore takes no illegal turns in $H_{\ell_1}$ with respect to the map $f^{\ell_1}_L \from H_{\ell_1} \to \Ax(W)$. Combining this with the infinite hanging tree structure on $H_{\ell_1}$, it follows that if $\mu \subset \beta_v$ is a subpath, with homeomorphic image subpath  $f^0_{\ell_1}(\mu) \subset f^0_{\ell_1}(\beta_v)$, and if the endpoints of $f^0_{\ell_1}(\mu)$ are disjoint from $\Ax(U_{\ell_1})$, then all of $f^0_{\ell_1}(\mu)$ is disjoint from $\Ax(U_{\ell_1})$, because any path between points in distinct components of an infinite hanging tree minus its axis must contain an illegal turn.

If Claim~$(*)$ fails then there exist $b \ne b' \in P$, a branch $\beta_v \subset H$, and~$\gamma^i \in Z$, such that $b \in \beta_v$ and $b' \in \gamma^i \cdot \beta_v$. The path $\mu = [\gamma^{-i}(b'),b]$ is contained in $\beta_v$ and so it is mapped homeomorphically to the path $f^0_{\ell_1}(\mu) = f^0_{\ell_1}[\gamma^{-i}(b'),b]$. Also, the endpoints $\gamma^{-i}(b')$, $b$ of $\mu$ are mapped by $f^0_{\ell_1}$ to the endpoints $\gamma^{-i}(x_1)$, $x_1$ of $f^0_{\ell_1}(\mu)$, neither of which are in $\Ax(U_{\ell_1})$. It follows that $f^0_{\ell_1}(\mu)$ is disjoint from $\Ax(U_{\ell_1})$. In the tree $U_0$ consider the bi-infinite, $\gamma^i$-invariant sequence of paths
$$\cdots \underbrace{[\gamma^{-2i}(b'),\gamma^{-i}(b)]}_{\gamma^{-i}(\mu)}, \, \underbrace{[\gamma^{-i}(b'),\gamma^0(b)]}_{\mu}, \, \underbrace{[\gamma^0(b'),\gamma^i(b)]}_{\gamma^i(\mu)}, \, \underbrace{[\gamma^i(b'),\gamma^{2i}(b)]}_{\gamma^{2i}(\mu)}, \cdots
$$
Since $f^0_{\ell_1}(\gamma^{-mi}(b)) = \gamma^{-mi}(x_1) = f^0_{\ell_1}(\gamma^{-mi}(b'))$ for all~$m$, it follows that the image of the above sequence of paths under the map $f^0_{\ell_1}$ concatenates together to form a bi-infinite $\gamma^i$-invariant path in $U_{\ell_1}$ which is disjoint from $\Ax(U_{\ell_1})$, a contradiction.
\end{proof}

\subsection{Proof of Theorem~\ref{TheoremRelFSGammaHyp}: Reducing the Coarse Lipschitz and Strong Contraction Axioms to Proposition \ref{PropMMTranslation}.} 
\label{SectionAxiomReduction}
In \FSOne\ we proved hyperbolicity of $\FS(F_n)$ by using fold sequences and free splitting units to verify hyperbolicity axioms established by Masur and Minsky in \cite{MasurMinsky:complex1}. We follow the same method here to prove hyperbolicity of $\FS(\Gamma;\A)$. 

As alluded to earlier, Proposition~\ref{PropMMTranslation} may be regarded as a translation of the Coarse Lipschitz and Strong Contraction Axioms into a single statement regarding fold sequences and free splitting units. To state it we need one more definition. 

Given a fold sequence $S_I\mapsto\cdots\mapsto S_K$ and a free splitting $T$ in $\FS(\Gamma;\A)$, an \emph{augmented projection diagram over~$\A$ of depth $J$ from $T$ to $S_I\mapsto\cdots\mapsto S_K$} is a commutative diagram of free splittings and maps rel~$\A$ of the form shown in Figure~\ref{FigureAugProjDiagram} such that each horizontal row is a foldable sequence, the subsequence $T_J \mapsto \cdots \mapsto T_L$ is a fold sequence, and the two rectangles shown are combing rectangles. The diagram obtained from Figure~\ref{FigureAugProjDiagram} by replacing the sequence $T_J \mapsto\cdots\mapsto T_L$ with the composed foldable map $T_J \mapsto T_L$ is therefore an ordinary projection diagram as given in Definition~\ref{DefProjDiagram}. Conversely, any projection diagram as given in Definition~\ref{DefProjDiagram} can be converted into an augmented projection diagram by simply factoring the map $T_J \mapsto T_L$ as a fold sequence. 
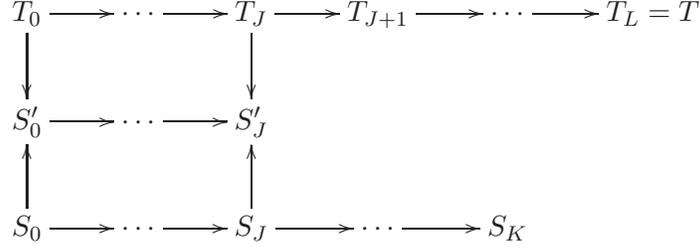
\begin{figure}
$$\xymatrix{
T_I \ar[r] \ar[d] & \cdots \ar[r] & T_{J} \ar[r] \ar[d] & T_{J+1} \ar[r] & \cdots \ar[r] & T_L = T\\
S'_I \ar[r]          & \cdots \ar[r] & S'_{J} \\
S_I \ar[r] \ar[u] & \cdots \ar[r] & S_{J} \ar[r] \ar[u]  & \cdots \ar[r] & S_K \\
}$$
\caption{An augmented projection diagram of depth $J$ from $T$ to $S_I\mapsto\cdots\mapsto S_K$.}
\label{FigureAugProjDiagram}
\end{figure}


\begin{proposition} 
\label{PropMMTranslation}[c.f.\ \FSOne\ Proposition 6.1] 
\qquad Let $b_1 = 5 \corank(\A) + 4 \abs{\A} - 3$, let \hbox{$S_I \mapsto\cdots\mapsto S_K$} be a fold sequence rel~$\A$, and let $\pi \from \FS(\Gamma;\A) \to \{I,\ldots,K\}$ be its associated projection map (as in Definition~\ref{DefProjDiagram}). Let $T$ be a free splitting rel~$\A$ with projection $J=\pi(T)$, and assume that the number of free splitting units rel~$A$ between $S_I$ and $S_J$ is at least $b_1$. Consider any augmented projection diagram rel~$\A$ of depth $J=\pi(T)$ (as denoted in Figure~\ref{FigureAugProjDiagram}), and let~$\Upsilon$ be the number of free splitting units rel~$\A$ between $T_J$ and~$T_L=T$. For any free splitting $R$ rel~$\A$, if $d(T,R) \le \max\{2 \lfloor \Upsilon/b_1 \rfloor,1\}$, then there exists $M \in [I,\pi(R)]$ such that the number of free splitting units between $S_M$ and $S_J$ is at most $b_1$.
\end{proposition}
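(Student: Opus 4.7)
If $\pi(R) \ge J$ then we may simply take $\ell = J$ since the number of free splitting units between $S_J$ and itself is zero. So assume $\pi(R) < J$. The plan is to show that $\ell = \pi(R)$ itself works, that is, $\Upsilon_{\pi(R),J} \le b_1$. We argue by contradiction, so suppose $\Upsilon_{\pi(R),J} > b_1$. The goal is then to exhibit a projection diagram rel~$\A$ from $R$ to $S_0 \mapsto \cdots \mapsto S_K$ of depth strictly greater than $\pi(R)$, contradicting the maximality of $\pi(R)$.

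To do this, I would construct the ``Big Diagram'' following the model of \FSOne. Since $d(T,R) \le D := \max\{2\lfloor \Upsilon / b_1\rfloor, 1\}$, choose a path in $\FS(\Gamma;\A)$ from $T=R_0$ to $R=R_D$ whose consecutive terms differ by a single elementary collapse or expansion move. Starting from the given augmented projection diagram (Figure~\ref{FigureAugProjDiagram}), inductively stack new ``layers'' above the $T$-row, one for each step of the path; each collapse step is handled by Lemma~\ref{LemmaCombingByCollapse} (combing by collapse), and each expansion step by Lemma~\ref{LemmaCombingByExpansion} (combing by expansion). This builds a commutative array of combing rectangles whose bottom row remains the original fold sequence $S_0 \mapsto \cdots \mapsto S_K$ and whose uppermost row is a foldable sequence terminating at $R$. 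From the uppermost row one reads off an augmented projection diagram for $R$, whose depth we aim to push past $\pi(R)$.

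The heart of the argument is then to use Proposition~\ref{PropFSUProps}~\pref{ItemFSUBoundsUpsComp}: since $\Upsilon_{\pi(R),J} > b_1 = 5\corank(\A) + 4\abs{\A} - 3$, for any pullback subgraph sequence $(\beta_k)$ along the bottom fold sequence between indices $\pi(R)$ and $J$, some component of $\beta_{\pi(R)}$ is an arc in the interior of a relatively natural edge of $S_{\pi(R)}$. I would choose $\beta_K$ as the $\Gamma$-orbit of a carefully selected edgelet in the column above $S_J$ arising from the $R$-layer, and transport the resulting ``almost invariant edge'' in $S_{\pi(R)}$ upward through the stack of combing rectangles to the $R$-row, where it yields an expansion-collapse move that extends the projection diagram from $R$ by one more fold step, the desired contradiction. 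The main obstacle is precisely the bookkeeping in this upward transport: each vertical combing rectangle may perturb the relevant subgraph complexity, and there are $D$ of them, but the hypothesis $D \le 2\lfloor \Upsilon/b_1 \rfloor$ was chosen exactly so that the accumulated perturbation is absorbed by the budget of $\Upsilon$ free splitting units in the top row between $T_J$ and $T_L$. Balancing the complexity inequalities of Lemma~\ref{SublemmaNew} and Lemma~\ref{LemmaComplexityBounds} with the FS-unit bound of Proposition~\ref{PropFSUProps}~\pref{ItemFSUBoundsUpsC} completes the argument, in direct parallel with the Big Diagram argument of \FSOne\ Proposition~6.1 modified per Bestvina--Feighn \cite{BestvinaFeighn:subfactor} to avoid the gate~3 condition.
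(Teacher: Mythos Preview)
Your overall framework is right---build the Big Diagram by stacking $D$ combing rectangles over the $T^0$-row via Lemmas~\ref{LemmaCombingByCollapse} and~\ref{LemmaCombingByExpansion}, then use Proposition~\ref{PropFSUProps}\pref{ItemFSUBoundsUpsComp} to produce arcs that simplify the picture---but the mechanism you describe is not the one that works, and the sketch as written would not close.

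The paper does \emph{not} argue by contradiction from $\Upsilon_{\pi(R),J}>b_1$, nor does it apply the arc lemma once along the $S$-row and then ``transport upward''. Instead the argument is iterative along the $T^0$-row. One fixes columns $L_0=L>L_1>\cdots>L_\Omega\ge J$ with $\Omega=\lfloor\Upsilon/b_1\rfloor$, chosen so that between $T^0_{L_{d}}$ and $T^0_{L_{d-1}}$ there are $\ge b_1$ free splitting units. At each inductive step $d$, Proposition~\ref{PropFSUProps}\pref{ItemFSUBoundsUpsComp} is applied to the collapse--expand diagram sitting over $[L_d,L_{d-1}]$ (so the arc appears in the \emph{top} row $T^{2d}_{L_d}$, not in any $S_i$); that arc is then used, via the Bestvina--Feighn edgelet trick, to replace a collapse by an equivariant homeomorphism and thereby eliminate two corrugations from the stack. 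Each such step costs $b_1$ free splitting units from the $\Upsilon$-budget along the $T^0$-row, which is exactly why the hypothesis $d(T,R)\le 2\lfloor\Upsilon/b_1\rfloor$ suffices for $(D-2)/2$ steps. Only after the stack is reduced to a single collapse--expand diagram does one invoke the separate hypothesis on the $S$-row (that $\Upsilon_{0,J}\ge b_1$): one picks the largest $I\le J$ with $\Upsilon_{I,J}=b_1$, applies the arc lemma once more at column $I$, and obtains a projection diagram from $R$ of depth $I$, whence $\pi(R)\ge I$ and $\ell=I$ works.

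In short: your single application at column $\pi(R)$ (or $J$) with an upward transport does not account for the $D$ layers of corrugation; you need $(D-2)/2$ separate applications spaced $b_1$ units apart along the $T^0$-row, plus one final application along the $S$-row. Also note that in Proposition~\ref{PropFSUProps}\pref{ItemFSUBoundsUpsComp} the arc lives in $T_i$, the top of the collapse--expand diagram, not in $S_i$.
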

\noindent
Noting that if $\pi(R) \ge J$ then the conclusion is witnessed by $M=J$, another way to word the conclusion is that the projection of $R$ to $S_I \to\cdots\to S_K$ is no further to the left of $S_J$ than $b_1$ free splitting units.


This proposition will be proved in the next section, using the Big Diagram argument. For now we use it to prove our main results on hyperbolicity of $\FS(\Gamma;\A)$ and on the uniform quasigeodesic parameterization of fold paths using free splitting units.

\begin{proof}[Proof of Theorem \ref{TheoremRelFSGammaHyp}]  
The argument follows closely the proof of hyperbolicity of $\FS(F_n)$ given in \FSOne\ Section~6.1, with Proposition~\ref{PropMMTranslation} standing in for \cite[Proposition 6.1]{\FSOneTag}.
 
We have already verified the \emph{Coarse Retract Axiom} in Lemma~\ref{LemmaCoarseRetract}. Fixing free splittings $T,R \in \FS(\Gamma;\A)$ we must verify the \emph{Coarse Lipschitz Axiom} and the \emph{Strong Contraction Axiom}, which we do with constant $c = 10 b_1 + 8$. After interchanging $T,R$ we may assume $\pi(R) \le \pi(T)=J$. Choose an augmented projection diagram from $T$ to $S_I \mapsto\cdots\mapsto S_K$ of depth $J$ as in Figure~\ref{FigureAugProjDiagram}, and let $\Upsilon$ be the number of free splitting units between $T_J$ and $T_L=T$. Applying Proposition~\ref{PropFSUProps}~\pref{ItemFSUBoundsDiam} we obtain $d(T_J,T) \le 10 \Upsilon + 8$ and so $d(S_J,T) \le 10 \Upsilon + 10$. 

We may assume that the number of free splitting units between $S_I$ and $S_J$ is at least~$b_1$, for otherwise by applying Proposition~\ref{PropFSUProps}~\pref{ItemFSUBoundsDiam} the set $\{S_I,\ldots,S_J\}$ and its subset $\{S_{\pi(R)},\ldots,S_J\}$ each have diameter $\le 10 b_1+8=c$ and the axioms follow.

For the \emph{Coarse Lipschitz Axiom}, if $d(T,R) \le 1$ then Proposition~\ref{PropMMTranslation} applies to produce $M \le J$ such $M \le \pi(R) \le J$ and such that between $S_M$ and $S_J$ there are at most $b_1$ free splitting units, so just as above the set $\{S_M,\ldots,S_J\}$ and its subset $\{S_{\pi(R)},\ldots,S_J\}$ each have diameter $\le 10 b_1 + 8=c$ and the axiom follows. 

For the \emph{Strong Contraction Axiom}, one considers two cases. For the first case where $\Upsilon \le 2b_1$, by Proposition~\ref{PropFSUProps}~\pref{ItemFSUBoundsDiam} we have $d(T,T_J) \le 20b_1 + 8$ and so $d(T,S_J) \le 20 b_1 + 10$, and taking $a=20b_1 + 10$ we may dispense with this case. Consider now the second case that $\Upsilon \ge 2 b_1 \ge 2$. It follows that 
$$\frac{\Upsilon}{b_1} \le 2 \left( \frac{\Upsilon}{b_1} - 1 \right) \quad\text{and so}\quad \frac{\Upsilon}{b_1} \le 2 \biggl\lfloor \frac{\Upsilon}{b_1} \biggr\rfloor
$$
Letting $b = 1/(15 b_1)$, we may then assume that  $d(T,R) \le b \, d(T,S_J)$, and it follows furthermore that 
$$d(T,R) \le b (10 \Upsilon + 10) \le b(10 \Upsilon + 5 \Upsilon) = \frac{\Upsilon}{b_1} \le 2 \biggl\lfloor \frac{\Upsilon}{b_1} \biggr\rfloor
$$
Proposition~\ref{PropMMTranslation} now applies with the conclusion that between there are at most $b_1$ free splitting units between $S_{\pi(R)}$ and $S_J$ and so, just as above, the set $\{S_{\pi(R)},\ldots,S_J\}$ has diameter $\le 10b_1 + 8=c$, and the axiom follows.

Having verified all of the Masur--Minsky axioms, hyperbolicity of $\FS(\Gamma;\A)$ therefore follows from~\cite{MasurMinsky:complex1}.
\end{proof}

\subsection{Theorem \ref{TheoremRelFSUParams}: Parameterizing fold paths using free splitting units}
\label{SectionFSUParameterization}

In the absolute case, fold paths in $\FS(F_n)$ become uniformly quasigeodesic when they are reparameterized based on free splitting units, as proved in \FSOne\ Proposition~6.2. That proof relativizes with very little change to the current setting using free splitting units rel~$\A$, as shown in Theorem~\ref{TheoremRelFSUParams} below. Also, we show in Corollary~\ref{CorollaryCompFSU} that there is a simpler reparameterization, albeit with worse constants, based on component free splitting units.

The free splitting unit parameterization is first described as a discrete parameterization of a fold sequence, and can then be extended to a continuous parameterization of the corresponding fold path. Consider a fold sequence $S_I \mapsto \cdots \mapsto S_J$ rel~$\A$. Let $\Upsilon \ge 0$ be the number of free splitting units rel~$\A$ from $S_I$ to $S_J$; recall that $\Upsilon=0$ if and only if  $S_I$ and $S_J$ differ by $<1$ free splitting unit as in Definition~\ref{DefFSU}~\pref{ItemLessThanOneFSU}. The domain of the discrete parameterization is the integer interval $0 \le u \le \max\{\Upsilon,1\}$; that parameterization is described in separate cases $\Upsilon=0$ and $\Upsilon \ge 1$. If $\Upsilon=0$, equivalently if there is $<1$ free splitting unit between $S_I$ and~$S_J$, then the domain is $\{0,1\}$, we set $m_0=I$ and $m_1=J$, and the parameterization is defined by $u \mapsto S_{m_u}$ for $u=0,1$. If $\Upsilon \ge 1$ then, associated to the fold sequence, there exists a greedy sequence of the form $I = m_0 < m_1 < \cdots < m_\Upsilon = J$ (Proposition~\ref{PropFSUProps}~\pref{ItemFSUGreedy}) (in particular, if $1 \le u \le \Upsilon$ then there is not $<1$ free splitting unit between $S_{m_{u-1}}$ and $S_{m_u}$). The discrete parameterization in this case is defined by $u \mapsto S_{m_u}$ for $u=0,\ldots,\Upsilon$. Regarding the continuous parameterization, consider the fold path in $\FS(\Gamma;\A)$ where each fold $S_{i-1} \mapsto S_i$ is replaced by an edge path in the $1$-skeleton of $\FS(\Gamma;\A)$ of length at most~$2$ (c.f.~Lemma~\ref{LemmaFoldLengthTwo}), hence each fold subsequence $S_i \mapsto \cdots S_j$ is replaced by an edge path of length at most $2\abs{j-i}$. For any consecutive integers $0 \le u-1 < u \le \max\{\Upsilon,1\}$, the discrete parameterization is extended to a continuous parameterization taking the real interval $u-1 \le t \le u$ onto the edge path between $S_{m_{u-1}}$ and $S_{m_u}$ of length $\abs{m_u-m_{u-1}}$. Since the set of vertices $\{S_m \suchthat m_{u-1} \le m \le m_u\}$ has uniformly bounded diameter in $\FS(\Gamma;\A)$ (by Proposition~\ref{PropFSUProps}~\pref{ItemFSUBoundsDiam}), uniform quasi-isometry of the integer parameterization and of the real parameterization are equivalent properties. 


\begin{theorem}[c.f.\ \FSOne\ Proposition 6.2]
\label{TheoremRelFSUParams}
For any fold path $\cdots S_i \mapsto S_{i+1} \mapsto \cdots$ in $\FS(\Gamma;\A)$, 
\begin{enumerate}
\item\label{ItemFSUDistComparison}
Between any two free splittings $S_I,S_K$ on the path with $I \le K$, the number of free splitting units is uniformly quasicomparable to the distance $d(S_I,S_K)$ in $\FS(\Gamma;\A)$, with constants depending only on $\corank(\A)$ and $\abs{\A}$.
\item\label{ItemFSUDistQG}
A reparameterization of the given fold path using free splitting units is uniformly quasigeodesic in $\FS(\Gamma;\A)$. For more details in the discrete case: there exist constants $k=k(\Gamma;\A) \ge 1$ and $c = c(\Gamma;\A) \ge 0$ such that for any fold subpath $S_I \mapsto\cdots\mapsto S_K$ with $\Upsilon$ free splitting units between $S_I$ and $S_K$, the free splitting parameterization $u \mapsto S_{m_u}$, which defined for integers $0 \le u \le \max\{\Upsilon,1\}$, is a $(k,c)$ quasigeodesic parameterization of the subpath in $\FS(\Gamma;\A)$. 
\end{enumerate}
\end{theorem}

\begin{proof} Conclusion~\pref{ItemFSUDistQG} clearly follows from~\pref{ItemFSUDistComparison}. For proving~\pref{ItemFSUDistComparison}, let $\Upsilon$ be the number of free splitting units between $S_I$ and $S_K$, and let $D = d(S_I,S_K)$. We have $D \le 10 \Upsilon + 8$, by Proposition~\ref{PropFSUProps}~\pref{ItemFSUBoundsDiam}. For the opposite inequality, choose a geodesic edge path $\rho$ in $\FS(\Gamma;A)$ between $S_I$ and $S_K$, denoted as
$$S_I = U_0 \,\, \text{---} \,\, \ldots  \,\, \text{---} \,\, U_{i-1} \,\, \text{---} \,\, U_{i}\,\, \text{---} \,\, \ldots    \,\, \text{---} \,\, U_D = S_K
$$ 
Let $\pi \from \FS(F_n;\A) \to \{I,\ldots,K\}$ denote the projection map to the fold path $S_I \mapsto\cdots\mapsto S_K$ (Definition~\ref{DefProjDiagram}), and denote $m(i) = \pi(U_i)$, and so along the fold path we have the following sequence of $D+3$ free splittings:
$$S_I \,\, , \,\, S_{m(0)} \,\, , \,\, \ldots \,\, , \,\, S_{m(i-1)}, \,\, S_{m(i)} \,\, , \,\, \ldots \,\, , \,\, S_{m(D)} \,\, , \,\, S_K
$$
By the Coarse Retract Axiom, Lemma~\ref{LemmaCoarseRetract}, we get a bound $u = u(\Gamma;\A)$ to the numbers of free splitting units between $S_I$ and $S_{m(0)} = S_{\pi(S_I)}$ and between $S_{m(D)} = S_{\pi(S_K)}$ and $S_K$. 
\begin{description}
\item[Claim:] There is a bound $v=v(\Gamma;\A)$ to the number of free splitting units between \break  $S_{m(i-1)}$ and $S_{m(i)}$, for each $i=1,\ldots,D$.
\end{description}
Once this claim is proved, by applying the ``long triangle inequality'' for free splitting units, Proposition~\ref{PropFSUProps}~\pref{ItemLongTriangEq}, it follows that $\Upsilon \le (u + v D + u) + ((D+2)-1) = (v+1)D + 2u+1$. This completes the proof of~\pref{ItemFSUDistComparison}, subject to the claim.

The claim is proved by applying Proposition \ref{PropMMTranslation} by setting $\{T,R\} =  \{U_{i-1},U_i\}$. But keeping in mind that the index sequence $m(i)$ need not be increasing, two cases must be considered:
\begin{itemize}
\item If $m(i-1) \ge m(i)$, set $T=U_{i-1}$ and $R=U_i$; 
\item If $m(i) \ge m(i-1)$, set $T=U_i$ and $R=U_{i-1}$. 
\end{itemize}
We write the proof of the claim only in the second case; the first case is then obtained with only a few notational changes, for the most part just swapping the roles of $m(i-1)$ and $m(i)$. 

Choose an augmented projection diagram from $T=U_i$ to $S_I \mapsto\cdots\mapsto S_K$ of depth $J=m(i)=\pi(U_i)=\pi(T)$ as denoted in Figure~\ref{FigureAugProjDiagram}. To apply Proposition~\ref{PropMMTranslation} we must check its two hypotheses. The upper bound required in the first hypothesis follows from $d(T,R) = d(U_i,U_{i-1})=1$. And we may assume that the second hypothesis also holds, requiring a lower bound $b_1$ to the number of free splitting units between $S_I$ and~$S_J$: if on the contrary that number is $\le b_1$ then, using that $I \le m_{i-1} \le m_i=J$, it follows from Proposition~\ref{PropFSUProps}~\pref{ItemFSUStable} that the number of free splitting units between $S_{m(i-1)}$ and $S_{m(i)}$ is bounded above by $b_1$ and we are done. Having verified all of the hypotheses of Proposition~\ref{PropMMTranslation}, its conclusions therefore hold, producing an index $M \in [I,\pi(R)] = [I,m(i-1)]$ such that the number of free splitting units between $S_M$ and $S_J=S_{m(i)}$ is $\le b_1$. Since $M \le m(i-1) \le m(i)$, applying Proposition~\ref{PropFSUProps}~\pref{ItemFSUStable} again it follows that the number of free splitting units between $S_{m(i-1)}$ and $S_{m(i)}$ is $\le b_1$.
\end{proof}

By applying Theorem~\ref{TheoremRelFSUParams} in conjunction with Lemma~\ref{LemmaCFSU} we immediately obtain:

\begin{corollary}
\label{CorollaryCompFSU}
For any fold path in $\FS(\Gamma;\A)$, 
\begin{enumerate}
\item Between any two free splittings on the path, the number of component free splitting units is quasicomparable to distance in $\FS(\Gamma;\A)$.
\item A reparameterization of the fold path using component free splitting units is a uniform quasigeodesic in $\FS(\Gamma;\A)$.\qed
\end{enumerate}
All quasicomparability and quasigeodesic constants depend only on $\abs{\A}$ and $\corank(\A)$.
\end{corollary}

\noindent
As observed at the end of Section~\ref{SectionMMReview}, one may also conclude that the projection map from $\FS(\Gamma;\A)$ to a fold path $S_I \mapsto\cdots\mapsto S_K$ given by $T \mapsto S_{\pi(T)}$ is a quasi-closest point projection, but in general not an almost closest point projection.

\subsection{The proof of Proposition \ref{PropMMTranslation}: Big Diagrams.} 
\label{SectionBigDiagrams}
Throughout the proof we fix the constant $b_1 = 5 \corank(\A) + 4 \abs{\A} - 3$, the geometric significance of which was established in Lemma~\ref{LemmaCompArc}. 

The proof of Proposition~\ref{PropMMTranslation} is, in essence, a study of the large scale geometry of certain diagrams of fold sequences and combing rectangles, diagrams that may be regarded as living in the relative free splitting complex $\FS(\Gamma;\A)$. We call these ``big diagrams''. We begin the proof by using the hypotheses of Proposition~\ref{PropMMTranslation} to set up the appropriate big diagram, and then we proceed to a study of its large scale geometry.

\paragraph{Constructing the Big Diagram, Step 0.} The reader may refer to Figure~\ref{FigureBigDiagram0} to follow this construction.

Consider a fold sequence $S_I \mapsto\cdots\mapsto S_K$ in $\FS(\Gamma;\A)$ with associated projection map $\pi \from \FS(\Gamma;\A) \to \{0,\ldots,K\}$. Consider also a free splitting $T \in \FS(\Gamma;\A)$ with augmented projection diagram over~$\A$ of depth $J=\pi(T) \in [I,\ldots,K]$ as denoted in Figure~\ref{FigureAugProjDiagram} with $T=T_L$. Along the foldable sequence in the top horizontal line of that augmented projection diagram, add a superscript~$0$, and so that sequence becomes
$$T^0_I \mapsto \cdots \mapsto T^0_J \mapsto \cdots \mapsto T^0_L=T
$$

Consider another free splitting $R \in \FS(\Gamma;\A)$ and consider also any geodesic path from $T^0_L$ to $R$ in the 1-skeleton of $\FS(\Gamma;\A)$. Since the concatenation of two collapse maps is a single collapse map, a geodesic necessarily has the form of a zig-zag path alternating between collapses and expansions. It is convenient for us to slightly alter the geodesic path from $T^0_L$ to $R$ so that it begins with a collapse and ends with an expansion: in order to achieve this, prepend a trivial collapse and/or append a trivial expansion as needed. The result is a path of even length~$D$ of the form 
$$T = T^0_{L} \rightarrow T^1_{L} \leftarrow T^2_{L} \rightarrow \cdots \leftarrow T^D_{L} = R
$$
where $d(T,R) \le D \le d(T,R)+2$. 

If $D=0$ then $T=R$ and we are done. Henceforth we assume $D \ge 2$.

Construct a stack of $D$ combing rectangles atop the foldable sequence $T^0_I \to\cdots\to T^0_L$, by alternately applying relative combing by collapse, Lemma~\ref{LemmaCombingByCollapse}, and relative combing by expansion, Lemma~\ref{LemmaCombingByExpansion}, using the arrows in the path from $T^0_L$ to $T^D_L$, for a total of $D$-applications. The result is the Big Diagram Step 0 depicted in Figure~\ref{FigureBigDiagram0}, in which $T^d_\ell$ denotes the entry in the ``row~$d$'' and ``column~$\ell$'' of the stack of combing rectangles, and in which we have highlighted certain columns and rows. 

Here is the general idea of the proof of Proposition~\ref{PropMMTranslation}. Notice that each column of the Big Diagram Step 0 is a zig-zag path, alternating between collapses and expansions. The diagram has the shape of a piece of corrugated aluminum. The far right edge is, by construction, a geodesic (except possibly for the first and last of its edges). The idea of the proof is that as one sweeps leftward through the Big Diagram, one discovers shorter vertical paths than the ones given in the diagram, allowing one to construct new Big Diagrams with fewer corrugations between the top and bottom rows. Eventually enough corrugations are removed to produce a projection diagram from $R$ to $S_I \mapsto\cdots\mapsto S_K$ from which one can estimate $\pi(R)$.

For each even integer $d$ with $2 \le d \le D-2$ we have a pair of collapse maps of the form $T^{d-1} \xleftarrow{[\rho]} T^d \xrightarrow{[\beta]} T^{d+1}$. If the subgraph $\rho \union \beta \subset T^d$ were proper in $T^d$, then there would be a path $T^{d-2} \rightarrow T^{d-1} \xrightarrow{[\beta']} T^h \xleftarrow{[\rho']} T^{d+1} \leftarrow T^{d+2}$ where $T^h$ is obtained by collapsing $T^d \xrightarrow{[\rho\union\beta]} T^h$, where $\beta'$ is the image of $\beta$ under $T^d \mapsto T^{d-1}$ and $\rho'$ is the image of $\rho$ under $T^d \mapsto T^{d+1}$; these images are proper, which is what allows this subpath to exist. But by concatenating the two collapse maps from $T^{d-2}$ to $T^h$ into a single collapse map, and similarly for the two collapse maps from $T^{d+2}$ to $T^h$ one obtains a shorter path between $T^0_L$ and $R$, contradicting that the chosen path was geodesic. It follows $\rho \union \beta$ is not proper, that is $T^d = \rho \union \beta$.

Letting $\Upsilon$ be the number of free splitting units rel~$A$ between $T_J$ and $T_L$, and letting $\Omega = \lfloor \Upsilon / b_1 \rfloor$, consider the sequence $L=L_0 > L_1 > \cdots > L_\Omega \ge J$ which is obtained from the right greedy sequence by taking only every $b_1^{\text{th}}$ term. By induction it follows for each $1 \le \omega \le \Omega$ that $L_\omega$ is the greatest integer $\le L_{\omega-1}$ such that between $T_{L_\omega}$ and $T_{L_{\omega-1}}$ there are $\ge b_1$ free splitting units. Columns in big diagrams indexed by $L_0,L_1,\ldots,L_\Omega$ will be emphasized as those diagrams evolve.

\smallskip

We have seen that we have a union $T^2_{L_0} = \rho_{L_0} \union \beta_{L_0}$. Knowing this, we may reduce to the case that this union is a blue--red decomposition meaning that $\rho_{L_0} \intersect \beta_{L_0}$ contains no edgelet: if this is not already so then we may alter the diagram to make it so, using exactly the same normalization process described in \FSOne\ Section 6.2. In brief, one replaces row~$T^2$ by collapsing the intersection of red and blue along this row.

\newcommand\bigdiagramcolor[1]{\textcolor{JungleGreen}{#1}}

\begin{figure}
$$\xymatrix{
T^D_I \ar@{.}[d]\ar[r]&\cdots \ar[r]&T^D_{J} \ar@{.}[d] \ar[r] &  \cdots \ar[r]& 
T^D_{L_1} \ar[r]\ar@{.}[d] &  \cdots \ar[r] & T^D_{L_0}\ar@{.}[d] \ar@{=}[r] & R \\
T^4_I \ar[r]\ar[d]&\cdots \ar[r]&T^4_{J}  \ar[d] \ar[r] &  \cdots \ar[r]& 
T^4_{L_1} \ar[r]\ar[d] &  \cdots \ar[r] & T^4_{L_0}\ar[d] \\ 
T^3_I \ar[r]&\cdots \ar[r]&T^3_{J} \ar[r] &  \cdots \ar[r]& 
T^3_{L_1} \ar[r]         &  \cdots \ar[r] & T^3_{L_0} \\ 
T^2_I \ar[r]\ar[d]^{[\rho_I]}\ar[u]_{[\beta_I]}&\cdots \ar[r]&T^2_{J} \ar[u]_{[\beta_{J}]} \ar[d]^{[\rho_{J}]} \ar[r] &  \cdots \ar[r]& 
T^2_{L_1} \ar[r]\ar[d]^{[\rho_{L_1}]}\ar[u]_{[\beta_{L_1}]} &  \cdots \ar[r] & T^2_{L_0}\ar[d]^{[\rho_{L_0}]}\ar[u]_{[\beta_{L_0}]} \\ 
T^1_I \ar[r]&\cdots \ar[r]&T^1_{J} \ar[r] &  \cdots \ar[r]& 
T^1_{L_1} \ar[r]         &  \cdots \ar[r] & T^1_{L_0} \\ 
T^0_I \ar[r] \ar[d] \ar[u]& \cdots \ar[r] & T^0_{J} \ar[u] \ar[r] \ar[d] &  \cdots \ar[r] & 
T^0_{L_1} \ar[r]\ar[u] &  \cdots \ar[r] & T^0_{L_0} \ar[u]  \ar@{=}[r] & T
\\
\bigdiagramcolor{S'_I} \ar[r]          & \cdots \ar[r] & \bigdiagramcolor{S'_{J}} \\
\bigdiagramcolor{S_I} \ar[r] \ar[u] & \cdots \ar[r] & \bigdiagramcolor{S_{J}} \ar[r] \ar[u]  & \cdots \ar[r] & \bigdiagramcolor{S_K} \\
}
$$
\caption{The Big Diagram, Step 0. Certain columns $L=L_0$, $L_1$, \ldots are emphasized, using free splitting units along the fold path $T^0_J \to \cdots \to T^0_L$. 
As the Big Diagram evolves, and up until nearly the end of the evolution, the original projection diagram atop which the diagram is built, which involves the $S'$ and~$S$ rows, will not change. Those rows will be suppressed in the meantime, returning only in the Penultimate Diagram of Figure~\ref{FigurePentDiagram}.}
\label{FigureBigDiagram0}
\end{figure}
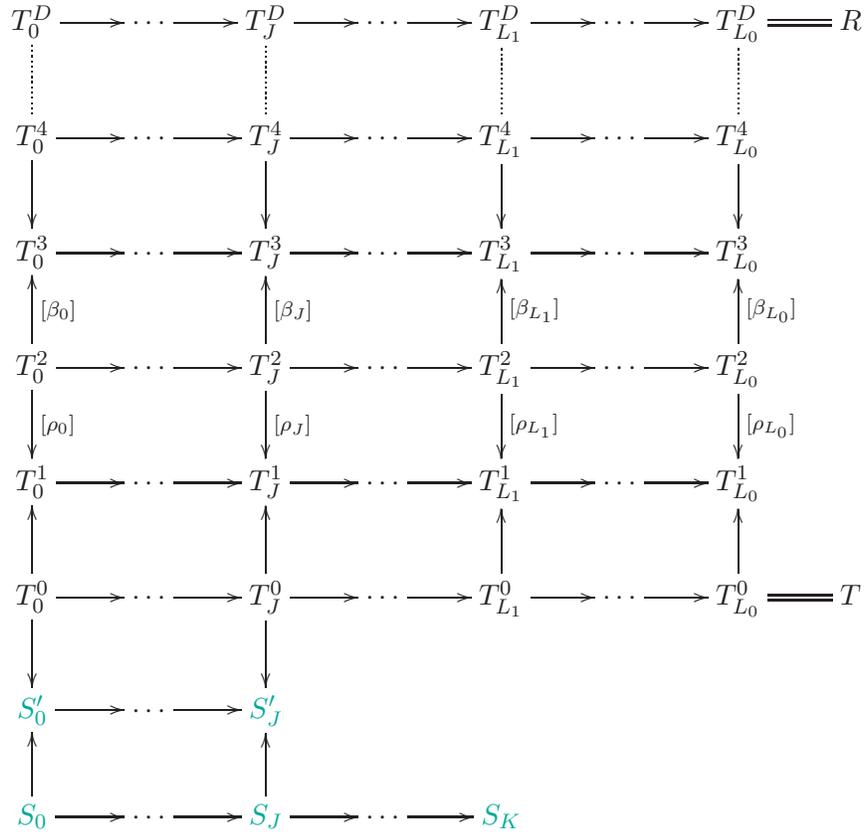

As in \FSOne, the heart of the argument is an induction, starting with the Big Diagram step 0 and producing Big Diagrams steps $1$, $2$, \ldots, $(D-2)/2$, each of which consists of a stack of combing diagrams grouped into successive pairs forming collapse--expand diagrams. At each step the number of combing rectangles decreases by~2, the final diagram at step $(D-2)/2$ being just a stack of 2 combing diagrams forming a single collapse--expand diagram. At all stages of the induction we highlight column~$T_J$, the number $J$ being the projection of $T$ onto $S_I \mapsto\cdots\mapsto S_K$. Throughout the induction we suppress the projection diagram atop which all big diagrams are formed. In particular the foldable sequence $T^0_I \mapsto\cdots\mapsto T^0_J$ is unaltered up until the case of the Big Diagram step~$(D-2)/2$, at which point we again highlight the projection diagram, obtaining in that case a stack of 4 combing rectangles. At that step we carry out one final alteration, producing a stack of 2 combing rectangles forming a projection diagram from $R$ to $S_I \mapsto\cdots\mapsto S_K$ the depth of which is no more than $b_1$ free splitting units to the left of~$S_J$. 

For the induction step, assuming that $D \ge 4$, we adopt variations introduced by Bestvina and Feighn in \cite{BestvinaFeighn:subfactor} for the method of successively producing the next Big Diagram. We describe in detail the first step of the induction, going from step~0 in Figure~\ref{FigureBigDiagram0} to step~1 in Figure~\ref{FigureBigDiagram1}; further steps of the induction are then described very briefly. The induction is complete at step $(D-2)/2$, after which there will be one final special alteration step, to be described in detail later. 

\subparagraph{The first induction step when $D \ge 4$.} Consider the collapse--expand diagram defined by the subrectangle $T^d_\ell$ for $(\ell,d) \in [L_1,\ldots,L_0] \times [0,1,2]$, along the top row of which we have an invariant blue--red decomposition $T^2_\ell = \beta_\ell \union \rho_\ell$. 

The key observation that gets the construction started is that $\beta_{L_1}$, the collapse forest for the map $T^2_{L_1} \xrightarrow{[\beta_{L_1}]} T^3_{L_1}$, has a component $[x,y]$ which is a subarc of the interior of some natural edge of the free splitting $T^2_{L_1}$. This follows from the fact that there are $\ge b_1$ free splitting units between $T^0_{L_1}$ and $T^0_{L_0}$, by applying Proposition~\ref{PropFSUProps}~\pref{ItemFSUBoundsUpsComp}. Let $b \subset \beta_{L_1}$ be the blue edgelet in $[x,y]$ with endpoint~$x$. Let $e$ be the red edgelet not in $[x,y]$ with endpoint~$x$. Factor the collapse map $T^2_{L_1} \xrightarrow{[\beta_{L_1}]} T^3_{L_1}$ as a product of two collapse maps as follows. The first factor collapses everything in $\beta_{L_1}$ except the orbit of $b$, collapsing the subgraph $\beta'_{L_1} = \beta_{L_1} \setminus F_n \cdot b$, and taking $b$ to an edgelet $b' \subset T^{3b}_{L_1}$. The second factor collapses the orbit of $b'$:
$$T^2_{L_1} \xrightarrow{[\beta'_{L_1} \, = \,\, \beta_{L_1} \setminus F_n \cdot b]} T^{3b}_{L_1} \xrightarrow{[F_n \cdot b']} T^3_{L_1}
$$
Note that $b'$ is contained in the interior of some natural edge $\eta'$ of $T^{3b}_{L_1}$. Also, letting $e' \subset T^{3b}_{L_1}$ be the image of $e$, note that arc $e' \union b'$ is also contained in $\eta'$. 

\medskip\textbf{Remark.} The particular way in which the edgelets $b$ and $e$ are used in the above paragraph is an innovation of Bestvina and Feighn in \cite{BestvinaFeighn:subfactor}, arising from dropping the gate~3 condition on fold paths, and having the effect of simplifying the Big Diagram argument.

\smallskip

The collapse map $T^{3b}_{L_1} \xrightarrow{[F_n \cdot b']} T^3_{L_1}$ is equivariantly homotopic to a homeomorphism $h \from T^{3b}_{L_1} \mapsto T^3_{L_1}$ as follows. The homotopy is stationary off of the orbit of $e' \union b'$. Restricted to the arc $e' \union b'$, the collapse is a quotient map taking $b'$ to a point, and that quotient map is homotopic, relative to the endpoints of the arc $e' \union b'$, to a homeomorphism; extend that restricted homotopy over the orbit of $e' \union b'$.

Using the above concatenation of two collapse maps, the combing rectangle $(\ell,d) \in [0,L_1] \times [2,3]$ factors it into a concatenation of two combing rectangles of the form shown in Figure~\ref{FigureFactoredCombing}, whose right side is the above factorization of the collapse map $T^2_{L_1} \mapsto T^3_{L_1}$ (here and later we silently apply the obvious generalizations to $\FS(\Gamma;\A)$ of the results of Section 4.3 of \FSOne\ which construct compositions and decompositions of combing rectangles).
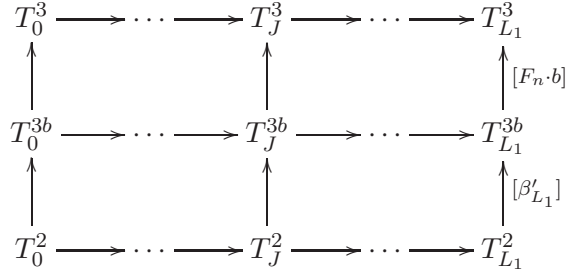
\begin{figure}
$$\xymatrix{ 
T^{3}_I \ar[r] &\cdots \ar[r] &T^{3}_{J} \ar[r] &\cdots \ar[r] & 
T^{3}_{L_1} \\
T^{3b}_I \ar[r] \ar[u] &\cdots \ar[r] &T^{3b}_{J} \ar[r] \ar[u] &\cdots \ar[r] & 
T^{3b}_{L_1} \ar[u]_{[F_n \cdot b]} \\ 
T^2_I \ar[r]\ar[u] &\cdots \ar[r]&T^2_{J} \ar[u] \ar[r] &  \cdots \ar[r]& 
T^2_{L_1} \ar[u]_{[\beta'_{L_1}]} 
}$$
\caption{Factoring the combing rectangle between rows $2,3$ and columns $0,\ldots,L_1$.}
\label{FigureFactoredCombing}
\end{figure}

Now we proceed from step 0 to step 0.1, depicted in Figure~\ref{FigureBigDiagram0.1}. Starting from the step 0 diagram depicted in Figure~\ref{FigureBigDiagram0}, discard the portion of the diagram that lies strictly below row $3$ and right of column~$L_1$, and the portion strictly above row $3$ and left of column~$L_1$. Next, replace the combing rectangle $(\ell,d) \in [0,L_1] \times [2,3]$ by inserting a certain portion of the two concatenated combing rectangles from Figure~\ref{FigureFactoredCombing}, namely, the lower of the two combing rectangles between row $2$ and row $3b$, plus the collapse map $T^{3b}_{L_1} \mapsto T^3_{L_1}$; do not insert any part of row $3$ to the left of $T^3_{L_1}$, nor any of the vertical arrows to the left of the collapse map $T^{3b}_{L_1} \mapsto T^3_{L_1}$. And  now replace the collapse map $T^{3b}_{L_1} \mapsto T^3_{L_1}$ by the equivariant homeomorphism $h \from T^{3b}_{L_1} \to T^3_{L_1}$, and using that homeomorphism identify the free splittings $T^{3b}_{L_1} \approx T^3_{L_1}$. This ostensibly completes the construction of the Big Diagram step 0.1 shown in Figure~\ref{FigureBigDiagram0.1}. 

Unfortunately, the map $h \from T^{3b}_{L_1} \approx T^3_{L_1}$ is not simplicial, because $h(x)$ is not a vertex of $T^3_{L_1}$. But $h$ does become simplicial, after subdividing $T^3_{L_1}$ at the orbit of $h(x)$. Unfortunately, after this subdivision the maps $T^4_{L_1} \mapsto T^3_{L_1} \mapsto T^3_{L_1+1}$ are no longer simplicial. To resolve this issue once and for all, we push the subdivision of $T^3_{L_1}$ up and to the right, throughout the upper right rectangle of Figure~\ref{FigureBigDiagram0.1} defined by $(\ell,d) \in [L_1,L_0] \times [3,D]$, restoring that all maps in this rectangle are simplicial. Do this restoration by the following procedure: first push the subdivision forward along the row $T^3_{L_1} \mapsto \cdots \mapsto T^3_{L_0}$ using the fold maps of that row; then pull the subdivision back to the row $T^4_{L_1} \mapsto\cdots\mapsto T^4_{L_0}$ under the collapse maps from row 4 to row 3; then push the subdivision forward to the row $T^5_{L_1} \mapsto\cdots\mapsto T^5_{L_0}$ under the collapse maps from row $4$ to row $5$; etc. Using the simplicial homeomorphism $h$ we may now identify $T^{3b}_{L_1}$ and $T^3_{L_1}$, truly completing the construction of the Big Diagram step 0.1.
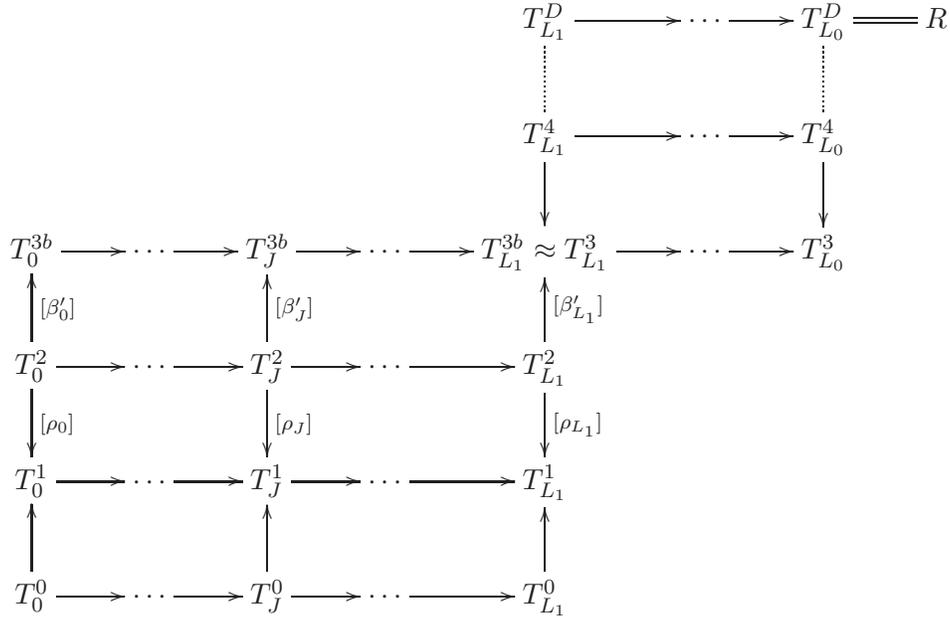
\begin{figure}
$$\xymatrix{
&&&&
T^D_{L_1} \ar[r]\ar@{.}[d] &  \cdots \ar[r] & T^D_{L_0}\ar@{.}[d] \ar@{=}[r] & R \\
&&&&
T^4_{L_1} \ar[r]\ar[d] &  \cdots \ar[r] & T^4_{L_0}\ar[d] \\ 
T^{3b}_I \ar[r]&\cdots \ar[r]&T^{3b}_{J} \ar[r] &  \cdots \ar[r]& 
T^{3b}_{L_1} \approx T^3_{L_1} \ar[r]         &  \cdots \ar[r] & T^3_{L_I} \\ 
T^2_I \ar[r]\ar[d]^{[\rho_I]}\ar[u]_{[\beta'_I]}&\cdots \ar[r]&T^2_{J} \ar[u]_{[\beta'_{J}]} \ar[d]^{[\rho_{J}]} \ar[r] &  \cdots \ar[r]& 
T^2_{L_1} \ar[d]^{[\rho_{L_1}]}\ar[u]_{[\beta'_{L_1}]}  \\ 
T^1_I \ar[r]&\cdots \ar[r]&T^1_{J} \ar[r] &  \cdots \ar[r]& 
T^1_{L_1} \\ 
T^0_I \ar[r] \ar[u]& \cdots \ar[r] & T^0_{J} \ar[u] \ar[r] &  \cdots \ar[r] & 
T^0_{L_1} \ar[u] 
}$$
\caption{The Big Diagram, step 0.1}
\label{FigureBigDiagram0.1}
\end{figure}

We must show that the following row in Figure~\ref{FigureBigDiagram0.1} is a foldable sequence:
$$T^{3b}_I \mapsto \cdots \mapsto T^{3b}_{J} \mapsto \cdots \mapsto T^{3b}_{L_\Omega} \mapsto\cdots\mapsto T^{3b}_{L_1} \approx T^3_{L_1} \mapsto\cdots\mapsto T^3_{L_0}
$$
where the homeomorphism $h$ is used to identify $T^{3b}_{L_1} \approx T^3_{L_1}$. By construction it is foldable from $T^{3b}_I$ to $T^{3b}_{L_1}$ and from $T^3_{L_1}$ to $T^3_{L_0}$, and so it suffices to show that if $0 \le \ell \le L_1$ then the map $T^{3b}_{\ell} \mapsto T^3_{L_0}$ has at least two gates at each vertex $y_\ell \in T^{3b}_{\ell}$. Let various images of $y_\ell$ under the maps in Figure~\ref{FigureFactoredCombing} be denoted $y_{L_1} \in T^{3b}_{L_1}$, $z_\ell \in T^3_\ell$, and $z_{L_1} \in T^3_{L_1}$, and so we have $z_{L_1} = h(y_{L_1})$. There are two cases depending on whether $y_{L_1} \in F_n \cdot b$. If $y_{L_1} \not\in F_n \cdot b$ then $y_\ell$ is not in the collapse graph of the map $T^{3b}_{\ell} \mapsto T^3_{\ell}$, and so under this collapse map the directions at $y_\ell$ and at $z_\ell$ correspond bijectively as do the directions at $y_{L_1}$ and at $z_{L_1}$. The gates at $y_\ell$ and at $z_\ell$ for the maps to $T^3_{L_1}$ therefore also correspond bijectively, and so the gates at $y_\ell$ and at $z_\ell$ for the maps to $T^3_{L_0}$ correspond bijectively, but at $z_\ell$ there are at least two such gates, and so at $y_\ell$ there are also at least two such gates. If $y_{L_1} \in F_n \cdot b$ then $y_{L_1}$ has valence~2 as does $z_{L_1}$, and at $y_\ell$ the map to $T^3_{L_1}$ has exactly two gates, one for each direction at $z_{L_1}$; those two directions map to two different directions in $T^3_{L_0}$, and so there are two gates at $y_\ell$ for the map $T^{3b}_\ell \to T^3_{L_0}$.

Next we proceed from step 0.1 to step 0.2, depicted in Figure~\ref{FigureBigDiagram0.2}. Starting from the step 0.1 diagram depicted in Figure~\ref{FigureBigDiagram0.1}, apply relative combing by collapse, Lemma~\ref{LemmaCombingByCollapse}, and relative combing by expansion, Lemma~\ref{LemmaCombingByExpansion}. These are applied alternately to insert $D-3$ combing rectangles into the upper left corner of step 0.1, between row $3b$ and row $D$ and between column~0 and column~$L_1$; we also delete everything strictly below row~$T^4$ and right of~$T_{L_1}$; the result is shown in Figure~\ref{FigureBigDiagram0.2}, with names $T^d_i$ re-used in the restored upper left corner. 

We note that in Figure~\ref{FigureBigDiagram0.2}, for each $4 \le d \le D$, the rectangle between rows $T^{d-1}$ and $T^d$ is a combing rectangle from column $I$ to $L_1$, and from column $L_1$ to $L_0$, and these piece together to form a single combing rectangle from column $I$ to $L_0$. In particular, for each $4 \le d \le D$ the $T^d$ row, from column $I$ to column $L_0$, is a foldable sequence. This all follows by applying the uniqueness clauses in the statements of relative combing by collapse, Lemma~\ref{LemmaCombingByCollapse}, and relative combing by expansion, Lemma~\ref{LemmaCombingByExpansion}.
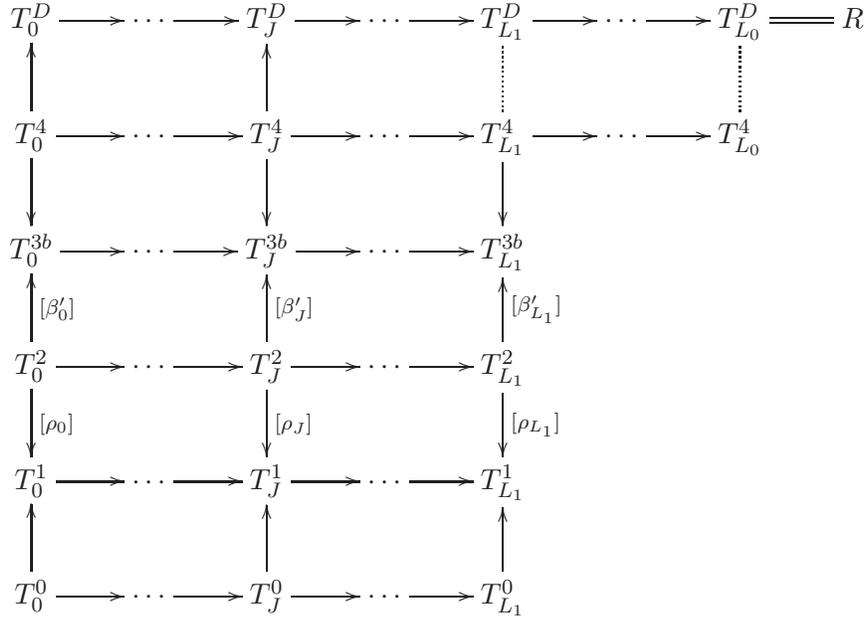
\begin{figure}
$$\xymatrix{
T^{D}_I \ar[r]&\cdots \ar[r]&T^{D}_{J} \ar[r] &  \cdots \ar[r]&T^D_{L_1} \ar[r]\ar@{.}[d] &  \cdots \ar[r] & T^D_{L_0}\ar@{.}[d] \ar@{=}[r] & R \\
T^4_I \ar[r]\ar[d]\ar[u]&\cdots \ar[r]&T^4_{J} \ar[u] \ar[d] \ar[r] &  \cdots \ar[r]&  
T^4_{L_1} \ar[r]\ar[d] &  \cdots \ar[r] & T^4_{L_0} \\ 
T^{3b}_I \ar[r]&\cdots \ar[r]&T^{3b}_{J} \ar[r] &  \cdots \ar[r]& 
T^{3b}_{L_1} \\ 
T^2_I \ar[r]\ar[d]^{[\rho_I]}\ar[u]_{[\beta'_I]}&\cdots \ar[r]&T^2_{J} \ar[u]_{[\beta'_{J}]} \ar[d]^{[\rho_{J}]} \ar[r] &  \cdots \ar[r]& 
T^2_{L_1} \ar[d]^{[\rho_{L_1}]}\ar[u]_{[\beta'_{L_1}]}  \\ 
T^1_I \ar[r]&\cdots \ar[r]&T^1_{J} \ar[r] &  \cdots \ar[r]& 
T^1_{L_1} \\ 
T^0_I \ar[r] \ar[u]& \cdots \ar[r] & T^0_{J} \ar[u] \ar[r] &  \cdots \ar[r] & 
T^0_{L_1} \ar[u] 
}$$
\caption{The Big Diagram, step 0.2}
\label{FigureBigDiagram0.2}
\end{figure}

Next we proceed to the Big Diagram step 0.3, depicted in Figure~\ref{FigureBigDiagram0.3}. Notice that in~$T^2_{L_1}$ we have an edgelet disjoint union
$$T^2_{L_1} = \underbrace{\rho_{L_1} \union \beta'_{L_1}}_{\kappa_{L_1}} \union (F_n \cdot b)
$$
Define a commutative ``baseball diagram'' of collapse maps:
$$\xymatrix{
& T^{2}_{L_1} \ar[dl]_{[\beta'_{L_1}]} \ar[dd]_{[\kappa_{L_1}]} \ar[dr]^{[\rho_{L_1}]} \\
T^{3b}_{L_1} \ar[dr]_{[\rho_{L_1}]} & & T^1_{L_1} \ar[dl]^{[\rho'_{L_1}]} \\
& T^h_{L_1}
}$$
Using Combing by Collapse on each of the five arrows in this diagram we obtain similar baseball diagrams replacing $L_1$ by any $i \in [0,\ldots,L_1]$. The combing diagrams that correspond to the two arrows from 2nd base $T^2_{L_1}$ to 1st and 3rd bases $T^1_{L_1}$ and $T^{3b}_{L_1}$ are the same as the two combing rectangles depicted in Figure~\ref{FigureBigDiagram0.2} between rows $T^2$ and rows $T^1$ and $T^{3b}$. The Big Diagram step 0.3 is now constructed by replacing those two combing rectangles by the ones that correspond to the two arrows from 1st and 3rd bases to home base $T^h_{L_1}$.
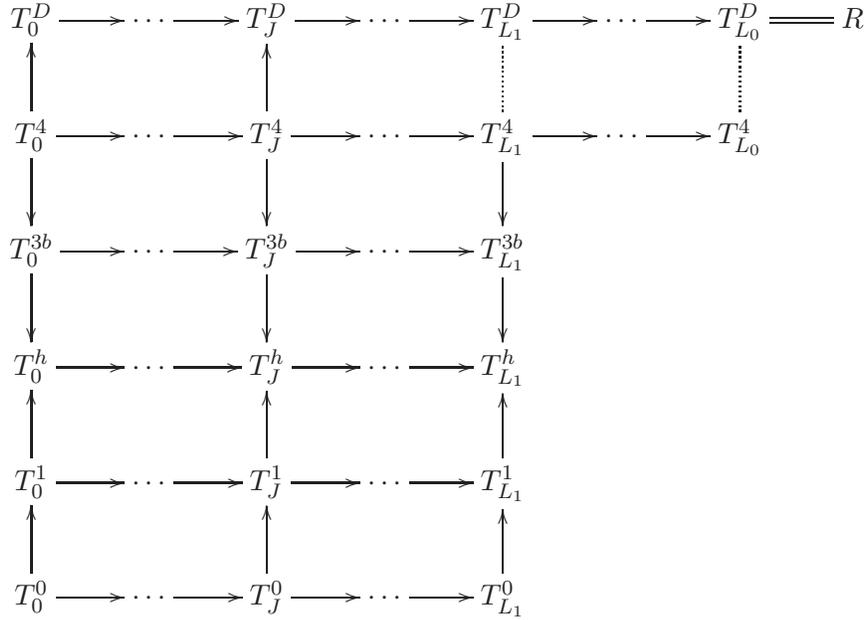
\begin{figure}
$$\xymatrix{
T^{D}_I \ar[r]&\cdots \ar[r]&T^{D}_{J} \ar[r] &  \cdots \ar[r]&T^D_{L_1} \ar[r]\ar@{.}[d] &  \cdots \ar[r] & T^D_{L_0}\ar@{.}[d] \ar@{=}[r] & R \\
T^4_I \ar[r]\ar[d]\ar[u]&\cdots \ar[r]&T^4_{J} \ar[u] \ar[d] \ar[r] &  \cdots \ar[r]&  
T^4_{L_1} \ar[r]\ar[d] &  \cdots \ar[r] & T^4_{L_0} \\ 
T^{3b}_I \ar[r]\ar[d] &\cdots \ar[r] &T^{3b}_{J} \ar[r]\ar[d]  &  \cdots \ar[r]& 
T^{3b}_{L_1}\ar[d] \\ 
T^h_I \ar[r] &\cdots \ar[r] & T^h_{J} \ar[r] &  \cdots \ar[r]& T^h_{L_1}  \\ 
T^1_I \ar[r]\ar[u] &\cdots \ar[r]&T^1_{J} \ar[r]\ar[u]  &  \cdots \ar[r]& 
T^1_{L_1}\ar[u]  \\ 
T^0_I \ar[r] \ar[u]& \cdots \ar[r] & T^0_{J} \ar[u] \ar[r] &  \cdots \ar[r] & 
T^0_{L_1} \ar[u] 
}$$
\caption{The Big Diagram, step 0.3}
\label{FigureBigDiagram0.3}
\end{figure}

Finally, the Big Diagram step~1, depicted in Figure~\ref{FigureBigDiagram1}, is obtained from step 0.3 by concatenating the two combing rectangles from row $T^0$ to $T^1$ and from row $T^1$ to $T^h$ into a single combing rectangle from row $T^0$ to row $T^h$, and by concatenating the two combing rectangles from row $T^4$ to row $T^{3b}$ and from row $T^{3b}$ to row $T^h$ into a single combing rectangle from row $T^4$ to row $T^h$.
\begin{figure}
$$\xymatrix{
T^{D}_I \ar[r]\ar@{.}[d] &\cdots \ar[r]&T^{D}_{J} \ar[r]\ar@{.}[d] &  \cdots \ar[r]&T^D_{L_1} \ar[r]\ar@{.}[d] &  \cdots \ar[r] & T^D_{L_0}\ar@{.}[d] \ar@{=}[r] & R \\
T^4_I \ar[r]\ar[d]&\cdots \ar[r]&T^4_{J} \ar[d] \ar[r] &  \cdots \ar[r]&  
T^4_{L_1} \ar[r]\ar[d] &  \cdots \ar[r] & T^4_{L_0} \\ 
T^h_I \ar[r] &\cdots \ar[r] & T^h_{J} \ar[r] &  \cdots \ar[r]& T^h_{L_1}  \\ 
T^0_I \ar[r] \ar[u]& \cdots \ar[r] & T^0_{J} \ar[u] \ar[r] &  \cdots \ar[r] & 
T^0_{L_1} \ar[u] 
}$$
\caption{The Big Diagram, step 1}
\label{FigureBigDiagram1}
\end{figure}
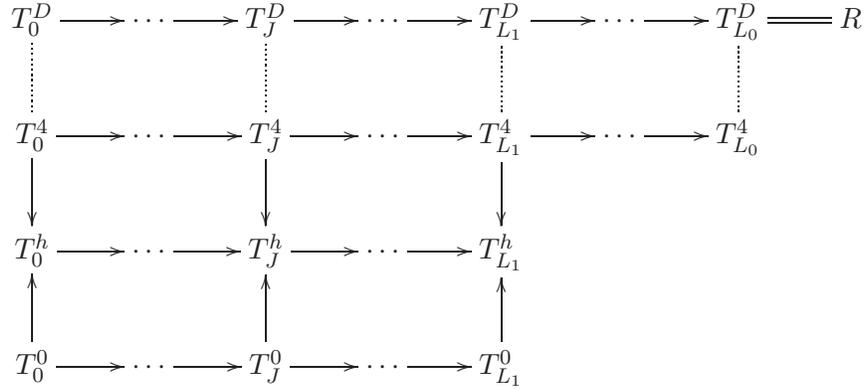
This completes the first step of the induction, constructing the Big Diagram step~$1$ from the Big Diagram step~0. 

\subparagraph{Further induction steps.} Continuing to assume that $D \ge 4$, each further induction step for $2 \le d \le (D-2)/2$ starts with the Big Diagram \hbox{step~$d-1$}, depicted as in Figure~\ref{FigureBigDiagram1} but with column subscript $L_1$ replaced by $L_{d-1}$ and row superscript $4$ replaced by $2d$. From there one constructs the Big Diagram step~$d$, using a straightforward notational variation of the construction from step~0 to step~1. The key observation which gets the construction started is that the collapse forest for the map $T^{2d}_{L_d} \mapsto T^{2d+1}_{L_d}$ has a component which is contained in the interior of a natural edge of $T^{2d}_{L_d}$. This follows by applying Proposition~\ref{PropFSUProps}~\pref{ItemFSUBoundsUpsComp} together with the fact that the number of free splitting units between $T^0_{L_d}$ and $T^0_{L_{d-1}}$ is greater than or equal to $b_1 = 5 \corank(\A) + 4 \abs{\A} - 3$.

\subparagraph{The final step.} When the induction is complete (which happens immediately if $D=2$), the Big Diagram step~$(D-2)/2$ consists of a single collapse--expand diagram. From this diagram discard everything strictly right of column $J$ and below the top row. Also, from the projection diagram for $T$ depicted in Figure~\ref{FigureAugProjDiagram} discard everything in the $T$ row strictly to the right of column $J$. Then glue these two diagrams together along the two copies of the sequence $T_I \mapsto T_J$, resulting in the Penultimate Diagram shown in Figure~\ref{FigurePentDiagram}. In Figures~\ref{FigurePentDiagram} and~\ref{FigureUltDiagram} we emphasize also column~$M$ where $M \in [I,\ldots,J]$ is chosen maximally so that there are $\ge b_1$ free splitting units between $S_M$ and $S_J$, and hence there are exactly $b_1$ free splitting units between $S_M$ and $S_J$; the existence of $M$ follows from the hypothesis of Proposition~\ref{PropMMTranslation} that there are $\ge b_1$ free splitting units between $S_I$ and $S_J$.
\begin{figure}
$$\xymatrix{
T^{D}_I \ar[r] \ar[d] &\cdots \ar[r] & T^{D}_M \ar[r] \ar[d] &\cdots \ar[r] & T^{D}_{J} \ar[r] \ar[d] &  \cdots \ar[r] & T^D_{L_0}\ar@{=}[r] & R \\
T^{h}_I \ar[r] &\cdots \ar[r] & T^{h}_M \ar[r] &\cdots \ar[r] & T^{h}_{J} \\ 
T_I \ar[r] \ar[d] \ar[u] & \cdots \ar[r] & T_M \ar[r] \ar[d] \ar[u] & \cdots \ar[r] & T_{J} \ar[d] \ar[u] \\
S'_I \ar[r]          & \cdots \ar[r] & S'_M \ar[r]          & \cdots \ar[r] & S'_{J} \\
S_I  \ar[r] \ar[u] & \cdots \ar[r] & S_M \ar[r] \ar[u] & \cdots \ar[r] & S_{J} \ar[r] \ar[u]  & \cdots \ar[r] & S_K \\
}$$
\caption{The Penultimate Diagram, obtained from the original projection diagram in Figure~\ref{FigureAugProjDiagram}, together with the Big Diagram step $(D-2)/2$, by discarding irrelevant portions strictly to the right of column $J$, and then gluing the two diagrams together.  Column $M$ is determined by requiring that $M$ is the largest integer $\le J$ such that between $S_M$ and $S_J$ there are $\ge b_1$ free splitting units.}
\label{FigurePentDiagram}
\end{figure}

The final construction is triggered by the observation that the collapse forest for the map from $T^{\vphantom h}_M$ to $T^h_M$ has a component that is contained in the interior of a natural edge of $T_M$, which follows by applying Proposition~\ref{PropFSUProps}~\pref{ItemFSUBoundsUpsComp} together with the assumption that between $S_M$ and $S_J$ there are $\ge b_1$ free splitting units. Based on this observation, we may now follow the same construction steps as above, the conclusion of which is a diagram of the form shown in Figure~\ref{FigureUltDiagram} (where the notations $T^D_i,T^h_i$ for $I \le i \le M$ have been reused).
\begin{figure}
$$\xymatrix{
T^{D}_I \ar[r] \ar[d] &\cdots \ar[r] & T^{D}_M \ar[r] \ar[d] &\cdots \ar[r] & T^{D}_{J} \ar[r] &  \cdots \ar[r] & T^D_{L_0}\ar@{=}[r] & R \\
T^{h}_I \ar[r] &\cdots \ar[r] & T^{h}_M  \\ 
S_I  \ar[r] \ar[u] & \cdots \ar[r] & S_M \ar[r] \ar[u] & \cdots \ar[r] & S_{J} \ar[r]  & \cdots \ar[r] & S_K \\
}$$
\caption{The Ultimate Diagram. Notations $T^D_i,T^h_i$ from Figure~\ref{FigurePentDiagram} have been reused, for $0 \le i \le M$.}
\label{FigureUltDiagram}
\end{figure}
This is a projection diagram from $R$ to $S_I \mapsto\cdots\mapsto S_K$, having depth~$M$. The maximal depth of such a projection diagram, which by definition is $\pi(R)$, therefore satisfies $\pi(R) \ge M$, finishing the proof of Proposition~\ref{PropMMTranslation}.

\subsection{Addendum to the Masur--Minsky axioms: The quasi-closest point property}
\label{SectionQuasiClosest}

For application in Part~III of this work~\cite[Section 3.7]{\STLTwoTag} we need a~new quantitative property --- the \emph{quasi-closest point property} --- which is a general consequence of the three Masur--Minsky axioms: the Coarse Retract Axiom; the Coarse Lipschitz Axiom; and the Strong Contraction Axiom. In stating this property we assume that those axioms are satisfied with respect to the following ``givens'': a $1$-dimensional simplicial complex~$X$; a~family of paths~$P$, each $p \in P$ being a discrete path in $X$ of the form $p \from [I_p,\ldots,J_p] \to X^{(0)}$ that satisfies ``almost transitivity'' with respect to a constant~$A$; a family of projection functions of the form $\pi_p \from X^{(0)} \to [I_p,\ldots,J_p]$, one for each $p \in P$; and three more constants $a,b,c$. Here we also recall the symmetrized ``integer interval'' notation $[I,\ldots,J] = [J,\ldots,I] = \{j \in \Z \suchthat I \le j \le J\}$ defined for all $I \le J \in \Z$. Full details of the axioms are found in Section~\ref{SectionMMReview}; they will also be reviewed in context in the proof below.

\begin{proposition}[The quasi-closest point property] 
\label{PropQuasiClosest}
Under the above assumptions, there exist $K,C \ge 0$ depending only on $a$, $b$, $c$ and $A$, such that for each $p \in P$ and $x \in X$, if the index $M \in [I_p,\ldots,J_p]$ is chosen so that $D=d(x,p(M))$ minimizes the distances $d(x,p(\ell))$ over all $\ell \in [I_p,\ldots,J_p]$, then 
$$\text{diam} \bigl( p[\pi_p(x),\ldots,M] \bigr) \le K \log D + C
$$
\end{proposition}

\paragraph{Remarks: ``Quasi-Closest'' versus ``Almost Closest''.} In early drafts of Part~III~\cite{\STLTwoTag}, we had applied an unfortunate assertion, claiming that the projection functions of the Masur--Minsky axioms were \emph{almost} closest point projections, in the sense that the following stronger inequality holds with a constant $C$ independent of $p$ and $x$:
$$\text{diam} \bigl( p[\pi_p(x),\ldots,M] \bigr) \le C
$$
Eventually, while working on late drafts of Part~III, and finding neither a citation nor a proof of the above assertion, we realized that the assertion is \emph{not true} in general. Without affecting the truth of the Masur--Minsky axioms, one can alter the data for those axioms as follows: one can freely alter $b$ so that $b<1$; and then for any given $p \in P$ with $\pi_p[I_p,\ldots,J_p]$ of large diameter one can carefully alter values of $\pi_p$ so that as $x$ goes farther and farther from $\pi_p[I_p,\ldots,J_p]$ the values of $p(\pi_p(x))$ drift farther and farther from the closest point $p(M)$. Grappling with this reality is what led us to this new \emph{Quasi-Closest Point Property} --- which expresses some level of control over the drift --- and to its application in late draft changes of Part~III; see Section~3.7 of Part~III for further remarks on this story.

\newcommand\pmin{p(\ell_{\text{min}})}

\begin{proof}[Proof of the Quasi-Closest Point Property.] We may assume that $a$ is an integer by replacing it with $\lceil a \rceil$; this replacement does not affect the truth of the Masur--Minsky axioms, as one sees by examining the \emph{Strong Contraction Axiom} and using that $d(x,p(\pi_p(x)))$ is an integer. Also, for convenience we extend each projection map $p \from X^{(0)} \to [I_p,\ldots,J_p]$ over the $1$-complex $X$ so that for any $1$-simplex $[v,w]$ and any $x \in (v,w)$ we have $\pi_p(x) \in \{\pi_p(v),\pi_p(w)\}$; again the truth of the Masur--Minsky axioms is unaffected. 

Fixing $p \in P$ and $x \in X^{(0)}$, in the $1$-complex $X$ choose a geodesic path $x(t)$, parameterized by $0 \le t \le D$, from the vertex $x=x(0)$ to the vertex $x(D)=p(M)$. It follows that $p(M)$ --- which is the point on $p[I_p,\ldots,K_p] = \text{image}(p)$ that minimizes distance to $x(0)$ --- is also the point on $\text{image}(p)$ that minimizes distance to $x(t)$ for each $0 \le t \le D$. 

We break into cases, the ``generic'' case being that $a < D$ and $b < 1$.

\medskip\noindent
\emph{\textbf{Case 1:} $D \le a$.} Applying the Coarse Lipschitz Axiom it follows that
$$\diam\bigl(p[\pi_p(x(0)),\ldots,\pi_p(x(D))]\bigr) 
\le \sum_{k=1}^D \diam\bigl(p[\pi_p(x(k-1)),\ldots,p(\pi_p(x(k)]\bigr) \le D c \le a c
$$
and by applying the Coarse Retract Axiom it follows that
$$\diam\bigl(p[\pi_p(x(D)),\ldots,M]\bigr) = \diam\bigl(p[\pi_p(p(M)),\ldots,M]\bigr) \le c
$$
Putting these together we get 
$$\diam(p[\pi_p(x),\ldots,M]) \le (a+1)c
$$

\medskip\noindent
\emph{\textbf{Case 2:} $a < D$ and $b \ge 1$.} The Strong Contraction Axiom applies with $x=x(0)$ and \hbox{$y=x(D)=p(M)$,} because $d(x,p(\pi_p(x))) \ge d(x,p(M)) = d(x(0),x(D)) = D > a$ and \break $d(x,x(D))= d(x(0),x(D)) = D \le bD$; and so from the conclusion of that axiom we have 
$$\diam\bigl(p[\pi_p(x),\ldots,\pi_p(p(M)] \bigr) \le c
$$
Again from the Coarse Retract Axiom we get $\diam\bigl(p[\pi_p(p(M)),\ldots,M]\bigr) \le c$, and so 
$$\diam\bigl(p[\pi_p(x),\ldots,M] \bigr) \le 2c
$$

\medskip\noindent
\emph{\textbf{Case 3:} $a < D$ and $b < 1$.} Inductively subdivide the geodesic $x(t)$ at points 
$$x=x_0, x_1, \ldots, x_{L-1}, x_L = p(M)
$$
so that for $1 \le i \le L-1$ we have 
$$(*) \qquad d(x_{i-1},x_i) = b \, d(x_{i-1},x_L)) = b \, d(x_{i-1},p(M)) \qquad \hphantom{(*)}
$$
and so that $L-1 \ge 1$ is the first index such that $d(x_{L-1},p(M)) < a$, hence $d(x_{L-1},x_L) < a$. Note that the points $x_1,\ldots,x_{L-1}$ need not be in $X^{(0)}$, which is why we extended the projection map in the first paragraph of the proof.

Using that $p(M)$ minimizes distance from $x_{i-1}$ to $\text{image}(p)$, it follows that if~$i \le L-1$ then $d\bigl(x_{i-1},p(\pi_p(x_{i-1}))\bigr) \ge d(x_{i-1},p(M)) \ge a$. Combining this with equation~$(*)$, we can apply the Strong Contraction Axiom with the conclusion that
$$(**) \qquad \text{diam}\bigl(p[\pi_p(x_{i-1}),\pi_p(x_i)]\bigr) \le c \quad\text{for $1 \le i \le L-1$} \qquad\hphantom{(**)}
$$
By applying the Coarse Lipschitz Axiom it follows that
$$\diam\bigl(p[\pi_p(x_{L-1}),\ldots,\pi_p(x_L)]\bigr) \le ac
$$
and by applying the Coarse Retract Axiom we have
$$\diam\bigl(p[\pi_p(x_L),\ldots,M]\bigr) = \diam\bigl(p[\pi_p(p(M)),\ldots,M]\bigr) \le c
$$
Taking these together, we get
$$\diam\bigl(p[\pi_p(x_{L-1}),\ldots,M]  \bigr) \le (a+1)c
$$
Combining this with $(**)$, altogether we get
\begin{align*}
\diam\bigl(p[\pi_p(x),\ldots,M]\bigr) &\le \diam\bigl(p[\pi_p(x_0),\ldots,\pi_p(x_{L-1})]\bigr) + \diam\bigl(p[x_{L-1},\ldots,M]\bigr) \\
&\le \sum_{i=1}^{L-1} \diam\bigl(p[\pi_p(x_{i-1}),\pi_p(x_i)]\bigr) + (a+1)c \\
&\le (L-1)c + (a+1)c = Lc + ac
\end{align*}
It remains to bound $L$. Note that if $0 < i \le L-1$ then $d(x_i,x_L) = (1-b) \, d(x_{i-1},x_L)$ and so, by induction, 
$$d(x_i,x_L) = (1-b)^i D
$$
We may assume $L \ge 2$ in which case 
\begin{align*}
(1-b)^{L-2} D &= d(x_{L-2},x_L) \\
&= d(x_{L-2},p(M)) \\
& \ge a \\
(L-2) \log(1-b) + \log(D) &\ge \log(a) \\
L \log(1-b) &\ge -\log(D) + \log(a) + 2 \log(1-b) 
\end{align*}
Since $0<b<1$, we have $\log(1-b)<0$, and hence
$$L \le \left(-\frac{1}{\log(1-b)}\right) \log(D) + \frac{\log(a)}{\log(1-b)} + 2
$$
\end{proof}

\section{Hyperbolicity of the complex of relative free factor systems}
\label{SectionFFCHyp}

In this section, given a group $\Gamma$ and a free factor system $\A$ of $\Gamma$, we define $\CFFS(\Gamma;\A)$, the complex of free factor systems of $\Gamma$ rel~$\A$ (Section~\ref{SectionFFRelComplex}), we prove that $\CFFS(\Gamma;\A)$ is connected (Section~\ref{SectionFFConnected}), and we prove Theorem~\ref{TheoremRelFFGammaHyp} saying that $\CFFS(\Gamma;\A)$ is hyperbolic (Section~\ref{SectionFFHyperbolicProof}). 

Our proof of hyperbolicity applies the method of Kapovich and Rafi developed in \cite{KapovichRafi:HypImpliesHyp} and used by them to derive hyperbolicity of the free factor complex $\FFC(F_n)$ from hyperbolicity of $\FS(F_n)$. Their general method shows how to derive hyperbolicity of a connected simplicial complex~$Y$ from hyperbolicity of a given connected simplicial complex $X$, by exhibiting a surjective Lipschitz map $f \from X \mapsto Y$ satisfying a simple geometric condition. Intuitively this condition says that if a geodesic in $X$ has its endpoints mapped near each other in $Y$ by the map~$f$, then the entire $f$-image of that geodesic is bounded. We construct the required surjective Lipschitz map $\FS(\Gamma;\A) \to \CFFS(\Gamma;\A)$ in Section~\ref{SectionFFConnected}, and we prove that it satisfies the needed condition on geodesics in Section~\ref{SectionFFHyperbolicProof}.

\subsection{The complex of free factor systems relative to a free factor system.} 
\label{SectionFFRelComplex}

Fix a group $\Gamma$. Define the \emph{unreduced complex of free factor systems of $\Gamma$} to be the simplicial realization of the set of free factor systems with respect to the partial ordering~$\sqsubset$: there is a $0$-simplex for each free factor system $\B$, and more generally a $K$-simplex for each chain of proper extensions of the form $\B_0 \sqsubset \B_1 \sqsubset \cdots \sqsubset \B_K$. This unreduced complex has various undesirable features to be aware of. First, it has diameter $\le 2$: the vertex $\{[\Gamma]\}$ is connected to every other vertex by an edge; also, if $\Gamma$ has a Grushko free factor system~$\A$ --- for example if $\Gamma$ is finitely generated --- then the vertex $\A$ is connected to every other vertex by an edge. Also, if $\Gamma$ does \emph{not} have a Grushko free factor system, then the unreduced complex is infinite dimensional. For our present purposes, the only role of the unreduced complex is to serve as a home for each free factor complex of $\Gamma$ relative to some chosen free factor system. 

Given a free factor system $\A$ of $\Gamma$, the \emph{complex of free factor systems of $\Gamma$ rel~$\A$}, denoted $\CFFS(\Gamma;\A)$, is the flag subcomplex of the unreduced complex of free factor systems of $\Gamma$ that is spanned by those free factor systems $\B_0$ such that each of the two extensions $\A \sqsubset \B_0 \sqsubset \{[\Gamma]\}$ is proper; thus $\CFFS(\Gamma;\A)$ contains a simplex $\B_0 \sqsubset \cdots \sqsubset \B_K$ of dimension~$K$ for each chain of proper extensions 
$$\A \sqsubset \B_0 \sqsubset \cdots \sqsubset \B_K \sqsubset \{[\Gamma]\}
$$
Recall the formula for the free factor system depth of a free factor system~$\A$ (see Section~\ref{SectionDFF}),
$$\DFF(\A) = 2 \corank(\A) + \abs{\A} - 1
$$
Recall also that $\A$ is \emph{exceptional} if and only if \hbox{$\DFF(\A) \le 2$.} 

The following two propositions are both corollaries of Lemma~\ref{LemmaFFSNorm}, together with the analysis of the exceptional case which accompanies that lemma, plus a similar analysis of the case $\DFF(\A)=3$:

\begin{proposition}\label{PropTempDimFF}
For any group $\Gamma$ and any free factor system~$\A$ such that $\DFF(\A) \ge 2$, the complex $\CFFS(\Gamma;\A)$ has dimension $\DFF(\A)-2$, and any simplex is contained in a simplex of maximal dimension $\DFF(\A)-2$. \qed
\end{proposition}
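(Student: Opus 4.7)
The plan is to translate the dimension count into a statement about lengths of chains in the partial order $\sqsubset$, and then invoke Lemma~\ref{LemmaFFSNorm}. By definition, a $K$-simplex of $\FF(\Gamma;\A)$ is a chain $\A_0 \sqsubsetneq \A_1 \sqsubsetneq \cdots \sqsubsetneq \A_K$ of proper extensions, with $\A \sqsubsetneq \A_0$ and $\A_K \sqsubsetneq \{[\Gamma]\}$ both proper. Padding with the endpoints $\A$ and $\{[\Gamma]\}$, this corresponds bijectively to a chain of proper extensions
$$\A = \B_0 \sqsubsetneq \B_1 \sqsubsetneq \cdots \sqsubsetneq \B_{K+2} = \{[\Gamma]\}$$
of length $M = K+2$.

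First I would prove the upper bound $\dim \FF(\Gamma;\A) \le \DFF(\A)-2$. Lemma~\ref{LemmaFFSNorm}~\pref{ItemFFSxChain} asserts that every chain of proper extensions from $\A$ to $\{[\Gamma]\}$ has length $M \le \DFF(\A)$. Applied to the padded chain above, this gives $K+2 \le \DFF(\A)$, i.e., $K \le \DFF(\A)-2$, and in particular the hypothesis $\DFF(\A) \ge 2$ guarantees $\FF(\Gamma;\A)$ is nonempty.

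Next I would show that every simplex is a face of a simplex of dimension $\DFF(\A)-2$. Given a $K$-simplex presented as the padded chain $\A = \B_0 \sqsubsetneq \cdots \sqsubsetneq \B_{K+2} = \{[\Gamma]\}$, if $K < \DFF(\A)-2$ then by Lemma~\ref{LemmaFFSNorm}~\pref{ItemFFSxElem} the chain cannot consist entirely of elementary extensions, so some step $\B_{i} \sqsubsetneq \B_{i+1}$ is non-elementary. By Lemma~\ref{LemmaFFSNorm}~\pref{ItemFFSxIncr} applied to that step, there exists a free factor system $\C$ with $\B_i \sqsubsetneq \C \sqsubsetneq \B_{i+1}$, and inserting $\C$ produces a strictly longer chain with the same endpoints, corresponding to a $(K+1)$-simplex of $\FF(\Gamma;\A)$ containing the original $K$-simplex as a face (note $\C \ne \A$ and $\C \ne \{[\Gamma]\}$ since $\B_i \sqsupseteq \A$ and $\B_{i+1} \sqsubseteq \{[\Gamma]\}$ properly for indices in the middle range; the cases $i=0$ or $i+1=K+2$ are handled identically using that $\A \sqsubsetneq \C$ properly and $\C \sqsubsetneq \{[\Gamma]\}$ properly). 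Iterating this refinement finitely many times yields a chain of length exactly $\DFF(\A)$, corresponding to a simplex of dimension $\DFF(\A)-2$ containing the original simplex as a face. This proves both assertions, and no step in the argument presents a real obstacle since the entire content is already packaged in Lemma~\ref{LemmaFFSNorm}.
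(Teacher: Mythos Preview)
Your proof is correct and follows the same approach the paper intends: the proposition is stated with a \qed\ and labeled an ``immediate corollary of Lemma~\ref{LemmaFFSNorm} and the definition,'' and your padding bijection between $K$-simplices and length-$(K+2)$ chains from $\A$ to $\{[\Gamma]\}$ is exactly the intended translation. One minor streamlining: rather than refining non-elementary steps via items~\pref{ItemFFSxElem} and~\pref{ItemFFSxIncr}, you can invoke item~\pref{ItemFFSxChain} directly, since its ``equality iff maximal'' clause already says that any non-maximal chain (length $<\DFF(\A)$) admits a proper refinement, and iterating gives a maximal chain of length exactly $\DFF(\A)$.
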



\begin{proposition}
\label{PropExceptionalFFS}
For any group $\Gamma$ and any free factor system $\A$ of $\Gamma$, $\A$ is exceptional if and only if $\CFFS(\Gamma;\A)$ is empty or 0-dimensional, and $\DFF(\A)=3$ if and only if $\CFFS(\Gamma;\A)$ is $1$-dimensional. In more detail:
\begin{enumerate}
\item\label{ItemEdgeFFS}
 $\CFFS(\Gamma;\A) = \emptyset \iff \DFF(\A) \le 1 \iff $ either $\A = \{[\Gamma]\}$, or $\A = \{[A_1],[A_2]\}$ and $\Gamma = A_1 * A_2$.
\item $\CFFS(\Gamma;\A)$ is 0-dimensional $\iff \DFF(\A) = 2 \iff$ one of the following occurs: 
\begin{enumerate}
\item\label{ItemLoopFFS}
$\A = \{[A]\}$ and $\Gamma = A * B$ where $B$ is free of rank~$1$; or
\item $\A = \{[A_1],[A_2],[A_3]\}$ and $\Gamma = A_1 * A_2 * A_3$.
\end{enumerate}
\item\label{ItemFFOneD}
$\CFFS(\Gamma;\A)$ is 1-dimensional $\iff \DFF(\A) = 3 \iff$ one of the following occurs:
\begin{enumerate}
\item\label{ItemFFOneDAZero}
$\A=\emptyset$ and $\Gamma = B$ is free of rank~$2$; or
\item\label{ItemFFOneDATwo}
$\A = \{[A_1],[A_2]\}$ and $\Gamma = A_1 * A_2 * B$ where $B$ is free of rank~1; or
\item\label{ItemFFOneDAFour}
$\A = \{[A_1],[A_2],[A_3],[A_4]\}$ and $\Gamma = A_1 * A_2 * A_3 * A_4$. \qed
\end{enumerate}
\end{enumerate}
 \end{proposition}

\paragraph{Relation to the complex of free factors.} In ranks $n \ge 3$ there is a natural quasi-isometry between the \emph{complex of free factor systems} $\CFFS(F_n) \,\, (= \CFFS(F_n;\emptyset))$ and the \emph{free factor complex} denoted $\FFC(F_n)$, the latter of which was proved to be hyperbolic in \cite{BestvinaFeighn:FFCHyp}; see \cite{HatcherVogtmann:FreeFactors} and \cite{KapovichRafi:HypImpliesHyp} for other closely related complexes. 

Here we define the complex of relative free factor systems $\CFFS(\Gamma;\A)$ that generalizes $\FFC(F_n)$, and we construct a natural quasi-isometry between $\CFFS(\Gamma;\A)$ and $\FFC(\Gamma;\A)$ outside of some low complexity cases. These results require knowing that $\CFFS(\Gamma;\A)$ is connected, in order for its simplicial metric to make sense; for a proof of connectivity we refer to Proposition~\ref{PropConnectedLipschitz}~\pref{ItemConnected_pi} in the next section. We will not use $\FFC(F_n)$ for anything in these works.

First we generalize the definition of the free factor complex $\FFC(F_n)$ from \cite{BestvinaFeighn:FFCHyp} to make it work in our present setting of a group $\Gamma$, a free factor system~$\A$ of $\Gamma$. In ranks $n \ge 3$, $\FFC(F_n)$ is the subcomplex of $\CFFS(F_n)$ consisting of those simplices $\B_0 \sqsubset\cdots\sqsubset \B_K$ such that each $\B_k$ has just one component ($0 \le k \le K$). We define $\FFC(\Gamma;\A)$ as a subcomplex $\CFFS(\Gamma;\A)$ in exactly the same way \emph{except} that the phrase ``just one component'' is replaced with ``just one \emph{nonatomic} component'' (we shall apply this definition only under the hypotheses of Proposition~\ref{PropF_Into_FF_QI} below). Any free factor system~$\B$ rel~$\A$ with just one nonatomic component can be written as $\B = \A' \union \{[B]\}$ for some subset $\A' \subset \A$ and some nonatomic free factor $B$ rel~$\A$, and note that $\A'$ is determined by $[B]$ as the set of all components $[A]$ of $\A$ such that $A$ is \emph{not} conjugate to a subgroup of $B$. The corresponding $0$-simplex of $\FFC(\Gamma;\A)$ may therefore be denoted in a well-defined shorthand as $\V[B]$. Using this notation, the vertex set of every $K$-simplex of $\FFC(\Gamma;\A)$ may be written in the form $\V[B_0],\ldots,\V[B_K]$ for some strictly increasing chain of non-atomic, proper free factors $B_0 \subgroup \cdots < B_K$ rel~$\A$.

\begin{proposition} 
\label{PropF_Into_FF_QI}
If $(\Gamma;\A)$ is nonexceptional and if $\Gamma$ is not a rank~$2$ free group then the simplicial embedding $\FFC(\Gamma;\A) \inject \CFFS(\Gamma;\A)$ is an $\Out(\Gamma;\A)$-equivariant quasi-isometry.
\end{proposition}

\subparagraph{Remarks.} The hypothesis of Proposition~\ref{PropF_Into_FF_QI} is concocted so that \emph{both} of the complexes $\FFC(\Gamma;\A)$ and $\CFFS(\Gamma;\A)$ are connected. 

It also follows from Proposition~\ref{PropF_Into_FF_QI} that $\CFFS(\Gamma;\A)$ is equivariantly quasi-isometric to the ``electrification definition'' of the free factor complex given in \cite{GuirardelHorbez:SubgroupClassification}. 

See also below for a discussion in one \emph{especially} exceptional case.

\begin{proof} Noting that $\CFFS(\Gamma;\A)$ is a bounded neighborhood of its subcomplex $\FFC(\Gamma;\A)$, it suffices to construct a Lipschitz retract $r$ from the $0$-skeleton of $\CFFS(\Gamma;\A)$ to the $0$-skeleton of $\FFC(\Gamma;\A)$. Given a $0$-simplex $\B$ of $\CFFS(\Gamma;\A)$, choose any non-atomic component $[B] \in \B$ and define \hbox{$r(\B) = \V[B] \in \FFC(\Gamma;\A)$.} When $\B$ is already in $\FFC(\Gamma;\A)$, i.e.\ when $\B$ already has just a single non-atomic component, then clearly $r(\B) = \B$, and so $r$ is a retract.

Since $(\Gamma;\A)$ is nonexceptional, i.e.\ $\DFF(\Gamma;\A) \ge 3$, it follows that any two vertices $\CFFS(\Gamma;\A)$ are connected by a chain of $1$-simplices (see Proposition~\ref{PropConnectedLipschitz}~\pref{ItemConnected_pi} below). It therefore suffices to consider any $1$-simplex $\B' \sqsubset \B$ in $\CFFS(\Gamma,\A)$ and to bound the distance between $r(\B')=\V[B']$ and $r(\B)=\V[B]$, where $[B']$, $[B]$ are the chosen nonatomic components of $\B'$, $\B$ respectively. Let $[B^\#] \in \B$ be the unique element such that $B'$ is conjugate to a subgroup of $B^\#$, equivalently $\V[B'] \sqsubset \V[B^\#]$, and so $\V[B']$ and $\V[B^\#]$ have distance $\le 1$ in $\FFC(\Gamma,\A)$.  It therefore suffices to bound the distance in $\FFC(\Gamma,\A)$ between $\V[B]$ and $\V[B^\#]$. If $[B]=[B^\#]$ we are done. In the other case, each of $[B] \ne [B^\#]$ are nonatomic components of~$\B$. We may rechoose $B',B^\#$ in their conjugacy classes so that each is a term in a realization of the free factor system $\B$ having the form 
$$\Gamma = H * B' * B^\# * C
$$ 
where $H$ is either trivial or is a free product of atomic free factors that represent a subset of $\A$, and $C$ is a possibly trivial cofactor. By moving each of $\V[B']$ and $\V[B^\#]$ a distance at most one in $\F(\Gamma;\A)$, we may assume that $B'$ and $B^\#$ are both minimal amongst nonatomic free factors of $\Gamma$ rel~$\A$, and it follows that $B'$ and $B^\#$ are both rank~$1$ cofactors of~$\A$. If either $H$~or~$C$ is nontrivial then $B' * B^\#$ is a proper, nonatomic free factor rel~$\A$ representing a vertex $\V[B' * B^\#]$ at distance~$\le 1$ from each of $\V[B']$ and $\V[B^\#]$. If on the other hand both $H$ and $C$ are both trivial then $\Gamma = B' * B^\#$ is free of rank~$2$, contradicting the hypothesis.
\end{proof}

The cases that are not covered by Proposition~\ref{PropF_Into_FF_QI} include two interesting special cases of Proposition~\ref{PropExceptionalFFS} that we shall consider separately, namely cases (2b) and (3a) --- the remaining cases of Proposition~\ref{PropExceptionalFFS}, namely (1) and (2a), are precisely those for which the relative free splitting complex has bounded diameter.

\subparagraph{The case of $\Gamma = A_1 * A_2 * A_3$ and $\A=\{[A_1],[A_2],[A_3]\}$ (exceptional case (2b)).} This is the only exceptional case whose relative free splitting complex $\FS(\Gamma;\A)$ is ``interesting'' in that it has infinite diameter. But $\CFFS(\Gamma;\A)$ is disconnected in this case --- it is infinite and $0$-dimensional, with simplices of the form $\{[B],[A_i]\}$ where $B$ is a proper, nonatomic free factor rel~$\A$. In this case the function $\{[B],[A_i]\} \mapsto [B]$ defines a bijection with the conjugacy classes of such free factors, and there is an $\Out(\Gamma;\A)$ equivariant embedding $\CFFS(\Gamma;\A) \to \FS(\Gamma;\A)$ taking $[B]$ to the unique free splitting $T[B]$ that has a vertex with stabilizer group $B$; this free splitting has a one-edge fundamental domain having one endpoint stabilized by $B$ and opposite endpoint stabilized by an atomic free factor conjugate to $A_i$. 

One can make a special definition of $\CFFS(\Gamma;\A)$, attaching edges of the form \break \hbox{$\{[B],[A_i]\}$---$\{[B'],[A_j]\}$} whenever $A_k$ is conjugate to a subgroup of both $B$ and $B'$. This occurs in two cases, one of which is $[B]=[B']$ and so $T[B]=T[B']$. The other case occurs when there is a length~$2$ path in $\FS(\Gamma;\A)$ of the form $T[B] \expandsto U \collapsesto T[B']$ where $U$ has a 2-edge fundamental domain with vertex stabilizers of the form $A'_i$, $A_k$, $A'_j$ such that $A'_i * A_k$ is conjugate to $B$ and $A_k * A'_j$ is conjugate to $B'$. The effect of attaching these edges is thus to obtain a subcomplex of $\FS(\Gamma;\A)$ whose inclusion into $\FS(\Gamma;\A)$ is a quasi-isometry.

One can also apply the ``electrification'' definition of $\mathcal \F(\Gamma;\A)$ given in \cite{GuirardelHorbez:SubgroupClassification}: starting with $\FS(\Gamma;\A)$, add an edge between two free splittings $T,T'$ rel~$\A$ whenever there exists $\gamma \in \Gamma$ fixing points in both $T$ and $T'$ such that $\gamma$ is not contained in any atomic free factor rel~$\A$. This happens precisely when $T=T[B]$ and $T'=T[B']$ where $B,B'$ are proper nonatomic free factors each containing $\gamma$; but it then $B=B'$ and $T=T'$ because, in case (2b), the intersection of any two \emph{distinct} non-atomic proper free factors of $\Gamma$ \relA\ is either trivial or atomic. 

To summarize, in the exceptional case (2b), two different attempts at formulating an exceptional definition for $\CFFS(\Gamma;\A)$ have both led to the same outcome, namely a complex that is $\Out(\Gamma;\A)$-equivariantly quasi-isometric to $\FS(\Gamma;\A)$.

\subparagraph{The case of $\Gamma = F_2$ and $\A=\emptyset$ (nonexceptional case (3a)).} When the definition of $\FFC(\Gamma;\A)$ given above is applied to the nonexceptional case of $\FFC(F_2)$, the outcome is nonetheless special in a way: one obtains a $0$-dimensional complex with simplices of the form $\{[B]\}$ where $B \subgroup F_2$ is a rank~$1$ free factor. But it is common for $\FFC(F_2)$ to be re-defined in its own extra special way, by attaching a 1-simplex to each pair of $0$-simplices of the form $[B],[B']$ for which there exists a free factorization $F_2 = B * B'$. By representing the barycenter of each such $1$-simplex using the free factor system $\{[B_1],[B_2]\}$, one sees that $\CFFS(F_2)$ becomes the first barycentric subdivision of $\FFC(F_2)$, and so again we obtain an $\Out(F_2)$-equivariant quasi-isometry $\FFC(F_2) \mapsto \CFFS(F_2)$.

\subsection{Connectivity of $\CFFS(\Gamma;\A)$; a Lipschitz map $\FS(\Gamma;\A) \mapsto \CFFS(\Gamma;\A)$.}
\label{SectionFFConnected}

Consider a group $\Gamma$ and a non-exceptional free factor system $\A$ of $\Gamma$, and so $\DFF(\A) \ge 3$ and $\CFFS(\Gamma;\A)$ has dimension~$\ge 1$ (by Proposition~\ref{PropTempDimFF}). We shall kill two birds (Proposition~\ref{PropConnectedLipschitz}~\pref{ItemConnected_pi} and~\pref{ItemLipschitz_pi}) with one stone (Lemma~\ref{LemmaFSToFFProps}): we prove connectivity of $\CFFS(\Gamma;\A)$; and we describe a map $\FS(\Gamma;\A) \mapsto \CFFS(\Gamma;\A)$ which is Lipschitz with respect to simplicial metrics. 

Define the \emph{projection set map} $\Pi$ from the $0$-skeleton of $\FS(\Gamma;\A)$ to finite subsets of the $0$-skeleton of $\CFFS(\Gamma;\A)$, as follows. Consider a free splitting $T$ of $\Gamma$ rel~$\A$ representing a $0$-simplex $[T] \in \FS(\Gamma;\A)$. Consider also a $0$-simplex of $\CFFS(\Gamma;\A)$, namely a free factor system~$\B$ of $\Gamma$ rel~$\A$ such that $\A \ne \B \ne \{[\Gamma]\}$. To say that $\B$ is \emph{visible} in $T$ means that there exists a collapse map $T \mapsto U$ such that $\B = \Fell U$ (for equivalent reformulations of visibility see \cite[Definition 2.5]{\STLOneTag}). Define $\Pi[T] \subset \CFFS(\Gamma;\A)$ to be the set of all $0$-simplices of $\CFFS(\Gamma;\A)$ that are visible in~$T$.
Here are a few properties of the set map $\Pi$ that we will use without comment in what follows:
\begin{itemize}
\item $\Pi[T]$ is well-defined within the equivalence class of $T$. 
\item The sets $\Pi[T]$ cover the entire $0$-skeleton of $\CFFS(\Gamma;\A)$, as $[T]$ varies over the $0$-skeleton of $\FS(\Gamma;\A)$.
\item The inverted equivariance property: $\Pi\bigl([T] \cdot \phi \bigr) = \phi^\inv \bigl(\Pi[T]\bigr)$ for each $\phi \in \Out(\Gamma;\A)$. 
\item $\Pi[T] \ne \emptyset$. 
\end{itemize}
The first item is evident, the second follows from Lemma~\ref{LemmaFSToFFOnto}, and the third from Lemma~\ref{LemmaFofTEquivariance}. For the fourth item, by applying Proposition~\ref{PropMaximizingSimplices} we obtain a simplex $[T_0] \expandsto [T_1] \expandsto \cdots \expandsto [T_D]$ of maximal dimension $D=\DFF(\A)$ in $\CFFS(\Gamma;\A)$ such that $T_i = T$ for some $i = 0,\ldots,D$. From this simplex we obtain a collapse map $T \mapsto T_0=U$. Applying Proposition~\ref{PropMaximizingSimplices}~\pref{ItemSmallSteps}(c) it follows that $U$ is a one-edge free splitting rel~$\A$, and so $U$ has either one or two vertex orbits. It follows that $\Fell U$ is exceptional: if~$U$~has two vertex orbits then $\Fell U = \{[A'_1],[A'_2]\}$ with $\Gamma  = A'_1 * A'_2$ and so $\DFF(\Fell U) = 1$; if on the other hand $U$ has one vertex orbit then $\Fell U = \{[A']\}$ with $\Gamma = \A' * B$ and $B$ of rank~$1$ and so $\DFF(\Fell U) = 2$. In either case we conclude that $\DFF(\Fell U) \le 2 < 3 \le \DFF(\A)$, implying that $\Fell U \ne \A$ and so $\Fell U \in \Pi[T]$.

Using Bass-Serre theory, we next translate the definition of $\Pi[T]$ into the language of graphs of groups. In the quotient graph of groups $T / \Gamma$, by a \emph{subgraph} $G \subset T / \Gamma$ we mean a subgraph in the ordinary graph theory sense, but having the extra requirement that $G$ contains every vertex of $T/\Gamma$ labelled with a nontrivial vertex group. We say that $G$ is a \emph{core subgraph} if every vertex of $G$ that has valence~$0$ or~$1$ in $G$ is labelled with a nontrivial vertex group. In general any subgraph of $T/\Gamma$ contains a unique maximal core subgraph denoted $\core(G)$: inductively remove any vertex of $G$ having valence~$0$ or~$1$ that is labelled by the trivial group, together with any edge of $G$ incident to that vertex. 

Every subgraph $G \subset T / \Gamma$ represents a free factor system rel~$\A$ that we denote $[G]$: letting $\wt G \subset T$ be the total lift of $G$ via the Bass-Serre universal covering map $T \mapsto T/\Gamma$, define $[G]$ to be the set of conjugacy classes of stabilizers of components of~$\wt G$. We note a few properties of this construction. First, $[G] = \Fell U$ where $T \to U$ is the collapse map that collapses to a point each component of $\wt G$. Also, $[G]=[\core(G)]$. Also, for any nested pair of subgraphs $G \subset G' \subset T/\Gamma$ their associated free factor systems rel~$\A$ are also nested, meaning that $[G] \sqsubset [G']$; in addition, equality $[G]=[G']$ holds if and only if $\core(G)=\core(G')$. Note finally that $[G]=\{[\Gamma]\}$ if and only if $G$ is the improper subgraph $G=T/\Gamma$.

\smallskip

Denote $M = \abs{\A}$ (for the rest of Section~\ref{SectionFFConnected}).

\smallskip
 
To say a core subgraph $G \subset T/\Gamma$ is \emph{trivial} means that either $M=0$ and $G = \emptyset$, or $M \ge 1$ and $G$ consists of $M$ vertices of $T/\Gamma$ labelled with nontrivial vertex groups $A_1,\ldots,A_M$ such that $\A=\{[A_1],\ldots,[A_M]\}$. A trivial core subgraph of $T/\Gamma$ exists if and only if $\Fell T=\A$. The triviality property for core subgraphs is extended to arbitrary subgraphs $G \subset T/\Gamma$ by requiring that $\Core(G)$ be trivial, and so $[G]=\A$ if and only if $G$ is trivial, if and only if $\core(G)$ is trivial.

To complete the Bass-Serre translation, the set $\Pi[T]$ is equal to the following set of $0$-simplices in $\CFFS(\Gamma;\A)$: 
$$\Pi[T] = \{[G] \in \CFFS(\Gamma;\A) \suchthat G \subgroup T / \Gamma \,\, \text{is a proper, nontrivial, core subgraph}\}
$$
To prove this, in the discussion above we have already proved the inclusion $\supset$. For the opposite inclusion $\subset$, given $\Fell U \in \Pi[T]$ and a natural collapse map $T \xrightarrow{[\sigma]} U$, let $\sigma'$ be the union of $\sigma$ with all vertices of $T$ having nontrivial stabilizer, let $G$ be the image of $\sigma'$ under the quotient map $T \mapsto T/\Gamma$, and it follows that $G$ is a proper, nontrivial core subgraph and that $[G]=\Fell U$. 


\begin{lemma} 
\label{LemmaFSToFFProps}
The set map $\Pi$ has the following properties:
\begin{enumerate}
\item\label{ItemFFProjCollapse}
For any collapse of free splittings $S \collapses T$ we have $\Pi[T] \subset \Pi[S]$.
\item\label{ItemFFProjDiamBound}
If $\DFF(\A) \ge 3$ then for each $[T] \in \FS(\Gamma;\A)$ the set $\Pi[T]$ is contained in a connected subcomplex of $\CFFS(\Gamma;\A)$ of simplicial diameter $\le 4$.
\end{enumerate}
\end{lemma}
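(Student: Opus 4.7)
Part (1) is immediate: compositions of collapse maps are collapse maps. Given $S \collapses T$ via $f \from S \to T$ and $[\F(U)] \in \Pi[T]$ witnessed by $g \from T \to U$ with $\F(U) \ne \A$, the composition $g \composed f \from S \to U$ witnesses $[\F(U)] \in \Pi[S]$.

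For Part (2), the plan is to exploit the Bass-Serre dictionary recorded just before the lemma, under which $\Pi[T]$ is parameterized by proper, nontrivial relative core subgraphs $G$ of the quotient graph of groups $T/\Gamma$, via $G \mapsto [G]$. Distances in $\FF(\Gamma;\A)$ will then be bounded by producing common upper bounds in the partial order $\sqsubset$ through unions of subgraphs.

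The main case analysis on two such subgraphs $G_1, G_2$ splits according to whether $G_1 \union G_2 = T/\Gamma$. In the easy case $G_1 \union G_2 \ne T/\Gamma$, the subgraph $\core(G_1 \union G_2)$ represents a proper free factor system $\B$; since $\B \sqsupset [G_1] \ne \A$ we have $\B \in \FF(\Gamma;\A)$, giving the length-$2$ path $[G_1] \sqsubset \B \sqsupset [G_2]$. In the hard case $G_1 \union G_2 = T/\Gamma$, neither subgraph contains the other (else the union would equal the larger and be proper), so I may pick an edge $e \in G_2 \setminus G_1$ and set $G_2' = \core(G_2 \setminus e)$; provided $[G_2'] \ne \A$, the union $G_1 \union G_2' \subseteq T/\Gamma \setminus e$ is proper, the easy case applied to $(G_1, G_2')$ gives distance at most $2$, and combined with the edge $[G_2'] \sqsubseteq [G_2]$ the total distance is at most $3$.

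The main obstacle is the degenerate subcase in which $\core(G_2 \setminus e) = \A$ for every choice of $e \in G_2 \setminus G_1$, which happens only when $G_2$ is in a strong sense ``minimally generating'' in $\Pi[T]$. In this subcase the plan is to pick any edge $e' \in G_1 \setminus G_2$ and construct a relative core subgraph $G_3 \subset T/\Gamma$ representing an element of $\Pi[T]$ that is built from $e'$ together with the vertex-group vertices (joined by a minimal tree if necessary), and then apply the previous analysis separately to the pairs $(G_1, G_3)$ and $(G_3, G_2)$. Since each application yields distance at most $3$, chaining the estimates produces the diameter bound of $6$. The delicate point in this step is verifying that the various intermediate relative core subgraphs really represent elements of $\FF(\Gamma;\A)$, which will be handled via careful use of the coring operation together with the Extension Lemma~\ref{LemmaExtension} to track $\sqsubset$-relations and non-triviality of the resulting free factor systems.
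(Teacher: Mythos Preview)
Your argument for item~\pref{ItemFFProjCollapse} is correct and matches the paper's. For item~\pref{ItemFFProjDiamBound}, your union-based strategy is a genuine alternative to the paper's intersection-based one, and your easy case and non-degenerate hard case are correct and clean: when $G_1\cup G_2$ is proper, or when one edge can be removed from $G_2\setminus G_1$ leaving a nontrivial core, you get distance at most~$3$.

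The gap is in the degenerate subcase. Your construction of $G_3$ is underspecified, and more seriously, every intermediate vertex your method produces is of the form $[G]$ for some relative core subgraph $G\subset T/\Gamma$, hence lies in $\Pi[T]$. But this cannot succeed in general. Take $\Gamma=F_2$, $\A=\emptyset$, and $T$ generic, so $T/\Gamma$ is the theta graph with two trivalent vertices and edges $e_1,e_2,e_3$. The only proper nontrivial core subgraphs are the three circles $e_i\cup e_j$, so $\Pi[T]$ consists of three rank~$1$ free factor systems with no $\sqsubset$-relations among them. Every pair $(G_1,G_2)$ falls into your degenerate subcase; your recipe for $G_3$ (``$e'$ together with the vertex-group vertices'') yields only a single edge, which cores to nothing, and any generous reinterpretation forces $G_3$ to be the third circle---whereupon $(G_1,G_3)$ and $(G_3,G_2)$ are again degenerate and the recursion does not terminate. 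In short, the stronger statement your method would prove, that $\Pi[T]$ itself has diameter $\le 6$ under $\sqsubset$, is simply false.

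The paper proceeds differently. After using item~\pref{ItemFFProjCollapse} to reduce to generic $T$, it works with the \emph{intersection}: one first enlarges each $G_i$ to a maximal proper core subgraph (costing one edge apiece), and then shows that triviality of $G_1\cap G_2$ forces $T/\Gamma$ to be one of three explicit small graphs (the theta graph, the ``spindle'', or the theta graph with an antenna). For each of these it builds a path of length $\le 4$ by hand, crucially allowing vertices \emph{outside} $\Pi[T]$: one collapses $T$ and then re-expands to a different free splitting $T''$ in which $[G_1]$ and $[G_2]$ are represented by core subgraphs of $T''/\Gamma$ with proper union. This collapse--expand manoeuvre is exactly the missing ingredient in your degenerate subcase; the Extension Lemma by itself will not supply it.
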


Before proving Lemma~\ref{LemmaFSToFFProps} we apply it as follows. A function $\pi$ from the $0$-skeleton of $\FS(\Gamma;\A)$ to the $0$-skeleton of $\CFFS(\Gamma;\A)$ is called a \emph{projection map} if $\pi[T] \in \Pi[T]$ for each $[T] \in \FS(\Gamma;\A)$. Projection maps always exist by simply choosing $\pi[T] \in \Pi[T] \ne \emptyset$. If it is so desired, perhaps because of an aversion to wearing out the Axiom of Choice \cite{Weiner:Choice}, for a concretely given group such as $\Gamma = F_n$ there is an explicit construction of a projection map, based on an explicit enumeration of the $0$-skeleta of $\FS(\Gamma;\A)$ and of $\CFFS(\Gamma;\A)$ and an explicit computation of the set map~$\Pi$. 


\begin{proposition}
\label{PropConnectedLipschitz}
Assuming $\DFF(\A) \ge 3$ the following hold:
\begin{enumerate}
\item\label{ItemConnected_pi}
$\CFFS(\Gamma;\A)$ is connected.
\item\label{ItemLipschitz_pi}
For any projection map $\pi$ from the $0$-skeleton on $\FS(\Gamma;\A)$ to the $0$-skeleton of $\CFFS(\Gamma;\A)$ we have:
\begin{enumerate}
\item\label{ItemLipschitz_pi_a}
$\pi$ is $4$-Lipschitz: for any free splitting $T$ of $\Gamma$ rel~$\A$, its set of visible free factor systems $\Pi[T]$ has diameter $\le 4$ in $\CFFS(\Gamma;\A)$.
\item $\pi$ satisfies the ``inverted coarse equivariance property'': $d(\pi[T \cdot \phi], \phi^\inv(\pi[T]))$ has an upper bound depending only on $\corank(\A)$ and $\abs{\A}$, for $[T] \in \FS(\Gamma;\A)$ and $\phi \in \Out(\Gamma;\A)$.
\end{enumerate}
\end{enumerate}
\end{proposition}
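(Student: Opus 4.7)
The plan is to derive Proposition~\ref{PropConnectedLipschitz} as a direct formal consequence of Lemma~\ref{LemmaFSToFFProps}, combined with connectivity of $\FS(\Gamma;\A)$ already established en route to Theorem~\ref{TheoremRelFSGammaHyp}. The two ingredients I will use from Lemma~\ref{LemmaFSToFFProps} are \emph{monotonicity}, that $S \collapses T$ implies $\Pi[T] \subset \Pi[S]$, and the \emph{uniform diameter bound}, that each $\Pi[T]$ lies in a connected subcomplex of $\FF(\Gamma;\A)$ of simplicial diameter at most~$6$. Given these, everything else is essentially formal, and no new geometric input is needed.

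For the Lipschitz property, it suffices to bound $d(\pi[S],\pi[T])$ when $[S]$ and $[T]$ are joined by a $1$-simplex of $\FS(\Gamma;\A)$. Such an adjacency means, up to swapping roles, that $S \collapses T$. Monotonicity then gives $\Pi[T] \subset \Pi[S]$, so both $\pi[S]$ and $\pi[T]$ lie in $\Pi[S]$, and the uniform diameter bound yields $d(\pi[S],\pi[T]) \le 6$. Concatenating along any edge path of $\FS(\Gamma;\A)$ shows that $\pi$ is $6$-Lipschitz, a constant depending only on the constants in Lemma~\ref{LemmaFSToFFProps}, hence only on $\corank(\A)$ and $\abs{\A}$.

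For connectivity I first observe that every $0$-simplex $\B$ of $\FF(\Gamma;\A)$ is realized by a free splitting $T$ with $\F(T)=\B$ by Lemma~\ref{LemmaFSToFFOnto}; since $\A \sqsubset \B=\F(T)$ with $\B \ne \A$ we have $T \in \FS(\Gamma;\A)$, and the trivial collapse places $\B \in \Pi[T]$. Given two such vertices $\B_1,\B_2$, pick corresponding $T_1,T_2$ and join them by an edge path $T_1=U_0,U_1,\ldots,U_N=T_2$ in $\FS(\Gamma;\A)$, using connectivity of $\FS(\Gamma;\A)$. For each $k$ the consecutive projection sets $\Pi[U_{k-1}]$ and $\Pi[U_k]$ satisfy a nesting relation by monotonicity, hence intersect, and each lies in a connected diameter-$\le 6$ subcomplex of $\FF(\Gamma;\A)$ by the diameter bound. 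The union of these subcomplexes is therefore connected and contains both $\B_1$ and~$\B_2$.

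For the inverted coarse equivariance, the definition of $\Pi$ already records the identity $\Pi([T]\cdot\phi)=\phi^{\inv}(\Pi[T])$. Both $\pi([T]\cdot\phi)$ and $\phi^{\inv}(\pi[T])$ lie in $\Pi([T]\cdot\phi)$, so applying the diameter bound to the vertex $[T]\cdot\phi$ yields $d(\pi([T]\cdot\phi),\phi^{\inv}(\pi[T])) \le 6$. The whole deduction is thus mechanical once Lemma~\ref{LemmaFSToFFProps} is available; the real technical work of this subsection will be in the proof of that lemma, especially the uniform diameter bound on $\Pi[T]$, rather than in Proposition~\ref{PropConnectedLipschitz} itself.
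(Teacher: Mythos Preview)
Your proposal is correct and follows essentially the same approach as the paper's own proof: both derive all three conclusions formally from Lemma~\ref{LemmaFSToFFProps} by using monotonicity of $\Pi$ under collapse together with the uniform diameter bound on each $\Pi[T]$, and both use connectivity of $\FS(\Gamma;\A)$ to chain together the connected subcomplexes containing the $\Pi[U_k]$'s. The paper packages the connectivity argument via the auxiliary sets $\Pi^1[T]$ (unions of short edge paths with endpoints in $\Pi[T]$), but this is just a notational device for exactly the argument you give.
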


\begin{proof} To prove connectivity of $\CFFS(\Gamma;\A)$ it suffices to prove connectivity of its $1$-skeleton. For each $0$-simplex $[T] \in \FS(\Gamma;\A)$ let $\Pi^1[T]$ be the union of all edge paths having endpoints in $\Pi[T]$ and having length $\le 4$.  Connectivity of $\Pi^1[T]$ follows from Lemma~\ref{LemmaFSToFFProps}~\pref{ItemFFProjDiamBound}. This is the basis step of an inductive proof of the following statement: for each edge path $S_0$---$S_1$---$\ldots$---$S_L$ in $\FS(\Gamma;\A)$ the set $\Pi^1[S_0] \cup \cdots \cup \Pi^1[S_L]$ is connected. For the induction step one uses that either $S_{L-1} \collapsesto S_L$ or $S_{L-1} \expandsto S_L$ and therefore by Lemma~\ref{LemmaFSToFFProps}~\pref{ItemFFProjCollapse} the set $\Pi[S_{L-1}] \union \Pi[S_L]$ equals either $\Pi[S_{L-1}]$ or $\Pi[S_L]$; it follows $\Pi^1[S_{L-1}] \union \Pi^1[S_L]$ equals either $\Pi^1[S_{L-1}]$ or $\Pi^1[S_L]$ and so is connected. The union $\union\{\Pi^1[T] \suchthat [T] \in \FS(\Gamma;\A)\}$ is therefore connected, and this union includes the entire $0$-skeleton of $\CFFS(\Gamma;\A)$; the entire $1$-skeleton of $\CFFS(\Gamma;\A)$ is therefore connected, because any 1-cells that are not yet included in this union already have endpoints in the union and so may be freely added.

To prove the Lipschitz bound for $\pi$, for any $1$-simplex $[S] \prec [T]$ in $\FS(\Gamma;\A)$ the set $\Pi[S] \union \Pi[T] = \Pi[T]$ has diameter~$\le 4$, and so $d(\pi[S],\pi[T]) \le 4$.

Inverted coarse equivariance for $\pi$ follows from inverted equivariance for $\Pi$ combined with Lemma~\ref{LemmaFSToFFProps}~\pref{ItemFFProjDiamBound}.
\end{proof}

\begin{proof}[Proof of Lemma~\ref{LemmaFSToFFProps}] To prove conclusion~\pref{ItemFFProjCollapse} choose a collapse map $S \mapsto T$. Each element of $\Pi[T]$ has the form $\Fell U$ for some collapse map $T \mapsto U$, and since the composition $S \mapsto T \mapsto U$ is a collapse map it follows that $\Fell U \in \Pi[S]$.

\smallskip
Turning now to the proof of conclusion \pref{ItemFFProjDiamBound}, we reduce to a special case as follows: 

\smallskip
\noindent
\textbf{Assumption: The free splitting $T$ is generic.} To justify making this assumption, consider an arbitrary free splitting $T$ of $\Gamma$ rel~$\A$. By Proposition~\ref{PropMaximizingSimplices} there exists a generic free splitting $S$ such that $S \collapsesto T$. By applying conclusion~\pref{ItemFFProjCollapse} of Lemma~\ref{LemmaFSToFFProps} (proved just above) it follows that $\Pi[T] \subset \Pi[S]$. So once our desired diameter bound is known to hold under the above assumption, applying that to $S$ we get $\diam(\Pi[S]) \le 4$, and it follows that $\diam(\Pi[T]) \le 4$.

\smallskip
Continuing with the proof under the above assumption, in the quotient graph of groups $T/\Gamma$, the valence~$1$ vertices $\{q_1,\ldots,q_M\}$ are precisely those vertices of $T/\Gamma$ labelled by nontrivial vertex groups $A_1,\ldots,A_M$, those subgroups being free factors of a realization $\Gamma = A_1 * \cdots * A_M * B$ of the free factor system~$\A = \{[A_1],\ldots,[A_M]\}$. Clearly we may assume that $T$ has its natural simplicial structure, and so every vertex of the graph of groups $T/\Gamma$ is either of valence~$1$ and labelled by a nontrivial group  (namely a conjugate of one of $A_1,\ldots,A_M$) or of valence~$3$ and labelled by the trivial group. Also, there must be at least one valence~$3$ vertex, for otherwise one of two cases holds: $M=0$ and so $\A=\emptyset$, the graph $T/\Gamma$ is a circle, and the group $\Gamma$ is infinite cyclic; or $M=2$ and so $\A=\{[A_1],[A_2]\}$, the graph $T/\Gamma$ is an arc, and $\Gamma = A_1 * A_2$. But in each of those two cases we have $\DFF(\A) = 1$ which is ruled out by hypothesis. 

\smallskip
The proof that $\diam(\Pi[T]) \ge 4$ continues by breaking into two cases:

\smallskip
\noindent
\textbf{Case 1: For every pair $G_1 \ne G_2$ of maximal, proper, core subgraphs of $T/\Gamma$ we have $d(G_1,G_2) \le 2$.} Consider any pair of proper, nontrivial core subgraphs $G'_1,G'_2 \subset T/\Gamma$. Since the finite graph $T/\Gamma$ has only finitely many proper, nontrivial core subgraphs, and so each $G'_i$ is contained in some maximal proper, nontrivial core subgraph $G_i \subset T/\Gamma$, and there is a path of length~$\le 1$ connecting $[G'_i]$ to $[G_i]$. From the Case~1 hypothesis we get a path of length $\le 2$ from $[G_1]$ to $[G_2]$, and by concatenation we get a path of length~$\le 4$ from $[G'_1]$ to~$[G'_2]$.

\smallskip 
\noindent
\textbf{Case 2: There exists a pair $G_1 \ne G_2$ of maximal, proper, core subgraphs of $T/\Gamma$ such that $d(G_1,G_2) \ge 3$.} We proceed in two steps. Step 1 will use the Case 2 hypothesis to show that $T/\Gamma$ with its natural cell structure is isomorphic one of three special graphs of low complexity:
\begin{description}
\item[A clam:] the rank~$2$ graph having two valence~$3$ vertices and three edges each with its endpoints at distinct vertices. In this case $\abs{\A}=0$ and $\corank(\Gamma;\A)=2$.
\item[A spindle:] the rank~$1$ graph having two valence~$3$ vertices and two valence~$1$ vertices, a circle consisting of two edges each having endpoints on the two vertices of valence~$3$, and two more edges each with one valence~$3$ endpoint and one valence~$1$ endpoint. In this case $\abs{\A}=2$, and $\corank(\Gamma;\A))=1$.
\item[A clam with an antenna:] the rank~$2$ graph obtained from the clam by attaching one endpoint of an edge to an interior point of one of the edges of the clam. In this case $\abs{\A}=1$, and $\corank(\Gamma;\A)=2$.
\end{description}
Step 2 will verify, for each of these three special cases, that $\Pi[T]$ has diameter~$\le 4$.

\smallskip
\noindent
\textbf{Step 1.} We start by proving that a proper core subgraph $G \subset T/\Gamma$ is maximal if and only if it is obtained from $T/\Gamma$ by removing the interior of a single natural edge having its two ends at distinct vertices. 

The ``if'' direction of this statement is clear. For the ``only if'' direction, given a maximal, proper core subgraph $G$, there are two cases to consider. In the first case $G$ is obtained from $T/\Gamma$ by removing the interior of a single natural edge $e$ having both ends at the \emph{same} vertex. That vertex has valence~$3$ in $T/\Gamma$ and so it has valence~$1$ in $G$, contradicting that $G$ is a core subgraph. In the second case $G$ is missing the interiors of two or more edges of $T/\Gamma$; consider two such edges $e_1,e_2$. If some $e_i$ has distinct endpoints then $T/\Gamma - \interior(e_i)$ is a proper core subgraph larger than $G$, contradicting maximality of~$G$. If no $e_i$ has distinct endpoints then, for each $i=1,2$, the edge $e_i$ forms a loop with both ends at some natural vertex $v_i$ of valence~$3$, and so $v_1 \ne v_2$. Let $e'_i$ be the edge distinct from $e_i$ incident to $v_i$, so $e'_i$ is distinct from both $e_1$ and $e_2$, and $e'_i$ has its ends at two distinct vertices. If $e'_i$ is in $G$ then, as above, $v_i$ has valence~$3$ in $T/\Gamma$ and valence~$1$ in $G$, again contradicting that $G$ is a core graph. If on the other hand $e'_i$ is not in $G$ then $T/\Gamma - \interior(e'_i)$ is a proper core subgraph larger than~$G$, also a contradiction.

Applying the Case~2 hypothesis, choose two maximal, proper, core subgraphs $G_1 \ne G_2 \subset T / \Gamma$ such that \hbox{$d([G_1],[G_2]) \ge 3$} in $\CFFS(\Gamma;\A)$. From Step 1 we have $G_i = T/\Gamma - \interior(e_i)$ for two natural edges $e_1 \ne e_2 \subset T/\Gamma$, and each endpoint set $\bdy e_1$, $\bdy e_2$ consists of two points. The union $\bdy e_1 \union \bdy e_2$ therefore has four, three, or two points, and we handle those cases separately, with various subcases. In each subcase we find that $T/\Gamma$ is a spindle, or a clam maybe with an antenna, or we find a contradiction.

\textbf{Case 1: $\bdy e_1 \union \bdy e_2$ is four points.} In this case $G_1 \intersect G_2$ is a core subgraph, because each of the four points $\bdy e_1 \union \bdy e_2$ has valence~$2$ in $G_1 \intersect G_2$ or valence~$1$ in $T/\Gamma$. If the core graph $G_1 \intersect G_2$ is nontrivial then we have a path of length~$2$ in $T/\Gamma$ of the form $[G_1] \leftarrow [G_1 \intersect G_2] \rightarrow [G_2]$ (the directions of arrows show the direction of inclusions of graphs or, equivalently, direction of nesting of free factor system), which contradicts the choice of $G_1$ and $G_2$. If $G_1 \intersect G_2$ is trivial, all four endpoints must have valence~$1$ in $T/\Gamma$, and it follows that $G_1 \intersect G_2 = \bdy e_1 \union \bdy e_2$. The graph $T/\Gamma$ is therefore the disjoint union of $e_1$ and $e_2$, but that graph is disconnected, a contradiction.


\textbf{Case 2: $\bdy e_1 \union \bdy e_2$ is three points.} Let $\bdy  e_1 \intersect \bdy e_2 = \{p\}$, a single point. In this case $G_1 \intersect G_2$ is not a core graph, because $p$ has valence~$1$ in $G_1 \intersect G_2$ but valence~$3$ in $T / \Gamma$. Let $e_3$ be the edge of $G_1 \intersect G_2$ incident to $p$, and let $H = (G_1 \intersect G_2) - (\{p\} \union \interior(e_3))$. It follows that $\core(H)=\core(G_1 \intersect G_2)$, and so $H$ is trivial but we cannot yet conclude that $H$ itself is a core subgraph. Let $q_i \ne p$ be the endpoint of $e_i$ opposite $p$. There are two subcases, depending on whether $q_3$ equals one of $q_1,q_2$. 

If $q_3$ is distinct from both $q_1$ and $q_2$ then each of $q_1,q_2,q_3$ has valence $2$ in $H$ or valence~$1$ in $T/\Gamma$ and so $H$ is a core subgraph. But $H$ is trivial and so $H = V_{nt} = \{q_1,q_2,q_3\}$ implying that $\abs{\A}=3$, and implying that $T/\Gamma = e_1 \union e_2 \union e_3$ is a tree and so $\corank(\A)=0$. But then $\DFF(\A)=2$, a contradiction.

Suppose that $q_3$ equals one of $q_1$ or $q_2$, say $q_3=q_1$, a point of valence~$3$ in $T/\Gamma$ and of valence~$1$ in $H$, and so $H$ is not a core subgraph. Let $e'$ be the edge of $H$ incident to $q_3$ and let $H' = H - (\{q_3\} \union \interior(e'))$, so $\core(H')=\core(H)=\core(G_1 \intersect G_2)$ and $H'$ is trivial, but again $H'$ need not be a core subgraph. Let $q'$ be the endpoint of $e'$ opposite $q_3$. Depending on whether $q_2=q'$ we will see that $T/\Gamma$ is either a spindle or a clam with an antenna. If $q_2 \ne q'$ then each has valence~$1$ in $T/\Gamma$ or valence~$2$ in $H'$ and so $H'$ is a core subgraph, but $H'$ is trivial and so both $q_2,q'$ have valence~$1$ in $T/\Gamma$, and in this case $T/\Gamma$ is a spindle. If $q_2=q'$ then that point has valence~$1$ in $H'$ and valence~$3$ in $T/\Gamma$, and so $H'$ is not a core subgraph. Letting $e''$ be the edge of $H'$ with endpoint $q''$ opposite $q'$ it follows that $H'' = H' - (\{q'\} \union \interior(e''))$ satisfies $\core(H'') = \core(H') = \core(G_1 \intersect G_2)$ and so $H''$ is trivial. Also, $q''$ has either valence~$2$ in $H''$ or valence~$1$ in $T/\Gamma$ so $H''$ is, at last, a core subgraph. By triviality it follows that $q''$ has valence $1$ in $T/\Gamma$ and that $T/\Gamma$ is a clam with an antenna.

\textbf{Case 3: $\bdy e_1 \union \bdy e_2 = \{p,q\}$ is two points.} These two points each have valence~$1$ in $G_1 \intersect G_2$ and valence~$3$ in $T/\Gamma$. Let $e_p,e_q \subset T/\Gamma$ be the natural edges incident to $p,q$ respectively. If $e_p=e_q$ then $G_1 \intersect G_2 = e_p$, and so $\core(G_1 \intersect G_2) = \emptyset$ and $T/\Gamma$ is a clam. 

We may therefore assume that $e_p \ne e_q$. Let $H' = (G_1 \intersect G_2) - (\{p,q\} \union \interior(e_p) \union \interior(e_q)\}$, so $\core(H')=\core(G_1 \intersect G_2)$, implying that $H'$ is trivial. Let $p',q'$ be the endpoints of $e_p,e_q$ opposite $p,q$ respectively. Depending on whether $p'=q'$ the graph $T/\Gamma$ is either a spindle or a clam with an antenna, which is proved exactly as in Case~2 but with the notation changed to replace $q_2$ in Case~2 with $p'$ in Case~3. 

This completes Step~$1$. 

\smallskip

\textbf{Step 2.} Knowing that $T/\Gamma$ is the clam, a spindle, or a clam with an antenna, we now consider these three graphs one at a time, in each case proving that the set of visible free factor systems $\Pi[T]$ has diameter $\le 4$ in $\CFFS(\Gamma;\A)$.  

\smallskip

\textbf{$T / \Gamma$ is a clam.} This graph has two vertices of valence~3 and three edges denoted $e_1,e_2,e_3$ each with distinct endpoints. The group $\Gamma$ is free of rank~$2$ and $\A=\emptyset$. The three loops in this graph represent three rank~$1$ free factors $Z_1,Z_2,Z_3 \subgroup \Gamma$, with $Z_i$ representing $e_j \union e_k$ for each cyclic permutation $(i,j,k)$ of the indices $(1,2,3)$. The three visible free factor systems of $T$ form the set projection $\Pi[T]=\{[Z_1]\}$, $\{[Z_2]\}$, $\{[Z_3]\}$. In the $1$-skeleton of $\CFFS(\Gamma;\A)$ (which is isomorphic to the barycentric subdivision of the Farey graph), we thus obtain a loop of length~$6$ in the form of a geodesic triangle, using which we see that the diameter of the $\Pi[T]$ is~$\le 2$ (actually $=2$ on the nose).
$$\xymatrix{
      		&  				& [Z_1] \ar[dl] \ar[dr]				\\
      		& \{[Z_1],[Z_2]\} 	& 	& \{[Z_3],[Z_1]\} 	\\
 \{[Z_2]\} \ar[ur] \ar[rr]	&      		& \{[Z_2],[Z_3]\}	&&\{[Z_3]\} \ar[ll] \ar[ul]		\\
}$$
In this diagram, for each $1 \le i \ne j \le 3$ we also include the $0$-cell $\{[Z_i],[Z_j]\}$ that lies between each of the two $0$-cells $\{[Z_i]\}$, $\{[Z_j]\}$, and  that is associated to the free factorization $\Gamma = Z_i * Z_j$. In this and the diagrams below, arrows $X \to Y$ indicate the nesting relation $X \sqsubset Y$.

\smallskip

\textbf{$T / \Gamma$ is a spindle.} In $T/\Gamma$ denote vertices $p,q$ labelled by subgroups with the same notations $P,Q \subgroup \Gamma$. Denote a circle subgraph $C \subset T/\Gamma$ with edges $e_1,e_2$ and endpoints $\bdy e_1=\bdy e_2 = \{u,v\}$, with notation chosen so that in $T/\Gamma$ there are edges $\overline{pu}$ and $\overline{vq}$. We may also choose a rank~$1$ cofactor $Z = \<z\> \subgroup \Gamma$ represented by the subgraph $C$, so that in $\Gamma$ there are two realizations of $\A$ of the form 
$$\Gamma = P * Q * Z \qquad \text{and} \qquad \Gamma = P * Q^z * Z
$$
where $Q^z = z Q z^\inv$. With these notations, the five nontrivial, proper free factor systems that are visible in $T$, associated to the five nontrivial core subgraphs of $T/\Gamma$, can be listed as follows:
\begin{align*}
\mathcal{PZQ} =  \{[P],[Z],[Q]\} = \{[P],[Z],[Q^z]\} & = [\{p\} \union C \union \{q\}]  \\
 \{[P],[Z * Q]\} &= [\{p\} \union C \union \overline{vq})] \\
\{[P * Z],[Q]\}  &= [\overline{pu} \union C \union \{q\}] \\
 \{[P * Q]\} &= [\overline{pu} \union e_1 \union \overline{vq}] \\
 \{[P * Q^z)]\} &= [\overline{pu} \union e_2 \union \overline{vq}]
\end{align*} 
Including two additional free factor systems $\{[P * Q],[Z]\}$ and \hbox{$\{[P * Q^z],[Z]\}$} (which are not visible in $T/\Gamma$) we obtain a connected subgraph of diameter $4$ in the $1$-skeleton of~$\CFFS(\Gamma;\A)$:
$$\xymatrix{
& 	& \{[P],[Z * Q]\}  & & \\
\{[P \! * \! Q]\} \ar[r]
	& \{[P \! * \! Q],[Z]\}  
		& \mathcal{PZQ} \ar[d] \ar[u] \ar[l] \ar[r] 
			& \{[P \! * \! Q^z],[Z]\} 
				& \{[P \! * \! Q^z]\} \ar[l] \\
& 	& \{[P* Z],[Q]\}
}$$

\textbf{$T/\Gamma$ is a clam with an antenna.} In $T/\Gamma$ there is one valence $1$ vertex that we denote~$r$, labelled with a subgroup $R \subgroup \Gamma$ such that $\A=\{[R]\}$, and with incident edge $\overline{ru}$ having opposite valence~3 vertex denoted $u$. There are two other valence~3 vertices $p,q$ and edges $\overline{up}$, $\overline{uq}$, and there are two edges $e_1,e_2$ each having endpoints $p,q$. The two loops $\overline{uq} \union e_1 \union \overline{pu}$ and $\overline{uq} \union e_2 \union \overline{pu}$ represent rank~$1$ free factors $Z_1 = \<z_1\>$, $Z_2 = \<z_2\>$, and the loop $e_1 \union e_2$ represents a free factor $Z_3 = \<z_1 z_2^\inv\>$. The free factor system $\A$ has a realization $\Gamma = R * F$ with a rank~$2$ cofactor $F$ that has several different internal free factorizations we will need:
$$F = Z_1 * Z_2 = Z_2 * Z_3 = Z_3 * Z_1 = \underbrace{(z_1 Z_3 z_1^\inv)}_{Z'_3}  * Z_1
$$
The free splitting $T$ has eight visible free factor systems, given by the eight nontrivial core subgraphs of $T/\Gamma$, as follows. The four such subgraphs do not contain $\overline{ru}$, and the remaining four do contain $\overline{ru}$:
\begin{align*}
\{[R],[F]\} &= \{r\} \union (\overline{pu} \union \overline{uq} \union e_1 \union e_2) \\
\{[R],[Z_1]\} &= \{r\} \union (\overline{pu} \union \overline{uq} \union e_1) \\
\{[R],[Z_2]\} &= \{r\} \union (\overline{pu} \union \overline{uq} \union e_2) \\
\{[R],[Z_3]\} = \{[R],[Z'_3]\} &= \{r\} \union (e_1 \union e_2) \\
\{[R * Z_1]\} &= \overline{ru} \union \overline{pu} \union \overline{uq} \union e_1 \\
\{[R * Z_2]\} &= \overline{ru} \union \overline{pu} \union \overline{uq} \union e_2 \\
\{[R * Z_3]\} &= \overline{ru} \union \overline{pu} \union e_1 \union e_2 \\
\{[R * Z_3']\} &= \overline{ru} \union \overline{qu} \union e_1 \union e_2
\end{align*}
The inclusion lattice of these eight core subgraphs, equivalently the nesting lattice of the eight visible free factor systems, forms a connected graph of diameter~$4$ in the $1$ skeleton of $\FS(\Gamma;\A)$:
$$\xymatrix{
						& \{[R],[F]\}  			& 					&  \\
 \{[R],[Z_1]\} \ar[ur] \ar[d]	& \{[R],[Z_2]\} \ar[u] \ar[d] 	& \{[R],[Z_3]\} \ar[ul] \ar[d] \ar[dr]	& \\
\{[R * Z_1]\} & \{[R * Z_2]\} &  \{[R * Z_3]\} & \{[R * Z_3']\} &
}$$
\end{proof}

\subsection{Proof of hyperbolicity of $\CFFS(\Gamma;\A)$}
\label{SectionFFHyperbolicProof}
We shall prove an enhanced version of Theorem~\ref{TheoremRelFFGammaHyp} by applying the following result of I.\ Kapovich and K.\ Rafi, which is in turn based on results of Bowditch \cite{Bowditch:CurveComplex}:

\begin{proposition}[\protect{\cite[Proposition 2.5]{KapovichRafi:HypImpliesHyp}}]
\label{TheoremKR}
Let $X$ be a connected simplicial complex which is $\delta$-hyperbolic with respect to the simplicial metric and let $Y$ be a connected simplicial complex. Suppose that the following hypothesis holds: there exists a map of $1$-skeleta $\pi \from X^{(1)} \to Y^{(1)}$ such that
\begin{enumerate}
\item $\pi$ is a $K$-Lipschitz graph map, meaning:
\begin{enumerate}
\item\label{ItemKRSurjective}
$\pi$ restricts to a surjection of vertex sets $X^{(0)} \mapsto Y^{(0)}$;
\item\label{ItemKRLipschitz}
The restriction of $\pi$ to each edge of $X$ is an edge path of length~$\le K$ in~$Y^{(1)}$.
 \end{enumerate}
\item\label{ItemKRSmall}
There exists a constant $D$ such that for all $v,w \in X^{(0)}$, if $d_Y(\pi(v),\pi(w)) \le 1$, and if $v=v_0,v_1,\ldots,v_L=w$ are the vertices along a geodesic in the $1$-skeleton $X^{(1)}$ between $v$ and $w$, then $\diam_Y\{\pi(v_0),\ldots,\pi(v_L)\} \le D$.
\end{enumerate}
It follows that $Y$ is $\delta_1$-hyperbolic with respect to the simplicial metric, and the following conclusion also holds:
\begin{itemize}
\item
If $v_0,v_1,\ldots,v_L$ is the sequence of vertices along some geodesic in $X^{(1)}$ then the subset $\{\pi(v_0),\pi(v_1),\ldots,\pi(v_L)\} \subset Y$ is $C$-Hausdorff close to a geodesic in $Y$. 
\end{itemize}
The constants $\delta_1$ and $C$ depend only on $\delta$, $K$, and $D$.
\qed
\end{proposition}

\begin{theorem}[Enhanced version of Theorem~\ref{TheoremRelFFGammaHyp}]
\label{TheoremRelFFHypEnchanced}
For any group $\Gamma$ and any nonexceptional free factor system $\A$ of $\Gamma$, the complex $\CFFS(\Gamma;\A)$ is nonempty, connected, and hyperbolic. Furthermore, the image under $\pi \from \FS(\Gamma;\A) \to \CFFS(\Gamma;\A)$ of any geodesic in $\FS(\Gamma;\A)$ is uniformly Hausdorff close to a geodesic in $\CFFS(\Gamma;\A)$.
\end{theorem}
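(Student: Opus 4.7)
The plan is to apply the Kapovich--Rafi transfer theorem (Theorem~\ref{TheoremKR}) with $X = \FS(\Gamma;\A)$, $Y = \FF(\Gamma;\A)$, and $\pi$ a projection map as constructed in Section~\ref{SectionFFConnected}. Hyperbolicity of $X$ is Theorem~\ref{TheoremRelFSGammaHyp}, connectedness of $Y$ is Proposition~\ref{PropConnectedLipschitz}~\pref{ItemConnected_pi}, and the Lipschitz property of $\pi$ is Proposition~\ref{PropConnectedLipschitz}~\pref{ItemLipschitz_pi_a}. Surjectivity can be arranged by choosing $\pi([T]) = [\F(T)]$ whenever $\F(T)$ is a vertex of $\FF(\Gamma;\A)$: every vertex of $\FF(\Gamma;\A)$ arises this way by Lemma~\ref{LemmaFSToFFOnto}, and $[\F(T)]$ lies in $\Pi(T)$ whenever $\F(T) \ne \A$. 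It therefore remains to verify condition~\pref{ItemKRSmall}: for $v,w$ with $d_{\FF}(\pi(v),\pi(w)) \le 1$ and any geodesic $v_0=v,\ldots,v_L=w$ in $\FS(\Gamma;\A)^{(1)}$, bound $\diam_{\FF}\{\pi(v_0),\ldots,\pi(v_L)\}$ by a constant depending only on $\corank(\A)$ and $\abs{\A}$.

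The strategy is to compare the geodesic to a fold path. By Corollary~\ref{CorollaryConnected} there is a fold path $S_0\mapsto\cdots\mapsto S_K$ with $d_{\FS}(v,S_0) \le 2$ and $d_{\FS}(w,S_K) \le 2$, which by Theorem~\ref{TheoremRelFSUParams} is a uniform quasigeodesic in $\FS(\Gamma;\A)$ when parameterized by free splitting units. Hyperbolicity of $\FS(\Gamma;\A)$ together with Morse stability for quasigeodesics provides a uniform constant $M$ such that each $v_i$ lies within $\FS$-distance $M$ of some $S_{j(i)}$; by Lipschitzness of $\pi$, $d_{\FF}(\pi(v_i),\pi(S_{j(i)}))$ is bounded by a constant multiple of~$M$. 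The crucial estimate is that $\diam_{\FF}\{\pi(S_0),\ldots,\pi(S_K)\}$ is uniformly bounded. Since each fold is a foldable map, Lemma~\ref{LemmaRealCollapse}~\pref{ItemRCFStoFF} gives the ascending chain $\F(S_0)\sqsubset\F(S_1)\sqsubset\cdots\sqsubset\F(S_K)$. Provided $\F(S_0) \ne \A$, monotonicity forces $\F(S_j) \ne \A$ for all $j$, and $\F(S_j) \ne \{[\Gamma]\}$ by minimality of the $\Gamma$-action on $S_j$; hence each $[\F(S_j)]$ is a vertex of $\FF(\Gamma;\A)$ and the chain spans a single simplex, which has diameter $\le 1$ in the simplicial metric. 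Since $[\F(S_j)]\in\Pi(S_j)$ and $\Pi(S_j)$ has $\FF$-diameter $\le 6$ by Lemma~\ref{LemmaFSToFFProps}~\pref{ItemFFProjDiamBound}, we obtain $d_{\FF}(\pi(S_j),[\F(S_j)]) \le 6$ and hence $\diam_{\FF}\{\pi(S_0),\ldots,\pi(S_K)\} \le 13$, which combined with the Morse bound yields condition~\pref{ItemKRSmall}.

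The hard part is the reduction to the case $\F(S_0) \ne \A$ and $\F(S_K) \ne \A$, since a fold path beginning at a ``generic'' free splitting may have an initial segment of arbitrary $\FS$-diameter along which $\F$ remains $\A$, and the chain argument degenerates there. I will handle this by first replacing $v$ by a nearby free splitting $v'$ satisfying $v \collapsesto v'$ and $\F(v') \ne \A$; such $v'$ exists because $\Pi(v)$ is nonempty in the nonexceptional case, lies at $\FS$-distance $1$ from $v$, and has $d_{\FF}(\pi(v),\pi(v')) \le 6$ since both $\pi(v)$ and $\pi(v')=[\F(v')]$ lie in $\Pi(v)$. A similar replacement of $w$ by $w'$, followed by a careful application of Lemma~\ref{LemmaFoldableExists} together with the combing results Lemma~\ref{LemmaCombingByCollapse} and Lemma~\ref{LemmaCombingByExpansion}, produces a fold path from $v'$ to $w'$ whose initial and final terms both satisfy $\F \ne \A$, at uniformly bounded cost to all distance estimates. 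Hyperbolicity of $\FF(\Gamma;\A)$ then follows from Theorem~\ref{TheoremKR}, and the ``furthermore'' clause regarding Hausdorff-closeness of $\pi$-images of $\FS(\Gamma;\A)$-geodesics to $\FF(\Gamma;\A)$-geodesics is immediate from item~\pref{ItemHausdorffClose} of Theorem~\ref{TheoremKR}.
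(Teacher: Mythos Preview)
Your approach is essentially the same as the paper's: apply Theorem~\ref{TheoremKR} with the special projection $\pi[T]=\F(T)$ whenever $\F(T)\ne\A$, reduce to a fold path, and use the monotone chain $\F(S_0)\sqsubset\cdots\sqsubset\F(S_K)$ to bound the image diameter. The paper carries this out in exactly the order you describe.

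There is, however, a genuine looseness in your reduction step. You replace $v$ by some $v'$ with $v\collapsesto v'$ and $\F(v')\ne\A$, and similarly for $w'$, and then invoke Lemma~\ref{LemmaFoldableExists} to produce a fold path from $v'$ to $w'$ with initial term satisfying $\F\ne\A$. But Lemma~\ref{LemmaFoldableExists} in general only gives $v'\expands S'\collapses S''$ followed by a fold sequence $S''\to w'$, and there is no control on $\F(S'')$: it could well equal~$\A$. To get the initial term right you must use the ``if'' clause of Lemma~\ref{LemmaFoldableExists}, which requires $\F(v')\sqsubset\F(w')$. Your choice of $v',w'$ only guarantees $d_{\FF}(\F(v'),\F(w'))\le 13$, which does not give nesting.

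The fix, and this is exactly what the paper does, is to choose $v'$ more carefully: since $\pi(v)\in\Pi(v)$, by definition there is a collapse $v\collapsesto v'$ with $\F(v')=\pi(v)$ exactly, and likewise $\F(w')=\pi(w)$. Then $d_{\FF}(\F(v'),\F(w'))=d_{\FF}(\pi(v),\pi(w))\le 1$, so after transposing if necessary $\F(v')\sqsubset\F(w')$, the ``if'' clause applies to give $v'\collapses S''$ and a fold sequence $S''\to w'$, and $\F(S'')\sqsupset\F(v')\ne\A$ as needed. With this refinement your argument goes through; the combing lemmas you cite are not needed here.
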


After the proof, we will record a further enchancement, in which the conclusion is strengthened to get a reparameterized quasigeodesic in $\CFFS(\Gamma;\A)$.

\begin{proof}
Choose a special projection map $\pi \from \FS(\Gamma;\A) \to \CFFS(\Gamma;\A)$ having the property that for each $0$-simplex $[T] \in \FS(\Gamma;\A)$, if $\Fell T \ne \A$ then $\pi[T]=\Fell T$; such a map exists since clearly $\Fell T \in \Pi[T]$. Surjectivity of this special $\pi$ follows from Lemma~\ref{LemmaFSToFFOnto}, and $\pi$ is Lipschitz by Proposition~\ref{PropConnectedLipschitz}~\pref{ItemLipschitz_pi_a}. These are hypotheses~\pref{ItemKRSurjective} and~\pref{ItemKRLipschitz} of Theorem \ref{TheoremKR} above. It remains to verify hypothesis~\pref{ItemKRSmall}.

Consider $0$-simplices $[S],[T] \in \FS(\Gamma;\A)$ such that $d(\pi(S),\pi(T)) \le 1$. 

We first reduce to the case that $\Fell S=\pi(S)$ and that $\Fell T=\pi(T)$; by the special choice of $\pi$ this is equivalent to reducing to the case $\Fell S \ne \A$ and $\Fell T \ne \A$. From the requirement that $\pi[S] \in \Pi[S]$ and $\pi[T] \in \Pi[T]$ it follows that there exist collapse maps $S \collapses S'$ and $T \collapses T'$ such that $\pi(S)=\Fell S' \ne \A$ and $\pi(T)=\Fell T' \ne \A$. From the special choice of $\pi$ it then follows that $\Fell S'=\pi(S')$ and $\Fell T'=\pi(T')$. Noting that in $\FS(\Gamma;\A)$ we have $d(S,S') \le 1$ and $d(T,T') \le 1$, and applying hyperbolicity of $\FS(\Gamma;\A)$, any geodesic connecting $S$ to $T$ stays uniformly Hausdorff close to any geodesic connecting $S'$ to $T'$. Since $\pi$ is Lipschitz it follows that the $\pi$-images of these geodesics are uniformly Hausdorff close in $\CFFS(\Gamma;\A)$. Once we have verified that hypothesis~\pref{ItemKRSmall} holds for an $S',T'$ geodesic, it holds as well for an $S,T$ geodesic, completing the reduction.

Henceforth we assume $\Fell S=\pi(S)$ and $\Fell T=\pi(T)$. Since $d(\Fell S,\Fell T) \le 1$, up to transposing notation we may assume $\Fell S \subset \Fell T$. Combining Lemma~\ref{LemmaFoldableExists} and Lemma~\ref{ThmFoldPathExists}, there exists a collapse map $S \collapses S''$ and a fold sequence from $S''$ to $T$. Since $d(S,S'') \le 1$ it follows, just as in the previous paragraph, that once we have verified the desired conclusions for an $S'',T$ geodesic, the conclusions for an $S,T$ geodesic follow. 

We may also assume that there exists a fold sequence from $S$ to $T$, denoted 
$$S = S_0 \xrightarrow{f_1} S_1 \xrightarrow{f_2} \cdots \xrightarrow{f_L} S_L = T
$$
By Theorem~\ref{TheoremRelFSUParams} the sequence $S_0,S_1,\ldots,S_L$ can be reparameterized as a uniform quasigeodesic. By hyperbolicity of $\FS(\Gamma;\A)$ this quasigeodesic is uniformly Hausdorff close in $\FS(\Gamma;\A)$ to any $S,T$ geodesic. And by the Lipschitz property for $\pi$ the images of the quasigeodesic and the geodesic are uniformly Hausdorff close in $\CFFS(\Gamma;\A)$. It therefore suffices to bound the diameter of the set $\{\pi(S_0),\pi(S_1),\ldots,\pi(S_L)\}$. By Lemma~\ref{LemmaRealCollapse}\pref{ItemRCFFtoFS} we have $\Fell S_0 \sqsubset \Fell S_1 \sqsubset \cdots \sqsubset \Fell S_L$ and so the set $\{\Fell S_0,\Fell S_1,\ldots,\Fell S_L\}$ has diameter~$\le 1$ in $\CFFS(\Gamma;\A)$. Since $\A$ is properly contained in $\Fell S=\Fell S_0$, it follows that $\A$ is properly contained in each of $\Fell S_0,\Fell S_1,\ldots,\Fell S_L$, and so $\pi(S_i)=\Fell S_i$ for $0 \le i \le L$. The set $\{\pi(S_0),\pi(S_1),\ldots,\pi(S_L)\}$ therefore has diameter~$\le 1$.
\end{proof}


\paragraph{Remarks on the exceptional cases of Theorem~\ref{TheoremRelFFHypEnchanced}.}These cases --- where $\DFF(\A) = 2$ and $\CFFS(\Gamma;\A)$ is an infinite discrete set --- can be incorporated into the conclusion if one attaches cells to $\CFFS(\Gamma;\A)$ in an exceptional but still natural manner. By analogy, the curve complex of the torus $T^2$ has a $0$-cell for every isotopy class of essential simple closed curves, and an exceptional rule for attaching a $1$-cell connecting each pair of $0$-cells that are represented by two curves that intersect transversely at one point; the ordinary rule for attaching a $1$-cell, on a surface more complicated than a torus, requires that the two curves be disjoint.

There are two cases to consider when $\DFF(\A)=2$ (see Section~\ref{SectionDFF}). The more interesting case is when $\Gamma = A_1 * A_2 * A_3$ with $\A=\{[A_1],[A_2],[A_3]\}$. The $0$-cells of $\CFFS(\Gamma;\A)$ can all be written in the form $\{[A'_i * A'_j],[A'_k]\}$ for some free factorization $\Gamma = A'_1 * A'_2 * A'_3$ such that $[A_i]=[A'_i]$ and some triple of indices $\{i,j,k\}=\{1,2,3\}$. For any such choice of free factorization and index triple one can then attach a $1$-cell --- in an exceptional but still natural manner --- having as endpoints the two $0$-cells $\{[A'_i * A'_j],[A'_k]\}$ and $\{[A'_i],[A'_j * A'_k]\}$. 

The less interesting case of $\DFF(\A)=2$ occurs when $\A = \{[A]\}$ with $\corank(\A)=1$. In this case the points of $\CFFS(\Gamma;\A)$ can all be written in the form $\{[A],[Z]\}$ where $\Gamma = A * Z$ with rank~$1$ cofactor $Z$. By examining the description of the free splitting complex $\FS(\Gamma;\A)$ given in Proposition~\ref{PropOneCoedgeOneComp}, one obtains an $\Out(\Gamma;\A)$-equivariant embedding $\CFFS(\Gamma;\A) \hookrightarrow \FS(\Gamma;\A)$, taking $\{[A],[Z]\}$ to the unique free splitting of $\Gamma$ rel~$\A$ having one edge orbit, represented by an edge whose two endpoints are stabilized by $A$ and $Z$ respectively. In $\FS(\Gamma;\A)$ there is just one other $0$-cell, namely the unique free splitting having one edge orbit represented by an edge whose two endpoints are stabilized by distinct conjugates of $A$; furthermore, $\FS(\Gamma;\A)$ is obtained by coning from this one other $0$-cell to each of the $0$-cells in the embedded image of $\CFFS(\Gamma;\A)$. In summary, after exceptional cell attachments, i.e.\ taking the cone of $\FS(\Gamma;\A)$, one obtains $\CFFS(\Gamma;\A)$ whose diameter is finite. 

\paragraph{Remarks on reparameterized quasigeodesics in $\CFFS(\Gamma;\A)$.}  We record here one more simple enhancement to Theorem~\ref{TheoremRelFFHypEnchanced}, which we use in Part~III to motivate a conjecture regarding quasigeodesics in $\CFFS(\Gamma;\A)$. 

In a simplicial complex $Y$, a sequence of vertices $V_0,\ldots,V_L$ is a \emph{reparameterized $K,C$ quasigeodesic} if there exists an integer $n \ge 0$ and a nondecreasing, surjective step function \hbox{$r(t) \in \{0,\ldots,L\}$} defined for all $0 \le t \le n$ such that the function defined by $t \mapsto V_{r(t)}$ is a $K,C$ quasigeodesic. 

\begin{theorem}[Addendum to Theorem~\ref{TheoremRelFFHypEnchanced}]
\label{TheoremEnchancedAddendum}
If $v_0,\ldots,v_L$ is the sequence of vertices along a geodesic in the $1$-skeleton of $\FS(\Gamma;\A)$ then the sequence of vertices $\pi(v_0),\ldots,\pi(v_L)$ is a reparameterized quasigeodesic in $\CFFS(\Gamma;\A)$, with uniform quasigeodesic constants.
\end{theorem}

\begin{proof} The conclusions of \cite[Proposition 2.5]{KapovichRafi:HypImpliesHyp} are derived from the conclusions \cite[Proposition 2.3]{KapovichRafi:HypImpliesHyp} which (as said in \cite{KapovichRafi:HypImpliesHyp} just before Proposition 2.3) is a ``slightly restated special case'' of \cite[Proposition 3.1]{Bowditch:CurveComplex}. To enchance all of these conclusions in the desired manner one can then apply \cite[Lemma 3.2]{Bowditch:CurveComplex} which, in our present context, says that the sequence $0,\ldots,L$ has a subsequence $0=m(0) < m(1) < \cdots < m(n) = L$ satisfying the following conditions, with a uniform constant $K$:
\begin{itemize}
\item $\diam\{v_{m(i-1)},v_{m(i-1)+1},\ldots,v_{m(i)}\} \le K$ for all $i=1,\ldots,n$;
\item $\abs{i-j} \le d(v_{m(i)},v_{m(j)}) + 2$ for all $i,j$.
\end{itemize}
\end{proof}

\bibliographystyle{amsalpha} 
\bibliography{/Users/Lee/Dropbox/Handel_Lyman_Mosher_shared/Lee_bibtex_file/mosher.bib} 

\end{document}